\let\Lpolish\L
\newtheorem*{theo*}{Theorem}
\newtheorem{theo}{Theorem}
\newtheorem{lemma}{Lemma}[section]
\newtheorem{coro}[theo]{Corollary}
\newtheorem{definition}[lemma]{Definition}
\newtheorem{prop}[lemma]{Proposition}
\newtheorem{claim}[lemma]{Claim}
\newtheorem{facts}{Fact}
\newtheorem{fact}[facts]{Fact}
\newtheorem{remark}[lemma]{Remark}
\renewenvironment{proof}[1][\proofname]
{\par\pushQED{\qed}
	\normalfont\topsep6\p@\@plus6\p@\relax\trivlist
	\item[\hskip\labelsep\bfseries#1\@addpunct{.}]
	\ignorespaces}
{\popQED\endtrivlist\@endpefalse}
\newcommand{\F}{\mathcal{F}}
\newcommand{\N}{\mathbb{N}}
\newcommand{\Ex}{\mathbb{E}}
\newcommand{\ith}{i\text{-th}}
\newcommand{\norm}[1]{\left\lVert #1 \right\rVert}
\renewcommand{\P}{\mathcal{P}}
\renewcommand{\Pr}{\mathbb{P}}
\newcommand{\Q}{\mathcal{Q}}
\newcommand{\R}{\mathcal{R}}
\newcommand{\E}{\mathcal{E}}
\newcommand{\A}{\mathcal{A}}
\newcommand{\V}{\mathcal{V}}
\DeclareMathOperator{\poly}{poly}
\renewcommand{\a}{\alpha}
\renewcommand{\b}{\beta}
\renewcommand{\d}{\delta}
\newcommand{\g}{\gamma}
\renewcommand{\L}{\mathcal{L}}
\newcommand{\sub}{\subseteq}
\newcommand{\sm}{\setminus}
\newcommand{\e}{\epsilon}
\newcommand{\Z}{\mathcal{Z}}
\newcommand{\HH}{\mathcal{H}}
\newcommand{\X}{\mathcal{X}}
\newcommand{\Y}{\mathcal{Y}}
\newcommand{\D}{\mathcal{D}}
\newcommand{\K}{\mathcal{K}}
\newcommand{\G}{\mathcal{G}}
\newcommand{\h}[1]{\widehat{#1}}
\newcommand{\Lside}{\mathbf{L}}
\newcommand{\Rside}{\mathbf{R}}
\newcommand{\Hy}[1]{H}
\DeclareMathOperator{\Exp}{\mathbb{E}}
\DeclareMathOperator{\codeg}{codeg}
\DeclareMathOperator{\Var}{Var}
\DeclareMathOperator{\Ack}{Ack}
\DeclareMathOperator{\Cross}{Cross}
\renewcommand{\k}{\kappa}
\newcommand{\Vside}{\mathbf{V}}
\DeclareMathOperator{\under}{U}
\date{}
\title{A Tight Bound for Hyperaph Regularity}
\author{Guy Moshkovitz\thanks{School of Mathematical Sciences, Tel Aviv University, Tel Aviv 6997801, Israel. Email: {\tt guymoshkov@gmail.com}. Supported in part by ERC Starting Grant 633509.} 
\and Asaf Shapira\thanks{School of Mathematical Sciences, Tel Aviv University, Tel Aviv 6997801, Israel. Email: {\tt asafico@tauex.tau.ac.il}. Supported in part by ISF Grant 1028/16 and ERC Starting Grant 633509.}}
\begin{document}

\maketitle
\begin{abstract}
The hypergraph regularity lemma -- the extension of Szemer\'edi's graph regularity lemma to the setting of $k$-uniform hypergraphs -- is one of the most celebrated combinatorial results obtained in the past decade.
By now there are several (very different) proofs of this lemma, obtained by Gowers, by Nagle-R\"odl-Schacht-Skokan and by Tao.
Unfortunately, what all these proofs have in common is that they yield regular partitions whose order is given by the $k$-th Ackermann function.

We show that such Ackermann-type bounds are unavoidable for every $k \ge 2$, thus confirming a prediction of Tao.
Prior to our work, the only result of this type was Gowers' famous lower bound for graph regularity.

%In a recent paper we have shown that these bounds are unavoidable for $3$-uniform hypergraphs.
%In this paper we extend this result by showing that such Ackermann-type bounds are unavoidable for every $k \ge 2$, thus confirming a prediction of Tao.
\end{abstract}

\section{Introduction}

As part of the proof of his eponymous theorem~\cite{Szemeredi75} on arithmetic progressions in dense sets of integers, Szemer\'edi developed (a variant of what is now known as) the graph {\em regularity lemma}~\cite{Szemeredi78}.
The lemma roughly states that the vertex set of every graph can be partitioned into a bounded number of parts such that
almost all the bipartite graphs induced by pairs of parts in the partition are quasi-random.
In the past four decades this lemma has become one of the (if not the) most powerful tools in extremal combinatorics, with applications in many other areas of mathematics. We refer the reader to~\cite{KomlosShSiSz02,RodlSc10}
for more background on the graph regularity lemma, its many variants and its numerous applications.

Perhaps the most important and well-known application of the graph regularity lemma is the original
proof of the {\em triangle removal lemma}, which states that if an $n$-vertex graph $G$ contains only $o(n^3)$ triangles, then one can turn $G$ into a triangle-free graph by removing only $o(n^2)$ edges (see \cite{ConlonFox13} for more details).
It was famously observed by Ruzsa and Szemer\'edi~\cite{RuzsaSz76} that the triangle removal lemma implies Roth's theorem~\cite{Roth54}, the special case of Szemer\'edi's theorem for $3$-term arithmetic progressions.
The problem of extending the triangle removal lemma to the hypergraph
setting was raised by Erd\H{o}s, Frankl and R\"odl~\cite{ErdosFrRo86}. One of the main motivations for obtaining such a result was the observation of Frankl and R\"odl~\cite{FrankRo02} (see also~\cite{Solymosi04}) that such a result would allow one to extend the Ruzsa--Szemer\'edi~\cite{RuzsaSz76} argument and thus obtain an alternative proof of Szemer\'edi's theorem for progressions of arbitrary length.

The quest for a hypergraph regularity lemma, which would allow one to prove a hypergraph removal lemma, took about 20 years.
The first milestone was the result of Frankl and R\"odl~\cite{FrankRo02}, who obtained a regularity lemma for $3$-uniform hypergraphs. About 10 years later, the approach of~\cite{FrankRo02} was extended to hypergraphs of arbitrary uniformity by R\"odl, Skokan, Nagle and Schacht~\cite{NagleRoSc06, RodlSk04}.
At the same time, Gowers~\cite{Gowers07} obtained an alternative version of the regularity lemma for $k$-uniform hypergraphs (from now on we will use $k$-graphs instead of $k$-uniform hypergraphs).
Shortly after, Tao~\cite{Tao06} and R\"odl and Schacht~\cite{RodlSc07,RodlSc07-B} obtained two more versions of the lemma.

As it turned out, the main difficulty with obtaining a regularity lemma for $k$-graphs was defining the correct notion of hypergraph regularity that would:
$(i)$ be strong enough to allow one to prove a counting lemma, and
$(ii)$ be weak enough to be satisfied by every hypergraph (see the discussion in~\cite{Gowers06} for more on this issue).
And indeed, the above-mentioned variants of the hypergraph regularity lemma rely on four different notions of quasi-randomness, which
to this date are still not known to be equivalent\footnote{This should be contrasted with the setting of graphs in which (almost) all notions of quasi-randomness are not only known to be equivalent but even effectively equivalent. See e.g.~\cite{ChungGrWi89}.} (see~\cite{NaglePoRoSc09} for some partial results). What all of these proofs {\em do} have in common however,
is that they supply only Ackermann-type bounds for the size of a regular partition.\footnote{Another variant of the hypergraph regularity lemma was obtained in~\cite{ElekSz12}. This approach does not supply any quantitative bounds.}
More precisely, if we let $\Ack_1(x)=2^x$ and then define $\Ack_k(x)$ to be the $x$-times iterated\footnote{$\Ack_2(x)$ is thus a tower of exponents of height $x$, %i.e.\ the tower function,
$\Ack_3(x)$ is the so-called wowzer function, etc.} version of $\Ack_{k-1}$, then all the above proofs guarantee to produce a regular partition of a $k$-graph whose order can be bounded from above by an $\Ack_k$-type function.

One of the most important applications of the $k$-graph regularity lemma was that it gave the first explicit
bounds for the multidimensional generalization of Szemer\'edi's theorem, see~\cite{Gowers07}. The original proof of this result,
obtained by Furstenberg and Katznelson~\cite{FurstenbergKa78}, relied on Ergodic Theory and thus supplied no quantitative bounds at all.
Examining the reduction between these theorems~\cite{Solymosi04} reveals that if one could improve the Ackermann-type bounds for the $k$-graph regularity
lemma, by obtaining (say) $\Ack_{k_0}$-type upper bounds (for all $k$), then one would obtain the first primitive recursive bounds for the
multidimensional generalization of Szemer\'edi's theorem. Let us note that obtaining such bounds just for
van der Waerden's theorem~\cite{Shelah89} and Szemer\'edi's theorem~\cite{Szemeredi75} (which are two special case) were open problems for many decades till
they were finally solved by Shelah~\cite{Shelah89} and Gowers~\cite{Gowers01}, respectively.
Further applications of the $k$-graph regularity lemma (and the hypergraph removal lemma in particular) are described in~\cite{RodlNaSkScKo05} and~\cite{RodlTeScTo06} as well as in R\"odl's recent ICM survey~\cite{Rodl14}.

A famous result of Gowers~\cite{Gowers97} states that the $\Ack_2$-type upper bounds for graph regularity
are unavoidable. Several improvements~\cite{FoxLo17},
variants~\cite{ConlonFo12,KaliSh13,MoshkovitzSh18} and simplifications~\cite{MoshkovitzSh16} of Gowers' lower bound were recently obtained, but no analogous lower bound was derived even for $3$-graph regularity.
%the $3$-graph regularity lemma.
The numerous applications of the hypergraph regularity lemma naturally lead to the question of whether one can improve upon the Ackermann-type
bounds mentioned above and obtain primitive recursive bounds
%\footnote{We note thus such bounds are known \cite{Chung} but for regularity lemmas that do not suffice for many applications, most notably, for proving the removal lemma.}
for the $k$-graph regularity lemma. Tao~\cite{Tao06-h} predicted that the answer to this question is negative, in the sense that one cannot obtain better than $\Ack_k$-type upper bounds for the $k$-graph regularity lemma for every $k \ge 2$. The main result presented in this paper confirms this prediction.
%The main result presented here
%and in the followup \cite{MSk}
%confirms this prediction.
%by proving $\Ack_k$-type lower bounds for $k$-graph regularity for all $k \geq 2$.

\begin{theo}{\bf[Main result, informal statement]}\label{thm:main-informal}
	The following holds for every $k\geq 2$: every regularity lemma for $k$-graphs satisfying some
	mild conditions can only guarantee to produce partitions of size bounded by an $\Ack_k$-type function.
\end{theo}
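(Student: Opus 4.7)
The natural plan is to proceed by induction on $k$. The base case $k=2$ is exactly Gowers' graph regularity lower bound~\cite{Gowers97} (or the sharper and more flexible versions from~\cite{MoshkovitzSh16,MoshkovitzSh18}), which already produces a graph whose every $\eps$-regular partition has order at least $\Ack_2(\poly(1/\eps))$. For the inductive step, I would convert a ``hard'' $(k-1)$-graph---one witnessing an $\Ack_{k-1}$ lower bound---into a $k$-graph $G$ whose every regular partition must encode the hard $(k-1)$-graph at many disjoint ``locations,'' thereby picking up one further level of the Ackermann hierarchy.

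The engine behind any Gowers-type lower bound is an \emph{amplification step}: given an adversary whose current partition has complexity $m$, one writes down a slightly harder instance on top of it that forces the adversary's next partition to jump from $m$ to roughly $f(m)$, where $f$ governs the growth of the lemma. Iterating this amplification $T \approx \poly(1/\eps)$ times yields the $T$-fold composition of $f$ evaluated at some constant. For $k$-graph regularity, the relevant $f$ is itself of $\Ack_{k-1}$ type, which is exactly what the inductive hypothesis supplies. Concretely, I would build a stacked construction in $T$ layers; layer $t$ takes the current coarse partition and grafts onto it a copy of the inductive $(k-1)$-graph construction, inserted as the ``top slice'' of the $k$-graph. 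Any $\eps$-regular partition of $G$ must then, at layer $t$, regularize a $(k-1)$-graph whose own regular partitions are $\Ack_{k-1}$-large relative to the input, and the recursion $\Ack_k(t+1)=\Ack_{k-1}(\Ack_k(t))$ delivers the target bound.

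The main obstacle is that hypergraph regularity is not a single partition but a \emph{hierarchy} of partitions of $j$-element sets for all $j<k$, together with compatibility and quasirandomness constraints linking consecutive levels. Gowers' original amplification was a one-level argument, whereas here one must prevent the adversary from ``cheating'' by offloading all the complexity to a lower level of the hierarchy: if the $(k-1)$-level partition of the $k$-graph is already extremely fine, the top-level $k$-graph might look trivial above it. Overcoming this will require an irregularity-increment argument that forces the blueprint of the hard $(k-1)$-graph to appear \emph{at} the top level of the hierarchy and not be absorbed into the lower levels, together with a careful bookkeeping argument showing that refinement at a lower level still costs the adversary the full $\Ack_{k-1}$-type blow-up. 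I expect this to be the technical heart of the argument.

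Finally, I would isolate the ``mild conditions'' hinted at in the informal statement as a short list of axioms---essentially that the partitions respect common refinement, that regularity is preserved under restriction to unions of parts, and that the definition of $k$-graph regularity implies a weak counting statement for its own polyads. Checking that the construction is symmetric, has densities bounded away from $0$ and $1$ along every admissible polyad, and satisfies these axioms then ensures that the same Ackermann-type lower bound applies simultaneously to the Gowers, Nagle--R\"odl--Schacht--Skokan, and Tao formulations of the lemma.
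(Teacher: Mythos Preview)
Your outline---induct on $k$ with Gowers' graph bound as base case and a ``stacked'' step-up---matches the paper only at the coarsest level; the proposal misses precisely the obstacles that make a naive step-up fail. The paper argues explicitly that one cannot just reverse-engineer the upper bound: an upper bound for $3$-graph regularity follows already from iterating a \emph{relaxed} graph lemma (SRAL), so an $\Ack_2$ lower bound for ordinary graph regularity is not the right inductive input. Even the SRAL lower bound of~\cite{MoshkovitzSh18} turned out to be insufficient; the paper introduces a still weaker notion, $\langle\delta\rangle$-regularity, calibrated so that a lower bound for it at level $k-1$ can be fed into level $k$. Crucially, this notion is defined via auxiliary \emph{bipartite} graphs $G_H^i$ (Definition~\ref{def:aux}), so that a witness to irregularity is once again just a pair of large vertex subsets rather than a $k$-polyad spanning many $(k-1)$-cliques. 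Your ``irregularity-increment argument that forces the blueprint to appear at the top level'' is exactly where the triad-witness problem bites, and the proposal offers no mechanism for producing such witnesses inside an adversary's partition.

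Two further structural ingredients are absent from your plan. The core graph lemma (Lemma~\ref{theo:core}) that drives the induction is inherently \emph{one-sided}: it assumes approximate refinement on one side and concludes it on the other, and the paper explains why a symmetric version is impossible (it would give $\Ack_k$ lower bounds for \emph{graph} regularity for every $k$). This asymmetry is removed only at the very end, by arranging $2k$ copies of the one-sided construction along the edges of a tight $2k$-cycle to obtain a circular chain of implications (Subsection~\ref{subsec:pasting}). Finally, one of your proposed ``mild conditions''---that the regularity notion support a weak counting statement---is the opposite of what the paper needs: $\langle\delta\rangle$-regularity is deliberately too weak to count even triangles (Proposition~\ref{claim:example}), and this weakness is a feature, since the lower bound then holds for a \emph{fixed} $\delta$ and scales with the density of $H$ rather than with $\eps$, which is exactly what allows the induction to close.
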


Our main result, stated formally as Theorem~\ref{theo:main} in Subsection \ref{subsec:formal}, 
establishes an $\Ack_k$-type lower bound for \emph{$\langle \d \rangle$-regularity} of $k$-graphs, which is a new notion 
of graph/hypergraph regularity which we introduce in this paper.

We will demonstrate the effectiveness of Theorem \ref{thm:main-informal} by deriving from it a lower bound for
the $k$-graph regularity lemma of R\"odl and Schacht~\cite{RodlSc07}.

\begin{coro}[Lower bound for $k$-graph regularity]\label{coro:RS-LB}
	For every $k\geq 2$, there is an $\Ack_k$-type lower bound for the $k$-graph regularity lemma of R\"odl and Schacht~\cite{RodlSc07}.
\end{coro}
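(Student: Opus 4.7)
The plan is to reduce Corollary~\ref{coro:RS-LB} to Theorem~\ref{theo:main} (the formal version of Theorem~\ref{thm:main-informal}) by proving a one-way implication between the two notions of regularity. Since Theorem~\ref{theo:main} already supplies an $\Ack_k$-type lower bound for $\langle \d \rangle$-regular partitions of $k$-graphs, it suffices to show that every Rödl--Schacht regular partition is also $\langle \d \rangle$-regular in a parameter regime that preserves the lower bound. Any upper bound for the R-S lemma substantially better than $\Ack_k$ would then yield, via this reduction, an upper bound for $\langle \d \rangle$-regularity that contradicts the main theorem.

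First I would unpack the R-S definition: a vertex partition $\P_1$ together with compatible partitions $\P_2,\ldots,\P_{k-1}$ of the $j$-cliques supported by $\P_1$, forming a regular complex, such that the target $k$-graph $H$ is $\d_k$-regular with respect to almost all top-level polyads $\widehat{P}$. The content of the reduction is to show that this hierarchical, layered information is already strong enough to force $\langle \d \rangle$-regularity in the sense introduced in this paper. Concretely, for each tuple of parts that enters the definition of $\langle \d \rangle$-regularity, I would express the relevant density/deviation of $H$ as an average over the underlying polyads of the R-S complex; the R-S regularity hypothesis controls all but a $\d_k$-fraction of polyads, and this directly yields the required flat deviation bound.

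The main obstacle, and the technical heart of the deduction, is tracking parameters through this averaging argument. R-S regularity permits both a density defect at the top level (controlled by $\d_k$) and an exceptional fraction of irregular polyads at each intermediate layer; one must argue that after absorbing these exceptions into the slack, the surviving structure still witnesses $\langle \d' \rangle$-regularity for some $\d'$ depending polynomially (or at worst by a fixed-height tower) on the vector of R-S parameters. Provided this dependence is mild in this sense, the $\Ack_k$-type growth of the lower bound is preserved under the reduction, since Theorem~\ref{theo:main} tolerates perturbations of $\d$ of this form.

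Finally, with the reduction established, the corollary follows formally. Suppose for contradiction that the R-S regularity lemma produced partitions of size bounded above by a function growing strictly more slowly than $\Ack_k$ in the relevant parameters. Applying the R-S lemma to the adversarial $k$-graph supplied by the lower bound construction would produce such a partition; the reduction would then convert it into a $\langle \d' \rangle$-regular partition of essentially the same size, contradicting Theorem~\ref{theo:main}. I expect the reduction step to be the bulk of the work, with this final ``plug-in'' step being routine.
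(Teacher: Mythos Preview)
Your high-level plan matches the paper's exactly: show that an R\"odl--Schacht regular partition is already a $\langle \d' \rangle$-regular partition (for a suitably degraded $\d'$), and then invoke Theorem~\ref{theo:main}. The paper packages this reduction as Claim~\ref{claim:k-reduction} and Theorem~\ref{theo:RS-LB}.

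Two technical ingredients that your sketch glosses over but the paper relies on essentially:
\begin{itemize}
\item \emph{The dense counting lemma.} Your ``averaging over polyads'' step needs to compare $|H \cap K'|/|K'|$ with $|H \cap K|/|K|$ where $K = E \circ V$ for $E \in \P^{(k-1)}$, $V \in \P^{(1)}$, and $K'$ is the restriction to large subsets $E' \subseteq E$, $V' \subseteq V$. After decomposing $K$ into $\bigcup_i \K(P_i)$ for polyads $P_i$ of $\P$ (this is the paper's Claim~\ref{claim:decomposition}), the R-S regularity of $H$ controls $d_H(Q_i)$ versus $d_H(P_i)$, but you still need to know that $|\K(Q_i)|/|\K(P_i)|$ is essentially $|K'|/|K|$, independent of $i$. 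This is exactly the content of the dense counting lemma for regular complexes (Fact~\ref{fact:counting} in the paper), combined with a slicing lemma to pass to $V' \subseteq V$. Without it the averaging does not close.
\item \emph{Verifying $\langle \d \rangle$-goodness.} A $\langle \d \rangle$-regular partition must in particular be $\langle \d \rangle$-good (Definition~\ref{def:k-good}): every $F \in \P^{(r)}$ for $2 \le r \le k-1$ must itself have $\langle \d \rangle$-regular auxiliary bipartite graphs $G^i_{F,\under(F)}$. The paper handles this by applying the same reduction (Claim~\ref{claim:k-reduction}) at each intermediate layer, using that $\P$ is $f$-equitable. Your outline treats only the top layer.
\end{itemize}
Once these are in place, the ``plug-in'' step is indeed routine, as you say: remove edges in irregular or low-density polyads (at most $O(\e)$-fraction), apply the reduction, and invoke Theorem~\ref{theo:main}.
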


As we discuss at the beginning of Section~\ref{sec:coro}, the lower bound stated in Corollary~\ref{coro:RS-LB} holds even for
a very weak/special case of the $k$-graph regularity lemma of~\cite{RodlSc07}. We also note that when specialized to $k=3$,
the notion of regularity used by R\"odl and Schacht~\cite{RodlSc07} is at least as strong as those used by
Frankl and R\"odl~\cite{FrankRo02} and by Gowers~\cite{Gowers06}. Hence as a special case of Corollary~\ref{coro:RS-LB} we also obtain\footnote{Since the full proofs of these assertions are given in a companion manuscript we put on the Arxiv~\cite{MS3}, we will not
explain here why the $3$-graph regularity notion of \cite{RodlSc07} is indeed stronger than those used in \cite{FrankRo02,Gowers06}.} tight $\Ack_3$-type lower bounds for the $3$-graph regularity lemmas obtained in \cite{FrankRo02,Gowers06}.

\subsection{Barriers and other aspects of the main proof}

Before getting into the gory details of the proof, let us informally discuss what we think are some
interesting aspects of the proof of Theorem~\ref{thm:main-informal}.

\paragraph{Why is it hard to ``step up''?}
The reason why the upper bound for graph regularity is of tower-type
is that the process of constructing a regular partition of a graph proceeds by a sequence of steps, each increasing the size of the partition exponentially.
The main idea behind Gowers' lower bound for graph regularity~\cite{Gowers97} is in ``reverse engineering'' the proof of the upper bound; in other words,
in showing that (in some sense) the process of building the partition using a sequence of exponential refinements is unavoidable.
Now, a common theme in all proofs of the hypergraph regularity lemma
for $k$-graphs is that they proceed by induction on $k$; that is, in the process of constructing a regular
partition of the input $k$-graph $H$, the proof applies the $(k-1)$-graph regularity lemma on certain $(k-1)$-graphs
derived from $H$. This is why one gets $\Ack_k$-type upper bounds. So with~\cite{Gowers97} in mind, one might guess
that in order to prove a matching lower bound one should ``reverse engineer'' the proof of the upper bound and show that such a process is unavoidable. However, this turns out to be false! As we argued in~\cite{MoshkovitzSh18}, in order to prove an {\em upper bound} for (say) $3$-graph regularity it is in fact enough to iterate a relaxed version of graph regularity which we call the ``sparse regular approximation lemma'' (SRAL for short).
Therefore, in order to prove an $\Ack_3$-type {\em lower bound} for $3$-graph
regularity one cannot simply ``step up'' an $\Ack_2$-type lower bound for graph regularity. Indeed, a necessary condition
would be to prove an $\Ack_2$-type lower bound for SRAL. See also the discussion following Lemma~\ref{theo:core} in Subsection~\ref{subsec:overview}
on how do we actually use a graph construction in order to get a 
hypergraph construction.
%$3$-graph construction.

\paragraph{A new notion of graph/hypergraph regularity:}
In a recent paper \cite{MoshkovitzSh18} we proved an $\Ack_2$-type
lower bound for SRAL.
As it turned out, even this lower bound was not enough to allow us to step up the graph lower bound
into a $3$-graph lower bound. To remedy this, in the present paper we introduce an even weaker notion of graph/hypergraph regularity
which we call $\langle \d \rangle$-regularity. 
(Henceforth, we think of $\d$ as a fixed constant.)
This notion seems to be right at the correct level of ``strength'';
on the one hand, it is strong enough to allow one to prove $\Ack_{k-1}$-type lower bounds for $(k-1)$-graph regularity, while at the
same time weak enough to allow one to induct, that is, to use it in order to then prove $\Ack_{k}$-type lower bounds for $k$-graph regularity.
Another critical feature of our new notion of hypergraph regularity is that it has (almost) nothing to do with hypergraphs!
A disconcerting aspect of all proofs of the hypergraph regularity lemma is that they involve a very complicated nested/inductive structure.
Furthermore, one has to introduce an elaborate hierarchy of constants that controls how regular one level of the partition is compared to
the previous one. What is thus nice about our new notion is that it involves only various kinds of instances of graph $\langle \d \rangle$-regularity.
As a result, our proof is (relatively!) simple.

\paragraph{How do we find witnesses for $k$-graph irregularity?}
The key idea in Gowers' lower bound~\cite{Gowers97} for graph regularity was in constructing a graph $G$, based on a sequence of
partitions ${\cal P}_1,{\cal P}_2,\ldots$ of $V(G)$, with the following
inductive property: if a vertex partition $\Z$ refines ${\cal P}_i$
but does not refine ${\cal P}_{i+1}$ then $\Z$ is not $\epsilon$-regular.
The key step of the proof of~\cite{Gowers97} is in finding witnesses showing
that pairs of clusters of $\Z$ are irregular. The main difficulty in extending this
strategy to $k$-graphs already reveals itself in the setting of $3$-graphs. In a nutshell, while
in graphs, a witness to irregularity of a pair of clusters $A,B \in \Z$ is
{\em any} pair of large subsets $A' \sub A$ and $B' \sub B$, in the setting of $3$-graphs
we have to find three large edge-sets
(usually referred to as a {\em triad} in the hypergraph regularity literature) that
have an additional property: they must together form a graph containing many triangles.
It thus seems quite hard to extend Gowers' approach already to the setting of $3$-graphs. By working
with the much weaker notion of $\langle \d \rangle$-regularity, we circumvent this
issue since two of the edges sets in our version of a triad are always complete bipartite graphs.

\paragraph{What is then the meaning of Theorem \ref{thm:main-informal}?}
Our main result, stated formally as Theorem~\ref{theo:main}, establishes an $\Ack_k$-type lower bound
for $\langle \d \rangle$-regularity of $k$-graphs, that is, for a specific new
version of the hypergraph regularity lemma. Therefore, we immediately get $\Ack_k$-type lower bounds for
any $k$-graph regularity lemma which is at least as strong as our new lemma, that is, for any lemma
whose requirements/guarantees imply those that are needed in order to satisfy our new notion of regularity.
In particular, we will prove Corollary \ref{coro:RS-LB} by showing that the $k$-graph regularity notion
used in \cite{RodlSc07} is at least as strong as $\langle \d \rangle$-regularity.

\paragraph{How strong is our lower bound?} 
Since Theorem \ref{thm:main-informal} gives a lower bound for
$\langle \d \rangle$-regularity and Corollary \ref{coro:RS-LB} shows that
this notion is at least as weak as previously used notions of regularity, it is natural to ask:
$(i)$ is this notion equivalent to one of the other notions? $(ii)$ is this notion strong enough for proving the
hypergraph removal lemma, which was one of the main reasons for developing
the hypergraph regularity lemma? We will prove that the answer to both questions is {\em negative} by showing that already for
graphs, $\langle \d \rangle$-regularity (for $\d$ a fixed constant) is not strong enough even for proving the triangle removal lemma.
This of course makes our lower bound even stronger as it already applies to a very weak notion of regularity.
This is formally stated as Proposition \ref{claim:example} in Section \ref{sec:example}. 
See also the discussion following Theorem~\ref{theo:main} and the one at the beginning of Section~\ref{sec:example}.

\paragraph{How tight is our bound?} Roughly speaking, we will show that for a $k$-graph with $pn^k$ edges,
every $\langle \d \rangle$-regular partition has order at least $\Ack_k(\log 1/p)$. In a recent paper \cite{MoshkovitzSh16}
we proved that in graphs, one can prove a matching $\Ack_2(\log 1/p)$ upper bound, even for a slightly stronger notion than $\langle \d \rangle$-regularity.
This allowed us to obtain a new proof of Fox's $\Ack_2(\log 1/\epsilon)$ upper bound for the graph removal lemma \cite{Fox11} (since the stronger notion allows to count small subgraphs). We believe that it should
be possible to match our lower bounds with $\Ack_k(\log 1/p)$ upper bounds (even for a slightly stronger notion analogous to the one used in \cite{MoshkovitzSh16}). We think that it should be possible to deduce from such an upper bound an $\Ack_k(\log 1/\epsilon)$ upper bound
for the $k$-graph removal lemma. The best known bounds for this problem are (at least) $\Ack_k(\poly(1/\epsilon))$.

\paragraph{How is this paper related to~\cite{MS3}?}
For the reader's convenience we have put on the Arxiv a companion manuscript \cite{MS3} in which we give
a completely self contained proof of the special case of Theorem \ref{theo:main} for $3$-graphs. 
First, the definitions given in Section~\ref{sec:define} when specialized to $k=3$ are the same notions used in (Section~2 of) \cite{MS3}.
The heart of the proof of Theorem~\ref{theo:main} is given by Lemma~\ref{lemma:ind-k} which is proved by induction on $k$.
The (base) case $k=2$ follows easily from Lemma~\ref{theo:core}.
Hence, the heart of the matter is the proof of Lemma~\ref{lemma:ind-k} by induction on $k$. Within this framework,
the argument given in~\cite{MS3} is precisely the deduction of Lemma~\ref{lemma:ind-k} for $k=3$ from the case $k=2$.
Hence, the reader interested in seeing the inductive proof of Lemma~\ref{lemma:ind-k} ``in action''---without the clutter
caused by the complicated definitions related to $k$-graphs---is advised to check~\cite{MS3}.
We also mention that in \cite{MS3} we deduce from the $3$-graph lower bound a tight bound for the regularity lemmas
of Frankl and R\"odl~\cite{FrankRo02} and of Gowers~\cite{Gowers07}. These proofs can also be considered as special cases of 
the arguments we give here in order to prove Corollary \ref{coro:RS-LB}.

\subsection{Paper overview}

Broadly speaking, Section~\ref{sec:define} serves as the technical introduction to this paper, while Sections~\ref{sec:LB} and~\ref{sec:core} contain the main technical proofs.
More concretely, in Section~\ref{sec:define} we will first define the new notion of $k$-graph regularity, which we term $\langle \d \rangle$-regularity,
for which we will prove our main lower bound. We will then give the formal version of Theorem~\ref{thm:main-informal} (see Theorem~\ref{theo:main}).
This will be followed by the statement of the main technical result we will use in this paper, Lemma~\ref{theo:core},
and an overview of how this technical result is used in the proof of Theorem~\ref{theo:main}.
The proof of Theorem~\ref{theo:main} appears in Section~\ref{sec:LB} 
and the proof of Lemma~\ref{theo:core} appears
in Section~\ref{sec:core}. To make it easier to read this paper
we made sure that these two sections are completely independent of each other in the sense that the only result
of Section~\ref{sec:core} that is used in the proof of Theorem~\ref{theo:main} is Lemma~\ref{theo:core}.
In Section \ref{sec:coro} we describe how Theorem~\ref{theo:main} can be used in order to prove Corollary~\ref{coro:RS-LB}, thus establishing tight
$\Ack_k$-type lower bounds for a concrete version of the hypergraph regularity lemma.
Finally, in Section~\ref{sec:example} we prove Proposition~\ref{claim:example} by describing an example showing that even in the setting of graphs,
$\langle \d \rangle$-regularity is strictly weaker than the usual notion of graph regularity, as it does not allow one even to count triangles.

\section{$\langle \d \rangle$-regularity and Proof Overview}\label{sec:define}

\subsection{Preliminary definitions}\label{subsec:preliminaries}

%Recall that the $(k+1)$-st Ackermann function is defined recursively by iterating the $k$-th Ackermann function. Formally,
%
%$$\Ack_{k+1}(n) =
%\begin{cases}
%\underbrace{\Ack_{k}(\Ack_{k}(\cdots(\Ack_{k}(1))\cdots))}_{n \text{ times}} &\text{if } k \ge 1\\
%2^n 					&\text{if } k = 0
%\end{cases}$$
%Hence, for example, $\Ack_2(n) = \twr(n)$ and $\Ack_3(n)=\wow(n)$.

Before giving the definition of $\langle \d \rangle$-regularity, let us start with some standard definitions regarding partitions of hypergraphs.
Formally, a \emph{$k$-graph} is a pair $H=(V,E)$, where $V=V(H)$ is the vertex set and $E=E(H) \sub \binom{V}{k}$ is the edge set of $H$.
The number of edges of $H$ is denoted $e(H)$ (i.e., $e(H)=|E|$).
We denote by $K^k_\ell$ the complete $\ell$-vertex $k$-graph (i.e., containing  all possible $\binom{\ell}{k}$ edges).
The $k$-graph $H$ is \emph{$\ell$-partite} $(\ell \ge k)$ on (disjoint) vertex classes $(V_1,\ldots,V_\ell)$ if every edge of $H$ has at most one vertex from each $V_i$.
%We denote by $H[S]$ the $k$-graph induced on a vertex subset $S \sub V(H)$; that is, $H[S]=(S,\, \{e \in E(H) \vert e \sub S\})$;
We denote by $H[V'_1,\ldots,V'_\ell]$ the $\ell$-partite $k$-graph induced on vertex subsets $V_1' \sub V_1,\ldots,V_\ell'\sub V_\ell$; that is, $H[V'_1,\ldots,V'_\ell]=((V_1',\ldots,V_\ell'),\, \{e \in E(H) \,\vert\, \forall i \colon e \cap V_i \in V_i'\})$.
%The \emph{density} of a $k$-partite $k$-graph $H$ on vertex classes $(V_1,\ldots,V_k)$ is $e(H)/\prod_{i=1}^k |V^i|$.
%The density of a graph between disjoint vertex subsets $A$ and $B$ is denoted $d(A,B)=e(A,B)/|A||B|$, where $e(A,B)$ is the number of edges between $A$ and $B$.
The \emph{density} $d(H)$ of a $k$-partite $k$-graph $H$ is $e(H)/\prod_{i=1}^k |V_i|$.
The set of edges of $G$ between disjoint vertex subsets $A$ and $B$ is denoted by $E_G(A,B)$; the density of $G$ between $A$ and $B$ is denoted by $d_G(A,B)=e_G(A,B)/|A||B|$, where $e_G(A,B)=|E_G(A,B)|$. We use $d(A,B)$ if $G$ is clear from context.
When it is clear from context, we sometimes identify a hypergraph with its edge set. In particular, we will write $V_1 \times V_2$ for the complete bipartite graph on vertex classes $(V_1,V_2)$, and more generally, $V_1 \times\cdots\times V_k$ for the complete $k$-partite $k$-graph on vertex classes $(V_1,\ldots,V_k)$.

For a partition $\Z$ of a vertex set $V$,
%and $i \ge 2$
the complete multipartite $k$-graph on $\Z$ is denoted by
%the complete $|\Z|$-partite $i$-graph whose vertex classes are the clusters of $\Z$ is denoted by
%we denote the complete multipartite $k$-graph on $\Z$ by
%$$\K_k(\Z) =
$\Cross_k(\Z)=
\big\{ e \sub V \,\big\vert\, \forall\, W \in \Z \colon |e \cap W| \le 1 \text{ and } |e|=k\big\}$.
For partitions $\P,\Q$ of the same underlying set, we say that $\Q$ \emph{refines} $\P$, denoted $\Q \prec \P$, if every member of $\Q$ is contained in a member of $\P$.
%We denote by $\P[X]$ the restriction of $\P$ to $X$, that is, $\P[X] = \{ P \in \P \,\vert\, P \sub X\}$.
We say that $\P$ is \emph{equitable} if all its members have the same size.\footnote{In a regularity lemma one allows the parts to differ in size by at most $1$ so that it applies to all (hyper-)graphs. For our lower bound this is unnecessary.}
We use the notation $x \pm \e$ for a number lying in the interval $[x-\e,\,x+\e]$.

%\paragraph*{$k$-partitions.}
We now define a \emph{$k$-partition},
which is a notion of a hypergraph partition\footnote{This is a standard notion, identical to the one used by R\"odl and Schacht (\cite{RodlSc07}, Definition10).}.
A $k$-partition $\P$ is of the form $\P=\P^{(1)} \cup\cdots\cup \P^{(k)}$ where $\P^{(1)}$ is a vertex partition, and for each $2 \le r \le k$, $\P^{(r)}$ is a partition of $\Cross_r(\P^{(1)})$ satisfying a condition we will state below.
First, to ease the reader in, let us describe here what a $k$-partition is for $1 \le k \le 3$.
A $1$-partition is simply a vertex partition.
A $2$-partition $\P=\P^{(1)} \cup \P^{(2)}$ consists of a vertex partition $\P^{(1)}$ and a partition $\P^{(2)}$ of $\Cross_2(\P^{(1)})$
%---of the set of all edges connecting distinct vertex clusters of $\P^{(1)}$---
such that the complete bipartite graph between any two distinct clusters of $\P^{(1)}$ is a union of parts of $\P^{(2)}$.
A $3$-partition $\P=\P^{(1)} \cup \P^{(2)} \cup \P^{(3)}$ consists of a $2$-partition $\P^{(1)} \cup \P^{(2)}$ and a partition $\P^{(3)}$ of $\Cross_3(\P^{(1)})$
such that for every tripartite graph $G$ whose three vertex clusters lie in $\P^{(1)}$ and three bipartite graphs lie in $\P^{(2)}$, the $3$-partite $3$-graph consisting of all triangles in $G$ is a union of parts of $\P^{(3)}$.

%which is a notion of partition of a $(k+1)$-graph.
%To ease the reader in, let us first describe what is a $1$-partition and a $2$-partition, and then generalize these to $k$-partitions for arbitrary $k$, which would require some additional terminology.
%A $1$-partition of a $2$-graph is simply a partition of its vertex set. A $2$-partition $\P=\P^{(1)} \cup \P^{(2)}$ of a $3$-graph $H$ consists of a vertex partition $\P^{(1)}$ of $H$ and a partition $\P^{(2)}$---of the set of all edges connecting distinct vertex clusters of $\P^{(1)}$---such that the complete bipartite graph between any two distinct clusters is a union of parts of $\P^{(2)}$.
%This is a generalization of the notion of a vertex partition of a graph, which is just a $1$-partition.

Before defining a $k$-partition in general, we need some terminology.
A \emph{$k$-polyad} is simply a $k$-partite $(k-1)$-graph.
Thus, a $2$-polyad is just a pair of disjoint vertex sets, and a $3$-polyad is a tripartite graph.
In the rest of this paragraph let $P$ be a $k$-polyad on vertex classes $(V_1,\ldots,V_k)$.
We
%We henceforth refer to a $k$-partite $(k-1)$-graph as a \emph{$k$-polyad}, and
often identify $P$
%a $k$-polyad $P$ on vertex classes $(V_1,\ldots,V_k)$
with the $k$-tuple $(F_1,\ldots,F_k)$ where each $F_i$ is the induced $(k-1)$-partite $(k-1)$-graph $F_i=P[\bigcup_{j \neq i} V_j]$.
%of $(k-1)$-partite $(k-1)$-graphs $(F_1,\ldots,F_k)$ where $F_i=P[\bigcup_{j \neq i} V_j]$.
%We say that $P$ is a $k$-polyad \emph{of} $\P$, where $\P$ is a family of hypergraphs, if $F_i \in \P$ for every $1 \le i \le k$.
%
We denote by $\K(P)$ the set of $k$-element subsets of $V(P)$ that span a clique (i.e., a $K^{k-1}_{k}$) in $P$; we view $\K(P)$ as a $k$-graph on $V(P)$.
%Note that a $K^1_2$ is simply a pair of vertices.
Note that $\K(P)$ is a $k$-partite $k$-graph.
For example, if $P$ is a $2$-polyad then $\K(P)$ is a complete bipartite graph (since $K^1_2$ is just a pair of vertices), and if $P$ is a $3$-polyad then $\K(P)$ is the $3$-partite $3$-graph whose edges correspond to the triangles in $P$.
%If $\P$ is a family of $k$-graphs, we say that a $k$-polyad $P=(F_1,\ldots,F_k)$ is a $k$-polyad \emph{of} $\P$ if $F_i \in \P$ for every $1 \le i \le k$.
%Suppose $\P$ is a family of $(k-1)$-graphs. We say that $P=(F_1,\ldots,F_k)$ is a $k$-polyad \emph{of} $\P$ if $F_i \in \P$ for every $1 \le i \le k$.
For a family of hypergraphs $\P$, we say that the $k$-polyad $P=(F_1,\ldots,F_k)$ is a $k$-polyad \emph{of} $\P$ if $F_i \in \P$ for every $1 \le i \le k$.

%We say that a $k$-graph $H$ is \emph{underlied} by a $(k-1)$-graph $P$
%if $H \sub \K(P)$, where $\K(P)$ denotes the set of $k$-element subsets of $V(P)$ that span a clique in $P$.
%Note that, by definition, the above $P$ must be a $k$-polyad.
%We say that a $k$-polyad $P=(F_1,\ldots,F_k)$ is a $k$-polyad \emph{of} $\P$, where $\P$ is a family of hypergraphs, if $F_i \in \P$ for every $1 \le i \le k$.

%Let $\P$ be a family of hypergraphs.
%We say that $(F_1,\ldots,F_k)$ is a \emph{$k$-polyad of $\P$} if $F_i \in \P$ for every $1 \le i \le k$.
%%every $P_i$ is in $\P$.
% %underlied by a $(k-1)$-polyad of $\P$.
%For $k \ge 2$ we write $\K_k(\P)=\{K(P) \,\vert\, P \text{ is a $k$-polyad of } \P\}$.

%
%Let $\P$ be a family of hypergraphs.
%A \emph{$k$-polyad} $(k \ge 3)$ of $\P$ is a $k$-partite $(k-1)$-graph $F$ on vertex classes $(A_1,\ldots,A_k)$ such that for every $1 \le h \le k$,
%the induced sub-hypergraph $F[\bigcup_{i \neq h} A_i]$ is underlied by a $(k-1)$-polyad of $\P$.
%A $2$-polyad of $\P$ is simply a pair of disjoint vertex sets in $\P$.
%For $r \ge 2$ we write $\K_r(\P)=\{K(P) \,\vert\, P \text{ is an $r$-polyad of } \P\}$.
%by $\K_r(\P)$ the set of all $k$-polyads of $\P$.

%\newcommand{\under}{U}

%
We are now ready to define a $k$-partition for arbitrary $k$.

\begin{definition}[$k$-partition]\label{def:r-partition}
	$\P$ is a \emph{$k$-partition} $(k \ge 1)$ on $V$ if $\P=\P^{(1)} \cup\cdots\cup \P^{(k)}$ with $\P^{(1)}$ a partition of $V$,
	and for every $2 \le r \le k$, $\P^{(r)}$ is a partition of $\Cross_r(\P^{(1)})$ into $r$-partite $r$-graphs with $\P^{(r)} \prec \K_r(\P):=\{\K(P) \,\vert\, P \text{ is an $r$-polyad of } \P\}$.
\end{definition}
Note that, by Definition~\ref{def:r-partition}, each $r$-partite $r$-graph $F \in \P^{(r)}$ satisfies $F \sub \K(P)$ for a unique $r$-polyad $P$ of $\P$.
In this context, let $\under$ be the function mapping $F$ to $P$.
%we henceforth write $\under(F)=P$.
So for example, if $\P=\P^{(1)}\cup\P^{(2)}\cup\P^{(3)}$ is a $3$-partition, for every bipartite graph $F \in \P^{(2)}$ we have that $\under(F)$ is the pair of vertex classes of $F$;
similarly, for every $3$-partite $3$-graph $F \in \P^{(3)}$ we have that $\under(F)$ is the unique $3$-partite graph of $\P^{(2)}$ whose set of triangles contains the edges of $F$.

We encourage the reader to verify that Definition~\ref{def:r-partition} is indeed compatible with the explicit description of a $1$-, $2$- and $3$-partition given above.

\subsection{$\langle \d \rangle$-regularity of graphs and hypergraphs}

In this subsection we define our new\footnote{For $k=3$, related notions of regularity were studied in~\cite{ReiherRoSc16,Towsner17}.} notion of $\langle\d\rangle$-regularity, first for graphs and then for $k$-graphs for any $k \ge 2$ in Definition~\ref{def:k-reg} below.
%Let us first recall Szemer\'edi's notion of $\epsilon$-regularity.
%A bipartite graph on $(A,B)$ is \emph{$\e$-regular} if for all subsets $A' \sub A$, $B' \sub B$ with $|A'|\ge\e|A|$, $|B'|\ge\e|B|$ we have $|d(A',B') -d(A,B)| \le \e$. A graph is $\e$-regular relative to a vertex equipartition $\P$ if
%the bipartite graph induced on all but at most $\e|\P|^2$ of the pairs $(A,B)$ with $A \neq B \in \P$ is $\e$-regular.
%Szemer\'edi's graph regularity lemma says that every graph is $\e$-regular relative to a partition of order at most some $\Ack_2(\poly(1/\e))$.
%We now introduce a weaker notion of graph regularity which we will use throughout.
%will be used in this paper.
%We, however, will not use the notion of $\e$-regularity in this paper.
%Instead we will use $\langle\d\rangle$-regularity, which we define as follows.
\begin{definition}[graph $\langle\d\rangle$-regularity]\label{def:star-regular}
	A bipartite graph $G$ on $(A,B)$ is \emph{$\langle \d \rangle$-regular} if for all subsets $A' \sub A$ and $B' \sub B$ with $|A'| \ge \d|A|$ and $|B'|\ge\d|B|$ we have $d_G(A',B') \ge \frac12 d_G(A,B)$.\\
	A vertex partition $\P$ of a graph $G$ is \emph{$\langle \d \rangle$-regular}
	%A graph $G$ is \emph{$\langle \d \rangle$-regular relative to a vertex partition $\P$}
	if one can add/remove at most $\d \cdot e(G)$ edges so that the bipartite graph induced on each $(A,B)$ with $A \neq B \in \P$ is $\langle \d \rangle$-regular.
	%
%	A bipartite graph $G$ on $(Z,Z')$ is \emph{$\langle \d \rangle$-regular} if for all subsets $S \sub Z$, $S' \sub Z'$ with $|S| \ge \d|Z|$, $|S'|\ge\d|Z'|$ we have $d_G(S,S') \ge \frac12 d_G(Z,Z')$.\\
%	A graph $G$ is \emph{$\langle \d \rangle$-regular relative to a vertex partition $\P$} if one can add/remove at most $\d \cdot e(G)$ edges so that the bipartite graph induced on each $(Z,Z')$ with $Z \neq Z' \in \P$ is $\langle \d \rangle$-regular.
\end{definition}
%	$\Z$ is a \emph{$\langle \d \rangle$-regular} partition of a graph $G$ if one can add/remove at most $\d \cdot e(G)$ edges so that for every cluster pair $(Z,Z') \in \Z^{(2)}$ and subsets $S \sub Z$ and $S' \sub Z'$ with $|S| \ge \d|Z|$ and $|S'|\ge\d|Z'|$ we have $d_G(S,S') \ge \frac14 d_G(Z,Z')$.	

For the reader worried that in Definition~\ref{def:star-regular} we merely replaced the $\e$
from the definition of $\e$-regularity with $\d$, we refer to the discussion following Theorem~\ref{theo:main} below.

%\paragraph*{Hypergraphs.}

%Note that if $H$ is $k$-partite then $P$ must be a $k$-polyad.
%
%We henceforth say that a $k$-partite $k$-graph $H$ is \emph{underlied} by a $k$-polyad $P$ if $H \sub \K(\P)$.
%$(k-1)$-graph $F$ \emph{underlies} a $k$-graph $H$
%if $H \sub \K(P)$, where $\K(P)$ denotes the set of $k$-element subsets of $V(P)$ that span a clique in $P$.
%
%Given a $k$-partition $\P$, note that every $F \in \P$ is underlied

The definition of $\langle\d\rangle$-regularity for hypergraphs involves the $\langle\d\rangle$-regularity notion for graphs, applied to certain auxiliary graphs which are defined as follows.
Henceforth, if $P$ is a $(k-1)$-graph and $H$ is $k$-graph then we say that $H$  is \emph{underlain} by $P$ if $H \sub \K(P)$.

%Henceforth, if a $k$-graph $H$ satisfies $H \sub \K(P)$ we henceforth say that $H$ is \emph{underlied} by $P$.

%\begin{definition}[The auxiliary graph $G_{H,P}^i$]\label{def:aux}
\begin{definition}[The auxiliary graph $G_{H}^i$]\label{def:aux}
	For a $k$-partite $k$-graph $H$ on vertex classes $(V_1,\ldots,V_k)$,
	we define a bipartite graph $G_{H}^1$ on the vertex classes $(V_2 \times\cdots\times V_k,\,V_1)$ by
	$$E(G_{H}^1) = \big\{ ((v_2,\ldots,v_k),v_1) \,\big\vert\, (v_1,\ldots,v_k) \in E(H) \big\} \;.$$
	The graphs $G_{H}^i$ for $2 \le i \le k$ are defined in an analogous manner.
%\end{definition}
%	
%For a $k$-partite $k$-graph $H$ underlied by the $k$-polyad $P=(F_1,\ldots,F_k)$ on vertex classes $(V_1,\ldots,V_k)$,
%%$(\Vside^1,\ldots,\Vside^k)$,
%we define for $1 \le i \le k$ a bipartite graph $G_{H,P}^i$ on the vertex classes $(F_i,\,V_i)$ by
%$$
%E(G_{H,P}^i) = \big\{ (e,v) \,\big\vert\, e \in F_i, v \in V_i \mbox{ and } e \cup \{v\} \in E(H) \big\}\;.
%%%= \big\{ ((v_1,\ldots,v_{i-1},v_{i+1},\ldots,v_{k-1}),v_i) \,\big\vert\, (v_1,\ldots,v_k) \in E(H) \big\} \;.
%$$
%We use the notation $G_{H}^i$ if $P$ is clear from context.
%When $P$ is clear from context we use the shorter notation $G_{H}^i$..
%
More generally, if $H$ is underlain by the $k$-polyad $P=(F_1,\ldots,F_k)$ then we define $G_{H,P}^i$ as the induced subgraph $G_{H,P}^i=G_{H}^i[F_i,V_i]$.
\end{definition}

As a trivial example, if $H$ is a bipartite graph then $G_H^1$ and $G_H^2$ are both isomorphic to $H$.

Importantly, for a $k$-partition (as defined in Definition~\ref{def:r-partition}) to be $\langle\d\rangle$-regular it must first satisfy a requirement on the regularity of its parts.
%(this $1$-partitions thus have

%As opposed to the case of a vertex partition, a $\langle\d\rangle$-regular  hypergraph partition in particular requires something avout

\begin{definition}[$\langle\d\rangle$-good partition]\label{def:k-good}
A $k$-partition $\P$ on $V$ is \emph{$\langle\d\rangle$-good} if for every $2 \le r \le k$ and every $F \in \P^{(r)}$ the following holds;
letting $P=\under(F)$ be the $r$-polyad of $\P$ underlying $F$, for every $1 \le i \le r$ the bipartite graph $G_{F,P}^i$ is $\langle \d \rangle$-regular.
%A $k$-partition $\P$ is \emph{$\langle\d\rangle$-good} if for every $r$-partite $r$-graph $F \in \P$ $(2 \le r \le k)$ and every $1 \le i \le r$, the bipartite graph $G_{F,\,\under(F)}^i$ is $\langle \d \rangle$-regular.
%(Here, $G_{F}^i=G_{F,P}^i$ with the $r$-polyad $P$ naturally determined by $\P$.)
\end{definition}
Note that a $1$-partition is trivially $\langle \d \rangle$-good for any $\d$.
Moreover, a $2$-partition $\P$ is $\langle \d \rangle$-good if and only if every bipartite graph in $\P^{(2)}$ (between any two distinct vertex clusters of $\P^{(1)}$) is $\langle \d \rangle$-regular (recall the remark after Definition \ref{def:aux}).

%For a partition $\P$ %of a set $U$,
%we denote by $\P[X]$ the restriction of $\P$ to $X$,
%that is,
%$\P[X] = \{ P \in \P \,\vert\, P \sub X\} \;.$
%So for example, if $\P$ is a $k$-partition with $\P^{(1)} \prec \{\Vside^1,\ldots,\Vside^k\}$, then we have $\P[\Vside^i]=\P^{(1)}[\Vside^i]$, and more generally, $\P[\Vside^{i_1}\times\cdots\times \Vside^{i_t}]= \P^{(t)}[\Vside^{i_1}\times\cdots\times \Vside^{i_t}]$.

%TODO: examplify $\P[V_i]=\P^{(1)}[V_i]$. More generally, ...

%So, if $\P=..$ is a $(k-1)$-partition then $\P[V_i]$ is the induced partition

%For a $(k-1)$-partition $\P$ of a $k$-partite $k$-graph on vertex classes $(V_1,\ldots,V_k)$ with $\P^{(1)} \prec \{V_1,\ldots,V_k\}$, for every $1 \le i \le k$ we denote
For a $(k-1)$-partition $\P$ with $\P^{(1)} \prec \{V_1,\ldots,V_k\}$ we henceforth denote, for every $1 \le i \le k$,
\begin{equation}\label{eq:partition-notation}
V_i(\P) = \Big\{Z \in \P^{(1)} \,\vert\, Z \sub V_i\Big\} \quad\text{ and }\quad E_i(\P) = \Big\{E \in \P^{(k-1)} \,\vert\, E \sub \prod_{j \neq i} V_j\Big\}.\footnote{$\prod_{j \neq i} V_j = V_1\times\cdots\times V_{i-1}\times V_{i+1}\times\cdots\times V_k$.}
\end{equation}
%$V_i(\P) = \{Z \in \P^{(1)} \,\vert\, Z \sub V_i\}$, and we denote $E_i(\P) = \{E \in \P^{(k-1)} \,\vert\, E \sub \prod_{j \neq i} V_j\}$.\footnote{$\prod_{j \neq i} V_j = V_1\times\cdots\times V_{i-1}\times V_{i+1}\times\cdots\times V_k$.}
%$\P^{(k-1)}_i = \{E \in \P^{(k-1)} \,\vert\, E \sub \prod_{j \neq i} \Vside^j \}$.
%$\P^{(k-1)}_i = \{P \in \P^{(k-1)} \,\vert\, E \sub V_{i_2} \times\cdots\times V_{i_k} \}$ where $\{v,v_2,\ldots,v_k\}=\{1,\ldots,k\}$.
%So for example, $\E_1$ is thus a partition of $V_2 \times V_3$.

\begin{definition}[$\langle\d\rangle$-regular partition]\label{def:k-reg}
	Let $H$ be a $k$-partite $k$-graph on vertex classes $(V_1,\ldots,V_k)$
	and $\P$ be a $\langle \d \rangle$-good $(k-1)$-partition with $\P^{(1)} \prec \{V_1,\ldots,V_k\}$.
	We say that $\P$ is a \emph{$\langle \d \rangle$}-regular partition of $H$ if
	for every $1 \le i \le k$,
	$E_i(\P) \cup V_i(\P)$ is a $\langle \d \rangle$-regular partition of $G_H^i$.
	%$\P^{(k-1)}_i \cup \P^{(1)}_i$ is a $\langle \d \rangle$-regular partition of $G_H^i$.
	%
%	the following holds for every $1 \le i \le k$;	setting $\P_i=\P[\prod_{j \neq i} \Vside^j] \cup \P[\Vside^i]$, the bipartite graph $G_{H}^i$ is $\langle \d \rangle$-regular relative to $\P_i$.
%	
%letting $P$ be the complete $k$-partite $(k-1)$-graph on $(\Vside^1,\ldots,\Vside^k)$ and $\P_i=\P[\prod_{\substack{1 \le j \le k\colon\\j \neq i}} \Vside^j] \cup \P[\Vside^i]$, the bipartite graph $G_{H,P}^i$ is $\langle \d \rangle$-regular relative to $\P_i$.\footnote{Thus, the vertex classes of $G_{H,P}^i$ are $(\Vside^1\times\cdots\Vside^{i-1}\times\Vside^{i+1}\times\Vside^{k},\,\Vside^i)$.}
%
% (Here, $G_{H}^i=G_{H,P}^i$ with the polyad $P$ being the complete $k$-partite $(k-1)$-graph on $(\Vside^1,\ldots,\Vside^k)$; that is, the vertex classes of $G_{F}^i$ are $(\Vside^1\times\cdots\Vside^{i-1}\times\Vside^{i+1}\times\Vside^{k},\,\Vside^i)$.)
	%
	%	We say that $H$ is \emph{$\langle \d \rangle$-regular relative to a $\langle \d \rangle$-good $(k-1)$ partition $\P$ on $(\Vside^1,\ldots,\Vside^k)$} if for every $1 \le i \le k$, the bipartite graph $G_{H}^i$ is $\langle \d \rangle$-regular relative to the partition $\P[\prod_{\substack{1 \le j \le k\colon\\j \neq i}} \Vside^j] \cup \P[\Vside^i]$.
	%
	%	one can add/remove at most $\d\cdot e(H)$ edges of $H$ to obtain a $k$-graph $H'$ such that
	%	for every $1 \le r \le k$, the bipartite graph $G_{H'}^i$ is perfectly $\langle \d \rangle$-regular relative to the partition $\P[\prod_{\substack{1 \le h \le k\colon\\h \neq r}} \Vside^h] \cup \P[\Vside^r]$.
\end{definition}
%We henceforth use $G_H^i$ as abbreviation for $G_{H,P}^i$ whenever $P$
Note that for $k=2$, Definition~\ref{def:k-reg} reduces to Definition~\ref{def:star-regular}.
% for a bipartite graph $G$ on $(\Vside^1,\Vside^2)$, a vertex partition $\P$ is $\langle \d \rangle$-regular partition if $\P$
For $k=3$, a $\langle \d \rangle$-regular partition of a $3$-partite $3$-graph $H$ on vertex classes $(V_1,V_2,V_3)$ is a $2$-partition $\P=\P^{(1)} \cup \P^{(2)}$ satisfying that: $(i)$ $\P$ is $\langle \d \rangle$-good per Definition~\ref{def:k-good};
%$(i)$ every bipartite graph in $\P^{(2)}$ is $\langle \d \rangle$-regular;
$(ii)$ from the auxiliary graph
$G_H^1$, on $(V_2 \times V_3,\,V_1)$,
one can add/remove at most $\d$-fraction of the edges such that for every graph $F \in E_1(\P)$ (so $F \sub V_2 \times V_3$) % \P[V_2 \times V_3]$
and every vertex cluster $V \in V_1(\P)$ (so $V \sub V_1$), %$V \in \P[V_1]$,
the induced bipartite graph $G_H^1[F,V]$ is $\langle \d \rangle$-regular; and~$(iii)$ the analogues of~$(ii)$ in $G_H^2$ and $G_H^3$ hold as well.

\subsection{Formal statement of the main result}\label{subsec:formal}

We are now ready to formally state our Ackermann-type lower bound for $k$-graph $\langle \d \rangle$-regularity (the formal version of
Theorem~\ref{thm:main-informal} above). Recall that we set $\Ack_{1}(x)=2^x$ and then define for every $k \geq 1$ the $(k+1)$-st
Ackermann function $\Ack_{k+1}(n)$ to be the $n$-times composition of $\Ack_{k}(n)$, that is,
$\Ack_{k+1}(n)= \Ack_{k}(\Ack_{k}(\cdots(\Ack_{k}(1))\cdots))$.

%\addtocounter{theo}{-1}
\begin{theo}[Main result]\label{theo:main}
	%There is $C>0$ such that
	The following holds for every $k \ge 2$ and $s \in \N$.
	There exists a $k$-partite $k$-graph $H$ of density at least $2^{-s-k}$,
	and a partition $\V_0$ of $V(H)$ with $|\V_0| \le 2^{200}k$, such that for every $\langle 2^{-16^k} \rangle$-regular partition $\P$ of $H$, if $\P^{(1)} \prec \V_0$ then $|\P^{(1)}| \ge \Ack_k(s)$.
%	
%	There exists a $k$-partite $k$-graph $H$ of density $2^{-s}$,
%	and a vertex partition $\V_0$ with $|\V_0| \le 2^{200}k$, such that every $\langle 2^{-16^k} \rangle$-regular partition of $H$
%	%with vertex partition $\Z \prec \V_1$
%	with vertex partition $\Z \prec \V_0$ satisfies $|\Z| \ge \Ack_k(C \cdot s)$.
\end{theo}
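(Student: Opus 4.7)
The plan is to prove Theorem~\ref{theo:main} by induction on $k$, via a more quantitative parametric statement (corresponding to Lemma~\ref{lemma:ind-k}) that tracks how much one additional ``round'' of the construction forces the partition to refine. The base case $k=2$ will reduce to Lemma~\ref{theo:core}, and the theorem will follow by instantiating the inductive statement at depth $s$.

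For the base case $k=2$, I would construct a bipartite graph $G$ together with a nested sequence of vertex partitions $\V_0 \prec \V_1 \prec \cdots \prec \V_s$, where each $\V_{j+1}$ is an exponential refinement of $\V_j$, and $G$ is built so that the bipartite graphs between parts of $\V_j$ admit explicit subset-witnesses of $\langle \d \rangle$-irregularity living inside parts of $\V_{j+1}$. Lemma~\ref{theo:core} is the ``step-up'' gadget that turns any would-be $\langle \d \rangle$-regular partition $\V_j$ into such witnesses of size exponential in $|\V_j|$. Iterating $s$ times gives the $\Ack_2(s)$ lower bound on $|\P^{(1)}|$, while telescoping the density loss across the $s$ levels yields overall density at least $2^{-s-2}$ with regularity parameter $2^{-16^2}$.

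For the inductive step $(k-1) \to k$, I would build the $k$-partite $k$-graph $H$ as a superposition of $s$ layers $H_1,\ldots,H_s$ on a refined product structure over $\V_0$. Each layer $H_j$ is designed so that, for every $1 \le i \le k$, the auxiliary bipartite graph $G_{H_j}^i$ of Definition~\ref{def:aux} encodes one instance of the graph-level construction from Lemma~\ref{theo:core}, but with the ``product side'' being indexed by a cluster of the partition $E_i(\P) \subseteq \P^{(k-1)}$ rather than by individual vertices. Since Definition~\ref{def:k-reg} requires $E_i(\P) \cup V_i(\P)$ to be a $\langle \d \rangle$-regular vertex partition of $G_H^i$, any $\langle \d \rangle$-regular $\P$ must resolve the graph-level witnesses at each layer, forcing $|E_i(\P)|\cdot |V_i(\P)|$ to grow by an $\Ack_{k-1}$-type factor per layer. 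The inductive hypothesis, applied to the $(k-1)$-partite $(k-1)$-graphs in $E_i(\P)$, converts this into an $\Ack_{k-1}$-type refinement of $\P^{(1)}$ per layer, and compounding over $s$ layers gives $|\P^{(1)}| \ge \Ack_k(s)$.

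The main obstacle is the inductive step, where one must couple a graph-level irregularity witness (from Lemma~\ref{theo:core}) with the partition $\P^{(k-1)}$ of $\Cross_{k-1}(\P^{(1)})$ in a way that genuinely constrains the vertex partition $\P^{(1)}$ rather than just the higher layers. This is exactly where the new notion of $\langle \d \rangle$-regularity pays off: two of the three ``sides'' of the relevant triad in $G_H^i$ are automatically complete bipartite pieces (one is the vertex cluster in $V_i(\P)$, the other the complete $(k-1)$-uniform clique set $\K(P)$ over $P=\under(F)$), so the witness construction of Lemma~\ref{theo:core} can be transplanted intact from graphs into the hypergraph setting. The remaining difficulty is purely bookkeeping: balancing the density loss $2^{-s-k}$, the regularity parameter $2^{-16^k}$, and the exponential blow-up of $\Ack_k$ so that the constants survive the $s$-fold iteration and the $k$-fold induction simultaneously.
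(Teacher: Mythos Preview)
Your outline captures the broad inductive architecture (Lemma~\ref{lemma:ind-k} proved by induction on $k$, with the base case reducing to Lemma~\ref{theo:core}), but it misses a structural obstacle that the paper handles explicitly and that your proposal does not address.

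\textbf{The one-sidedness problem.} Lemma~\ref{theo:core} is inherently one-sided: it lets you conclude $\P \prec_\gamma \L_i$ only \emph{under the hypothesis} $\Q \prec_{2^{-9}} \R_i$. Consequently, the inductive statement (Lemma~\ref{lemma:ind-k}) is also one-sided: assuming $V_h(\P) \prec_{2^{-9}} V_h(\V_i)$ for all $h=2,\ldots,k$, one deduces $V_1(\P) \prec_{2^{-9}} V_1(\V_{i+1})$. Your proposal implicitly assumes that running the argument ``for every $1 \le i \le k$'' simultaneously yields the unconditional conclusion, but this does not work: as the paper notes in Subsection~\ref{subsec:overview}, if one could drop the one-sided hypothesis in Lemma~\ref{theo:core} with arbitrarily fast-growing partition sizes, one would prove $\Ack_k$-type lower bounds for \emph{graph} regularity for every $k$, which is false. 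The paper removes the one-sidedness by a separate device (Subsection~\ref{subsec:pasting}): it places $2k$ copies of the one-sided construction on the $k$-edges of a tight $2k$-cycle, so that the implications chain around the cycle and close up. This step is absent from your plan and is not mere bookkeeping.

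\textbf{How the induction hypothesis is actually invoked.} Your sentence ``the inductive hypothesis, applied to the $(k-1)$-partite $(k-1)$-graphs in $E_i(\P)$'' misidentifies the object. In the paper, the induction hypothesis is first used to \emph{build} the sequence $\F_1 \succ \cdots \succ \F_{s'}$ of equipartitions of $\Vside^1 \times\cdots\times \Vside^{k-1}$; then Lemma~\ref{theo:core} is applied with $\Lside = \Vside^1\times\cdots\times\Vside^{k-1}$ and the $\L$-side partitions equal to $\F_{(j)}$, which forces $E_k(\P) \prec_{\frac14\d_{k-1}} \F_{(j')}$. The crucial bridge is Claim~\ref{claim:uniform-refinement}: from $E_k(\P) \prec_\d \F$ and the $\langle\d\rangle$-goodness of $\P$, one extracts a specific $F \in \F_{(j')}$ for which the restricted $(k-2)$-partition $\P'$ is $\langle\d_{k-1}\rangle$-regular, and \emph{then} the induction hypothesis is applied to that single $(k-1)$-graph $F$. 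The hypothesis is not applied to members of $E_k(\P)$. Without this uniform-refinement step you have no $(k-1)$-graph on which to invoke the induction hypothesis, so the passage from ``$E_k(\P)$ is large'' to ``$\P^{(1)}$ is large'' is not justified in your outline.
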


Let us draw the reader's attention to an important and perhaps surprising aspect of Theorem~\ref{theo:main}.
All the known tower-type lower bounds for graph regularity depend on the error parameter $\epsilon$,
that is, they show the existence of graphs $G$ with the property that every $\epsilon$-regular partition of $G$ is of order at least $\Ack_2(\poly(1/\e))$.
This should be contrasted with the fact that our lower bounds for $\langle \d \rangle$-regularity (for graphs and more generally $k$-graphs) holds for a {\em fixed} error parameter $\delta$.
Indeed, instead of the dependence on the error parameter, our lower bound depends on the {\em density} of the $k$-graph.
This delicate difference makes it possible for us to execute the inductive part of the proof of Theorem~\ref{theo:main}.

\subsection{The core construction and proof overview}\label{subsec:overview}

The graph construction in Lemma~\ref{theo:core} below is the main technical result we will need in order to prove Theorem~\ref{theo:main}.
We stress that the proof of this lemma (which appears in Section~\ref{sec:core}) is completely
independent of the proof of Theorem~\ref{theo:main} (which appears in Section~\ref{sec:LB}).
We will first need to define ``approximate'' refinement (a notion that goes back to Gowers~\cite{Gowers97}).
%\paragraph*{Approximate refinements.}
%``an approximate version of the notion of a refinement of a partition.
\begin{definition}[Approximate refinements]
For sets $S$ and $T$ we write $S \sub_\b T$ if $|S \sm T| < \b|S|$.
%$|S \cap T| \ge (1-\b)|S|$.
For a partition $\P$ we write $S \in_\b \P$ if $S \sub_\b P$ for some $P \in \P$.
For partitions $\P$ and $\Q$ of the same set of size $n$ we write $\Q \prec_\b \P$ if
$$\sum_{\substack{Q \in \Q\colon\\Q \notin_\b \P}} |Q| \le \b n \;.$$
\end{definition}
Note that for $\Q$ equitable, $\Q \prec_\b \P$ if and only if
%$P \in_\b \Q$ for all but at most $\b|\P|$ parts $P \in \P$.
all but at most $\b|\Q|$ parts $Q \in \Q$ satisfy $Q \in_\b \P$.
We note that 
throughout the paper 
%in the proof of the main result
we will only use approximate refinements with $\b \le 1/2$, and so if $S \in_\b \P$ then $S \sub_\b P$ for a unique $P \in \P$.

We stress that in Lemma~\ref{theo:core} below we only use notions related to graphs. In particular, $\langle \d \rangle$-regularity refers to Definition~\ref{def:star-regular}.

\begin{lemma}\label{theo:core}
	Let $\Lside$ and $\Rside$ be disjoint sets. Let
	$\L_1 \succ \cdots \succ \L_s$ and $\R_1 \succ \cdots \succ \R_s$ be two sequences of $s$ successively refined equipartitions of $\Lside$ and $\Rside$, respectively,
	%such that for every $i \ge 1$ we have that:
	that satisfy for  every $i \ge 1$ that:
	\begin{enumerate}
		\item\label{item:core-minR}
		$|\R_i|$ is a power of $2$ and $|\R_1| \ge 2^{200}$,
		%$|\R_i| \ge 2^{200}$ is a power of $2$,
		\item\label{item:core-expR} $|\R_{i+1}| \ge 4|\R_i|$ if $i < s$,
		\item\label{item:core-expL} $|\L_i| = 2^{|\R_i|/2^{i+10}}$.
	\end{enumerate}
	%	
	%	
	%	$|\R_i| \ge 2^{200}$ is a power of $2$,
	%	\begin{equation}\label{eq:core-conditions}
	%	|\L_i| = 2^{|\R_i|/2^{i+10}} \quad\text{ and }\quad |\R_{i+1}| \ge 4|\R_i| \;.
	%	% \ge 2^{200} \;.
	%%	|\L_i| = 2^{|\R_i|/2^{i+10}} \quad\text{ and }\quad |\R_{i+1}| \ge 4|\R_i| \ge 2^{200} \text{ is a power of $2$} \;.
	%	\end{equation}	
	Then there exists a sequence of $s$ successively refined edge equipartitions $\G_1 \succ \cdots \succ \G_s$ of $\Lside \times \Rside$ such that for every $1 \le j \le s$, $|\G_j|=2^j$, and
	the following holds for every $G \in \G_j$ and $\d \le 2^{-20}$.
	For every $\langle \d \rangle$-regular partition $\P \cup \Q$ of $G$, where $\P$, $\Q$ are partitions of $\Lside$, $\Rside$, respectively, and every $1 \le i \le j$, 	
	if $\Q \prec_{2^{-9}} \R_{i}$ then $\P \prec_{\g} \L_{i}$ with
	$\g = \max\{2^{5}\sqrt{\d},\, 32/\sqrt[6]{|\R_1|} \}$.
\end{lemma}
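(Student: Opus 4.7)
The plan is to construct the edge partitions probabilistically. For each $1 \le i \le s$ and each $R \in \R_i$, independently pick a uniformly random ``mask'' $S_R \sub \L_i$ of size $|\L_i|/2$, subject to containing exactly half of the $\L_i$-subparts of every $\L_{i-1}$-part (so as to respect the nesting $\L_{i-1} \succ \L_i$). For each $1 \le j \le s$ and $b \in \{0,1\}^j$, define
\[
G_b \;=\; \Big\{(u,v) \in \Lside \times \Rside \;:\; [L_i(u) \in S_{R_i(v)}] = b_i \text{ for every } 1 \le i \le j\Big\},
\]
where $L_i(u)$ and $R_i(v)$ denote the $\L_i$- and $\R_i$-parts containing $u$ and $v$, and set $\G_j = \{G_b : b \in \{0,1\}^j\}$. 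By construction $|\G_j| = 2^j$, the partitions are successively refined, and each $G_b$ has expected density $2^{-j}$; a small deterministic balancing step turns this into an exact equipartition.

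For the regularity analysis, fix $G = G_b \in \G_j$ and a $\langle \d \rangle$-regular partition $\P \cup \Q$ of $G$ with $\Q \prec_{2^{-9}} \R_i$ for some $i \le j$, and I argue by contradiction, supposing $\P \not\prec_\gamma \L_i$. Then a $\gamma$-fraction of the mass of $\P$ lies in ``bad'' parts $A$ satisfying $\max_{L \in \L_i} |A \cap L|/|A| \le 1-\gamma$. For such $A$ and any $R \in \R_i$, set $\alpha_R(A) := \sum_{L \,:\, [L \in S_R] = b_i} |A \cap L|/|A|$. A direct calculation shows that whenever $Q \sub_{2^{-9}} R$ we have $d_G(A,Q) \approx \alpha_R \cdot 2^{-(j-1)}$, whereas the sub-part $A' = \{u \in A : [L_i(u) \in S_R] \ne b_i\}$ satisfies $d_G(A',Q) \lesssim 2^{-j-9}$; so whenever $\alpha_R \in (2^{-9},\,1-\d)$, taking $A'$ as the test set gives a density ratio below $1/2$, witnessing irregularity of $(A,Q)$. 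Because $S_R$ is uniformly random and $\max_L p_L \le 1-\gamma$, a Chebyshev argument puts $\alpha_R$ inside $(1/4, 3/4)$ with probability bounded below; applying this across the $\gtrsim |\R_i|$ ``populated'' $R$'s (those admitting a $\Q$-part inside them, guaranteed by $\Q \prec_{2^{-9}} \R_i$) yields a witness for $A$ with probability $1 - 2^{-\Omega(|\R_i|)}$.

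The main obstacle is turning this per-$A$ success into a uniform statement for every $\P \cup \Q$ on a single realization of the construction; a naive union bound over subsets $A \sub \Lside$ fails since $|\Lside|$ is unbounded. I plan to instead discretize the $\L_i$-profile $(|A \cap L|/|A|)_L$ at scale $\gamma/2$, yielding at most $|\L_i|^{O(1/\gamma)}$ equivalence classes, and exploit the hypotheses $|\L_i| = 2^{|\R_i|/2^{i+10}}$ together with $|\R_1| \ge 2^{200}$ to make $|\L_i|^{O(1/\gamma)} \cdot 2^{-\Omega(|\R_i|)}$ negligible whenever $\gamma \gtrsim |\R_1|^{-1/6}$, which is exactly the second term in the definition of $\gamma$. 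Finally, since $\langle \d \rangle$-regularity permits a $\d \cdot e(G)$-edge modification, a single irregular pair does not suffice; the remedy is to aggregate the witnesses, noting that the total mass of bad $A$'s is $\ge \gamma |\Lside|$ and each contributes $\gtrsim \alpha_R \cdot 2^{-j}|A||Q|$ ``missing'' edges to its witness pair, so the aggregate repair demand is $\Omega(\gamma^2 \cdot e(G))$, which exceeds $\d \cdot e(G)$ precisely when $\gamma \ge 2^5 \sqrt{\d}$---yielding the first term in $\gamma$.
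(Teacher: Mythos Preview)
Your construction is close to the paper's, but the analysis has two genuine gaps.

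First, your density formula $d_G(A,Q) \approx \alpha_R \cdot 2^{-(j-1)}$ ignores the constraints at levels $k < i$. Membership of $(u,v)$ in $G_b$ requires $[L_k(u) \in S_{R_k(v)}] = b_k$ for \emph{every} $1 \le k \le j$; if $A$ is not approximately contained in a single $\L_{i-1}$-part, then already the lower-level conditions kill the density on different pieces of $A$ for different $R$, and your $\alpha_R$ (which only records level $i$) no longer controls $d_G(A,R)$. The paper's remedy is a layered decomposition (the sets $\D_k$ in Subsection~\ref{subsec:LB-core-main}): for each bad $A$ it finds the unique $k \le i$ with $A \in_{1/4} \L_{k-1}$ yet $A \notin_\gamma \L_k$, and then takes witness clusters only among those $R \in \R_k$ lying in the neighborhood $\mathcal{N}_k(L)$ of the $\L_{k-1}$-part $L$ approximately containing $A$---precisely so that the lower-level constraints are automatically met and a clean density formula holds (Claim~\ref{claim:property-degree} and Claim~\ref{claim:prop-main}).

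Second, both the Chebyshev step and the profile union bound fail. For the two-point profile with masses $1-\gamma,\gamma$, a random half $S_R$ gives $\alpha_R \in \{0,\gamma,1-\gamma,1\}$, so $\alpha_R \in (1/4,3/4)$ essentially never once $\gamma < 1/4$; what you actually need is only $\min(\alpha_R,1-\alpha_R) \ge c\gamma$. And the union bound over $|\L_i|^{O(1/\gamma)}$ discretized profiles is numerically infeasible: since $\gamma$ may be as small as $32/|\R_1|^{1/6}$ and $|\L_1| = 2^{|\R_1|/2^{11}}$, already at $i=1$ one gets $|\L_1|^{O(1/\gamma)} = 2^{\Theta(|\R_1|^{7/6})}$, which swamps any $2^{-\Omega(|\R_1|)}$ failure probability. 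The paper avoids this entirely by establishing a \emph{pairwise} property of the masks ($\beta$-balancedness, Definition~\ref{def:balanced-graph}), checkable by a union bound over only $\poly(|\L_i|)$ events (Lemma~\ref{lemma:seq-exists}), and then invoking a deterministic anti-concentration lemma (Lemma~\ref{lemma:1-6}) that converts the pairwise condition into the needed per-profile statement for all $\lambda$ simultaneously.
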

\begin{remark}
	Every $G \in \G_j$ is a bipartite graph of density $2^{-j}$ since $\G_j$ is equitable.
\end{remark}

An overview of the proof of Lemma~\ref{theo:core} is given in Subsection~\ref{subsec:core-overview}.
Let us end this section by explaining the role Lemma~\ref{theo:core} plays in the proof of Theorem~\ref{theo:main}.

\paragraph{Using graphs to construct $k$-graphs:}
%One can apply Lemma~\ref{theo:core} in order to prove a tower-type lower bound for graph $\langle \d \rangle$-regularity, by putting four copies of the construction along a cycle so as to . 
Perhaps the most surprising aspect of the proof of Theorem~\ref{theo:main} is that in order to construct a $k$-graph we use the \emph{graph} construction of Lemma~\ref{theo:core} in a somewhat unexpected way.
In this case, $\Lside$ will be a complete $(k-1)$-partite $(k-1)$-graph and the $\L_i$'s will be partitions of this complete $(k-1)$-graph themselves given by another application of Lemma~\ref{theo:core}.
The size of the partitions will have $\Ack_{k-1}$-type growth, and this application of Lemma~\ref{theo:core} will ``multiply'' the $(k-1)$-graph partitions (given by the $\L_i$'s) to
produce a partition of the complete $k$-partite $k$-graph into $k$-graphs that are hard for $\langle \d \rangle$-regularity.
We will take $H$ in Theorem~\ref{theo:main} to be an arbitrary $k$-graph in this partition.

%In this case, $\Rside$ will be a set of vertices and
%the $\R_i$'s will be vertex partitions, while $\Lside$ will be a complete bipartite graph and the $\L_i$'s
%will be partition of this complete bipartite graph given by an application of Lemma~\ref{theo:core}.
%Now, another application of Lemma~\ref{theo:core}, this time with a sequence of partitions of
%$W$-type growth, will ``multiply'' the first graph partition (given by $\L_i$) and
%give a partition of the complete $3$-partite $3$-graph into $3$-graphs that are hard for $\langle \d \rangle$-regularity.
%We will take $H$ in Theorem~\ref{theo:main} to be an arbitrary graph in this partition.

\paragraph{Why is Lemma~\ref{theo:core} one-sided?}
As is evident from the statement of Lemma~\ref{theo:core}, it is one-sided in nature; that is, under the premise that the partition $\Q$
refines $\R_i$ we may conclude that $\P$ refines $\L_i$.
It is natural to ask if one can do away with this assumption, that is, be able to show that,
under the same assumptions, $\Q$ refines $\R_i$ and $\P$ refines $\L_i$.
As we mentioned in the previous item, 
in order to prove an $\Ack_k$-type lower bound for the $k$-graph regularity lemma we have to apply Lemma~\ref{theo:core} with a sequence of partitions whose size grows as an $\Ack_k$-type function.
%in order to prove an Ackermann-type lower bound for hypergraph regularity we have to apply Lemma~\ref{theo:core} with a sequence of partitions whose size grows as an Ackermann function of the same level.
%
Now, in this setting, Lemma~\ref{theo:core} does not hold without the one-sided assumption, because if it did, 
then one would have been able to prove (for \emph{every} $k \ge 2$) an $\Ack_k$-type lower bound for
%then one would have been able to prove an Ackermann-type lower bound for 
graph $\langle \d \rangle$-regularity, and hence also for Szemer\'edi's regularity lemma.
Put differently, if one wishes to have a construction that holds with arbitrarily fast growing partition sizes, then
one has to introduce the one-sided assumption.

\paragraph{How do we remove the one-sided assumption?}
The proof of Theorem \ref{theo:main} proceeds by first proving a one-sided version of Theorem \ref{theo:main},
stated as Lemma~\ref{lemma:ind-k}. In order to get a construction that does not require such a one-sided assumption,
we will need one final trick; we will take $2k$ clusters of
vertices and arrange $2k$ copies
%we will then take $6$ clusters of vertices and arrange $6$ copies
of this one-sided construction
along the $k$-edges of a cycle. This will give us a ``circle of implications'' that will eliminate the one-sided assumption.
See Subsection~\ref{subsec:pasting}.

%\section{The Lower Bound}\label{sec:LB}
\section{Proof of Theorem~\ref{theo:main}}\label{sec:LB}

\renewcommand{\k}{r}

\newcommand{\w}{w}
\renewcommand{\t}{t}

%FIX THIS
\newcommand{\GG}{\mathbf{G}}
\newcommand{\FF}{\mathbf{F}}
\newcommand{\VV}{\mathbf{V}}
%FIX THIS

%\renewcommand{\Hy}[1]{{#1}^\uparrow}
\renewcommand{\Hy}[1]{H_{{#1}}}
\renewcommand{\A}{A}

\newcommand{\subs}{\subset_*}
\newcommand{\pad}{P}
\renewcommand{\K}{\mathcal{K}}
\newcommand{\U}{U}

\renewcommand{\k}{k}

\renewcommand{\K}{\mathcal{K}}
\renewcommand{\r}{k}

The purpose of this section is to prove the main result, Theorem~\ref{theo:main}. %in Subsection~\ref{subsec:pasting} below.
Its proof crucially relies on a subtle inductive argument (see Lemma~\ref{lemma:ind-k} below).
This section is self-contained save for the application of Lemma~\ref{theo:core}.
%in Subsection~\ref{subsec:pasting} below.
The key step of our lower bound proof for $k$-graph regularity, stated as Lemma~\ref{lemma:ind-k} and proved in Subsection~\ref{subsec:main-induction}, relies on a construction that applies Lemma~\ref{theo:core} $k-1$ times. This lemma only gives a ``one-sided'' lower bound, in the spirit of Lemma~\ref{theo:core}.
In Subsection~\ref{subsec:pasting} we show how to use Lemma~\ref{lemma:ind-k} in order to complete the proof of Theorem~\ref{theo:main}.

We first state some properties of $k$-partitions whose proofs are deferred to the end of this section.
%We will use a simple yet crucial property of $k$-partitions, stated as Claim~\ref{claim:uniform-refinement} below.
The first property relates $\d$-refinements of partitions and $\langle \d \rangle$-regularity of partitions. %\footnote{We remark that for Lemma~\ref{lemma:ind-k} will apply Claim~\ref{claim:uniform-refinement} with a $(k-1)$-partition.}
%$\langle \d \rangle$-good partitions and $\langle \d \rangle$-regular partitions,
%and relies
%on Claim~\ref{claim:refinement-union}.
%Here, as well as in the rest of this section, we will use
%the definitions and notations introduced in Section \ref{sec:define}.
%In particular, recall that if a vertex partition $\Z$ of vertex classes $(V_1,V_2,V_3)$ satisfies $\Z \prec \{V_1,V_2,V_3\}$,
%then for every $1 \le i \le 3$ we denote $\Z_i = \{Z \in \Z \,\vert\, Z \sub V_i\}$.
%Moreover, if a $2$-partition $(\Z,\E)$, satisfies $\Z \prec \{V_1,V_2,V_3\}$ we denote $\E_i = \{E \in \E \,\vert\, E \sub V_j \times V_k\}$ where $\{i,j,k\}=\{1,2,3\}$. We will first need the following two easy claims regarding the union of $\langle \d\rangle$-regular graphs.
The reader is advised to recall the notation in~(\ref{eq:partition-notation}).

%We remark that in our main inductive proof for $k$-graphs (Lemma~\ref{lemma:ind-k} below) we will apply the following lemma with a $(k-1)$-partition.

\begin{claim}\label{claim:uniform-refinement}
	Let $\P$ be a $(k-1)$-partition with  $\P^{(1)} \prec \{V_1,\ldots,V_k\}$,
	and let $\F$ be a partition of $V_1\times\cdots\times V_{k-1}$ with $E_k(\P) \prec_\d \F$.
	If $\P$ is $\langle \d \rangle$-good
	then the $(k-2)$-partition $\P'$ obtained by restricting $\P$ to $\bigcup_{i=1}^{k-1} V_i$ is a $\langle 3\d \rangle$-regular partition of some $F \in \F$.
\end{claim}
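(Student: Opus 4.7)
The plan is to locate a suitable $F \in \F$ that is well approximated by a union of parts of $E_k(\P)$, and then to verify the two defining conditions of a $\langle 3\d \rangle$-regular partition (Definition~\ref{def:k-reg}): namely $\langle 3\d \rangle$-goodness of $\P'$ and $\langle 3\d \rangle$-regularity of the induced aux-graph partitions.

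First I would construct the candidate $F$. For each ``good'' $E \in E_k(\P)$ (i.e., $E \in_\d \F$), let $F_E \in \F$ denote the unique part with $E \sub_\d F_E$, and for each $F \in \F$ set
$$F^* := \bigcup\{E \in E_k(\P) : F_E = F\}.$$
A double counting over the edges of $V_1 \times \cdots \times V_{k-1}$---bounding the ``bad'' $E$'s via the $\prec_\d$ hypothesis and the ``misplaced'' edges of good $E$'s via the $\d$-almost-containment $E \sub_\d F_E$---then yields
$$\sum_{F \in \F} |F \triangle F^*| \le 3\d \cdot |V_1 \times \cdots \times V_{k-1}| = 3\d \sum_{F \in \F} |F|.$$
An averaging step produces some $F \in \F$ with $|F \triangle F^*| \le 3\d \cdot e(F)$, and this is the $F$ I would use.

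For this $F$, the $\langle 3\d \rangle$-goodness of $\P'$ is immediate from the $\langle \d \rangle$-goodness of $\P$: every $F' \in \P'^{(r)}$ (for $2 \le r \le k-2$) is also a part of $\P^{(r)}$ sharing the same underlying polyad, so the aux graphs $G^j_{F', \under(F')}$ are already $\langle \d \rangle$-regular and hence $\langle 3\d \rangle$-regular. For the regularity requirement in Definition~\ref{def:k-reg}, I would modify $G^i_F$ to $G^i_{F^*}$ at the cost of $|F \triangle F^*| \le 3\d \cdot e(G^i_F)$ edge changes, and then show that for every $X \in E_i(\P')$ and $Y \in V_i(\P')$ the induced bipartite graph $G^i_{F^*}[X,Y]$ is $\langle 3\d \rangle$-regular.

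The crux of the argument is the decomposition
$$G^i_{F^*}[X,Y] = \bigcup_{E \in E_k(\P) :\, F_E = F,\, F_{E,i} = X,\, Y_{E,i} = Y} G^i_{E,\under(E)},$$
which holds because each $E \in E_k(\P)$ has a unique $i$-th boundary $(k-2)$-graph $F_{E,i} \in E_i(\P')$ and a unique $i$-th vertex class $Y_{E,i} \in V_i(\P')$, and distinct parts of $E_i(\P')$ (resp.\ $V_i(\P')$) are disjoint. Each summand $G^i_{E,\under(E)}$ is $\langle \d \rangle$-regular by the $\langle \d \rangle$-goodness of $\P$; and the union of edge-disjoint $\langle \d \rangle$-regular bipartite graphs on common vertex classes is itself $\langle \d \rangle$-regular, since the one-sided density bound $d(A',B') \ge \tfrac12 d(A,B)$ adds across an edge-disjoint decomposition. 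I expect the main difficulty to be not any particular estimate, but the careful polyad bookkeeping required to justify this decomposition; once it is in hand, both conditions of Definition~\ref{def:k-reg} fall out.
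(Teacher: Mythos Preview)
Your proposal is correct and follows essentially the same route as the paper's proof. Both arguments (a) locate an $F\in\F$ with $|F\triangle F^*|\le 3\d|F|$ by the averaging/double-counting you describe (the paper packages this as Claim~\ref{claim:refinement-union}), (b) transfer to the auxiliary graph $G_{F^*}^i$ at edit cost $3\d\cdot e(G_F^i)$, and (c) show each $G_{F^*}^i[X,Y]$ is an edge-disjoint union of graphs $G^i_{E,\under(E)}$, each $\langle\d\rangle$-regular by goodness, whence the union is $\langle\d\rangle$-regular (the paper packages this last step as Claim~\ref{claim:star-union}). Your direct justification of the decomposition via the observation that $E\subseteq X\circ Y$ iff $F_{E,i}=X$ and $Y_{E,i}=Y$ is slightly more hands-on than the paper's route through Claim~\ref{claim:decomposition}, but it amounts to the same bookkeeping.
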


The second property is given by following easy (but slightly tedious to state) claim.
\begin{claim}\label{claim:restriction}
	Let $H$ be a $k$-partite $k$-graph on vertex classes $(V_1,\ldots,V_k)$, and let $H'$ be the induced $k$-partite $k$-graph on vertex classes $(V_1',\ldots,V_k')$ with $V_i' \sub V_i$ and $\b \cdot e(H)$ edges.
	If $\P$ is a $\langle \d \rangle$-regular partition of $H$ with
	$\P^{(1)} \prec \bigcup_{i=1}^k \{V_i,\,V_i \sm V_i'\}$
	%$\Z \prec \{V_1',V_1\sm V_1',V_2',V_2\sm V_2',V_3',V_3\sm V_3'\}$
	then its restriction $\P'$ to $V(H')$ is a $\langle \d/\b \rangle$-regular partition of $H'$.
\end{claim}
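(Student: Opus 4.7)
The plan is to show that restricting $\P$ to $V(H')$ is essentially a ``free'' operation: the bipartite auxiliary graphs present in $\P'$ are literally the same as the corresponding ones in $\P$, and the only genuine loss comes from comparing edge-counts in $H$ versus $H'$, which produces precisely the factor $1/\beta$.

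First I would unpack the restriction. Because $\P^{(1)} \prec \bigcup_{i}\{V_i,\,V_i\sm V_i'\}$, every cluster of $\P^{(1)}$ lies either entirely inside or entirely outside $V(H')$; by induction on the level this propagates through the hierarchy, so for each $2 \le r \le k-1$ and each $F \in \P^{(r)}$, the polyad $\under(F)$ either has all its vertex classes inside $V(H')$ or none that matter. Thus $\P'$ is a well-defined $(k-1)$-partition of $V(H')$, and each $F \in \P'^{(r)}$ is literally a part of $\P^{(r)}$.

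Next I would verify $\langle \d/\b \rangle$-goodness of $\P'$. For every $F \in \P'^{(r)}$ and every $1 \le i \le r$, the auxiliary bipartite graph $G_{F,\under(F)}^{i}$ is identical to the one witnessing goodness of $\P$. It is $\langle \d \rangle$-regular by hypothesis, and since $\b \le 1$ (because $e(H') \le e(H)$) we have $\d/\b \ge \d$; monotonicity in the parameter (a larger $\d$ makes the requirement check fewer subsets) then gives $\langle \d/\b \rangle$-regularity for free.

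It remains to show that for each $1 \le i \le k$, the partition $E_i(\P')\cup V_i(\P')$ is a $\langle \d/\b \rangle$-regular partition of $G_{H'}^{i}$. Note that $G_{H'}^{i}$ is exactly the induced bipartite subgraph of $G_{H}^{i}$ on $\bigl(\prod_{j\neq i}V_j',\,V_i'\bigr)$, with $e(G_{H'}^{i})=e(H')=\b\cdot e(H)=\b\cdot e(G_{H}^{i})$. By $\langle \d \rangle$-regularity of $\P$ for $G_H^i$, there is a set of at most $\d\cdot e(G_H^i)$ edge additions/deletions after which each induced bipartite graph on $(F,V)$, $F \in E_i(\P)$, $V \in V_i(\P)$, is $\langle \d \rangle$-regular. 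Restrict this modification to $V(H')$: the number of modified edges remains at most $\d\cdot e(H)=(\d/\b)\cdot e(H')=(\d/\b)\cdot e(G_{H'}^{i})$, as required. Moreover, every pair $(F,V)$ with $F \in E_i(\P')$, $V \in V_i(\P')$ is literally a pair from $E_i(\P)\times V_i(\P)$ (by the first step), so the induced bipartite graph is the same $\langle \d \rangle$-regular graph as before, hence in particular $\langle \d/\b \rangle$-regular. This completes the proof.

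The argument is essentially bookkeeping; the main point that requires care is simply that the partition hypothesis $\P^{(1)} \prec \bigcup_i\{V_i,\,V_i\sm V_i'\}$ is exactly what guarantees that restriction does not split any part of $\P$, so all the regularity data transfers verbatim and only the overall edge density changes.
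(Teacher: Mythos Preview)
Your proposal is correct and follows essentially the same approach as the paper's proof: identify $G_{H'}^i$ as the induced subgraph of $G_H^i$, restrict the edge modification, and observe that $\d\cdot e(G_H^i)=(\d/\b)\cdot e(G_{H'}^i)$ while all pairs $(F,V)$ from $\P'$ are literally pairs from $\P$. You are simply more explicit than the paper about why the refinement hypothesis on $\P^{(1)}$ guarantees that no part of $\P$ is split by the restriction, and why $\langle\d\rangle$-goodness implies $\langle\d/\b\rangle$-goodness; the paper compresses both into ``Clearly, $\P'$ is $\langle\d\rangle$-good'' and a one-line appeal to the assumption on $\P^{(1)}$.
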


\renewcommand{\K}{k}

\subsection{Key inductive proof}\label{subsec:main-induction}

\paragraph*{Set-up.}

We next introduce a few more definitions that are needed for the statement of Lemma~\ref{lemma:ind-k}.
Let $e(i) = 2^{i+10}$. We define the following tower-type function $\t\colon\N\to\N$;
\begin{equation}\label{eq:t}
\t(i+1) = \begin{cases}
2^{\t(i)/e(i)}	&\text{if } i \ge 1\\
2^{200}	&\text{if } i = 0 \;.
\end{cases}
\end{equation}
It is easy to prove, by induction on $i$, that $\t(i) \ge e(i)\t(i-1)$ for $i \ge 2$ (for the induction step,
$t(i+1) \ge 2^{\t(i-1)} = t(i)^{e(i-1)}$, so $t(i+1)/e(i+1) \ge \t(i)^{e(i-1)-i-11} \ge \t(i)$).
%$t(i+1)/e(i+1) \ge 2^{\t(i-1)}/e(i+1) = t(i)^{e(i-1)}/e(i+1) \ge \t(i)^{e(i-1)-i-11} \ge \t(i)$).
%$\t(i+1)/\t(i) \ge 2^{\t(i-1)}/\t(i) = \t(i)^{e(i-1)-1} \ge 2^{8(e(i-1)-1)} \ge 2^{e(i+1)} \ge e(i+1)$).
This means that $t$ is a monotone increasing function, and that it is always an integer power of $2$ (follows by induction as $t(i)/e(i) \ge 1$ is a positive power of $2$ and in particular an integer).
%It is also easy to see that $t(i) \geq T(i)$, so $t$ is a tower-type function.
%We will later use the following simple facts regarding the function $\t$:
We record the following facts regarding $\t$ for later use:
\begin{equation}\label{eq:monotone}
\t(i) \ge 4\t(i-1)  \quad\text{ and }\quad
\text{ $\t(i)$ is a power of $2$} \;.
\end{equation}
%and that $\t$ is monotone increasing (as $\t(i) \ge 2\t(i-1)$ for $i \ge 2$).
%%%%
%Moreover, there is an absolute constant $C>0$ such that
%\begin{equation}\label{eq:A_2}
%\t(i) \ge \Ack_2(C \cdot i) \;.
%\end{equation}
%%%%
For a function $f:\N\to\N$ with $f(i) \ge i$ we denote
\begin{equation}\label{eq:f*}
f^*(i) = \t\big(f(i)\big)/e(i) \;.
\end{equation}
Note that $f^*(i)$ is indeed a positive integer (by the monotonicity of $\t$, $f^*(i) \ge \t(i)/e(i)$ is a positive power of $2$).
In fact, $f^*(i) \ge f(i)$ (as $f^*(i) \ge 4^{f(i)}/e(i)$ using~(\ref{eq:monotone})).
%(using $\t(x) \ge e(x) \t(x-1)$).
%So for example, for $i \ge 1$ we have $\t(i+1)=2^{\text{id}^*(i)}$.

We recursively define the function $\A_\k\colon\N\to\N$ for any integer $\k \ge 2$ as follows: $\A_2(i)=i$, whereas for $\k \ge 3$, %we set
\begin{equation}\label{eq:Ak}
\A_\k(i+1) = \begin{cases}
%\A_{k-1}\big( \t(\A_k(i))/e(i) \big)		&\text{if } i > 0\\
\A_{\k-1}(\A_\k^*(i))		&\text{if } i \ge 1\\
%1							&\text{if } i = 0
2^{2^{3\K+2}}	&\text{if } i = 0 \;.
%2^{8^{\K+1}}	&\text{if } i = 0
\end{cases}
\end{equation}
Note that $\A_\k$ is well defined since $\A_\k^*(i) \in \N$ for $i \ge 1$ and $k \ge 2$, and that $\A_\k^{},\A_\k^*$ are both monotone increasing.
%Observe that there is an absolute constant $C>0$ such that for every $\k \ge 3$,
%and that in fact $\A_\k^*(i) \ge \A_\k(i)$ for $\k \ge 3$ (since $\t(i) \ge i \cdot e(i)$).
It is evident that $\A_\k$ grows like the $k$-th level Ackermann function; in fact, one can check that for every $\k \ge 2$ we have
%Observe, using $\A_\k^*(i) \ge \A_\k^{}(i)$, that for $\k \ge 3$ we have
%Observe that  for every $\k \ge 3$,
\begin{equation}\label{eq:A_k}
\A_\k(i) \ge \Ack_\k(i) \;.
%\A_\k(i) \ge \Ack_\k(C \cdot i) \;.
%\A_\k(i) = \Ack_\k(\Omega(i)) \;.
\end{equation}
Furthermore, we denote, for $\k \ge 1$,
\begin{equation}\label{eq:delta_k}
\d_\k = 2^{-8^\k} \;.
%\d_\k = 2^{-2^{3\k}} \;.
%\d_\k = 2^{-2^{2(\k+1)}}/2^{11} \;.
\end{equation}
%We will also use the fact that
%\begin{equation}\label{eq:monotone}
%\t(i) \ge 4\t(i-1) %\ge 2^{200}
%\text{ is a power of $2$} \quad\text{ and }\quad
%2^{200} \le \t(1) = (1/\d_\K)^4 \;.
%%\sqrt[6]{\t(1)}/32 \le %2^{2^{2(K+1)}} =
%%1/\d_\K \;.
%\end{equation}
%We further denote
%\begin{equation}\label{eq:f_k}
%f_\K(x) = \Big(\frac{x}{2}\Big)^{32^k} \quad \big(\le \d_k^4\Big(\frac{x}{2}\Big)^{2^{k+3}}\big)
%\end{equation}
%and
%\begin{equation}\label{eq:eps_k}
%\e_\k(x) = \big(\frac14 \d_\k \cdot x\big)^2
%\end{equation}
We moreover denote, for $\k \ge 2$,
\begin{equation}\label{eq:m_k}
m_\k(i):=\A_2^*(\cdots(\A^*_{\k}(i))\cdots) \;.
\end{equation}
%====
%Finally, we define a function $g_\K \colon \RR\to\RR$ as follows;
%recalling the function $n_0$ from Corollary~\ref{coro:k-reduction} (which we may assume is a function of two real variables, and is strictly increasing in each),
%%strictly increasing in each variable and defined for every ),
%%we define the function $g_k \colon \RR\to\RR$ by letting $g_k(x)$
%we let $g_\K(x)$ be the inverse function of $n_0(\d_\K,x)$; that is, $g(n_0(\d_\K,x))=x$.
%(Note that $g_\K$ is increasing as well.)
%Thus, when applying Corollary~\ref{coro:k-reduction} with $\d=\d_\K$, one may replace the condition $|V(\Hy{G})| \ge n_0(\d,|\P|)$ with $|\P| \le g(|V(\Hy{G})|)$.
%====
%
%\paragraph*{Bounds.}
We next record a few easy bounds for later use.
%We will later use the following simple facts regarding the function $\t$:
%\begin{equation}\label{eq:monotone}
%\t(i) \ge 4\t(i-1)  \quad\text{ and }\quad
%\text{ $\t(i)$ is a power of $2$} \;.
%%\quad\text{ and }\quad \t(1) \ge 2^{200} \;.
%%\sqrt[6]{\t(1)}/32 \le %2^{2^{2(K+1)}} =
%%1/\d_\K \;.
%\end{equation}
%as well as the following regarding the function $\A_{\K}$ for $\K \ge 3$:
%\begin{equation}\label{eq:monotone-A}
%\A_\K(i) \ge 4\A_\K(i-1), \quad
%\text{ $\A_\K(i)$ is a power of $2$}  \quad\text{ and }\quad
%\A_k(1) \ge 2^{2^{11}} \ge 2^{200} \;.
%\end{equation}
%As for $\d_{\k}$,
Recall that
%We will also use the followings simple facts regarding
\begin{equation}\label{eq:delta_12}
\d_1 = 2^{-8} \quad\text{ and }\quad \d_2 = 2^{-64} \;.
\end{equation}
%We will later use the fact that
Noting the relation $\d_{\k} = \d_{\k-1}^8$, we have for $\k \ge 3$ that
\begin{equation}\label{eq:delta_k-bound}
%\d_{\k} = \d_{\k-1}^8 \quad \text{ and thus } \quad
\d_\k^{1/4} = \d_{\k-1}^2 \le \d_2 \d_{\k-1} = 2^{-64}\d_{\k-1} \;.
\end{equation}
Noting the relation $\A_\K(1) = \d_\K^{-4}$ for $\K \ge 3$, we have for $\K \ge 3$ that
\begin{equation}\label{eq:t1-bound}
1/\sqrt[6]{\A_\K(1)} \le \d_{\K}^{1/2} = \d_{\K-1}^{4} \le \d_1^3\d_{\K-1} = 2^{-24}\d_{\K-1} \;.
\end{equation}

%Let $H$ be a $k$-partite graph on vertex classes $(\Vside^1,\ldots,\Vside^k)$ and let $\V_1 \succ\cdots\succ \V_m$ be a sequence of $m$ successively refined partitions of $V(H)$.

%\subsection{Key inductive proof.}
\paragraph*{Key inductive proof.}

The key argument in our lower bound proof for $\K$-graph regularity is the following result, which is proved by induction on the hypergraph's uniformity.

%TODO: define somewhere the density of a $k$-partite $k$-graph $H$ with $n$ vertices in each class as $d(H)=|H|/n^k$.

%TODO: say somewhere that $\langle \d,f \rangle$-regularity for $2$-graphs is just $\langle d \rangle$-regularity.

%====
%The \emph{order} $|\P|$ of an $r$-partition $\P$ is the total number of parts in $\P$, that is, $|\P|=\sum_{i=1}^r |\P^{(i)}|$.
%====

%\begin{lemma}[Key argument]\label{lemma:ind-k}
%	%The following holds for every $2 \le \k \le \K$.
%	%The following holds for every $\k \ge 2$.
%	Let $s \in \N$, let $\Vside^1,\Vside^2,\Vside^3$ be mutually disjoint sets of equal size $n$ and let $\V^1 \succ\cdots\succ \V^m$ be a sequence of
%	$m=\w^*(s)+1$ successive equitable refinements of $\{\Vside^1,\Vside^2,\Vside^3\}$
%	with $|\V^i_1|=|\V^i_2|=|\V^i_3|=\t(i)$ for every\footnote{Since we assume that each $\V_i$ refines $\{\Vside^1,\ldots,\Vside^k\}$ then $V_1(\V_i)$ is (by the notation mentioned before Claim \ref{claim:star-union}) the restriction of $\V_i$ to $\Vside^1$.} $1 \leq i \leq m$.
%	Then there is a $3$-partite $3$-graph $H$ on $(\Vside^1,\Vside^2,\Vside^3)$ of density $d(H)=2^{-s}$ satisfying the following property:\\
%	If $(\Z,\E)$ is a $\langle 2^{-70} \rangle$-regular partition of $H$ and for some
%	$1 \le i \le \w(s)$ we have $\Z_3 \prec_{2^{-9}} \V^i_3$ and $\Z_2 \prec_{2^{-9}} \V^i_2$ then we also have $\Z_1 \prec_{2^{-9}} \V^{i+1}_1$.
%\end{lemma}

\begin{lemma}[$\k$-graph induction]\label{lemma:ind-k}
	%The following holds for every $2 \le \k \le \K$.
	%The following holds for every $\k \ge 2$.
	Let $s \in \N$, let $\Vside^1,\ldots,\Vside^\k$ be $\k \ge 2$ mutually disjoint sets of equal size and let $\V_1 \succ\cdots\succ \V_m$ be a sequence of
	$m=m_\k(s)$ %:=\A_2^*(\cdots(\A^*_{\k}(s))\cdots)$
	%$m:=\A_\k(s)+1$
	successive equitable refinements of $\{\Vside^1,\ldots,\Vside^\k\}$
	with $|V_h(\V_i)|=\t(i)$ for every $i,h$.\footnote{Since we assume that each $\V_i$ refines $\{\Vside^1,\ldots,\Vside^k\}$ then $V_h(\V_i)$ is 
		(recall~(\ref{eq:partition-notation}))
		%(by the notation mentioned before Claim \ref{claim:star-union}) 
		the restriction of $\V_i$ to $\Vside^h$.}
	%with $|{\V^i}^{(1)}_h|=\t(i)$ for every $i,h$.
	Then there exists a sequence of $s$ successively refined equipartitions $\HH_1 \succ \cdots \succ \HH_s$ of $\Vside^1 \times \cdots \times \Vside^\k$ such that for every $1 \le j \le s$, $|\HH_j|=2^j$
	and every $H \in \HH_j$ satisfies the following property:\\
	%, where $\g=2^{-9}$:\\
		If $\P$ is a $\langle \d_\k \rangle$-regular
		partition of $H$, and for some $1 \le i \le \A_\k(j)$ $(<m)$ we have $V_h(\P) \prec_{2^{-9}} V_h(\V_i)$ for every $2 \le h \le \k$, then $V_1(\P) \prec_{2^{-9}} V_1(\V_{i+1})$.
\end{lemma}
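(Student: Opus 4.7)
I would prove Lemma~\ref{lemma:ind-k} by induction on $k$, with each inductive step invoking Lemma~\ref{theo:core} once on top of the construction obtained at level $k-1$, consistent with the ``$k-1$ applications of Lemma~\ref{theo:core}'' count announced in the overview of Subsection~\ref{subsec:overview}.

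\textbf{Base case $k=2$.} Here the statement concerns bipartite $\langle\d_2\rangle$-regularity and $\A_2(j)=j$, matching Lemma~\ref{theo:core}'s conclusion directly. I apply Lemma~\ref{theo:core} with $\Lside:=\Vside^1$, $\Rside:=\Vside^2$, $\R_q:=V_2(\V_q)$, and $\L_q:=V_1(\V_{q+1})$. The three hypotheses on $\R_q$ and $\L_q$ are immediate from (\ref{eq:t}) and (\ref{eq:monotone}): $|\R_q|=\t(q)$ is a power of $2$ with $|\R_1|=2^{200}$ and $|\R_{q+1}|\ge 4|\R_q|$, while $|\L_q|=\t(q+1)=2^{\t(q)/e(q)}=2^{|\R_q|/2^{q+10}}$. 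Setting $\HH_j:=\G_j$ and using $\d_2=2^{-64}\le 2^{-20}$, the output error satisfies $\g=\max\{2^5\sqrt{\d_2},\,32/\sqrt[6]{2^{200}}\}\le 2^{-9}$, so Lemma~\ref{theo:core} yields exactly the desired one-step amplification.

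\textbf{Inductive step $k\ge 3$.} Assuming the lemma at $k-1$, I first apply it to $\Vside^2,\ldots,\Vside^k$ with the restricted sequence $\V_i|_{\Vside^2\cup\cdots\cup\Vside^k}$ and parameter $s':=\A_k^*(s)$; by the nested form (\ref{eq:m_k}) this makes $m_{k-1}(s')=m_k(s)=m$, so enough restricted partitions are available. This produces refined equipartitions $\HH'_1\succ\cdots\succ\HH'_{s'}$ of $\Vside^2\times\cdots\times\Vside^k$, each level $l$ supporting the $\A_{k-1}(l)$-amplification at $k-1$. I then apply Lemma~\ref{theo:core} with $\Lside:=\Vside^1$, $\Rside:=\Vside^2\times\cdots\times\Vside^k$, $\R_q:=\HH'_{l(q)}$, and $\L_q:=V_1(\V_{I(q)})$, where $l(q)$ and $I(q)$ are chosen so that (i) the size relation $|\L_q|=2^{|\R_q|/e(q)}$ holds via the $\t$-recurrence (\ref{eq:t}), and (ii) the inductive amplification already reaches $\A_{k-1}(l(q))\ge \A_k(q)$, using the identity $\A_k(q+1)=\A_{k-1}(\A_k^*(q))$. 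Setting $\HH_j:=\G_j$ yields the required equipartitions of $\Vside^1\times\cdots\times\Vside^k$.

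\textbf{Verification and main obstacle.} Given $H\in\HH_j$, a $\langle\d_k\rangle$-regular partition $\P$ of $H$, and refinements $V_h(\P)\prec_{2^{-9}}V_h(\V_I)$ for $2\le h\le k$ and some $I\le\A_k(j)$, the plan has two steps. First, I translate the vertex refinements into the edge refinement $E_1(\P)\prec_{2^{-9}}\HH'_{l(j)}$: the $\langle\d_k\rangle$-goodness of $\P$, combined with Claim~\ref{claim:restriction} and a $1$st-coordinate version of Claim~\ref{claim:uniform-refinement} (valid by symmetry), yields $\langle O(\d_k)\rangle$-regular $(k-2)$-partitions on restrictions of the relevant parts of $\HH'_{l(j)}$; the inductive hypothesis at level $l(j)$, whose amplification reaches $\A_{k-1}(l(j))\ge I$, then forces the edge parts of $E_1(\P)$ to approximately lie inside parts of $\HH'_{l(j)}$. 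Second, I invoke Lemma~\ref{theo:core}'s conclusion at $i=j$ to get $V_1(\P)\prec_\g V_1(\V_{I(j)})$ with $I(j)\ge \A_k(j)+1\ge I+1$, and conclude $V_1(\P)\prec V_1(\V_{I+1})$ by transitivity of refinement, with $\g\le 2^{-9}$ coming from (\ref{eq:delta_k-bound}) and (\ref{eq:t1-bound}). The hard step is the first one: the parameter bookkeeping must ensure that $\d_k=2^{-8^k}$ is small enough that, after the multiplicative losses of Claims~\ref{claim:uniform-refinement} and~\ref{claim:restriction}, what remains still sits in the $\d_{k-1}$-regime of the inductive hypothesis---exactly the slack arranged by (\ref{eq:delta_k-bound})---while the nested Ackermann indices through $\t$, $\A_k$, $\A_k^*$, and $m_k$ are matched via the recurrence $\A_k(q+1)=\A_{k-1}(\A_k^*(q))$, which makes the amplification numerically tight and is where I expect the most delicate work.
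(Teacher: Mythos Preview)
Your base case is fine and matches the paper. The inductive step, however, swaps the roles of $\Lside$ and $\Rside$ in Lemma~\ref{theo:core} relative to what the paper does, and this swap creates a real gap. In the paper, the induction hypothesis is applied to $\Vside^1,\ldots,\Vside^{k-1}$, producing partitions $\F_\ell$ of $\Vside^1\times\cdots\times\Vside^{k-1}$; Lemma~\ref{theo:core} is then applied with $\Lside=\Vside^1\times\cdots\times\Vside^{k-1}$ (the \emph{edge} side) and $\Rside=\Vside^k$. The verification runs in the order: the assumption $V_k(\P)\prec_c V_k(\V_i)$ is an $\Rside$-side hypothesis, so the core lemma \emph{outputs} the edge refinement $E_k(\P)\prec_{\frac14\d_{k-1}}\F_{(j')}$; Claim~\ref{claim:uniform-refinement} then turns this edge refinement into $\langle\d_{k-1}\rangle$-regularity of the restricted $(k-2)$-partition $\P'$ for some $F\in\F_{(j')}$; finally the induction hypothesis on that $F$, fed with the remaining vertex assumptions on $V_2,\ldots,V_{k-1}$, yields $V_1(\P)\prec_c V_1(\V_{i+1})$.

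In your ordering, with $\Lside=\Vside^1$ and $\Rside=\Vside^2\times\cdots\times\Vside^k$, the $\Rside$-side hypothesis of Lemma~\ref{theo:core} is the edge refinement $E_1(\P)\prec_{2^{-9}}\HH'_{l}$, which you must therefore \emph{supply} rather than derive. The tools you cite cannot do this. Claim~\ref{claim:uniform-refinement} (in any coordinate version) takes an edge refinement as its hypothesis and outputs regularity of a restricted partition---this is the reverse direction from what you need, and there is no converse available. The induction hypothesis at level $k-1$ is a vertex-to-vertex amplification (from refinement of $V_3,\ldots,V_k$ to refinement of $V_2$); it tells you nothing about whether the members of $E_1(\P)$ approximately sit inside parts of $\HH'_l$. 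So your ``first step'' is not merely delicate bookkeeping but a missing mechanism. The fix is exactly the paper's choice: put the product on the $\Lside$-side so that the core lemma \emph{produces} the edge refinement from the single vertex assumption on $\Vside^k$, and then spend the remaining vertex assumptions in the inductive call via Claim~\ref{claim:uniform-refinement}.
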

Note that $\V_i$ is well defined in the property described in Lemma~\ref{lemma:ind-k} since $i \le \A_\k(j) \le m$.
\begin{proof}	
	We proceed by induction on $\k \ge 2$.
	For the induction basis $\k=2$ we are given $s \in \N$, two disjoint sets $\Vside^1,\Vside^2$ %of equal size
	as well as $m=\A_2^*(s)$ ($\ge s+1$) successive equitable refinements $\V_1 \succ\cdots\succ \V_{m}$ of $\{\Vside^1,\Vside^2\}$.
	%(recall $\A_2(s)=s$).
	Our goal is to find a sequence of $s$ successively refined equipartitions $\HH_1 \succ \cdots \succ \HH_s$ of $\Vside^1 \times \Vside^2$ as in the statement.
	To prove the induction basis, apply Lemma~\ref{theo:core} with
	$$\Lside=\Vside^1,\quad \Rside=\Vside^2 \quad\text{ and }\quad V_1(\V_2) \succ \cdots \succ V_1(\V_{s+1}) ,\quad V_2(\V_1) \succ \cdots \succ V_2(\V_{s}) \;,$$
	and let
	%\begin{equation}\label{eq:main-k-colors}
	$$\G^1 \succ \cdots \succ \G^{s} \quad\text{ with }\quad |G^\ell|=2^\ell \text{ for every } 1 \le \ell \le s $$
	%\end{equation}
	be the resulting sequence of $s$ successively refined equipartitions of $\Vside^1 \times \Vside^2$.
	These two sequences indeed satisfy assumptions~\ref{item:core-minR},~\ref{item:core-expR} in Lemma~\ref{theo:core} since $|V_2(\V_j)|=\t(j)$ and by~(\ref{eq:monotone});
	moreover, they satisfy assumption~\ref{item:core-expL} since for every $1 \le j \le s$ we have
	$$|V_1(\V_{j+1})| = \t(j+1) = 2^{\t(j)/e(j)} = 2^{|V_2(\V_{j})|/e(j)} \;,$$
	where the second equality uses the definition of the function $\t$ in~(\ref{eq:t}).
	%Let $\G_1 \succ \cdots \succ \G_s$ be the sequence of $s$ successively refined equipartitions of $\Vside^1 \times \Vside^2$ that results from applying Lemma~\ref{theo:core}.
	We will show that taking $\HH_j=\G_j$ for every $1 \le j \le s$ yields a sequence as required by the statement.
%	First, notice $|\G_j|=2^j$ for every $1 \le j \le s$.
	Fix $1 \le j \le s$ and $G \in \G_j$; note that $G$ is a bipartite graph on the vertex classes $(\Vside^1,\Vside^2)$.
	%, and denote $p=d(G)$.
	Moreover, let $1 \le i \le j$ (recall $\A_2(j)=j$) and let $\P$ be a $\langle \d_2 \rangle$-regular %(or equivalently, $\langle \d_2 \rangle$-regular)
	partition of $G$
	with $V_2(\P) \prec_{2^{-9}} V_2(\V_{i})$.
	Since $\d_2 \le 2^{-20}$ by~(\ref{eq:delta_12}),
	Lemma~\ref{theo:core} implies that $V_1(\P) \prec_{x} V_1(\V_{i+1})$ with $x=\max\{2^{5}\sqrt{\d_2},\, 32/\sqrt[6]{\t(1)} \}$.
	Using~(\ref{eq:delta_12}) and~(\ref{eq:t})
	%Using~(\ref{eq:delta_k}) and~(\ref{eq:t1-bound})
	we have $x \le 2^{-9}$, completing the proof of the induction basis.
	
	To prove the induction step, recall that we are given $s \in \N$, disjoint sets $\Vside^1,\ldots,\Vside^\k$ %of equal size
	and a sequence of $m=m_\k(s)$ successive equitable refinements $\V_1 \succ\cdots\succ \V_m$ of $\{\Vside^1,\ldots,\Vside^\k\}$,
	and our goal is to construct a sequence of $s$ successively refined equipartitions $\HH_1 \succ \cdots \succ \HH_s$ of $\Vside^1 \times \cdots \times \Vside^\k$ as in the statement.
	We begin by applying the induction hypothesis with $\k-1$ (which would imply Proposition~\ref{prop:main-k-hypo} below).
	%In the proof of the induction step
	%It will be convenient to simplify the notation as follows; for a vertex partition $\Q \prec \{\Vside^1,\ldots,\Vside^\k\}$ and $1 \le a \le b \le \k$ we write, with a slight abuse of notation, $\Q[a]=\Q[\Vside^a]$ and $\Q[a,b]=\Q[\bigcup_{a \le h \le b} \Vside^h]$.
	%$\Q[a,b]=%\Z[\Vside_a]\cup\cdots\cup\Z[\Vside_b]$.
	%\bigcup_{a \le h \le b} \Q[h]$.
	We henceforth put $c=2^{-9}$.
	Now, apply the induction hypothesis with $\k-1$ on
	\begin{equation}\label{eq:ind-hypo}
	\text{
		$s':=\A_\k^*(s)$ (in place of $s$),\, $\Vside^1,\ldots,\Vside^{\k-1}$ and $\bigcup_{h=1}^{\k-1}V_h(\V_1) \succ\cdots\succ \bigcup_{h=1}^{\k-1}V_h(\V_m)$} \;,
	\end{equation}
	and let
	\begin{equation}\label{eq:main-k-colors}
	\F_1 \succ \cdots \succ \F_{s'} \quad\text{ with }\quad |\F_ \ell|=2^\ell \text{ for every } 1 \le \ell \le s'
	\end{equation}
	be the resulting sequence of $s'$ successively refined equipartitions of $\Vside^1 \times \cdots \times \Vside^{\k-1}$.
	\begin{prop}[Induction hypothesis]\label{prop:main-k-hypo}
		Let $1 \le \ell \le s'$ and $F \in \F_\ell$.
		If $\P'$ is a $\langle \d_{\k-1} \rangle$-regular partition of $F$ with ${\P'}^{(1)} \prec \{\Vside^1,\ldots,\Vside^{\k-1}\}$, and for some $1 \le i \le \A_{\k-1}(\ell)$ $(<m_{k-1}(s))$ we have
		$V_h(\P') \prec_c V_h(\V_i)$ for every $2 \le h \le \k-1$,
		then $V_1(\P) \prec_c V_1(\V_{i+1})$.
	\end{prop}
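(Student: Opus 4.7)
Proposition~\ref{prop:main-k-hypo} is not a genuinely new claim but a restatement of what the inductive hypothesis (Lemma~\ref{lemma:ind-k} at uniformity $\k-1$) yields once fed the specific parameters introduced in~(\ref{eq:ind-hypo}). The plan is therefore just to verify those parameters fit the hypotheses of Lemma~\ref{lemma:ind-k} and then read off the conclusion verbatim.

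The verification has three ingredients. First, $\Vside^1,\ldots,\Vside^{\k-1}$ are $(\k-1) \ge 2$ mutually disjoint sets of equal size, inherited directly from the setup of the induction step. Second, the sequence $\bigcup_{h=1}^{\k-1} V_h(\V_1) \succ \cdots \succ \bigcup_{h=1}^{\k-1} V_h(\V_m)$ is a sequence of successive equitable refinements of $\{\Vside^1,\ldots,\Vside^{\k-1}\}$ with $|V_h(\V_i)| = \t(i)$ for every $i$ and every $1 \le h \le \k-1$; this is immediate from the standing hypothesis on $\V_1 \succ \cdots \succ \V_m$ by restricting to the first $\k-1$ vertex classes. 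Third, the number of refinements available must be exactly $m_{\k-1}(s')$ where $s' = \A_\k^*(s)$; unwinding the definition~(\ref{eq:m_k}) gives
\[ m_{\k-1}(s') = \A_2^*(\cdots(\A_{\k-1}^*(\A_\k^*(s)))\cdots) = m_\k(s) = m, \]
so the counts match exactly.

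With these hypotheses in place, Lemma~\ref{lemma:ind-k} applied at uniformity $\k-1$ delivers a sequence $\F_1 \succ \cdots \succ \F_{s'}$ of successively refined equipartitions of $\Vside^1 \times \cdots \times \Vside^{\k-1}$ with $|\F_\ell|=2^\ell$, each member $F$ of which carries precisely the property asserted in the proposition: for any $\langle \d_{\k-1} \rangle$-regular partition $\P'$ of $F$ whose top level refines $\{\Vside^1,\ldots,\Vside^{\k-1}\}$, an approximate refinement of $\V_i$ on the last $\k-2$ vertex classes forces an approximate refinement of $\V_{i+1}$ on the first vertex class, for every $1 \le i \le \A_{\k-1}(\ell)$.

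No mathematical obstacle arises: the whole task is bookkeeping, making sure the uniformity shift from $\k$ to $\k-1$, the parameter shift from $s$ to $s' = \A_\k^*(s)$, and the resulting count $m_{\k-1}(s') = m$ all line up. The one subtlety worth flagging is the choice of regularity constant: the input partition in Proposition~\ref{prop:main-k-hypo} is $\langle \d_{\k-1} \rangle$-regular, matching what Lemma~\ref{lemma:ind-k} requires at uniformity $\k-1$, rather than the $\langle \d_\k \rangle$-regular input that will drive the outer induction step where this proposition is applied.
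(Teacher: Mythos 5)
Your proposal is correct and follows exactly the paper's route: the paper's own proof consists precisely of the bookkeeping check $m_{\k-1}(s') = \A_2^*(\cdots(\A_{\k-1}^*(\A_\k^*(s)))\cdots) = m_\k(s) = m$, treating the remaining hypotheses (disjointness, equal sizes, equitable refinements with $|V_h(\V_i)|=\t(i)$) as immediate from the setup, just as you do.
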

	\begin{proof}
		It suffices to verify that the number $m$ of partitions in~(\ref{eq:ind-hypo}) is as required by the induction hypothesis. Indeed, by~(\ref{eq:m_k}),
		$$m_{\k-1}(s') = \A_2^*(\cdots(\A_{\k-1}^*(s'))\cdots) = \A_2^*(\cdots(\A^*_{\k}(s))\cdots) = m_\k(s) = m \;.$$	
	\end{proof}
For each $1 \le j \le s$ let
\begin{equation}\label{eq:main-k-dfns}
\F_{(j)} = \F_{\A_k^*(j)}
\quad\text{ and }\quad
\V_{(j)} = V_k(\V_{\A_k(j)}) \;.
\end{equation}	
All these choices are well defined since $\A_\k^*(j)$ satisfies $1 \le \A_\k^*(1) \le \A_\k^*(j) \le \A_\k^*(s) = s'$ by our choice of $s'$ in~(\ref{eq:ind-hypo}), and since $\A_\k(j)$ satisfies $1 \le \A_\k(1) \le \A_\k(j) \le \A_\k(s) \le m$.
Observe that we have thus chosen two subsequences of $\F_1,\cdots,\F_{s'}$ and $V_k(\V_1),\ldots,V_k(\V_m)$, each of length $s$.
Recalling that each $\F_{(j)}$ is a partition of $\Vside^1 \times\cdots\times \Vside^{k-1}$, apply Lemma~\ref{theo:core} with
$$
\Lside=\Vside^1 \times\cdots\times \Vside^{k-1},\quad \Rside=\Vside^k \quad\text{ and }\quad \F_{(1)} \succ \cdots \succ \F_{(s)}, \quad \V_{(1)} \succ \cdots \succ \V_{(s)} \;,
$$
and let
\begin{equation}\label{eq:ind-colors2}
\G_1 \succ \cdots \succ \G_{s} \quad\text{ with }\quad |\G_\ell|=2^\ell \text{ for every } 1 \le \ell \le s
\end{equation}
be the resulting sequence of $s$ successively refined graph equipartitions of $(\Vside^1\times\cdots\times\Vside^{\k-1})\times\Vside^\k$.

\begin{prop}[Core proposition]\label{prop:ind-prop2}
	Let $1 \le j \le s$ and $G \in \G_j$.
	If $\E \cup \V$ is a $\langle \d_\k \rangle$-regular partition of $G$ (where $\E$ and $\V$ are partitions of $\Vside^1\times\cdots\times\Vside^{\k-1}$ and $\Vside^\k$ respectively),
	and for some $1 \le j' \le j$ we have $\V \prec_{c} \V_{(j')}$,
	then $\E \prec_{\frac14\d_{\k-1}} \F_{(j')}$.
\end{prop}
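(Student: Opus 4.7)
The plan is to deduce Proposition~\ref{prop:ind-prop2} as a direct application of Lemma~\ref{theo:core} to the bipartite graph $G$ and the two sequences $\F_{(1)} \succ \cdots \succ \F_{(s)}$ and $\V_{(1)} \succ \cdots \succ \V_{(s)}$ that were fed into the construction~(\ref{eq:ind-colors2}) producing $\G_1 \succ \cdots \succ \G_s$. In other words, the entire content of this proposition is the translation of the one-sided conclusion of Lemma~\ref{theo:core} into the notation of (\ref{eq:main-k-dfns}), and essentially nothing else is needed.

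The first step is to check that the paired sequences satisfy the three numerical hypotheses (\ref{item:core-minR})--(\ref{item:core-expL}) of Lemma~\ref{theo:core}. Using $|V_\k(\V_j)|=\t(j)$, we have $|\V_{(i)}|=\t(\A_\k(i))$, which by~(\ref{eq:monotone}) is a power of $2$ at least $\t(1)=2^{200}$ (condition~(\ref{item:core-minR})) and satisfies $|\V_{(i+1)}|\ge 4|\V_{(i)}|$ since $\A_\k$ is strictly monotone (condition~(\ref{item:core-expR})). The crucial exponent-matching condition~(\ref{item:core-expL}) asks $|\F_{(i)}|=2^{|\V_{(i)}|/e(i)}$; since $|\F_{(i)}|=2^{\A_\k^*(i)}$ from the equipartition property $|\F_\ell|=2^\ell$ in~(\ref{eq:main-k-colors}), this reduces to the identity $\A_\k^*(i)=\t(\A_\k(i))/e(i)$, which is exactly the definition~(\ref{eq:f*}) of~$\A_\k^*$. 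This is precisely the place where the Ackermann-type definitions (\ref{eq:t}),(\ref{eq:f*}),(\ref{eq:Ak}) were calibrated to plug into Lemma~\ref{theo:core}.

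Once the hypotheses are verified, the lemma can be invoked: $\d_\k=2^{-8^\k}\le 2^{-20}$ for $\k\ge 2$, the partition $\E\cup\V$ of $G$ is $\langle\d_\k\rangle$-regular by assumption, and $\V\prec_{c}\V_{(j')}$ with $c=2^{-9}$ and $1\le j'\le j$ matches the one-sided premise of the lemma exactly (with $\V_{(j')}$ playing the role of $\R_{j'}$ and $\F_{(j')}$ the role of $\L_{j'}$). The lemma then yields $\E\prec_\gamma \F_{(j')}$ with $\gamma=\max\{2^5\sqrt{\d_\k},\;32/\sqrt[6]{|\V_{(1)}|}\}$.

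The only remaining computation is to verify $\gamma\le\tfrac14 \d_{\k-1}$. For the first term, $2^5\sqrt{\d_\k}=2^5\d_{\k-1}^4$, which by iterating~(\ref{eq:delta_k-bound}) is easily seen to be below $\tfrac18\d_{\k-1}$. For the second term, using $|\V_{(1)}|=\t(\A_\k(1))\ge \A_\k(1)$ together with~(\ref{eq:t1-bound}) gives $32/\sqrt[6]{|\V_{(1)}|}\le 2^5\cdot 2^{-24}\d_{\k-1}=2^{-19}\d_{\k-1}\le\tfrac18\d_{\k-1}$. The main obstacle, such as it is, is simply the bookkeeping of the first step: confirming the exponent-matching condition~(\ref{item:core-expL}) lines up and that the initial value $\A_\k(1)=2^{2^{3\k+2}}$ is large enough for the second term to be absorbed. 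Once these alignments are in place, the proposition follows in a single line from Lemma~\ref{theo:core}.
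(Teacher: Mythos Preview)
Your proposal is correct and follows essentially the same approach as the paper: verify the three numerical hypotheses of Lemma~\ref{theo:core} (with the exponent-matching condition~\ref{item:core-expL} reducing to the defining identity~(\ref{eq:f*}) for $\A_\k^*$), invoke the lemma, and then bound the resulting $\gamma$ by $\tfrac14\d_{\k-1}$ via~(\ref{eq:delta_k-bound}) and~(\ref{eq:t1-bound}). Your justification of condition~\ref{item:core-minR} via $\t(\A_\k(1))\ge \t(1)=2^{200}$ is in fact slightly cleaner than the paper's, which instead notes $\A_\k(1)\ge 2^{200}$ directly.
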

\begin{proof}
	First we need to verify that we may apply Lemma~\ref{theo:core} as above.
	Assumptions~\ref{item:core-minR} and~\ref{item:core-expR} follow from the fact that $|\V_{(j)}|=\t(\A_\k(j))$, from~(\ref{eq:monotone})
	and the fact that $\A_k(1) \ge 2^{2^{11}} \ge 2^{200}$ for $\k \ge 3$ by~(\ref{eq:Ak}).	
	%easily from the definition of $\A_\k$ in~(\ref{eq:A_k}), where we note that $\A_k(1) \ge 2^{2^{11}} \ge 2^{200}$ for $\k \ge 3$,
	To verify that assumption~\ref{item:core-expL} holds, note that $|\F_{(j)}|=2^{\A_\k^*(j)}$
	by~(\ref{eq:main-k-colors}),
	and that
	$|\V_{(j)}|=\t(\A_\k(j))$
	by the statement's assumption that 
	$|V_k(\V_i)|=\t(i)$. 
	%$|\V_i[\k]|=\t(i)$.
	Thus, indeed,
	%\begin{equation}\label{eq:main-k-orders}
	$$
	|\F_{(j)}| = 2^{\A_\k^*(j)} = 2^{\t(\A_\k(j))/e(j)} = 2^{|\V_{(j)}|/e(j)} \;,
	$$
	%|\F(j+1)| = 2^{\Ack_k(j+1)} = 2^{|\V^k(j)|/e(j)} \;.
	%\end{equation}
	where the second equality uses the definition in~(\ref{eq:f*}).
	%Moreover, note that $\d_\k \le \d_2 \le 2^{-20}$ by~(\ref{eq:delta_12}).
%	By Lemma~\ref{theo:core} we have $\E \prec_x \F_{(j')}$ with $x=\max\{2^{5}\sqrt{\d_\k},\, 32/\sqrt[6]{\t(\A_\K(1))} \}$.
%	To see that indeed $x \le \frac14\d_{\k-1}$,
%	apply~(\ref{eq:delta_k-bound}) as well as the fact that $\t(\A_\K(1)) \ge \A_\K(1)$ and~(\ref{eq:t1-bound}).
	%
	Moreover, note that $\d_\k \le \d_2 \le 2^{-20}$ by~(\ref{eq:delta_12}).
	We can thus use Lemma~\ref{theo:core} to infer that the fact that $\V \prec_{c} \V_{(j')}$ implies that $\E \prec_x \F_{(j')}$ with $x=\max\{2^{5}\sqrt{\d_\k},\, 32/\sqrt[6]{\t(\A_\K(1))} \}$.
	To see that indeed $x \le \frac14\d_{\k-1}$,
	apply~(\ref{eq:delta_k-bound}) as well as the fact that $\t(\A_\K(1)) \ge \A_\K(1)$ and~(\ref{eq:t1-bound}).
\end{proof}

	For each $G \in \G_j$ let $\Hy{G}$ be the $k$-partite $k$-graph on vertex classes $(\Vside^1,\ldots,\Vside^k)$ with edge set
	$$E(\Hy{G}) = \big\{ (v_1,\ldots,v_k) \,:\, ((v_1,\ldots,v_{k-1}),v_k) \in E(G) \big\} \;,$$
	and note that we have (recall Definition \ref{def:aux})
	\begin{equation}\label{eqH}
	G=G_{\Hy{G}}^k\;.
	\end{equation}
	For every $1 \le j \le s$ let $\HH_j=\{\Hy{G} \,\vert\, G \in \G_j \}$, and note that $|\HH_j|=|\G_j|=2^j$ by~(\ref{eq:ind-colors2}),
	that $H_j$ is an equipartition of $\Vside^1\times\cdots\times\Vside^k$,
	%that $|\H_j|=|\G_j|=2^j$ using~(\ref{eq:ind-colors2}),
	and that $\HH_1 \succ\cdots\succ \HH_s$.
	Our goal is to show that these partitions satisfy the property guaranteed by the statement.
	%We will show that taking $\H_j=\{\Hy{G} \colon G \in \G_j \}$ for every $1 \le j \le s$ yields a sequence of successively refined equipartitions of $\Vside^1 \times \cdots \times \Vside^\k$ as required by the statement.
	%First, notice $|\H_j|=|\G_j|=2^j$ for every $1 \le j \le s$,  using~(\ref{eq:ind-colors2}).
	
	Henceforth fix $1 \le j \le s$ and $H \in \HH_j$, and write $H=\Hy{G}$ with $G \in \G_j$.
	To complete the proof is suffices to show that $H$ satisfies the property in the statement.
	Assume now that $i$ is such that
	\begin{equation}\label{eq:ind-i-assumption}
	1 \le i \le \A_\k(j)
	\end{equation}
	and
	\begin{enumerate}
		\item\label{item:ind-reg}
		$\P$ is a $\langle \d_\k \rangle$-regular partition of $H$,
		\item\label{item:ind-refine} $V_h(\P) \prec_{c} V_h(\V_i)$ for every $2 \le h \le \k$.
		%$\P[2,\k] \prec_\g \V_i[2,\k]$.
	\end{enumerate}	
	In the remainder of the proof we will complete the induction step by showing that
	\begin{equation}\label{eq:ind-goal}
	V_1(\P) \prec_{c} V_1(\V_{i+1}) \;.
	%\P[1] \prec_{\g} \V_{i+1}[1] \;.
	\end{equation}
	It follows from Item~\ref{item:ind-reg},
	%by Property~\ref{item:uniform-regular} in Definition~\ref{def:d-uniform} and
	by Definition~\ref{def:k-reg} and~(\ref{eqH}), that in particular
	\begin{equation}\label{eq:ind-reg}
	E_k(\P) \cup V_k(\P) \text{ is a $\langle \d_\k \rangle$-regular partition of } G.
	\end{equation}
		Let
	\begin{equation}\label{eq:ind-j'}
	1 \le j' \le s
	\end{equation}
	be the unique integer satisfying
	\begin{equation}\label{eq:ind-sandwich}
	\A_k(j') \le i < \A_k(j'+1) \;.
	\end{equation}
	Note that (\ref{eq:ind-j'}) holds due to~(\ref{eq:ind-i-assumption}).
	Recalling~(\ref{eq:main-k-dfns}),
	the lower bound in~(\ref{eq:ind-sandwich}) implies that $V_k(\V^i) \prec V_k(\V_{\A_k(j')}) = \V_{(j')}$.
	Therefore, the assumption $V_k(\P) \prec_{c} V_k(\V^i)$ in Item~\ref{item:ind-refine} implies that
	\begin{equation}\label{eq:ind-Zk}
	V_k(\P) \prec_{c} \V_{(j')} \;.
	\end{equation}
	Apply Proposition~\ref{prop:ind-prop2} on $G$, using~(\ref{eq:ind-reg}),~(\ref{eq:ind-j'}) and~(\ref{eq:ind-Zk}), to deduce that
	\begin{equation}\label{eq:ind-E}
	E_k(\P) \prec_{\frac14\d_{k-1}} \F_{(j')} = \F_{\A_k^*(j')} \;,
	\end{equation}
	where for the equality again recall~(\ref{eq:main-k-dfns}).
	Let $\P^*$ be the restriction of $\P$ to $\Vside^1\cup\cdots\cup\Vside^{\k-1}$,
	and let $\P' = \P^* \sm \P^*[\Vside^1\times\cdots\times\Vside^{\k-1}]$.
	Note that $\P^*$ is a $(\k-1)$-partition on $(\Vside^1,\ldots,\Vside^{\k-1})$ and that $\P'$ is a $(\k-2)$-partition on $(\Vside^1,\ldots,\Vside^{\k-1})$.
	Since $\P$ is a $\langle \d_k \rangle$-regular partition of $H$ (by Item~\ref{item:ind-reg} above), $\P^*$
	is in particular $\langle \d_k \rangle$-good. By~(\ref{eq:ind-E}) we may thus apply Claim~\ref{claim:uniform-refinement} on $\P^*$
	to conclude that
	\begin{equation}\label{eq:ind-reg2}
	\P' \text{ is a }
	\langle \d_{k-1} \rangle
	 \text{-regular partition of some $F\in\F_{\A_k^*(j')}$.}
%	\Z_1 \cup \Z_2 \text{ is a }
%	\langle 2^{-28} \rangle
%	\text{-regular partition of some $G\in\G^{\w^*(j')}$.}
	\end{equation}
	By~(\ref{eq:ind-reg2}) we may apply Proposition~\ref{prop:main-k-hypo} with $F$, $\P'$, $\ell=\A_k^*(j')$ and $i$, observing (crucially)
	that $i \leq \ell$ by (\ref{eq:ind-sandwich}).
	Note that Item~\ref{item:ind-refine} in particular implies that $V_h(\P') \prec_{c} V_h(\V_i)$ for every $2 \le h \le k-1$.
	We thus deduce that $V_1(\P') \prec_{c} V_1(\V_{i+1})$.
	Since $V_1(\P') = V_1(\P)$, this proves~(\ref{eq:ind-goal}) and thus completes the induction step and the proof of Lemma~\ref{lemma:ind-k}.
	%This implies (\ref{eq:ind-goal}) and thus completes the proof.
	%We thus conclude that the fact $\Z_2 \prec_{2^{-9}} \V^i_2$ (stated in~\ref{item:ind-refine}) implies that $\Z_1 \prec_{2^{-9}} \V^{i+1}_1$, thus proving~(\ref{eq:ind-goal}) and completing the proof.
\end{proof}

\subsection{Putting everything together}\label{subsec:pasting}

We can now prove our main theorem, Theorem~\ref{theo:main}, which we repeat here for convenience.
\addtocounter{theo}{-1}
\begin{theo}[Main theorem]\label{theo:main}
	%There is $C>0$ such that the following holds for every $k \ge 2$, $s \in \N$.
	The following holds for every $k \ge 2$ and $s \in \N$.
	There exists a $k$-partite $k$-graph $H$ of density at least $2^{-s-k}$,
	and a partition $\V_0$ of $V(H)$ with $|\V_0| \le 2^{200}k$, such that if $\P$ is a $\langle 2^{-16^k} \rangle$-regular partition of $H$ with $\P^{(1)} \prec \V_0$ then $|\P^{(1)}| \ge \Ack_k(s)$.
%	
%	There is $C>0$ such that the following holds for every $k \ge 2$, $s \in \N$.
%	There exists a $k$-partite $k$-graph $H$ of density $2^{-s}$,
%	and a partition $\V_0$ of $V(H)$ with $|\V_0| \le 2^{200}k$, such that the following holds; for every $\langle 2^{-16^k} \rangle$-regular partition $\P$ of $H$, if its vertex partition satisfies $\P^{(1)} \prec \V_0$ then $|\P^{(1)}| \ge \Ack_k(C \cdot s)$.
%	
%	There is $C>0$ such that the following holds for every $k \ge 2$, $s \in \N$.
%	There exists a $k$-partite $k$-graph $H$ of density $2^{-s}$,
%	and a partition $\V_0$ of $V(H)$ with $|\V_0| \le 2^{200}k$, such that every $\langle \a_k \rangle$-regular partition of $H$
%	 %with vertex partition $\Z \prec \V_1$
%	with vertex partition $\Z \prec \V_0$ satisfies $|\Z| \ge \Ack_k(C \cdot s)$.
	%it holds that $|\Z| \ge \Ack_k(C \cdot s)$.
	
%	Let the $k$-graph $H$ and the vertex equipartitions $\V_1 \succ\cdots\succ \V_m$ satisfy Property~$(B)$.
%	Then there exists a $k$-graph $H'$ with $d(H')=d(H)=:p$ such that every $\langle \d_0, f \rangle$-equipartition $\Q$ of $H'$ satisfies $|\Q^{(1)}| \ge \Ack_k(\Omega(\log 1/p))$.
	%$\Z \preceq \V_s$, and in particular, $|\Z|
%	
%	There is $C>0$ such that the following holds for every $s \ge k \ge 2$.
%	There exists a $k$-graph $H'$ of density at least $2^{-2s}$
%	and a vertex partition $\V_1$ such that for every $\langle \a_k, f \rangle$-regular partition of $H'$ with vertex partition
%	$\Z \prec \V_1$, it holds that $|\Z| \ge \Ack_k(C \cdot s)$.
%
	%$\Z$, if $\Z \prec \V_1$ then $|\Z| \ge \Ack_k(C \cdot s)$.
	%\Ack_k(\Omega(\log 1/p))$.
\end{theo}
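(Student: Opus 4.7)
The plan is to convert the one-sided refinement conclusion of Lemma~\ref{lemma:ind-k} into the two-sided lower bound required by Theorem~\ref{theo:main} using the ``cycle of implications'' idea sketched in Subsection~\ref{subsec:overview}. I would start by taking $k$ vertex classes $V_1,\ldots,V_k$, splitting each as $V_h = V_h^a \cup V_h^b$ into two equal halves, and arranging the $2k$ resulting subclasses cyclically in the interleaved order
\[
V_1^a,\, V_2^a,\, \ldots,\, V_k^a,\, V_1^b,\, V_2^b,\, \ldots,\, V_k^b \;.
\]
For each $j \in \{1,\ldots,2k\}$ the $k$ consecutive subclasses starting at position $j$ (indices modulo $2k$) form a tight-cycle $k$-edge $\pi_j$, and by this interleaving $\pi_j$ hits each of $V_1,\ldots,V_k$ exactly once. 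Hence when $H$ is built as the disjoint union of $2k$ $k$-partite $k$-graphs $H_j$, one per $\pi_j$, the result is a genuine $k$-partite $k$-graph on $(V_1,\ldots,V_k)$.

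To construct $H$, I would fix a common master sequence $\V_1 \succ \cdots \succ \V_m$ of $m = m_k(s)$ equitable refinements of the $2k$-subclass partition, with $|V_h(\V_i)| = t(i)$ for every subclass $h$ and every $i$. For each $j$ I would apply Lemma~\ref{lemma:ind-k} to $\pi_j$ (ordered ``position $j$ first, then $j+1$, etc.'' modulo $2k$) together with the restriction of the master sequence to the subclasses of $\pi_j$, obtaining a chain $\HH_1^j \succ \cdots \succ \HH_s^j$. I would pick an arbitrary $H_j \in \HH_s^j$ and set $H = \bigsqcup_j H_j$ with initial partition $\V_0 := \V_1$, so $|\V_0| = O(k)$ as required. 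A short calculation confirms that $H$ has density at least $2^{-s-k}$, since each $H_j$ contains $2^{-s}(n/2k)^k$ edges while $(V_1,\ldots,V_k)$ admits at most $(n/k)^k$ $k$-partite edges.

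The main argument is a single induction built on the cyclic structure. Given a $\langle 2^{-16^k}\rangle$-regular partition $\P$ of $H$ with $\P^{(1)} \prec \V_0$, I would first note that each $H_j$ carries a $1/(2k)$-fraction of $E(H)$, so by Claim~\ref{claim:restriction} the restriction of $\P$ to $V(H_j)$ is a $\langle 2k \cdot 2^{-16^k}\rangle$-regular partition of $H_j$, which is comfortably $\langle \d_k \rangle$-regular since $2k \cdot 2^{-16^k} \leq 2^{-8^k}$ for every $k \geq 2$. I would then induct on $i$ to prove: for every $1 \le i \le \A_k(s)+1$ and every subclass $h$, $V_h(\P) \prec_{2^{-9}} V_h(\V_i)$. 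The base case $i=1$ is immediate from $\P^{(1)} \prec \V_0 = \V_1$. For the inductive step, I would fix any edge $j$; by the inductive hypothesis the $k-1$ subclasses in positions $2,\ldots,k$ of $\pi_j$ are refined at level $i \le \A_k(s)$, so Lemma~\ref{lemma:ind-k} applied to $H_j$ implies the position-$1$ subclass of $\pi_j$ is refined at level $i+1$. Ranging $j$ over all $2k$ edges covers every subclass exactly once, completing the step. At $i = \A_k(s)+1$ the approximate refinement $V_h(\P) \prec_{2^{-9}} V_h(\V_{\A_k(s)+1})$ together with the tower growth of $t$ yields $|V_h(\P)| \ge \Ack_k(s)$ via~(\ref{eq:A_k}), so $|\P^{(1)}| \ge \Ack_k(s)$.

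The main conceptual obstacle is the simultaneous design of two features: (a) a subclass arrangement making $H$ genuinely $k$-partite on $(V_1,\ldots,V_k)$, and (b) a cyclic labeling under which the ``circle of implications'' refines every subclass exactly once per inductive round, while the level index $i$ never exceeds $\A_k(s)$. The interleaving above achieves both. Everything else --- verifying $2k \cdot 2^{-16^k} \le \d_k$, checking $H$'s density, and extracting the final lower bound on $|\P^{(1)}|$ from an approximate $2^{-9}$-refinement of a level-$(\A_k(s){+}1)$ partition --- is routine constant-chasing given the careful setup of $t$ and $\A_k$ in Subsection~\ref{subsec:main-induction}.
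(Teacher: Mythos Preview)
Your proposal is correct and follows essentially the same approach as the paper: both arrange $2k$ subclasses along a tight $2k$-cycle (your ``interleaved order'' is precisely the $k$-partite structure of the tight cycle the paper uses), apply Lemma~\ref{lemma:ind-k} to each of the $2k$ consecutive $k$-tuples, and then run the circle of implications. The only cosmetic difference is that you phrase the cycle argument as a direct forward induction on the level $i$, whereas the paper phrases it as a proof by contradiction via the minimal $\beta^*$; these are equivalent.
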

%\addtocounter{theo}{+1}

\begin{remark}\label{remark:main}
	As can be easily checked, the proof of Theorem~\ref{theo:main} also gives that $H$ has the same number of vertices in all vertex classes.
	%, and that the lower bound for every $k \ge 3$ is in fact $\Ack_k(s)$ (i.e., with $C=1$).
\end{remark}

%$V(F)=\bigcup_{i=1}^k \{i,-i\}$
%$$E(F) = \{(1,2,\ldots,k), (-1,2,3,\ldots,k), (-1,-2,3,4,\ldots,k),\ldots,(-1,-2,\ldots,k) \} \;.$$

\begin{proof}
	Let the $k$-graph $B$ be the tight $2k$-cycle; that is, $B$ is the $k$-graph on vertex classes $\{0,1,\ldots,2k-1\}$ with edge set $E(B)=\{\{0,1,\ldots,k-1\},\{1,2,\ldots,k\},\ldots,\{2k-1,0,\ldots,k-2\}\}$.
	Note that $B$ is $k$-partite with vertex classes $(\{0,k\},\{1,k+1\},\ldots,\{k-1,2k-1\}\}$.
	Put $m=m_k(s-k)$ and let $n \ge \t(m)$.
	Let $\Vside^0,\ldots,\Vside^{2k-1}$ be $2k$ mutually disjoint sets of size $n$ each.
	Let $\V^1 \succ\cdots\succ \V^m$ be an arbitrary sequence of $m$ successive equitable refinements of $\{\Vside^0,\ldots,\Vside^{2k-1}\}$ with $|\V^i_h|=\t(i)$ for every $1 \le i \le m$ and $0 \le h \le 2k-1$, which exists as $n$ is large enough.
	Extending the notation $\V_x$ (above Definition~\ref{def:k-reg}), for every $0 \le x \le 2k-1$ we henceforth denote the restriction of the vertex partition $\V \prec \{\Vside^0,\ldots,\Vside^{2k-1}\}$ to $\Vside^x$ by $\V_x = \{V \in \V \,\vert\, V \sub \Vside^x\}$.
	For each edge $e=\{x,x+1,\ldots,x+k-1\} \in E(B)$ (here and henceforth when specifying an edge, the integers are implicitly taken modulo $2k$)
	apply Lemma~\ref{lemma:ind-k} with
	$$s,\,
	\Vside^{x},\Vside^{x+1},\ldots,\Vside^{x+k-1}
	\text{ and }
	\bigcup_{j=0}^{k-1}\V^{1}_{x+j}
	\succ\cdots\succ \bigcup_{j=0}^{k-1}\V^{m}_{x+j} \;.$$
	Let $H_e$ denote the resulting $k$-partite $k$-graph on $(\Vside^{x},\Vside^{x+1},\ldots,\Vside^{x+k-1})$.
	Note that $d(H_e) = 2^{-s}$.
	Let
	$$c = 2^{-9} \quad\text{ and }\quad K=\A_k(s)+1 \;.$$
	Then $H_e$ has the property that for every $\langle \d_k \rangle$-regular partition $\P'$ of $H_e$ and every $1 \le i < K$,
	\begin{equation}\label{eq:paste-property}
	\text{If $V_{x+h}(\P') \prec_{c} V_{x+h}(\V_i)$ for every $1 \le h \le k-1$, then $V_x(\P) \prec_{c} V_x(\V_{i+1})$.}
	%\text{if $\Z'_{x+2} \prec_{c} \V^i_{x+2}$ and $\Z'_{x+1} \prec_{c} \V^i_{x+1}$ then $\Z'_x \prec_{c} \V^{i+1}_x$.}
	\end{equation}	
	We construct our $k$-graph on the vertex set $\Vside:=\Vside^0 \cup\cdots\cup \Vside^{2k-1}$ as
	$E(H) = \bigcup_{e} E(H_e)$; that is, $H$ is the edge-disjoint union of all $2k$ $k$-partite $k$-graphs $H_e$ constructed above.
	Note that $H$ is a $k$-partite $k$-graph (on vertex classes $(\Vside^0 \cup \Vside^k,\, \Vside^1 \cup \Vside^{k+1},\ldots, \Vside^{k-1} \cup \Vside^{2k-1}))$ of density $\frac{2k}{2^k} 2^{-s} \ge 2^{-s-k}$, as needed.
	We will later use the following fact.
	\begin{prop}\label{prop:restriction}
		Let $\P$ be a $\langle 2^{-16^k} \rangle$-regular partition of $H$ with $\P^{(1)} \prec \{\Vside^0,\ldots,\Vside^{2k-1}\}$, and let
		$e \in E(B)$.
		%$e=\{i_1,\ldots,i_k\} \in K_{2,\ldots,2}$.
		%If the vertex partition of $\P$ refines
		%If $\P[\Vside] \prec \{\Vside^{i_1},\ldots,\Vside^{i_k}\}$
		%If $\P[\Vside] \prec \{\Vside^1,\ldots,\Vside^{2k}\}$
		Then the restriction $\P'$ of $\P$ to $V(H_e)$ is a $\langle \d_k \rangle$-regular partition of $H_e$.
	\end{prop}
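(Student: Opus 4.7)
The proof proposal is to simply invoke Claim~\ref{claim:restriction} on $H$ and on the subgraph $H_e$. The main verifications are that $\P^{(1)}$ is a refinement of a partition of the form $\bigcup_{i=1}^k \{V_i, V_i \setminus V_i'\}$, and that the resulting regularity parameter $\d/\b$ that Claim~\ref{claim:restriction} produces is indeed at most $\d_k = 2^{-8^k}$.

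First, I would unpack notation. The $k$-partite structure of $H$ has vertex classes $V_i = \Vside^{i-1} \cup \Vside^{i-1+k}$ for $i=1,\ldots,k$, while $H_e$ for $e=\{x,x+1,\ldots,x+k-1\}$ has vertex classes $\Vside^{x+j}$ for $j=0,\ldots,k-1$. Since the indices $x,x+1,\ldots,x+k-1$ are a full residue system modulo $k$, each $V_i$ of $H$ contains exactly one $\Vside^{x+j}$, which we designate as $V_i'$; and $V_i \setminus V_i'$ is precisely the other class $\Vside^{x+j+k \bmod 2k}$. The hypothesis $\P^{(1)} \prec \{\Vside^0,\ldots,\Vside^{2k-1}\}$ then immediately implies $\P^{(1)} \prec \bigcup_{i=1}^k \{V_i,\, V_i \setminus V_i'\}$, verifying the hypothesis of Claim~\ref{claim:restriction}.

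Next I would compute $\b=e(H_e)/e(H)$. Each $H_e$ (as output by Lemma~\ref{lemma:ind-k} with the parameter $s$) is a $k$-partite $k$-graph of density $2^{-s}$ on vertex classes of size $n$ each, so $e(H_e)=2^{-s}n^k$. Since $H$ is the edge-disjoint union of $2k$ such graphs (edge-disjointness follows because the $k$-tuple of indices $\{x,x+1,\ldots,x+k-1\} \bmod 2k$ determines $x$, so different edges of $B$ use distinct combinations of $\Vside$-classes), we have $e(H)=2k\cdot 2^{-s}n^k$ and hence $\b=1/(2k)$.

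Applying Claim~\ref{claim:restriction} with $\d=2^{-16^k}$ and $\b=1/(2k)$ then yields that $\P'$ is a $\langle 2k\cdot 2^{-16^k}\rangle$-regular partition of $H_e$. Finally I would check the numerical inequality $2k\cdot 2^{-16^k}\le 2^{-8^k}=\d_k$; this is immediate since $16^k-8^k=8^k(2^k-1)\ge 8^k$, so $2k\le 2^{8^k}$ comfortably for every $k\ge 2$. I do not anticipate any real obstacle: the only thing to be slightly careful about is matching the vertex-class correspondence between $H$ and $H_e$ so that Claim~\ref{claim:restriction}'s hypothesis on $\P^{(1)}$ applies verbatim, and once the ratio $\b=1/(2k)$ is identified, the rest is arithmetic on the constants defined in~(\ref{eq:delta_k}).
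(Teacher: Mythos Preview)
Your proposal is correct and is exactly the paper's approach: the paper's proof reads in full ``Immediate from Claim~\ref{claim:restriction} using the fact that $e(H_e) = \frac{1}{2k}e(H)$.'' You have simply spelled out the verifications (the refinement hypothesis on $\P^{(1)}$, the computation $\b=1/(2k)$, and the arithmetic $2k\cdot 2^{-16^k}\le \d_k$) that the paper leaves implicit.
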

	\begin{proof}
		Immediate from Claim~\ref{claim:restriction} using the fact that $e(H_e) = \frac{1}{2k}e(H)$.
		%
		%		Recall Definition~\ref{def:k-reg}.
		%		Clearly, $(\Z',\E')$ is $\langle 2^{-73} \rangle$-good. We will show that $\E'_1 \cup \Z'_1$ is a $\langle 2^{-70} \rangle$-regular partition of $G^1_{H_e}$.
		%		The argument for $G^2_{H_e}$, $G^3_{H_e}$ will be analogous, hence the proof would follow.
		%		Write $e=\{x,x+1,x+2\}$ and suppose $x$ is in the $\ith$ vertex class of $B$, $1 \le i \le 3$.
		%		Then $G^1_{H_e}$ is an induced subgraph of $G^i_{H}$, namely, $G^1_{H_e} = G^i_{H}[\Vside^{x+1} \times \Vside^{x+2},\,\Vside^x]$.
		%		Note that by construction we have $e(H_e) = \frac16e(H)$, and thus $e(G^1_{H_e}) = \frac16 e(G^i_{H})$.
		%		Since $\E_i \cup \Z_i$ is a $\langle 2^{-73}\rangle$-regular partition of $G^i_{H}$, we deduce---by adding/removing at most $2^{-73}e(G^i_{H}) = 6 \cdot 2^{-73}e(G^1_{H_e})$ edges of $G^1_{H_e}$---that $\E'_1 \cup \Z'_1$ is a $\langle \frac16 \cdot 2^{-73} \rangle$-regular partition of $G_{H_e}^1$.
		%		As explained above, this completes the proof.
	\end{proof}	
	%We assume $|\P| \le g_k(n)$ since otherwise, by our choice of $n$, $|\P| \ge g_k(n) \ge \Ack_k(s)$ and we are done.
	
	Now, let $\P$ be an $\langle 2^{-16^k} \rangle$-regular partition of $H$
	with $\P^{(1)} \prec \V^1$.
	Our goal will be to show that
	\begin{equation}\label{eq:paste-goal}
	\P^{(1)} \prec_{c} \V^{K} \;.
	\end{equation}
	Proving~(\ref{eq:paste-goal}) would complete the proof, by setting $\V_0$ in the statement to be $\V^1$ here (notice $|\V^1|=k\t(1) = k2^{200}$ by~(\ref{eq:t})); %\footnote{Here $\V^1$ plays the role of $\V_0$ in the statement. Note that $|\V^1|=3\t(1) \le 2^{300}$ by~(\ref{eq:t}).}
	indeed, Claim~\ref{claim:refinement-size}, given in Section~\ref{subsec:pre} below, would imply that
	$$|\P^{(1)}| \ge \frac14|\V^{K}| = \frac14 \cdot 2k \cdot \t(K)
	\ge \t(K)
	\ge \t(\A_k(s))
	\ge \A_k(s)
	\ge \Ack_k(s) \;,$$
	% \Ack_k(\Omega(s)) \;,$$
	%as needed.
	where the last inequality uses~$(\ref{eq:A_k})$.
	Assume towards contradiction that $\P^{(1)} \nprec_{c} \V^{K}$. By averaging,
	\begin{equation}\label{eq:assumption}
	\P^{(1)}_h \nprec_c \V^{K}_h \text{ for some } 0 \le h \le 2k-1.
	\end{equation}
	For each $0 \le h \le 2k-1$ let $1 \le \b(h) \le K$ be the largest integer satisfying $\P^{(1)}_h \prec_c \V^{\b(h)}_h$,
	which is well defined since $\P^{(1)}_h \prec_c \V^1_h$ (in fact $\P^{(1)} \prec \V^1$).
	Put $\b^* = \min_{0 \le h \le 2k-1} \b(h)$, and note that by~(\ref{eq:assumption}),
	\begin{equation}\label{eq:paste-star}
	\b^* < K \;.
	\end{equation}
	Let $0 \le x \le 2k-1$ minimize $\b$, that is, $\b(x)=\b^*$.
	Therefore:
	\begin{equation}\label{eqcontra}
	\P^{(1)}_{x+k-1} \prec_c \V^{\b^*}_{x+k-1},\,\ldots,\,\P^{(1)}_{x+1} \prec_c \V^{\b^*}_{x+1} \text{ and }
	\P^{(1)}_{x} \nprec_c \V^{\b^*+1}_{x}.	
	\end{equation}
	Let $e=\{x,x+1,\ldots,x+k-1\} \in E(B)$.
	Let $\P'$ be the restriction of $\P$ to $V(H_e)=\Vside^{x} \cup \Vside^{x+1} \cup\cdots\cup \Vside^{x+k-1}$, which is a $\langle \d_k \rangle$-regular partition of $H_e$ by Proposition~\ref{prop:restriction}. Since ${\P'}^{(x+h)}_{h}=\P^{(x+h)}_h$ for every $0 \le h \le k-1$ we get
	from~(\ref{eqcontra}) a contradiction to~(\ref{eq:paste-property}) with $i=\beta^*$.
	We have thus proved~(\ref{eq:paste-goal}) and so the proof is complete.
\end{proof}

\subsection{Deferred proofs: properties of $k$-partitions}\label{subsec:k-partitions-proofs}

\renewcommand{\K}{\mathcal{K}}

Henceforth, for a $(k-1)$-partite $(k-1)$-graph $F$ on $(V_1,\ldots,V_{k-1})$ and a disjoint vertex set $V$ we denote by $F \circ V$ the $k$-partite $k$-graph on $(V_1,\ldots,V_{k-1},V)$ given by
$$F \circ V := \{ (v_1,\ldots,v_{k}) \,\vert\, (v_1,\ldots,v_{k-1}) \in F \text{ and } v_{k} \in V \} \;.$$
%
%We say that a $k$-partition $\P$ is \emph{on} $\{\Vside^1,\ldots,\Vside^\ell\}$ if $\P^{(1)} \prec \{\Vside^1,\ldots,\Vside^\ell\}$.
We will use the following additional property of $k$-partitions.

\begin{claim}\label{claim:decomposition}
	Let $\P$ be a $(k-1)$-partition %with $\P^{(1)} \prec \{\Vside^1,\ldots,\Vside^\k\}$,
	with $\P^{(1)} \prec (\Vside^1,\ldots,\Vside^k)$,
	$F \in E_k(\P)$ and $V \in V_k(\P)$.
	%$F \in \P[\Vside^1\times\cdots\times\Vside^{k-1}]$ and $V \in \P[\Vside^k]$.
	%	let $F \in \P[\Vside^1\times\cdots\times\Vside^{k-1}]$ be a $(k-1)$-graph %on $(A^1,\ldots,A^{k-1})$ with $A^i \sub \Vside^i$ for every $1 \le i \le k-1$,
	%	and let $V \in \P[\Vside^k]$ be a vertex cluster.
	Then there is a set of $k$-polyads $\{P_i\}_i$ of $\P$ such that
	\begin{equation}\label{eq:red-k-partitionP-gen}
	F \circ V = \bigcup_i \K(P_i) \,\text{ is a partition of $F \circ V$,
		with } P_i = (P_{i,1},\ldots,P_{i,k-1},F) \;.
	\end{equation}
\end{claim}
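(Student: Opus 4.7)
The plan is to partition $F \circ V$ according to the parts of $\P^{(k-1)}$ that each $(k-1)$-face of each edge falls into. First I would record the setup: since $F \in E_k(\P) \subseteq \P^{(k-1)}$ and $\P^{(k-1)}$ partitions $\Cross_{k-1}(\P^{(1)})$, the graph $F$ lives on some fixed vertex classes $(Z_1,\ldots,Z_{k-1})$ with $Z_i \in V_i(\P)$; consequently, $F \circ V$ is a $k$-partite $k$-graph on $(Z_1,\ldots,Z_{k-1},V)$.

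Next I would define the map producing the polyads. For each edge $(v_1,\ldots,v_k) \in F \circ V$ and each $j \in \{1,\ldots,k-1\}$, the $(k-1)$-tuple $(v_1,\ldots,\widehat{v_j},\ldots,v_k)$ lies in $\Cross_{k-1}(\P^{(1)})$ (it uses one vertex from each of the distinct clusters $Z_1,\ldots,\widehat{Z_j},\ldots,Z_{k-1},V$), so it belongs to a unique part $E_j \in \P^{(k-1)}$, which is moreover a $(k-1)$-partite $(k-1)$-graph supported on vertex classes $(Z_1,\ldots,\widehat{Z_j},\ldots,Z_{k-1},V)$. This yields the $k$-polyad $P := (E_1,\ldots,E_{k-1},F)$. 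All $k$ of its faces lie in $\P^{(k-1)}$, so $P$ is a $k$-polyad of $\P$. Enumerate the distinct polyads produced this way as $\{P_i\}_i$.

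It remains to verify that $F \circ V = \bigcup_i \K(P_i)$ is a partition. \emph{Coverage}: by construction, each edge $(v_1,\ldots,v_k) \in F \circ V$ has every $(k-1)$-subset equal to an edge of the corresponding face of its associated polyad $P$, hence $\{v_1,\ldots,v_k\} \in \K(P)$. \emph{Disjointness}: if $(v_1,\ldots,v_k) \in \K(P) \cap \K(P')$ with $P=(E_1,\ldots,E_{k-1},F)$ and $P'=(E_1',\ldots,E_{k-1}',F)$ both in our family, then for each $j<k$ the $(k-1)$-tuple omitting $v_j$ belongs to both $E_j$ and $E_j'$; since $\P^{(k-1)}$ is a partition, $E_j=E_j'$ for all $j$, so $P=P'$. \emph{Containment}: for any such $P$, each $(v_1,\ldots,v_k) \in \K(P)$ has $(v_1,\ldots,v_{k-1})$ as an edge of the $k$-th face $F$ and $v_k \in V$, so the edge lies in $F \circ V$.

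There is no real obstacle here; the only thing that needs a little care is making sure that a $k$-polyad is genuinely determined by its $k$ faces with no extra compatibility requirement, so that an arbitrary choice of $E_j \in \P^{(k-1)}$ on the correct vertex classes really produces a valid polyad. This is immediate from the definition (a $k$-polyad is a $k$-partite $(k-1)$-graph, and every $(k-1)$-edge in such a graph omits exactly one vertex class, so the graph is the disjoint union of its faces). Once this is noted, the argument is a direct unpacking of definitions.
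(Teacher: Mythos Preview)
Your proof is correct, and it takes a genuinely different route from the paper's. The paper argues by induction on $k$: it uses that $F \sub \K(G_1,\ldots,G_{k-1})$ for the unique $(k-1)$-polyad $(G_1,\ldots,G_{k-1})$ of $\P$ underlying $F$, rewrites $F \circ V = \K(G_1 \circ V,\ldots,G_{k-1} \circ V, F)$, applies the induction hypothesis to each $G_j \circ V$ inside the restricted $(k-2)$-partition, and then refines once more using $\P^{(k-1)} \prec \K_{k-1}(\P)$ to land in $\P^{(k-1)}$. You instead go directly: for each edge of $F \circ V$ you read off, face by face, which part of $\P^{(k-1)}$ it lies in, and observe that this map is well-defined, covers, and separates. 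Your argument is shorter and more transparent, and avoids induction entirely; the paper's approach hews more closely to the recursive structure of Definition~\ref{def:r-partition}, but that extra structure is not actually needed for the claim as stated.
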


\begin{proof}%[Proof of Claim~\ref{claim:decomposition}]
	We proceed by induction on $\k \ge 2$, noting that the induction basis $\k=2$ is trivial since in this case $E_2(\P) = V_1(\P)$ so
	$F=V' \in V_1(\P)$,
	hence $F \circ V = V' \times V$ is simply $\K(P)$ where $P$ is the $2$-polyad of $\P$ corresponding to the pair $(V',V)$.
	For the induction step assume the statement holds for $k \ge 2$ and let us prove it for $k+1$. Let $\P$ be a $k$-partition on $(\Vside^1,\ldots,\Vside^{k+1})$, let $F \in E_{k+1}(\P)$ and let $V \in V_{k+1}(\P)$,
	and denote the vertex classes of $F$ by $(V_1,\ldots,V_k)$ with $V_j \sub \Vside^j$ for every $1 \le j \le k$.
	Recall that, by Definition~\ref{def:r-partition},
	$\P^{(\k)} \prec \K_{\k}(\P)$.
	Thus, $F \sub \K(G_1,\ldots,G_\k)$ with $G_j \in \P^{(\k-1)}$ for every $1 \le j \le \k$, where $G_j$ is a $(\k-1)$-partite $(\k-1)$-graph on $(V_1,\ldots,V_{j-1},V_{j+1},\ldots,V_\k)$.
	We have
	\begin{equation}\label{eq:decompose}
	F \circ V = \K(G_1 \circ V, \ldots,\, G_{\k} \circ V,\, F) \;;
	\end{equation}
	indeed, the  inclusion~$\sub$ follows from the fact that $F \sub \K(G_1,\ldots,G_\k)$, and the reverse inclusion~$\supseteq$ is immediate.
	%	~(recall~(\ref{eq:K2})).
	%	$$\K(F_1,\ldots,F_i) = \big\{ e=\{v_1,\ldots,v_i\} \in V_1 \times\cdots\times V_i \,\big\vert\, \forall j \colon e \sm \{v_j\} \in F_j \big\}\,\footnote{For convenience we omit the double parentheses in $\K((F_1,\ldots,F_i))$.}$$
	Now, for every $1 \le j \le \k$, let $\P_j$ denote the restriction of $\P$ to the vertex classes $(\Vside^1,\ldots,\Vside^{j-1},\Vside^{j+1},\ldots,\Vside^\k,\Vside^{k+1})$
	and apply the induction hypothesis with the $(\k-1)$-partition $\P_j$, the $(k-1)$-graph $G_j$ and $V$. %\in \P_j[\Vside^{k+1}]$.
	It follows that there is a partition $G_j \circ V = \bigcup_i \K(P_{j,i})$ where each $P_{j,i}$ is a $\k$-polyad of $\P_j$ (and thus of $\P$) on $(V_1,\ldots,V_{j-1},V_{j+1},\ldots,V_\k,V)$.
	Since $\K_{\k}(\P_j) \succ \P_j^{(\k)}$, again by Definition~\ref{def:r-partition},
	for each $i$ and $j$ we have a partition $\K(P_{j,i}) = \bigcup_\ell F_{j,i,\ell}$ with $F_{j,i,\ell} \in \P_j^{(\k)}$, where each $F_{j,i,\ell}$ is a $\k$-partite $\k$-graph on $(V_1,\ldots,V_{j-1},V_{j+1},\ldots,V_\k,V)$.
	Summarizing, for every $1 \le j \le k$ we have the partition $G_j \circ V = \bigcup_{i,\ell} F_{j,i,\ell}$,
	and so it follows using~(\ref{eq:decompose}) that we have the partition
	%\begin{equation}\label{eq:red-k-partitionF}
	$$
	F \circ V = \K\Big(\bigcup_{i,\ell} F_{1,i,\ell}, \ldots,\, \bigcup_{i,\ell} F_{\k,i,\ell},\, F \Big)
	= \bigcup_{\substack{i_1,\ldots,i_\k\\\ell_1,\ldots,\ell_\k}} \K(F_{1,i,\ell},\ldots,\,F_{\k,i,\ell},\, F) \;.
	%\end{equation}
	$$
	As each $(\k+1)$-tuple $(F_{1,i,\ell},\ldots,\,F_{\k,i,\ell}, F)$ corresponds to a $(\k+1)$-polyad of $\P$, this completes the inductive step.
\end{proof}

Before proving Claim~\ref{claim:uniform-refinement} we will also need the following two easy claims.
%Finally, we will need the following easy fact
%regarding the union of $\langle \d\rangle$-regular graphs.
\begin{claim}\label{claim:star-union}%[Union lemma for $\langle \d\rangle$-regularity]
	Let $G_1,\ldots,G_\ell$ be mutually edge-disjoint bipartite graphs on the same vertex classes $(Z,Z')$.
	If every $G_i$ is $\langle \d \rangle$-regular then $G=\bigcup_{i=1}^\ell G_i$ is also $\langle \d \rangle$-regular.
\end{claim}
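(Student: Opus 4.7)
The plan is to reduce to the definition directly, exploiting the fact that edge densities are additive over edge-disjoint unions. Fix arbitrary subsets $A \sub Z$ and $B \sub Z'$ with $|A| \ge \d|Z|$ and $|B| \ge \d|Z'|$; the goal is to show $d_G(A,B) \ge \tfrac12 d_G(Z,Z')$.

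Since the $G_i$ are mutually edge-disjoint on the same vertex classes, for every pair of subsets $(X,Y) \sub Z \times Z'$ the edges of $G[X,Y]$ decompose as the disjoint union of the edges of $G_i[X,Y]$, so
\[
d_G(X,Y) = \sum_{i=1}^\ell d_{G_i}(X,Y)
\]
(dividing $e_G(X,Y) = \sum_i e_{G_i}(X,Y)$ by $|X||Y|$). Apply this identity once with $(X,Y)=(A,B)$ and once with $(X,Y)=(Z,Z')$.

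Now invoke the $\langle\d\rangle$-regularity of each $G_i$ (Definition~\ref{def:star-regular}) on the pair $(A,B)$, noting that the hypothesis $|A|\ge\d|Z|$, $|B|\ge\d|Z'|$ is exactly what is needed regardless of which $G_i$ we are looking at, since all $G_i$ share the same vertex classes $(Z,Z')$. This yields $d_{G_i}(A,B) \ge \tfrac12 d_{G_i}(Z,Z')$ for every $i$. Summing over $i$ and using the additivity identity above gives $d_G(A,B) \ge \tfrac12 d_G(Z,Z')$, as required.

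There is essentially no obstacle here: the key point is just that the $\langle\d\rangle$-regularity condition is a linear inequality in the density, and edge-disjointness makes the density additive, so a sum of valid inequalities is valid. The only sanity check is that the threshold sets $|A|\ge\d|Z|$, $|B|\ge\d|Z'|$ are defined in terms of the common vertex classes, not the individual $G_i$, so the same $(A,B)$ witnesses regularity for all $i$ simultaneously.
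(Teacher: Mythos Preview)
Your proof is correct and essentially identical to the paper's own argument: both directly verify the definition by using additivity of density over edge-disjoint unions and summing the per-$G_i$ regularity inequalities.
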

\begin{proof}%[Proof of Claim~\ref{claim:star-union}]
	%	Suppose $H_i$ is $(\e,d_i)$-regular in $F$.
	Let $S \sub Z$, $S' \sub Z'$ with $|S| \ge \d|Z|$, $|S'| \ge \d|Z'|$.
	Then
	$$d_G(S,S') = \frac{e_G(S,S')}{|S||S'|}
	= \sum_{i=1}^\ell \frac{e_{G_i}(S,S')}{|S||S'|}
	= \sum_{i=1}^\ell d_{G_i}(S,S')
	\ge \sum_{i=1}^\ell \frac12 d_{G_i}(Z,Z')
	= \frac12 d_{G}(Z,Z') \;,$$
	where the second and last equalities follow from the mutual disjointness of the $G_i$, and the inequality follows from the $\langle \d \rangle$-regularity of each $G_i$.
	Thus, $G$ is $\langle \d \rangle$-regular, as claimed.
\end{proof}

\begin{claim}\label{claim:refinement-union}
	If $\Q \prec_\d \P$ then there exist $P \in \P$ and $Q$ that is a union of members of $\Q$ such that $|P \triangle Q| \le 3\d|P|$.
\end{claim}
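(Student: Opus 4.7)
The plan is an averaging argument over $P \in \P$. Let $n$ denote the size of the common underlying set. Since we are working with $\d \le 1/2$, for every $Q \in \Q$ with $Q \in_\d \P$ there is a \emph{unique} $P = P(Q) \in \P$ with $Q \sub_\d P$, i.e.\ $|Q \sm P(Q)| < \d|Q|$. Call such $Q$ \emph{good} (and the rest \emph{bad}); by hypothesis, the total size of bad parts is at most $\d n$. For each $P \in \P$ define
\[
Q_P = \bigcup_{\substack{Q\in\Q\text{ good}\\ P(Q)=P}} Q,
\]
which is a union of members of $\Q$. I would then show that $\sum_{P \in \P} |P \triangle Q_P| \le 3\d n$, which by averaging against $\sum_P |P| = n$ produces a $P$ with $|P \triangle Q_P| \le 3\d|P|$, completing the proof.

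To bound $\sum_P |Q_P \sm P|$, note that for each good $Q$ with $P(Q)=P$ we have $|Q \sm P| < \d|Q|$, so $|Q_P \sm P| \le \d|Q_P|$, giving
\[
\sum_{P\in\P} |Q_P \sm P| \le \d\sum_{P\in\P}|Q_P| = \d\!\!\sum_{Q\text{ good}}\!\!|Q| \le \d n.
\]
To bound $\sum_P |P \sm Q_P|$, observe that any point of $P \sm Q_P$ lies in some $Q \in \Q$ that is either bad or good with $P(Q) \neq P$. The bad parts contribute $\big|\bigcup_{Q\text{ bad}} Q\big| \le \d n$ in total across all $P$. For the good contribution, for each good $Q$ with $P(Q) = P'$ we have $\sum_{P \neq P'} |Q \cap P| = |Q| - |Q \cap P'| \le |Q \sm P'| < \d|Q|$, so summing over good $Q$ gives at most $\d n$.

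Combining these two bounds yields $\sum_P |P \triangle Q_P| \le \d n + \d n + \d n = 3\d n = 3\d \sum_P |P|$, so some $P \in \P$ satisfies $|P \triangle Q_P| \le 3\d|P|$, as required. There is no serious obstacle here; the only subtlety is that the association $Q \mapsto P(Q)$ must be well-defined, which is exactly the uniqueness remark following the definition of approximate refinements (valid since $\d \le 1/2$).
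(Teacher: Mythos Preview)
Your proof is correct and follows essentially the same averaging argument as the paper: define for each $P\in\P$ the union $Q_P$ of those $Q\in\Q$ with $Q\sub_\d P$, bound $\sum_P |P\triangle Q_P|\le 3\d n$ by splitting into the three contributions (overflow of good $Q$'s outside their $P$, bad $Q$'s, and good $Q$'s intersecting the wrong $P$), and average. The paper's write-up is slightly more compressed but the decomposition and estimates are identical.
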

\begin{proof}
	For each $P\in \P$ let $\Q(P) = \{Q \in \Q \,\vert\, Q \sub_\d P\}$,
	and denote $P_\Q = \bigcup_{Q \in \Q(P)} Q$.
	We have
	\begin{align*}
	\sum_{P \in \P} |P \triangle P_\Q|
	&= \sum_{P \in \P} |P_\Q \sm P| + \sum_{P \in \P} |P \sm P_\Q|
	= \sum_{P \in \P} \sum_{\substack{Q \in \Q \colon\\Q \sub_\d P}} |Q \sm P|
	+ \sum_{P \in \P} \sum_{\substack{Q \in \Q \colon\\Q \nsubseteq_\d P}} |Q \cap P| \\
	&\le \sum_{P \in \P} \sum_{\substack{Q \in \Q \colon\\Q \sub_\d P}} \d|Q|
	+ \Big( \sum_{\substack{Q \in \Q\colon\\Q \notin_\d \P}} |Q|
	+ \sum_{\substack{Q \in \Q \colon\\Q \in_\d \P}} \d|Q| \Big)
	\le 3\d\sum_{Q \in \Q} |Q|
	= 3\d\sum_{P \in \P} |P| \;,
	\end{align*}
	where the last inequality uses the statement's assumption $\Q \prec_\d \P$ to bound the middle summand.
	By averaging, there exists $P \in \P$ such that $|P \triangle P_\Q| \le 3\d|P|$, thus completing the proof.
\end{proof}

\paragraph*{Proofs of properties.}
We are now ready to prove the properties of $k$-partitions stated at the beginning of Section~\ref{sec:LB}.
%\begin{claim}\label{claim:uniform-refinement}
%	Let $\P$ be a $(k-1)$-partition with  $\P^{(1)} \prec \{V_1,\ldots,V_k\}$,
%	and let $\F$ be a partition of $V_1\times\cdots\times V_{k-1}$ with $E_k(\P) \prec_\d \F$.
%	If $\P$ is $\langle \d \rangle$-good
%	then the $(k-2)$-partition $\P'$ obtained by restricting $\P$ to $\bigcup_{i=1}^{k-1} V_i$ is a $\langle 3\d \rangle$-regular partition of some $F \in \F$.
%\end{claim}

\renewcommand{\k}{r}

%We are now ready to prove Claim~\ref{claim:uniform-refinement}.
\begin{proof}[Proof of Claim~\ref{claim:uniform-refinement}]
	%Put $\E = \P[\Vside^1\times\cdots\times\Vside^{\r}]$.
	Put $\E = E_k(\P)$, and let us henceforth use $\k=k-1$. Since $\E \prec_\d \F$, Claim~\ref{claim:refinement-union} implies that there exist $F \in \F$ (an $\k$-partite $\k$-graph on $(\Vside^1,\ldots,\Vside^{\k})$), as well as an $\k$-partite $\k$-graph $F_\E$ that is a union of members of $\E$, such that $|F \triangle F_\E| \le 3\d|F|$.
	Denote by $\Q$ the $(\k-1)$-partition $\P'$ obtained by restricting $\P$ to $\bigcup_{i=1}^{\k} V_i$, and note that $\Q$ is $\langle \d \rangle$-good since $\Q \sub \P$ and, by assumption, $\P$ is $\langle \d \rangle$-good.
	Our goal is to prove that $\Q$ is a $\langle 3\d \rangle$-regular partition of $F$.
	Recalling Definition~\ref{def:k-reg}, note that it suffices to show, without loss of generality, that $E_{\k}(\Q) \cup V_{\k}(\Q)$ is a $\langle \d \rangle$-regular partition of the bipartite graph $G_{F}^{\k}$. %\footnote{Here, $G_{F}^k=G_{F,P}^k$ with the polyad $P$ being the complete $k$-partite $(k-1)$-graph on $(\Vside^1,\ldots,\Vside^k)$; that is, the vertex classes of $G_{F}^k$ are $(\Vside^1\times\cdots\times\Vside^{k-1},\,\Vside^k)$.}
	We have $|G_{F}^{\k} \triangle G_{F_\E}^\k|=|F \triangle F_\E| \le 3\d|F|=3\d|G_{F}^{\k}|$, that is, $G_{F_\E}^\k$ is obtained from $G_{F}^{\k}$ by adding/removing at most $3\d |G_{F}^{\k}|$ edges.
	%As $F_\E$ is obtained from $F$ by adding/removing at most $3\d|F|$ edges,
	%it follows that $X:=G_{F_\E}^{\k}$ is obtained from $G_{F}^{\k}$ by adding/removing at most $3\d |G_{F}^{\k}|$ edges.
	Therefore, to complete the proof it suffices to show that for every $Z \in E_{\k}(\Q)$ and $Z' \in V_{\k}(\Q)$, the induced bipartite graph $G_{F_\E}^\k[Z,Z']$ is $\langle \d \rangle$-regular (recall Definition~\ref{def:star-regular}).
	
	Apply Claim~\ref{claim:decomposition} on the $(\k-1)$-partition $\Q$ with $Z$ and $Z'$. Since $\E \prec \K_{\r-1}(\Q)$ (recall Definition~\ref{def:r-partition}),
	this means that $Z \circ Z'$ is a (disjoint) union of members $E$ of $\E$ all underlain by $\k$-polyads of the form  $(P_1,\ldots,P_{\k-1},Z)$.
	Since $\Q$ is $\langle \d \rangle$-good (recall Definition~\ref{def:k-good}), for each such $E$ we in particular have that $G^\k_E[Z,Z']$ ($=G^\k_{E,\,\under(E)}$) is $\langle \d \rangle$-regular.
	It follows that $G_{F_\E}^\k[Z,Z']$ is a disjoint union of $\langle \d \rangle$-regular bipartite graphs on $(Z,Z')$.
	Claim~\ref{claim:star-union} thus implies that $G_{F_\E}^\k[Z,Z']$ is a $\langle \d \rangle$-regular bipartite graph. As explained above, this completes the proof.
\end{proof}

We end this subsection with the easy proof of Claim~\ref{claim:restriction}.
\begin{proof}[Proof of Claim~\ref{claim:restriction}]
	Recall Definition~\ref{def:k-reg}.
	Clearly, $\P'$ is $\langle \d \rangle$-good. We will show that $E_1(\P') \cup V_1(\P')$ is a $\langle \d/\b \rangle$-regular partition of $G^1_{H'}$.
	The argument for $G^i_{H'}$ for every $2 \le i \le k$ will be analogous, hence the proof would follow.
	Observe that $G^1_{H'}$ is an induced subgraph of $G^1_{H}$, namely, $G^1_{H'} = G^i_{H}[V_2' \times\cdots\times V_k',\, V_1']$.
	By assumption, $e(H') = \b e(H)$, and thus $e(G^1_{H'}) = \b e(G^1_{H})$.
	By the statement's assumption on $\P^{(1)}$ and since $E_1(\P) \cup V_1(\P)$ is a $\langle \d \rangle$-regular partition of $G^1_{H}$, we deduce---by adding/removing at most $\d e(G^1_{H}) =  (\d/\b)e(G^1_{H'})$ edges of $G^1_{H'}$---that $E_1(\P') \cup V_1(\P')$ is a $\langle \d/\b \rangle$-regular partition of $G_{H'}^1$.
	As explained above, this completes the proof.
\end{proof}

%	Since the above holds for every $1 \le i \le k$,
%	we deduce that the induced hypergraph $F_\E \cap K$ is $\langle \d \rangle$-regular. This completes the proof.

%	every induced hypergraph $F_\E \cap K$ with $K \in K_k(\P)$ is $\langle \d \rangle$-regular.

%	Let $K \in K_k(\P)$.
%	By Property~\ref{item:uniform-partition} of $\P$ we have that $\P[K]$ $(=\E[K])$ is a partition of $K$.
%	Denoting $F_\E = \bigcup_{E \in \E'} E$ with $\E' \sub \E$, we thus have $F_\E \cap K = \bigcup_{E \in \E'_K} E$, where $\E'_K = \E[K] \cap \E'$ $(\sub \E)$.
%	%where $\E'_K = \{E \in \E' \,\vert\, E \sub K\}$.
%	%Let $1 \le i \le k$ and put $G=G_{H[K],i}$.
%	Letting $1 \le i \le \r$,
%	this means that $G_{F_\E \cap K}^i = \bigcup_{E \in \E'_K} G_{E}^i$.
%	%for every $1 \le i \le \r$.
%	Note that by Property~\ref{item:uniform-equitable} of $\P$, the	bipartite graph $G_{E}^i$ is $\langle \d \rangle$-regular.	
%	It follows that $G_{F_\E \cap K}^i$ is a disjoint union of $\langle \d \rangle$-regular bipartite graphs on the same vertex classes.
%	By Fact~\ref{fact:star-union}, $G^i_{F_\E \cap K}$ is also $\langle \d \rangle$-regular.
%	Since the above holds for every $1 \le i \le k$,
%	we deduce that the induced hypergraph $F_\E \cap K$ is $\langle \d \rangle$-regular. This completes the proof.

\section{Proof of Lemma~\ref{theo:core}}\label{sec:core}

%\paragraph*{Core statement.}

%
The purpose of this section is to prove Lemma~\ref{theo:core}.
%We advise the reader that this is the only result of this section that is used in the proof of Theorem~\ref{theo:main} in Section~\ref{sec:LB}.
%. in the next section.
This section, which deals exclusively with graphs (rather than hypergraphs), is organized as follows.
In Subsection~\ref{subsec:core-overview} we give a short overview of our construction and analysis with pointers to the main claims in this section, in Subsection~\ref{subsec:pre} we introduce a few auxiliary notions and prove a few simple lemmas, in Subsection~\ref{subsec:core-construction} we specify our construction, in Subsection~\ref{subsec:key-properties} we prove two key properties of our construction and in Subsection~\ref{subsec:LB-core-main} we prove Lemma~\ref{theo:core}.

\subsection{Proof overview}\label{subsec:core-overview}

\paragraph*{Construction.}
In our construction of the sequence of edge equitpartition $\G_1 \succ \cdots \succ \G_s$ of $\Lside \times \Rside$ for Lemma~\ref{theo:core}, each graph in each $\G_j$ is a blowup of a bipartite graph whose vertex classes are the partitions $(\L_j,\R_j)$. 
To obtain $\G_j$, we subdivide each graph in $\G_{j-1}$ (where $\G_0$ is the trivial partition $\{\Lside \times \Rside\}$ with one part) into two subgraphs with the same number of edges. The way we do this is, roughly speaking,
blowup the graph from $\L_{j-1} \cup \R_{j-1}$ into $\L_j \cup \R_j$ (i.e., replace every ``vertex'' in $\L_{j-1} \cup \R_{j-1}$ by a cluster in $\L_j \cup \R_j$ and replace every ``edge'' by a complete bipartite graph),
and subdivide each of the complete bipartite graphs into a random biregular bipartite subgraph of density $\frac12$ and its complement (see Claim~\ref{claim:core-properties} below).

\paragraph*{Analysis.}
In order to prove that every graph $G \in \G_j$ satisfies the guarantee of Lemma~\ref{theo:core} we use two key properties of our construction.
The first key property (Claim~\ref{claim:property_qr} below) is that $G$ is \emph{somewhat} quasirandom, where
the measure of quasirandomness depends inversely on $|\R_1|$.
The second key property (Claim~\ref{claim:prop-main} below) is essentially that 
for every subset $P \sub \Lside$ satisfying $\P \sub_\d L \in \L_{k-1}$ yet $P \notin_{\d} \L_k$ for some $k \le j$, a constant fraction of the clusters $R \in \R_k$ ``neighboring'' $L$ in $G$ are such that in order to make the induced bipartite graph $G[P,R]$ $\langle \d \rangle$-regular one has to add or remove a constant fraction of its edges.
Recall that what we need to prove in Lemma~\ref{theo:core} is, in the contrapositive, 
that a vertex partition $\P \cup \Q$ of $G$ ($\P$ partitions $\Lside$, $\Q$ partitions $\Rside$) satisfying $\Q \prec_{2^{-9}} \Q_i$ yet $\P \nprec_{\g} \L_i$---with $\g=\poly(\d,|\R_1|^{-1})$ and some $i \le j$---cannot be $\langle \d \rangle$-regular.
Applying the second key property on all clusters $P \in \P$, \emph{each with its appropriate $k$},  
allows us to deduce, using the first key property, that in order for all induced bipartite graphs $G[P,Q]$, with $P \in \P$ and $Q \in \Q$, to be $\langle \d \rangle$-regular, one has to add or remove more than a $\d$-fraction of the edges of $G$. This proves that $\P \cup \Q$ is not a $\langle \d \rangle$-regular partition of $G$, as needed.

\subsection{Construction preliminaries}\label{subsec:pre}

In this subsection we prove a sequence of simple lemmas that will be used later in this section.

%We henceforth use the notation $d_G(A,B)$ when we want to emphasize that this density is with respect to the graph $G$; that is, $d_G(A,B)=e(A,B)/|A||B|$, where $e_G(A,B)$ is the number of edges between $A,B$ in $G$.
% between vertex subsets $A,B$ in the graph $G$.
% use denote by $d_G(A,B)$ the density between disjoint vertex subsets $A,B$

%For a graph $G$ we henceforth denote the density in $G$ by $d_G(A,B)$; that is, for disjoint vertex subsets $A,B$ we denote $d_G(A,B)=e(A,B)/|A||B|$, where $e_G(A,B)$ is the number of edges between $A,B$ in $G$.

\paragraph*{Approximate refinements.}

We will need the following claim. %regarding approximate refinements.
\begin{claim}\label{claim:refinement-size}
	If $\Q \prec_{1/2} \P$ and $\P$ is equitable then $|\Q| \ge \frac14|\P|$.
\end{claim}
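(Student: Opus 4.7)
The plan is to unpack the definition of $\Q \prec_{1/2} \P$ and combine it with the equitability of $\P$ via a short double-counting argument. First, I would split $\Q$ into its ``good'' parts---those $Q \in \Q$ satisfying $Q \in_{1/2} \P$---and its ``bad'' ones. By the definition of $\prec_{1/2}$, the bad parts cover total mass at most $n/2$, where $n$ denotes the size of the underlying set; hence the good parts cover total mass at least $n/2$.

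The second step is to bound the size of each individual good $Q$ from above. For every good $Q$, let $P(Q) \in \P$ be the unique part of $\P$ with $Q \sub_{1/2} P(Q)$; uniqueness is supplied by the remark after the definition of approximate refinement, which applies since $1/2 \le 1/2$. Unpacking $Q \sub_{1/2} P(Q)$ gives $|Q \cap P(Q)| > |Q|/2$, while on the other hand $|Q \cap P(Q)| \le |P(Q)| = n/|\P|$ by the equitability of $\P$. Combining these two inequalities yields $|Q| < 2n/|\P|$ for every good $Q$.

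Finally, I would just sum: the good parts together cover mass at least $n/2$, and each one has size strictly less than $2n/|\P|$, so the number of good parts exceeds $(n/2)/(2n/|\P|) = |\P|/4$. Hence $|\Q|$ is at least this many, giving $|\Q| \ge |\P|/4$ as required. There is no real obstacle here; the only point to be careful about is invoking the $\b \le 1/2$ remark to ensure that $P(Q)$ is well-defined, and interpreting the strict inequality $|Q| < 2n/|\P|$ correctly when passing to the integer lower bound $|\Q| \ge |\P|/4$.
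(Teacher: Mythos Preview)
Your proof is correct. It differs from the paper's argument in packaging, though the underlying ingredients are the same. The paper constructs an auxiliary subset $U^* = \bigcup_{Q \text{ good}} (Q \cap P_Q)$ of size at least $n/4$, restricts both partitions to $U^*$ to obtain an \emph{exact} refinement $\Q^* \prec \P^*$, and then uses equitability of $\P$ to deduce $|\P^*| \ge \frac14|\P|$, concluding via $|\Q| \ge |\Q^*| \ge |\P^*|$. Your argument bypasses the auxiliary set entirely: you bound each good $Q$ by $|Q| < 2n/|\P|$ directly from equitability, and divide the total mass $n/2$ of the good parts by this bound. Both routes hinge on the same two facts (good parts cover at least half the mass, and each is essentially trapped in a single cell of $\P$), but yours is a touch more direct since it avoids introducing the restricted partitions $\P^*,\Q^*$.
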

\begin{proof}
	We claim that the underlying set $U$ has a subset $U^*$ of size $|U^*|\ge \frac14|U|$ such that the partitions $\Q^*=\{Q \cap U^* \,\vert\, Q \in \Q \} \setminus \{\emptyset\}$ and $\P^*=\{P \cap U^* \,\vert\, P \in \P \} \setminus \{\emptyset\}$
	%$\Q^*=\{Q \cap U^* \,\vert\, Q \in \Q,\, Q \cap U^* \neq \emptyset\}$ and $\P^*=\{P \cap U^* \,\vert\, P \in \P,\, P \cap U^* \neq \emptyset \}$ 
	of $U^*$ satisfy $\Q^* \prec \P^*$.
	Indeed, let $U^* = \bigcup_{Q} Q \cap P_Q$ where the union is over all $Q \in \Q$ satisfying $Q \sub_{1/2} P_Q$ for a (unique) $P_Q \in \P$.
	%Since $\Q \prec_{1/2} \P$ we have %$|U^*|\ge \frac14|U|$. 
	As claimed, $|U^*| = \sum_{Q \in_{1/2} \P} |Q \cap P_Q| \ge \sum_{Q \in_{1/2} \P} \frac12|Q| \ge \frac14|U|$, using $\Q \prec_{1/2} \P$ for the last inequality.
	%Observe that the partitions $\Q^*=\{Q \cap U^* \colon Q \in \Q\}$ and $\P^*=\{P \cap U^* \colon P \in \P\}$ of $U^*$ satisfy $\Q^* \prec \P^*$.
	Now, since $\P$ is equitable, $|\P^*| \ge \frac14|\P|$.
	%$|\P|-|\P^*| \le \frac12|\P|$.
	Thus, $|\Q| \ge |\Q^*| \ge |\P^*| \ge \frac14|\P|$, as desired.
\end{proof}

\paragraph*{Quasirandom graphs.}

A bipartite graph $G=(V,U,E)$ of density $p$ is  \emph{$(\e)$-regular} if for all sets $A \sub U$, $B \sub V$ with $|S| \ge \e|V|$, $|T| \ge \e|U|$ we have
%\begin{equation}\label{eq:()-reg}
$$|d(S,T)-p| \le \e p \;.$$
%\end{equation}

The next lemma---which, importantly, applies to arbitrarily sparse graphs---will be later used to prove that the graph we construct is quasirandom by bounding from above the average codegree.
Formally, $\codeg(v,v')$ is the number of vertices that neighbor both $v$ and $v'$.

\begin{lemma}\label{lemma:codegree}
	Let $G=(V,U,E)$ be a biregular bipartite graph of density $p$
	where for every $v \in V$,
	%	$$\frac{1}{|V|}\sum_{v' \in V} \big|\codeg(v,v') - p^2|U|\big| \le \a \cdot p^2 |U| \;.$$
	$$\sum_{v' \in V} \max\{\codeg(v,v') - p^2|U|,\,0\} \le \a \cdot p^2 |U||V| \;.$$
	%	$$\frac{1}{|V|}\sum_{v' \in V} \max\{\frac{\codeg(v,v')}{p^2|U|},\,1\} \le 1+\a \;.$$
	Then $G$ is $(2\a^{1/6})$-regular.
\end{lemma}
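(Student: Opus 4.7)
The plan is a standard second--moment / Cauchy--Schwarz argument, with the codegree hypothesis directly providing the second--moment bound. Fix $A \sub U$ and $B \sub V$ satisfying the size assumptions, and set $\eps = 2\a^{1/6}$. For $u \in U$ define $f(u) = d(u,B) - p|B|$. Biregularity gives $\deg(v) = p|U|$ for every $v \in V$, so $\sum_{u \in U} d(u,B) = |B| \cdot p|U|$, hence $\sum_u f(u) = 0$ and consequently $e(A,B) - p|A||B| = \sum_{u \in A} f(u)$. Cauchy--Schwarz then yields
$$\bigl(e(A,B) - p|A||B|\bigr)^2 \le |A| \cdot \sum_{u \in U} f(u)^2.$$

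The core step is to bound $\sum_u f(u)^2$ via the codegree hypothesis. Expanding $d(u,B)^2$ as a double sum over $B \times B$ and switching the order of summation gives $\sum_u d(u,B)^2 = \sum_{v,v' \in B} \codeg(v,v')$, so that $\sum_u f(u)^2 = \sum_{v,v' \in B} \bigl(\codeg(v,v') - p^2|U|\bigr)$. Since each summand is at most its positive part, I can harmlessly enlarge the inner sum from $v' \in B$ to $v' \in V$; invoking the hypothesis for each $v$ and summing over $v \in B$ yields $\sum_u f(u)^2 \le \a p^2 |U||V| \cdot |B|$.

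Combining the two inequalities produces $|e(A,B) - p|A||B|| \le p\sqrt{\a |A||B||U||V|}$, i.e.\ $|d(A,B) - p| \le p\sqrt{\a |U||V|/(|A||B|)} \le p\sqrt{\a}/\eps$, using $|A| \ge \eps|V|$ and $|B| \ge \eps|U|$. With $\eps = 2\a^{1/6}$ the right--hand side simplifies to $\tfrac12 p\a^{1/3}$, which is at most $\eps p$ whenever $\a \le 1$ (since $\a^{1/3}/2 \le 2\a^{1/6}$ is equivalent to $\a^{1/6} \le 4$). This establishes the desired $(\eps)$--regularity. No step looks genuinely difficult; the only point worth flagging is that biregularity is the ingredient making $f$ mean--zero, which is what allows the one--sided positive--part hypothesis to suffice as an upper bound on $\sum_u f(u)^2$.
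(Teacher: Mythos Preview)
Your proof is correct. Both your argument and the paper's hinge on the same core second--moment computation: writing $D(u)=d(u,B)$ for $u\in U$, one uses regularity on the $V$ side to obtain
\[
\sum_{u\in U}\bigl(D(u)-p|B|\bigr)^2 \;=\; \sum_{v,v'\in B}\bigl(\codeg(v,v')-p^2|U|\bigr),
\]
which the hypothesis then bounds by $\alpha\,p^2|U||V||B|$. The difference is in how this variance bound is turned into the density estimate. You apply Cauchy--Schwarz directly to $\sum_{u\in A}(D(u)-p|B|)$, which is short and in fact uses only the $V$--side regularity ($\deg(v)=p|U|$). The paper instead invokes Chebyshev's inequality to bound the number of ``bad'' vertices $u$ with $|D(u)-p|B||$ large, and then handles good and bad vertices separately, using the $U$--side regularity ($\deg(u)=p|V|$) to cap the contribution of bad vertices. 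Your route is cleaner; the paper's makes the exceptional--vertex structure explicit in the standard quasirandomness style.

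One harmless typo: since $A\subseteq U$ and $B\subseteq V$, the size assumptions should read $|A|\ge\eps|U|$ and $|B|\ge\eps|V|$ rather than the swapped versions you wrote; only the product $|A||B|\ge\eps^2|U||V|$ is used, so the argument is unaffected.
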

\begin{proof}
	%	Let $S \sub V$ and $T \sub U$ with $|S|=s|V|$ and $|T|=t|U|$. We will prove that
	%	$$d(S,T) \le f(\e,s,t) \cdot p \;.$$
	%	This would complete the proof since $s,t \ge f(C,\e)$ implies $d(S,T) \le f(C,'\e)$.
	%
	Put $\e=2\a^{1/6}$.
	Let $S \sub V$, $T \sub U$ with $|S| \ge \e|V|$, $|T| \ge \e|U|$. Our goal is to show that $d(S,T) = (1\pm \e)p$.
	%$|d(S,T)-p| \le \e p$.	
	Let $u$ be a uniformly random vertex from $U$, and let the random variable $D$ be the degree of $u$ into $S$, that is, $D=e(S,u)$.
	Then
	$$\Ex[D] = \frac{1}{|U|}\sum_{u \in U} e(S,u) = \frac{1}{|U|}\sum_{v \in S} \deg_G(v) = p|S| \;,$$
	where the last equality uses the biregularity of $G$.
	%the assumption that the $V$ side is regular.
	Furthermore,
	$$\Ex[D^2] = \frac{1}{|U|}\sum_{u \in U} e(S,u)^2
	= \frac{1}{|U|} \sum_{v,v' \in S} \codeg(v,v') \;.$$
	Therefore,	
	\begin{align*}
	\Var[D] &= \frac{1}{|U|}\sum_{v,v' \in S} \codeg(v,v') - (p|S|)^2
	= \frac{1}{|U|}\sum_{v,v' \in S} \big(\codeg(v,v') - p^2|U|\big)\\
	&\le \frac{1}{|U|}\sum_{v \in S} \sum_{v' \in V} \max\{\codeg(v,v') - p^2|U|,\,0\}
	\le \a p^2 |S||V| \;,
	\end{align*}
	%where we note that the penultimate inequality in effect only bounds the positive summands, and
	where the last inequality follows from the statement's assumption.
	Set, with foresight, $x=\a^{1/3}/\e$.
	By Chebyshev's inequality,
	$$\Pr\Big(\big|D-p|S|\big| \ge xp|S|\Big) \le \frac{\Var[D]}{(xp|S|)^2}
	\le \frac{\a}{x^2\e} = \e^2 x \;.$$
	Since $e(S,T) = \sum_{u \in T} e(S,u)$, we have
	$$(|T|-\e^2x|U|) \cdot p|S|(1-x)
	\le e(S,T)
	\le |T|\cdot p|S|(1+x) + \e^2x|U| \cdot p|V|\;,$$
	where the right inequality again uses the biregularity of $G$.
	Therefore,
	$$%1 - \Big(x+\frac{\a}{x^2st}\Big) \le
	1-2x \le (1-\e x)(1-x)
	\le \frac{d(S,T)}{p}
	\le (1+x) + x = 1+2x \;.$$
	%Thus, $d(S,T)/p = 1 \pm 2x$,
	%$$\frac{d(S,T)}{p} = 1 \pm 2x = 1 \pm \e \;,$$
	%$$\frac{d(S,T)}{p} = 1 \pm \Big(x + \frac{\a}{x^2\e^3}\Big) = 1 \pm 2x \;.$$
	%and
	Since $2x \le \e$ by our choice of $\e=2\a^{1/6}$, the proof follows.
\end{proof}

\newcommand{\Xside}{\mathbf{X}}
\newcommand{\Yside}{\mathbf{Y}}
\newcommand{\Ups}{\mathbf{\Upsilon}}

\newcommand{\Ubold}{\mathbf{U}}
\renewcommand{\U}{\mathcal{U}}

%
%\paragraph*{Balanced sequences.}
%A sequence of subsets $S_1,\ldots,S_k$ of a set $\Ubold$ is called \emph{$\b$-balanced} if for every $x \neq y \in \Ubold$, the number of sets $S_i$ for which $x,y \in S_i$ or $x,y \notin S_i$ is at most $(\frac12 + \b)k$.
%%
%%the following conditions hold:
%%\begin{itemize}
%%\item for every $i$ there is $j \neq i$ such that $S_j = U \sm S_j$, and
%%\item for every $x \neq y \in X$, the number of sets $S_i$ for which $x,y \in S_i$ or $x,y \notin S_i$ is at most $(\frac12 + \b)k$.
%%\end{itemize}

%%
%%
%We use the following lemma from CITE.
%
%\begin{lemma}\label{lemma:1-6}
%	If $S_1,\ldots,S_k$ is a $\frac{1}{16}$-balanced sequence of subsets of $\Ubold$ then for every $\lambda=(\lambda_1,\ldots,\lambda_{|\Ubold|})$ with $\lambda_t \ge 0$ and $\norm{\lambda}_{1}=1$, at least $\frac16 k$ of the sets $S_i$ satisfy $\min\{\sum_{t\in S_i} \lambda_t,\sum_{t \notin S_i} \lambda_t\} \ge \frac18(1-\norm{\lambda}_{\infty})$.	
%%
%%	If $(X_{1,0},X_{1,1}),\ldots,(X_{d,0},X_{d,1})$ is a sequence of bipartitions of $X$ that is $\frac{1}{16}$-balanced, then for every $\l=(\l_1,\ldots,\l_{|X|})$ with $\l_t \ge 0$ and $\norm{\l}_{1}=1$, at least $d/6$ of the bipartitions $(X_{i,0},X_{i,1})$ satisfy $\min\{\sum_{t\in X_{i,0}} \l_t,\sum_{t\in X_{i,1}} \l_t\} \ge \frac18(1-\norm{\l}_{\infty})$.

%\end{lemma}

\paragraph*{Balanced graphs.}
Here we define another notion of quasirandom bipartite graphs.
We say that a bipartite graph $G$ on (an ordered pair) $(\Xside,\Yside)$ is\footnote{That is to say, this definition is not symmetric with respect $\Xside$,$\Yside$.} \emph{$\b$-balanced} if for every $x \neq x' \in \Xside$, the number of vertices in $\Yside$ that are either neighbors of $x,x'$ or non-neighbors of $x,x'$ is at most $(\frac12 + \b)|\Yside|$.
We say that $G$ is \emph{complement-closed} if there is an involution $\phi:\Yside\to\Yside$ such that $N(\phi(y))=\Xside \sm N(y)$ for every $y \in \Yside$.

\begin{definition}\label{def:balanced-graph}
	Let $\Gamma$ be a bipartite graph on $(\Xside,\Yside)$, let $\X$ be a partition of $\Xside$, let $\Y$ be a partition of $\Yside$, and let $\F$ be a family of subsets of $\Yside$ where each member of $\F$ is a union of parts of $\Y$.
	We say that $\Gamma$ is $(\X,\Y,\F,\a,\b)$-balanced if the following four conditions hold:
	%A bipartite graph on $(\Xside,\Yside)$ is \emph{$(\X,\Y,\F,\a,\b)$-balanced},
	%where $\X$ is a partition of $\Xside$, $\Y$ is a partition of $\Yside$ and $\F$ is a family of subsets of $\Yside$, if all of the following conditions hold:
	\begin{enumerate}	
		\item(Equitable) for every $X \in \X$ and $y \in \Yside$ we have $|N(y) \cap X| = \frac12|X|$,
		%$e(X,y) = \frac12|X|$,
		\item($\b$-balanced) for every $F \in \F$ the graph $\Gamma[\Xside,F]$ is $\b$-balanced,
		%		\item($\b$-balanced) for every $F \in \F$ the sequence $\{N(y)\}_{y \in F}$ of subsets of $\Xside$ is $\b$-balanced,
		\item(Pseudorandom) for every $X \in \X$, $F \in \F$ and $y \neq y' \in F$,
		$|N(y) \cap N(y') \cap X| \le (1+\a)\frac14|X|$,
		%$\codeg_X(y,y') = (1\pm\a)\frac14|X|$,
		%		\item($\b$-balanced) for every $F \in \F$ and $x \neq x' \in \Xside$ we have $\codeg_F(x,x') = (1\pm\b)\frac14|F|$.
		\item(Complement-closed) for every
		$Y \in \Y$ the graph $\Gamma[\Xside,Y]$ is complement-closed.
		%		\item(Complement-closed) every $y \in Y \in \Y$ has a unique $y' \in Y$ such that $N(y')= \Xside \sm N(y)$.
	\end{enumerate}
\end{definition}

\begin{remark}\label{remark:complement}
	Condition~$(iv)$ in Definition~\ref{def:balanced-graph} in particular implies that for every
	$F \in \F$ the induced bipartite graph $\Gamma[\Xside,F]$ is complement-closed, as $F$ is assumed to be a union of parts of $\Y$.
\end{remark}
\begin{remark}
	Any balanced graph $\Gamma$ is biregular, since the vertices in $\Yside$ all have degree $\frac12|\Xside|$ by condition~$(i)$, and the vertices in $\Xside$ all have degree $\frac12|\Yside|$ by condition~$(iv)$.
\end{remark}

We note that condition~$(ii)$ of Definition~\ref{def:balanced-graph} can be shown to follow from condition~$(iii)$ for a suitable $\a=\a(\b)$ by using Claim~\ref{lemma:codegree}, but we opted for including the two conditions for clarity.

\begin{lemma}\label{lemma:seq-exists}
	Let $\Xside,\Yside$ be disjoint sets, let $\X$ be partition of $\Xside$ into parts each of even size $m$, let $\Y$ be a partition of $\Yside$ into parts each of even size,
	and let $\F$ be a family of subsets of $\Yside$ where each is of size $k$ and is a union of parts of $\Y$.
	Let $\a \ge m^{-1/3}$ and suppose $2^{21} \le k \le m$ and $\max\{|\X|,|\F|\} \le m \le 2^{k/600}$.
	Then there exists an $(\X,\Y,\F,\a,\frac{1}{16})$-balanced bipartite graph $\Gamma$ on $(\Xside,\Yside)$.
\end{lemma}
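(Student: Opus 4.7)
My approach is the probabilistic method. The complement-closed condition $(iv)$ dictates the basic structure: within each $Y\in\Y$ I pair up its vertices arbitrarily (possible since $|Y|$ is even) and designate one vertex per pair as the \emph{representative}, letting $R\subseteq\Yside$ be the set of representatives. The random construction is the following: for every representative $y\in R$ and every $X\in\X$, choose $N(y)\cap X$ uniformly at random from the subsets of $X$ of size $|X|/2$, mutually independently across all $(y,X)$; for the non-representative $y'=\phi(y)$ of each pair set $N(y')=\Xside\setminus N(y)$. Conditions $(i)$ and $(iv)$ are then satisfied \emph{by construction}, so it only remains to verify $(ii)$ and $(iii)$ with positive probability.

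For condition $(iii)$, fix $X\in\X$, $F\in\F$, and $y\neq y'\in F$. Since $F$ is a union of parts of $\Y$, if $y'=\phi(y)$ then $N(y)\cap N(y')\cap X=\emptyset$ trivially. Otherwise $N(y)\cap X$ and $N(y')\cap X$ are independent uniform half-subsets of $X$ (the complement of a uniform half is a uniform half, drawn from an independent coin), so $|N(y)\cap N(y')\cap X|$ is hypergeometric with mean $|X|/4=m/4$. A Hoeffding-type concentration bound for the hypergeometric distribution gives $\Pr[|N(y)\cap N(y')\cap X|>(1+\a)m/4]\le\exp(-c\a^{2}m)$ for an absolute constant $c>0$. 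Taking a union bound over at most $|\X|\cdot|\F|\cdot\binom{k}{2}\le m^{4}$ triples, and using $\a^{2}m\ge m^{1/3}$ (because $\a\ge m^{-1/3}$) together with $m\ge k\ge2^{21}$, the failure probability for $(iii)$ is less than, say, $1/4$.

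For condition $(ii)$, fix $F\in\F$ and $x\neq x'\in\Xside$, and let $S$ count the $y\in F$ that are ``same on $x,x'$''. The key observation is that being ``same on $x,x'$'' is invariant under $\phi$: if $y$ is same then so is $\phi(y)$. Since $F$ is a union of pairs, $S=2S'$ where $S'$ counts the pairs in $F$ whose representative is same. The random variables $\mathbb{1}[y\text{ same on }x,x']$ over representatives $y\in R\cap F$ are \emph{independent} (distinct representatives use disjoint random coins), each with mean exactly $1/2$ when $x,x'$ lie in different parts of $\X$ and $\tfrac12-O(1/m)$ when they lie in the same part. By Hoeffding, $\Pr[S\ge(\tfrac12+\tfrac1{16})|F|]\le\exp(-c'k)$. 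Union-bounding over the at most $|\F|\cdot\binom{|\Xside|}{2}\le m^{5}$ choices of $(F,\{x,x'\})$ gives a failure probability which, by $m\le 2^{k/600}$ and $k\ge 2^{21}$, is again less than $1/4$; choosing the absolute constants in Hoeffding and the setup carefully makes the bound go through.

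The two failure probabilities sum to less than $1$, so with positive probability a graph $\Gamma$ satisfying all four conditions exists. The main obstacle is the tightness of the constants: the bound $m\le2^{k/600}$ barely leaves room for the Hoeffding exponent $\Omega(k)$ to dominate the $\log(m^{5})=O(k)$ from the union bound in step~$(ii)$, so one must be careful to invoke the sharpest form of Hoeffding's inequality (exploiting that the indicators are Bernoulli$(1/2)$) rather than a generic multiplicative Chernoff bound, and to exploit the fact that pairs contribute together (halving the effective number of independent trials is not a problem because it also halves the variance relative to the deviation threshold).
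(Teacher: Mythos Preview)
Your proposal is correct and essentially identical to the paper's proof: the paper too pairs up the vertices in each $Y\in\Y$ (calling the two halves $\Yside_1,\Yside_2$ with bijection $\phi$), assigns independent uniform half-neighborhoods in each $X\in\X$ to the $\Yside_1$-vertices and complementary neighborhoods to their $\phi$-partners, gets $(i)$ and $(iv)$ for free, and verifies $(ii)$ and $(iii)$ by Chernoff/hypergeometric concentration plus a union bound. The only cosmetic difference is that the paper first proves the analogues of $(ii),(iii)$ on the half-graph $\Gamma_1$ and then transfers them to $\Gamma_2$ and $\Gamma=\Gamma_1\cup\Gamma_2$, whereas you argue directly on $\Gamma$ via the $\phi$-invariance of the ``same on $x,x'$'' event---both routes give the same exponents in the tail bounds.
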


\newcommand{\YYside}{\Yside_1}
\renewcommand{\FF}{\mathcal{F}_1}
\newcommand{\YY}{\mathcal{Y}_1}
\newcommand{\XX}{\mathcal{X}}

\begin{proof}
	The proof idea is quite simple---an appropriate  random bipartite graph satisfies the first three conditions in Definition~\ref{def:balanced-graph}, and ``closing'' it under complementation additionally guarantees the last condition.	
	%using the statement's assumption that $k$ is even.
	The precise details follow.
	
	For each part $Y \in \Y$ let $Y=Y_1 \cup Y_2$ be an arbitrary bipartition (recall that $|Y|$ is even).
	For each $i \in \{1,2\}$ put $\Y_i = \{Y_i \colon Y \in \Y\}$ and $\Yside_i = \bigcup_{Y \in \Y} Y_i$,
	and let $\phi:\Yside_2\to\Yside_1$ be a bijection mapping  $Y_2$ to $Y_1$ for each $Y \in \Y$.
	Furthermore, for each $F \in \F$ let $F_i = F \cap \Yside_i$, and put $\F_i = \{ F_i \,\vert\, F \in \F\}$.
	Since every member of $\F$ is a union of parts of $\Y$ and of size $k$,
	we have that each member of $\F_1$ and of $\F_2$ is of size $k/2$, with $\phi$ mapping $F_2$ to $F_1$ for each $F \in \F$.
	We first prove that there is a bipartite graph $\Gamma_1$ on $(\Xside,\Yside_1)$ satisfying the following three conditions:
	\begin{enumerate}	
		\item[$(A)$] for every $X \in \XX$ and $y \in \YYside$ we have $|N(y) \cap X| = \frac12|X|$,
		\item[$(B)$] for every $F \in \F_1$ the graph $\Gamma[\Xside,F]$ is $\b$-balanced,
		%\item[$(B)$] for every $F \in \FF$ the sequence $\{N(y)\}_{y \in F}$ of subsets of $\Xside$ is $\b$-balanced,
		\item[$(C)$] for every $X \in \XX$, $F \in \FF$ and $y \neq y' \in F$ we have
		$|N(y) \cap N(y') \cap X| = (1\pm\a)\frac14|X|$.
	\end{enumerate}
	We construct $\Gamma_1$ in a random fashion by choosing, independently for each $X \in \XX$ and $y \in \YYside$, precisely $|X|/2$ uniformly random neighbors of $y$ in $X$ (recall $|X|=m$ is even).
	It suffices to show that $\Gamma_1$ satisfies conditions~$(B)$ and $(C)$ with positive probability.
	To prove~$(B)$, fix %$X \in \X$,
	$F \in \FF$ and $x \neq x' \in \Xside$.	
	For every $y \in F$, the probability that either $x,x' \in N(y)$ or $x,x' \notin N(y)$ is easily seen to be at most $1/2$.
	By Chernoff's inequality,\footnote{We use the basic version stating that $\Pr[\text{Bin}(n,p) \ge pn+t] \le \exp(-2t^2/n)$; see, e.g.,~\cite{Heoffding63}.} the number of $y \in F$ satisfying the above is at most $(\frac12 + \frac{1}{16})|F|$ except with probability at most $\exp(-2|F|/(16)^2) = \exp(-|F|/2^7)$.
	Therefore, the probability that condition~$(B)$ does not hold is, by the union bound, at most
	$$|\FF|\binom{|\Xside|}{2} \cdot \exp\Big(-\frac{(k/2)}{2^7}\Big)
	\le \big(2^{k/600}\big)^3 \cdot \exp\Big(-\frac{k}{2^8}\Big)
	%= \frac12 \bigg( 2^{3/400} \exp\Big(-\frac{1}{2^7}\Big) \bigg)^\k
	< \frac12 \;,$$
	where the last inequality uses the assumption $k \ge 2^{21}$.
	To prove~$(C)$, fix $X \in \XX$, $F \in \FF$ and $y \neq y' \in F$.
	By construction, the number of neighbors of $y'$ that lie in $N(y) \cap X$ follows a hypergeometric distribution.
	Thus, we may apply the Chernoff bound (see, e.g., Section~6 in~\cite{Heoffding63} or Theorem~2.10 in~\cite{JansonLuRu11}) and deduce that
	$|N(y) \cap N(y') \cap X| = (\frac12 \pm \frac12\a)|N(y) \cap X|$ $(= \frac14(1 \pm \a)|X|)$ 	
	%	$|N(y) \cap N(y') \cap X| =(\frac14 \pm \frac14\a)|X|$  %$\codeg_X(y,y')=(\frac14 \pm \frac14\a)|X|$
	except with probability at most $2\exp(-2(\frac14\a)^2|X|) = 2\exp(-\a^2|X|/8)$.
	Therefore, the probability that condition~$(C)$ does not hold is, by the union bound, at most
	$$|\XX||\FF|\binom{k/2}{2} \cdot 2\exp\Big(-\frac18\a^2m\Big)
	\le m^4 \cdot \exp\Big(-\frac18 m^{1/3}\Big)
	< \frac12 \;,$$
	where the last inequality uses the assumption $m \ge 2^{21}$.
	We deduce from the above, again by the union bound, that $\Gamma$ satisfies conditions~$(B)$ and~$(C)$ with positive probability, proving the existence of $\Gamma_1$ as desired.
	%satisfying $(A)$,$(B)$,$(C)$.
	
	Consider now the isomorphic copy of $\Gamma_1$ on the vertex classes $(\Xside,\Yside_2)$ that is determined by $\phi$,
	and let $\Gamma_2$ be its complement.
	%the isomorphism of $\F_1$ and $\F_2$,
	We claim that $\Gamma_2$ also satisfies conditions~$(A)$, $(B)$ and $(C)$, with $\Yside_2$ and $\F_2$ replacing $\Yside_1$ and $\F_1$, respectively.
	Indeed, condition~$(A)$ clearly holds, condition~$(B)$ follows from the symmetry in the definition of a $\b$-balanced graph, and one can easily verify that condition~$(C)$ follows from conditions~$(A)$ and $(C)$ satisfied by $\Gamma_1$ 
	(using $N_{\Gamma_2}(y) \cap N_{\Gamma_2}(y') \cap X = X \sm (N_{\Gamma_1}(\phi^{-1}(y)) \cup N_{\Gamma_1}(\phi^{-1}(y')))$ and the inclusion-exclusion formula).
	%(using simple inclusion-exclusion).
	
	To complete the proof we claim that the (edge-disjoint) union $\Gamma = \Gamma_1 \cup \Gamma_2$ is an $(\X,\Y,\F,\a,\frac{1}{16})$-balanced graph on $(\Xside,\Yside)$, as needed.
	Note that condition~$(iv)$ is immediate from the definition of $\Gamma_2$, condition~$(i)$ follows from~$(A)$,
	%Note that conditions~$(i)$ and $(iv)$ clearly hold,
	and condition~$(ii)$ follows from $(B)$ since $\Gamma$ is a union of $\b$-balanced graphs sharing the vertex class $\Xside$.
	As for condition~$(iii)$, let $X \in \X$, $F \in \F$ and $y \neq y' \in F$.
	If $y,y' \in \Yside_1$ or $y,y' \in \Yside_2$ then we are done by condition~$(C)$ which is satisfied by $\Gamma_1$ and $\Gamma_2$.
	%	If $y,y' \in \Yside_1$ then we are done by the construction of $\Gamma_1$, and if $y,y' \in \Yside_2$ then we are done by the construction of $\Gamma_2$ together with~$(i)$.
	Hence, suppose $y \in \Yside_1$ while $y' \in \Yside_2$.
	%	Let $\phi:\Yside_2\to\Yside_1$ be the correspondence between the vertices of $\Yside_1$ and $\Yside_2$ implied by the isomorphism of $\Gamma_1$ and $\Gamma_2$.
	Observe that $N_\Gamma(y') = \Xside \sm N_{\Gamma_1}(\phi(y'))$. Thus, if $\phi(y')=y$ then $|N_\Gamma(y) \cap N_\Gamma(y')|=0$ which completes the proof, and
	otherwise, since $\Gamma_1$ satisfies conditions~$(A)$ and $(C)$, we complete the proof as in the end of the previous paragraph.
\end{proof}

\subsection{Core construction}\label{subsec:core-construction}

\newcommand{\Rf}{U}
\newcommand{\Nf}{\mathcal{N}_i}
\newcommand{\Df}{D}

We now turn to define our construction for Lemma~\ref{theo:core}.
Let $\Lside$ and $\Rside$ be disjoint sets, not necessarily of the same size.
Let $\L_1 \succ \cdots \succ \L_s$ and $\R_1 \succ \cdots \succ \R_s$ each be a sequence of $s$ successively
refined equipartitions of $\Lside$ and $\Rside$, respectively, satisfying the three conditions in Lemma~\ref{theo:core}.
%~(\ref{eq:core-conditions}).
We construct a sequence of $s$ successively refined edge equipartitions $\G_1 \succ \cdots \succ \G_s$ of $\Lside \times \Rside$ with $|\G_i|=2^i$.
For convenience, set $\G_0=\{\Lside\times\Rside\}$ to be the trivial partition, which consists only of the complete bipartite graph on $\Lside,\Rside$.
The construction is iterative, where for every $i \ge 1$ the partition $\G_i$ will be obtained from $\G_{i-1}$ by subdividing the edge set of each graph $G_{i-1} \in \G_{i-1}$ into two biregular
%\footnote{Here we refer to degree regularity, that is, every vertex from $\Lside$ would have degree $|\Rside|/2^i$ while every vertex of $\Rside$ would have degree $|\Lside|/2^i$.} 
bipartite graphs of density $2^{-i}$ (note that this property is satisfied by $\G_0$).\footnote{To be clear, all our graphs are bipartite on the vertex classes $(\Lside,\Rside)$.}
Furthermore, $G_i$ will have the property that $d_{G_i}(L,R) \in \{0,1\}$ for every $L \in \L_i$ and $R \in \R_i$. We can thus define for every $G_i \in \G_i$ a bipartite graph $\widetilde{G}_i$ on the vertex classes $(\L_i,\R_i)$,
where $(L,R) \in E(\widetilde{G}_i)$ if and only if $d_{G_i}(L,R)=1$.
%, whose vertices correspond to the parts of $\L_i$ and $\R_i$ and where the vertex corresponding to $L \in \L_i$ is connected in $\widetilde{G}_i$ to the vertex corresponding to $R \in R_i$ if and only if $d_{G_i}(L,R)=1$.
In other words, the graph $G_i$ is a blowup\footnote{This in particular means that for (the single graph) $G_0 \in \G_0$ we have that $\widetilde{G}_0$ is $K_{1,1}$, i.e., the single-edge graph.}
%	Notice that for  that for (the single graph) $G_0 \in \G_0$ the graph $\widetilde{G}_0$ is the single edge.}
of $\widetilde{G}_i$. In particular, we will frequently refer to a part $L \in \L_i$ or $R \in \R_i$ both as
a cluster of vertices of $G_i$ or as a vertex of $\widetilde{G}_i$.
%Actually, since for every $j < i$ we have $\L_i \prec \L_j$ and $\R_i \prec \R_j$, we can also think of $\widetilde{G}_j$ as a graph on the vertex sets $(\L_i,\R_i)$.\footnote{Just to practice this notation, if $j \le i$ and  $L \in \L_j$, $L' \in \L_i$, $R \in \R_j$, $R' \in \R_i$ with $R' \subseteq R$, $L' \subseteq L$ then $(L',R') \in E(\widetilde{G}_j)$ if and only if $(L,R) \in E(\widetilde{G}_j)$.}
%
%$\L_j \succ \L_i$ and $\R_j \succ \R_i$
%we can think of $G_{j}$ not only as a blowup of $\widetilde{G}_{j}$ with vertex sets $\L$ and $\R$ but also as a blowup of $\widetilde{G}_{j}$ with vertex sets $\L_i$ and $\R_i$.
%This means that we can naturally\footnote{Just to practice this notation, if $j \leq i$ and  $L \in \L_j$, $L' \in \L_i$, $R \in \R_j$ and $R' \in \R_i$ with $R' \subseteq R$, $L' \subseteq L$ then $(L',R') \in E(\widetilde{G}_j)$ if and only if $(L,R) \in E(\widetilde{G}_j)$. } refer to $L \in \L_i$ not only as a vertex of $\widetilde{G}_i$ but also as a vertex of $\widetilde{G}_{j}$. (perhaps enough to day this only for $j=i-1$).

Fix an $i \geq 1$ and one of the graphs $G_{i-1} \in \G_{i-1}$.
%We will now construct a partition $G_{i-1} = G_i \cup (G_{i-1} \setminus G_i)$
We will now show how to partition $G_{i-1}$ (more precisely $E(G_{i-1})$) into
two biregular bipartite graphs $G_i$ and $G_{i-1} \setminus G_i$, both of density $2^{-i}$.
It will be more convenient to (indirectly) define $G_i$ by defining $\widetilde{G}_i$.
We henceforth abbreviate $N_{\widetilde{G}_{i-1}}(\cdot)$ by $N_{i-1}(\cdot)$.
This means, for example, that if $R \in \R_{i-1}$ and $L \in \L_{i-1}$ and $R \in N_{i-1}(L)$ then $G_{i-1}$ contains the complete bipartite graph on $(L,R)$.
We henceforth denote $\P[X] = \{ P \in \P \,\vert\, P \sub X\}$ the restriction of a partition $\P$ to a set $X$.

For $L \in \L_{i-1}$ let $\Nf(L)$ be the family of clusters of $\R_i$ contained in a cluster of $\R_{i-1}$ adjacent to $L$ in $\widetilde{G}_{i-1}$; more formally,
%and $R \in \R_{i-1}$ put
\begin{equation}\label{eq:noation-frak}
\Nf(L):=\bigcup_{R \in N_{i-1}(L)} \R_i[R] \;.
%\Nf(L):=\bigcup\{\R_i^{R} \colon R \in N_{\widetilde{G}_{i-1}}(L)\} \;.
%\Nf(L):=\bigcup\{R \in \R_{i-1} \colon R \in N_{\widetilde{G}_{i-1}}(L)\} \;.
%\Lf(L) := \L_i^{L}, \quad \Rf(R) := \R_i^{R}
%\quad\text{ and }\quad \Nf(L):=\bigcup\{R \in \R_{i-1} \colon R \in N_{\widetilde{G}_{i-1}}(L)\} \;.
%\quad\text{ and }\quad \Nf(L):=\bigcup\{\R_i^{R} \colon R \in N_{\widetilde{G}_{i-1}}(L)\} \;.
\end{equation}
Note that, since $\widetilde{G}_{i-1}$ is biregular of density $2^{i-1}$,
%by the biregularity of $\widetilde{G}_{i-1}$ 
we have
\begin{equation}\label{eq:N(L)}
|\Nf(L)| = d(\widetilde{G}_{i-1})|\R_{i-1}| \cdot \frac{|\R_i|}{|\R_{i-1}|} = d(G_{i-1})|\R_i| = \frac{|\R_i|}{2^{i-1}} \;.
\end{equation}

We will next use Lemma~\ref{lemma:seq-exists} in order to find a bipartite graph $\Gamma_i$ on the vertex classes $(\L_i,\R_i)$ that is $(\X_i,\Y_i,\F_i,\a_i,\frac{1}{16})$-balanced, where
$\X_i = \{ \L_i[L] \,\vert\, L \in \L_{i-1} \}$,
$\Y_i = \{ \R_i[R] \,\vert\, R \in \R_{i-1} \}$,
$\F_i = \{ \Nf(L) \,\vert\, L \in \L_{i-1}\}$ and
\begin{equation}\label{eq:alpha}
%\a_i = \sqrt{2^{-i}|\R_i|/|\L_i|} \;.
%\a_i = \sqrt{n_i/N_i} \;.
%\a_i = n_i^{-1/3} \;.
\a_i = |\L_i|^{-1/6} \;.
\end{equation}
In the rest of this paragraph we explain why we can indeed apply Lemma~\ref{lemma:seq-exists}.
We will use the assumptions~\ref{item:core-minR},~\ref{item:core-expR} and~\ref{item:core-expL} of Lemma~\ref{theo:core}, and in particular the following inequality which they imply:
%We will use~(\ref{eq:core-conditions}) and the following inequality which follows from~(\ref{eq:core-conditions});
\begin{equation}\label{eq:n_i-bound}
|\L_i| = 2^{|\R_{i}|/2^{i+10}} \ge 2^{2|\R_{i-1}|/2^{i+9}} = |\L_{i-1}|^2 \;.
\end{equation}
To prove our claim we observe the following; $\F_i$ is $k_i$-uniform with $k_i=|\R_{i}|/2^{i-1}$ by~(\ref{eq:N(L)});
%the regularity of $\widetilde{G}_{i-1}$ (recall $d(\widetilde{G}_{i-1})=d(G_{i-1})=1/2^{i-1}$);
the members of $\X_i$ and of $\Y_i$ are of size $m_i=|\L_i|/|\L_{i-1}|$ and $|\R_i|/|\R_{i-1}|$, respectively, with both numbers being powers of $2$ and thus even; since $m_i \ge \sqrt{|\L_i|}$ by~(\ref{eq:n_i-bound}), we have $2^{21} \le k_i \le m_i$ (by item~\ref{item:core-expL} of Lemma~\ref{theo:core}) and $\a_i = |\L_i|^{-1/6} \ge m_i^{-1/3}$;
%$\a_i \le n_i^{-1/3} \le |\L_i|^{-1/6}$;
$|\X_i|=|\F_i|=|\L_{i-1}| \le \sqrt{|\L_i|} \le m_i$;
and, finally, $m_i \le |\L_i| \le 2^{k_i/600}$.
We thus conclude that the above graph $\Gamma_i$ indeed exists.

%===
%
%To prove our claim, note that $\X_i$ is $m_i$-uniform with $m_i=|\L_i|/|\L_{i-1}|$; $\F_i$ is $\k_i$-uniform with $\k_i=|\R_{i}|/2^{i-1}$
%%$(= k_1\cdots k_i/2^{i-1} \ge k_i)$
%by the regularity of $G_{i-1}$ (recall $d(G_{i-1})=1/2^{i-1}$); by~(\ref{eq:core-conditions}), $m_i \ge \k_i \ge 2$ are powers of $2$ and thus even; by~(\ref{eq:n_i-bound}), $m_i \ge \sqrt{|\L_i|}$ and thus $\a_i = |\L_i|^{-1/6} \ge m_i^{-1/3}$;
%$|\X_i|=|\F_i|=|\L_{i-1}|$ where
%$\max\{|\L_{i-1}|,\, 2^{21}\} \le \sqrt{|\L_i|} \le m_i$;
%and, finally, $m_i \le |\L_i| \le 2^{\k_i/600}$.

Recall that our intention is to construct a graph $\widetilde{G}_i$ on vertex classes $(\L_i,\R_i)$ and that the above-constructed $\Gamma_i$ is also a graph on $(\L_i,\R_i)$.
%and that we can also think of $\widetilde{G}_{i-1}$ as a graph on $(\L_i,\R_i)$.
We define $\widetilde{G}_i$ as the intersection of $\widetilde{G}_{i-1}$ and $\Gamma_i$. More precisely,
%\begin{equation}\label{eq:core-construction}
%E(\widetilde{G}_i) = \{\, (L,R) \in \L_i \times \R_i \,\colon\, (L,R) \in E(\widetilde{G}_{i-1}) \text{ and } (L,R) \in E(\Gamma_i) \,\} \;.
%\end{equation}
%\begin{equation}\label{eq:core-construction}
%E(\widetilde{G}_i) = \{\, (L',R') \in \L_i \times \R_i \,\colon\,  (L,R) \in E(\widetilde{G}_{i-1}) \text{ where }L'\sub L, R' \sub R \text{ and } (L',R') \in E(\Gamma_i) \,\} \;.
%\end{equation}
\begin{equation}\label{eq:core-construction}
E(\widetilde{G}_i) = \{\, (L',R') \in E(\Gamma_i) \,\vert\, L'\sub L, R' \sub R \text{ with } (L,R) \in E(\widetilde{G}_{i-1}) \} \;.
%E(\widetilde{G}_i) = \{\, (L',R') \in E(\Gamma_i) \,\colon\,  (L,R) \in E(\widetilde{G}_{i-1}) \text{ where }L'\sub L, R' \sub R \,\} \;.
\end{equation}
Having defined $\widetilde{G}_i$ we have thus defined a graph $G_i \subseteq G_{i-1}$. % for every $G_{i-1} \in \G_{i-1}$.
In order to obtain $\G_i$ from $\G_{i-1}$ we simply repeat the above for every $G_{i-1} \in \G_{i-1}$, that is, we set
$$\G_i=\bigcup_{G_{i-1} \in \G_{i-1}} \{G_i,\, G_{i-1} \setminus G_i\} \;.$$
%$$\G_i=\{G_i, G_{i-1} \setminus G_i~:~ G_{i-1} \in \G_{i-1}\}\;.$$

Fix $G_i \in \G_i$.
The following claim summarizes two important properties of
%our construction.
$G_i$.
\begin{claim}\label{claim:core-properties}
	%Fix $i \ge 0$, a graph $G_i \in \G_i$, a cluster
	Let $L \in \L_{i-1}$.
	%and recall the notation %$\widetilde{G}_i$ and
	%$N_i(\cdot)$ as defined above.
	Then
	\begin{enumerate}
		\item\label{item:equitable} $G_i[L,R]$ is biregular of density $\frac12$ for every $R \in N_{i-1}(L)$ (so $G_i$ is biregular of density $2^{-i}$),
		%		\item\label{item:balanced} $\Gamma_i[\L_i,\,\Nf(L)]$ is $\frac{1}{16}$-balanced and complement-closed,
		%\item\label{item:balanced} the sequence $\{N_{\Gamma_i}(R')\}_{R' \in \Nf(L)}$ of subsets of $\L_i$ is $\frac{1}{16}$-balanced and complementary-closed,
		\item\label{item:codeg} $\big|N_{i}(R_1') \cap N_{i}(R_2') \cap \L_i[L]\big| \le
		(1+\a_i)\frac14\big|\L_i[L]\big|$  %(1+\a_i)\frac14\frac{|\L_i|}{|\L_{i-1}|}$
		for every $R_1' \neq R_2' \in \Nf(L)$.
	\end{enumerate}
\end{claim}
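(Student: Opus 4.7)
The plan is to reduce both parts of the claim to the balancedness properties of the auxiliary graph $\Gamma_i$ chosen via Lemma~\ref{lemma:seq-exists}. The key observation is that the defining equation~(\ref{eq:core-construction}) says that for any edge $(L,R) \in E(\widetilde{G}_{i-1})$, the restriction $\widetilde{G}_i[\L_i[L],\R_i[R]]$ coincides with $\Gamma_i[\L_i[L],\R_i[R]]$; passing to blowups, $G_i[L,R]$ is therefore literally the induced bipartite graph $\Gamma_i[\L_i[L],\R_i[R]]$, where $\L_i[L] \in \X_i$ and $\R_i[R] \in \Y_i$.

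For part~\ref{item:equitable}, I would fix $R \in N_{i-1}(L)$ and argue biregularity of $G_i[L,R]$ from the two sides separately. On one side, condition~$(i)$ of Definition~\ref{def:balanced-graph} applied to $X = \L_i[L] \in \X_i$ says every $y \in \R_i[R]$ has exactly $\tfrac12|\L_i[L]|$ neighbors in $\L_i[L]$. On the other side, condition~$(iv)$ supplies a fixed-point-free involution $\phi$ on $\R_i[R]$ (it must be fixed-point-free since $N(\phi(y)) = \L_{\text{side}} \sm N(y)$ cannot hold with $\phi(y)=y$), so $\phi$ pairs up the elements of $\R_i[R]$ and any $x \in \L_i[L]$ sees exactly one member of each pair, giving degree $\tfrac12 |\R_i[R]|$. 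This yields biregularity of density $\tfrac12$ on $G_i[L,R]$. The parenthetical statement that $G_i$ itself is biregular of density $2^{-i}$ then follows by summing over $R \in N_{i-1}(L)$ and invoking that $\widetilde{G}_{i-1}$ is biregular of density $2^{-(i-1)}$, which by induction on $i$ (base case $G_0 = \Lside\times\Rside$) is part of the hypothesis being propagated.

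For part~\ref{item:codeg}, I would fix $R_1' \neq R_2' \in \Nf(L)$ with $R_j' \subseteq R_j \in N_{i-1}(L)$. By the reduction above, for each $L' \in \L_i[L]$ we have $L' \in N_i(R_j')$ iff $(L',R_j') \in E(\Gamma_i)$, because the conjunct $(L,R_j) \in E(\widetilde{G}_{i-1})$ in~(\ref{eq:core-construction}) is automatic. Hence
\[
N_i(R_1') \cap N_i(R_2') \cap \L_i[L] \;=\; N_{\Gamma_i}(R_1') \cap N_{\Gamma_i}(R_2') \cap \L_i[L].
\]
Now $\Nf(L) \in \F_i$ and $R_1',R_2'$ are distinct elements of this $\Nf(L)$, so condition~$(iii)$ of balancedness applied with $X = \L_i[L]$ and $F = \Nf(L)$ yields the required bound $(1+\a_i)\tfrac14|\L_i[L]|$.

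I do not anticipate a genuine obstacle here: the claim is essentially an unpacking of the balancedness definition through the blowup construction, once one is careful to identify $G_i[L,R]$ with $\Gamma_i[\L_i[L],\R_i[R]]$ and to observe that the condition $(L,R_j) \in E(\widetilde{G}_{i-1})$ used to build $\widetilde{G}_i$ is satisfied by hypothesis $R_j' \in \Nf(L)$. The only mildly delicate step is deducing bi-sided regularity in part~\ref{item:equitable}, since balancedness only gives regularity of one side directly; the complement-closed property $(iv)$ is what does the work on the other side.
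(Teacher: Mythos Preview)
Your proposal is correct and follows essentially the same approach as the paper: identify $\widetilde{G}_i$ restricted to $\L_i[L]\times\R_i[R]$ with $\Gamma_i$ restricted there via~(\ref{eq:core-construction}), then invoke conditions~$(i)$ and~$(iv)$ of Definition~\ref{def:balanced-graph} for part~\ref{item:equitable} and condition~$(iii)$ for part~\ref{item:codeg}. One small wording quibble: $G_i[L,R]$ is the \emph{blowup} of $\Gamma_i[\L_i[L],\R_i[R]]$, not literally equal to it, but biregularity is of course preserved under blowup so this does not affect the argument.
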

\begin{proof}
	Since the induced bipartite graph $G_{i-1}[L,R]$ is complete, by~(\ref{eq:core-construction}) the induced bipartite graph $\widetilde{G}_i[L,R]$ is precisely $\Gamma_i[L,R]$.
	%Therefore, property~$(i)$ follows from
	Conditions~$(i)$ and~$(iv)$ in Definition~\ref{def:balanced-graph} imply the biregularity of $\widetilde{G}_i[L,R]$. Recalling that $G_i$ is a blow-up of $\widetilde{G}_i$, and since $G_{i-1}$ is biregular of density $2^{-(i-1)}$,  property~$(i)$ follows.
	%which are satisfied by $\Gamma_i$.
	%Property~$(ii)$ follows from conditions $(ii)$,$(iv)$ in Definition~\ref{def:balanced-graph}, using the fact that $\Nf(L)$ is a union of clusters of $\Y_i$.
	Property~$(ii)$ immediately follows from condition~$(iii)$ in Definition~\ref{def:balanced-graph} and our choice of $\Gamma_i$.
	%and $(iii)$ are directly inherited from conditions $(ii)$,$(iv)$ and from condition $(iii)$ in Definition~\ref{def:balanced-graph}, respectively. %satisfied by $\Gamma_i$.
\end{proof}
%The fact that $G_i$ (and so $G_{i-1} \sm G_i$) is of density $2^{-i}$ follows from the fact that $G_{i-1}$ is of density $2^{-(i-1)}$ together with property~$(i)$ in Claim~\ref{claim:core-properties}.

We furthermore have the following property coming from the balancedness of $\Gamma_i$.

\begin{claim}\label{claim:1-12}	
	Let $L \in \L_{i-1}$.
	For every $\lambda=(\lambda_{L'})_{L' \in \L_i}$ with $\lambda_{L'} \ge 0$ and $\norm{\lambda}_{1}=1$, at least $\frac{1}{6}2^{-i}|\R_i|$ of the clusters $R' \in \Nf(L)$ satisfy that
	\begin{equation}\label{eq:1-12-statement}
	\sum_{L' \notin N_i(R')} \lambda_{L'} \ge \frac18(1-\norm{\lambda}_{\infty}) \quad\text{ and }\quad
	\sum_{\substack{L' \in N_i(R')\colon\\L' \sub L}} \lambda_{L'} \ge \frac12 -
	\sum_{\substack{L' \in \L_i \colon\\L' \nsubseteq L}} \lambda_{L'} \;.
	\end{equation}
\end{claim}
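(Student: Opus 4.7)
The plan is to view $R'$ as uniformly distributed in $\Nf(L)$, treat the two conditions as events $A_1$, $A_2$, and exploit the complement-closed structure from Definition~\ref{def:balanced-graph}(iv) together with the codegree bounds from (ii)--(iii). Since $|\Nf(L)| = 2 \cdot 2^{-i}|\R_i|$ by~\eqref{eq:N(L)}, it suffices to show $|A_1 \cap A_2| \ge |\Nf(L)|/12$. Write $p_L := \sum_{L' \sub L} \lambda_{L'}$ and $c_1 := (1-\|\lambda\|_\infty)/8 \le 1/8$, and define the random variables $\mu(R') := \sum_{L' \notin N_i(R')} \lambda_{L'}$ and $\nu(R') := \sum_{L' \in N_i(R'),\, L' \sub L} \lambda_{L'}$, so that $A_1 = \{\mu \ge c_1\}$ and $A_2 = \{\nu \ge p_L-1/2\}$. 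The biregularity coming from Definition~\ref{def:balanced-graph}(i) together with the construction of $\widetilde{G}_i$ will give $\Pr[L' \in N_i(R')] = 1/2$ for each $L' \sub L$, hence $\Ex[\nu] = p_L/2$.

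The key structural fact I would use is the involution $\phi$ on $\Nf(L)$ coming from the complement-closed property (Definition~\ref{def:balanced-graph}(iv)), which within each $\R_i[R]$ for $R \in N_{i-1}(L)$ satisfies $X_{L'}(\phi(R')) = 1 - X_{L'}(R')$ whenever $L' \in \L_i[L^*]$ with $L^* \in N_{i-1}(R)$. Since $L \in N_{i-1}(R)$, this yields $\nu(R') + \nu(\phi(R')) = p_L \le 1$, so at most one $R'$ per $\phi$-pair violates cond.~2 and $|A_2^c| \le |\Nf(L)|/2$; analogously $\mu(R') + \mu(\phi(R')) = 1 + \rho(R)$ with $\rho(R) := \sum_{L^* \ne L,\, L^* \notin N_{i-1}(R)} p_{L^*} \ge 0$ gives $|A_1^c| \le |\Nf(L)|/2$. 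Writing $\mu = 1 - \nu - \sum_{L' \nsubseteq L} \lambda_{L'} X_{L'}$ and bounding the last sum by $1 - p_L$, one sees $\mu \ge p_L - \nu$, so the failure of cond.~2 at $R'$ forces $\mu(R') > 1/2 > c_1$, i.e.~cond.~1 holds at $R'$; hence $A_1^c$ and $A_2^c$ are disjoint, and $|A_1 \cap A_2| = |\Nf(L)| - |A_1^c| - |A_2^c|$.

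The remaining and main step is to prove $|A_1^c| + |A_2^c| \le (11/12)|\Nf(L)|$. When $p_L \le 1/2$, cond.~2 is trivial (since $\nu \ge 0$), so $|A_2^c| = 0$ and the bound follows from $|A_1^c| \le |\Nf(L)|/2$. When $p_L > 1/2$, I would use the codegree estimate $|\{R' \in \Nf(L) : L', L'' \in N(R')\}| \le (1/4 + \b/2)|\Nf(L)|$, which follows from the $\b$-balanced property (Definition~\ref{def:balanced-graph}(ii)) with $\b = 1/16$, to obtain $\Var(\nu) \le \|\lambda\|_\infty p_L/4 + p_L^2/32$. A Chebyshev bound on $|A_2^c|$ when $\|\lambda\|_\infty$ is small, combined with the fact that the cond.~1 threshold $c_1$ collapses when $\|\lambda\|_\infty$ is large (so that $|A_1^c|$ shrinks), would then close the argument.

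\textbf{Main obstacle.} The hard part will be this last combination: the $\phi$-pairing alone yields $|A_1^c| + |A_2^c| \le |\Nf(L)|$, just short of the target $(11/12)|\Nf(L)|$, while Chebyshev alone degrades when $\|\lambda\|_\infty$ is moderate and $p_L$ is close to $1$. The trade-off between the concentration regime of $\lambda$ (where Chebyshev is effective for $A_2^c$) and the sparsity regime (where cond.~1 becomes vacuous) must be carefully balanced via the codegree bounds of Definition~\ref{def:balanced-graph}(ii)--(iii); this balancing is precisely what is intended to produce the constant $1/12$.
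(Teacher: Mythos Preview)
Your structural observations are correct: the disjointness of $A_1^c$ and $A_2^c$ via $\mu \ge p_L - \nu$, and the $\phi$-pairing bounds $|A_j^c| \le \tfrac12|\Nf(L)|$, are valid. But the proposal is incomplete: you explicitly leave the decisive step, bounding $|A_1^c|+|A_2^c| \le \tfrac{11}{12}|\Nf(L)|$, as an ``obstacle'' to be handled by a Chebyshev/case-analysis trade-off that you never actually carry out. Your variance bound is right, but the Chebyshev inequality degenerates as $p_L\to 1$ exactly when the deviation $(1-p_L)/2$ shrinks, and nothing you wrote shows how the $\|\lambda\|_\infty$ trade-off compensates for this uniformly. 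So as it stands there is a genuine gap.

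The paper's route is cleaner and sidesteps the obstacle entirely. The key move you are missing is to work with $N_{\Gamma_i}(R')$ rather than $N_i(R')$. On $\Gamma_i[\L_i,\Nf(L)]$ the condition
\[
\min\Big\{\sum_{L'\in N_{\Gamma_i}(R')}\lambda_{L'},\ \sum_{L'\notin N_{\Gamma_i}(R')}\lambda_{L'}\Big\}\ \ge\ \tfrac18(1-\|\lambda\|_\infty)
\]
is \emph{$\phi$-invariant} (complementation merely swaps the two sums), so Lemma~\ref{lemma:1-6} (cited from~\cite{MoshkovitzSh16}) gives a $\phi$-closed set of at least $\tfrac16|\Nf(L)|$ such $R'$; within this set, each $\phi$-pair has at least one member with $\sum_{L'\in N_{\Gamma_i}(R')}\lambda_{L'}\ge\tfrac12$, yielding $\tfrac1{12}|\Nf(L)|$ clusters satisfying both conditions. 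Only then does one translate from $\Gamma_i$ to $\widetilde{G}_i$ using the construction~\eqref{eq:core-construction}, which is a one-line check for each inequality in~\eqref{eq:1-12-statement}. In your setup the analogue of the min-condition fails to be $\phi$-invariant (because $N_i$ and $N_{\Gamma_i}$ differ on $L'\nsubseteq L$), which is exactly why your pairing argument stalls at $|A_1\cap A_2|\ge 0$ and forces you into the unfinished second-moment computation.
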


%a bipartite graph $G$ on $(\Xside,\Yside)$ is \emph{$\b$-balanced} if for every $x \neq x' \in \Xside$, the number of vertices in $\Yside$ that are either neighbors of $x,x'$ or non-neighbors of $x,x'$ is at most $(\frac12 + \b)|\Yside|$.

For the proof of Claim~\ref{claim:1-12}	we will need the following lemma from~\cite{MoshkovitzSh16} (which improved upon a similar lemma from~\cite{Gowers97}).
\begin{lemma}[restatement of Lemma~2.3 in~\cite{MoshkovitzSh16}]\label{lemma:1-6}
	Let $G$ be a bipartite graph on $(\Xside,\Yside)$ that is $\frac{1}{16}$-balanced. For every $\lambda=(\lambda_1,\ldots,\lambda_{|\Xside|})$ with $\lambda_x \ge 0$ and $\norm{\lambda}_{1}=1$, at least $\frac16|\Yside|$ of the vertices $y \in \Yside$ satisfy
	$$\min\Big\{\sum_{x\in N_G(y)} \lambda_x,\,\sum_{x \notin N_G(y)} \lambda_x\Big\} \ge \frac18(1-\norm{\lambda}_{\infty}) \;.$$
	%	
	%	If $(X_{1,0},X_{1,1}),\ldots,(X_{d,0},X_{d,1})$ is a sequence of bipartitions of $X$ that is $\frac{1}{16}$-balanced, then for every $\l=(\l_1,\ldots,\l_{|X|})$ with $\l_t \ge 0$ and $\norm{\l}_{1}=1$, at least $d/6$ of the bipartitions $(X_{i,0},X_{i,1})$ satisfy $\min\{\sum_{t\in X_{i,0}} \l_t,\sum_{t\in X_{i,1}} \l_t\} \ge \frac18(1-\norm{\l}_{\infty})$.
	%	
	%	If $(A_i,B_i)^m_{i=1}$ is a sequence of bipartitions of $[M]$ that is $\frac{1}{16}$-balanced, then for every $\lambda=(\lambda_1,\ldots,\lambda_M)$ with $\lambda_t \ge 0$, $\norm{\lambda}_{1}=1$, and $\norm{\lambda}_{\infty} \le 1-8\z$, at least $m/6$ of the bipartitions $(A_i,B_i)$ satisfy $\min(\sum_{t\in A_i} \lambda_t,\sum_{t\in B_i} \lambda_t) \ge \z$.
\end{lemma}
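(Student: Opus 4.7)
The plan is to apply Lemma~\ref{lemma:1-6} to the bipartite graph $\Gamma_i[\L_i,\Nf(L)]$ and then combine its output with the complement-closed structure supplied by condition~(iv) of Definition~\ref{def:balanced-graph}. Condition~(ii) of Definition~\ref{def:balanced-graph}, applied to $F=\Nf(L)\in\F_i$, guarantees that $\Gamma_i[\L_i,\Nf(L)]$ is $\tfrac{1}{16}$-balanced, so Lemma~\ref{lemma:1-6} with the distribution $\lambda$ produces a set $S_1\sub\Nf(L)$ of size at least $\tfrac{1}{6}|\Nf(L)|$ on which both $\sum_{L'\in N_{\Gamma_i}(R')}\lambda_{L'}$ and $\sum_{L'\notin N_{\Gamma_i}(R')}\lambda_{L'}$ are at least $\tfrac{1}{8}(1-\norm{\lambda}_{\infty})$. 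Since the construction in~(\ref{eq:core-construction}) yields $\widetilde{G}_i\sub\Gamma_i$ and hence $N_i(R')\sub N_{\Gamma_i}(R')$, the first inequality of~(\ref{eq:1-12-statement}) is immediate for every $R'\in S_1$ by passing to complements.

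For the second inequality, set $A=\sum_{L'\in\L_i[L]}\lambda_{L'}$; when $A\le\tfrac12$ the right-hand side is nonpositive and the inequality is trivial, so assume $A>\tfrac12$. For $R'\in\Nf(L)$ lying in the unique $R\in N_{i-1}(L)$ containing it, the same construction~(\ref{eq:core-construction}) gives $N_i(R')\cap\L_i[L]=N_{\Gamma_i}(R')\cap\L_i[L]$. Apply condition~(iv) of Definition~\ref{def:balanced-graph} to $\R_i[R]\in\Y_i$ to obtain a fixed-point-free involution $\phi$ on $\R_i[R]\sub\Nf(L)$ with $N_{\Gamma_i}(\phi(R'))=\L_i\sm N_{\Gamma_i}(R')$; it is fixed-point-free since by condition~(i) we have $0<|N_{\Gamma_i}(R')|<|\L_i|$. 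Intersecting with $\L_i[L]$ shows that the neighborhoods of $R'$ and $\phi(R')$ partition $\L_i[L]$, so $\sum_{L'\in N_{\Gamma_i}(R')\cap\L_i[L]}\lambda_{L'}+\sum_{L'\in N_{\Gamma_i}(\phi(R'))\cap\L_i[L]}\lambda_{L'}=A$, and at least one of the two sums is at least $A/2\ge A-\tfrac12$---i.e., one of $R',\phi(R')$ satisfies the second inequality.

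The crucial final step is to observe that $S_1$ is itself closed under $\phi$: since $\phi$ swaps each vertex's neighborhood with its non-neighborhood, the two conditions of Lemma~\ref{lemma:1-6} defining $S_1$ are interchanged under $\phi$, so $R'\in S_1\iff\phi(R')\in S_1$. Therefore $\phi$ partitions $S_1$ into pairs, at least one element of each pair satisfies the second inequality, and hence at least $\tfrac12|S_1|\ge\tfrac{1}{12}|\Nf(L)|=\tfrac{1}{6}\cdot 2^{-i}|\R_i|$ clusters $R'\in\Nf(L)$ satisfy both inequalities of~(\ref{eq:1-12-statement}), where the last equality uses~(\ref{eq:N(L)}). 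The main obstacle is precisely this $\phi$-invariance of $S_1$: a naive union bound between the ``Lemma~\ref{lemma:1-6}-good'' set (density $\tfrac16$) and the ``involution-good'' set (density $\tfrac12$) would give vacuous intersection, and it is the $\phi$-invariance of $S_1$ that upgrades the overlap from negative to the required $\tfrac{1}{12}$.
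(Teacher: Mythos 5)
Your argument is a proof of Claim~\ref{claim:1-12} (Lemma~\ref{lemma:1-6} itself is imported from~\cite{MoshkovitzSh16} and is used as a black box both by you and by the paper), and it is correct and follows essentially the same route as the paper's proof: apply Lemma~\ref{lemma:1-6} to $\Gamma_i[\mathcal{L}_i,\mathcal{N}_i(L)]$, use the complement-closed involution $\phi$ to retain at least half of the resulting clusters, and transfer neighborhoods from $\Gamma_i$ to $\widetilde{G}_i$ via~(\ref{eq:core-construction}). The only (cosmetic) difference is in how the surviving half is selected: the paper keeps those $R'$ with $\sum_{L'\in N_{\Gamma_i}(R')}\lambda_{L'}\ge\frac12$ and then subtracts the mass outside $L$, whereas you restrict to $\mathcal{L}_i[L]$ before pairing and get one of $R',\phi(R')$ with mass at least $A/2\ge A-\frac12$; both yield the same bound, and your explicit observation that the set produced by Lemma~\ref{lemma:1-6} is $\phi$-invariant is exactly what the paper's counting step implicitly uses.
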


\begin{proof}[Proof of Claim~\ref{claim:1-12}]
	We first observe that $\Gamma_i[\L_i,\,\Nf(L)]$ is $\frac{1}{16}$-balanced and complement-closed;
	this
	%We have that $\Gamma_i[\L_i,\,\Nf(L)]$ is $\frac{1}{16}$-balanced and complement-closed, which
	follows from conditions $(ii)$, $(iv)$ in Definition~\ref{def:balanced-graph}
	and our choice of $\Gamma_i$ (for~$(iv)$ recall Remark~\ref{remark:complement}).
	%satisfied by $\Gamma_i$, using the fact that $\Nf(L)$ is a union of clusters of $\Y_i$.
	By Lemma~\ref{lemma:1-6} (without needing the complement-closed requirement) there are at least $\frac16|\Nf(L)|$ clusters $R' \in \Nf(L)$ satisfying that
	\begin{equation}\label{eq:1-12-min}
	\min\Big\{\sum_{L'\in N_{\Gamma_i}(R')} \lambda_{L'}, \sum_{L' \notin N_{\Gamma_i}(R')} \lambda_{L'}\Big\}
	\ge \frac18(1-\norm{\lambda}_{\infty}) \;.
	\end{equation}
	Now, observe that if $R'$ satisfies~(\ref{eq:1-12-min}) but does not satisfy
	\begin{equation}\label{eq:1-12-half}
	\sum_{L' \in N_{\Gamma_i}(R')} \lambda_{L'} \ge \frac12
	\end{equation}
	then its ``complementary counterpart'' $\phi(R')$ does satisfy both~(\ref{eq:1-12-min}) and~(\ref{eq:1-12-half}).
	This means that at least $\frac{1}{12}|\Nf(L)|$ clusters $R'\in\Nf(L)$ satisfy both~(\ref{eq:1-12-min}) and~(\ref{eq:1-12-half}).
	We claim that each of these $R'$ satisfies~(\ref{eq:1-12-statement}).
	Indeed, by construction~(\ref{eq:core-construction}), if $L' \notin N_{\Gamma_i}(R')$ then $L' \notin N_{i}(R')$, hence
	$$\sum_{L' \notin N_{i}(R')} \lambda_{L'} \ge \sum_{L' \notin N_{\Gamma_i}(R')} \lambda_{L'} \ge \frac18(1-\norm{\lambda}_{\infty}) \;.$$
	Moreover, by~(\ref{eq:core-construction}), since $R' \in \Nf(L)$, we have for every $L' \sub L$ that $L' \in N_{\Gamma_i}(R')$ if and only if $L' \in N_{i}(R')$. Hence, by~(\ref{eq:1-12-half}),
	%Moreover, if $L' \in N_{\Gamma_i}(R')$ and $L' \sub L$ then, by~(\ref{eq:core-construction}) and since $R' \in \Nf(L)$, we have $L' \in N_{i}(R')$, hence
	$$\sum_{\substack{L' \in N_i(R')\colon\\L' \sub L}} \lambda_{L'}
	= \sum_{\substack{L' \in N_{\Gamma_i}(R')\colon\\L' \sub L}} \lambda_{L'}
	= \sum_{L' \in N_{\Gamma_i}(R')} \lambda_{L'} - \sum_{\substack{L' \in N_{\Gamma_i}(R') \colon\\L' \nsubseteq L}} \lambda_{L'}
	\ge \frac12 - \sum_{\substack{L' \in \L_i \colon\\L' \nsubseteq L}} \lambda_{L'} \;.$$
	This proves our claim.
	Since $\frac{1}{12}|\Nf(L)| = \frac{1}{6}2^{-i}|\R_i|$ by~(\ref{eq:N(L)}), the proof follows.
\end{proof}

Having collected some simple properties, in the following subsection we prove two key properties of our construction which will be used to prove Lemma~\ref{theo:core}.

\subsection{Key properties of core construction}\label{subsec:key-properties}

\renewcommand{\t}{\ell}

For the rest of this subsection fix $1 \le \t \le s$, fix $G \in \G_{\t}$ and put $p = d(G) = 2^{-\t}$.
Our goal in this subsection is to prove the two key properties of $G$ stated in Claim~\ref{claim:property_qr} and Claim~\ref{claim:prop-main} below.
The first key property of $G$ is that, although it is hard to regularize it, $G$ \emph{is} in fact somewhat regular.
Specifically, the measure of regularity is determined by $|\R_1|$.
%In particular, it implies that the quality of the quasirandomness of $G$ is determined by $|\R_1|$.
%In particular,
%This will rely on the fact (proved in Claim~\ref{claim:property_qr} below)
%it shows
%that in $G$, the number of vertex pairs whose codegree is $2^i$-times the expected value constitute at most a $4^{-i}$-fraction of the vertex pairs.

\begin{claim}[Key Property I]\label{claim:property_qr}
	For every $S \sub \Lside$ and $T \sub \Rside$ with $|S| \ge (4/|\R_1|^{1/6})|\Lside|$ and $|T| \le \frac{1}{256}|\Rside|$,
	we have $e_{G}(S,T) \le \frac{1}{200}p|S||\Rside|$.
	%	
	%	Let $S \sub \Lside$, $T \sub \Rside$ with $|S| \ge |\Lside|/|\R_1|^{1/4}$ and $|T| \le \frac{1}{512}|\Rside|$.
	%	Then $d_{G}(S,T) \le 2 p$.
	%	
	%	Let $S \sub \Lside$, $T \sub \Rside$ with $|S| \ge p^{1/7}|\Lside|$ and $|T| = \frac{1}{512}|\Rside|$.
	%	Then $e_{G}(S,T) \le 2p|S||T|$.
	%	
	%	Let $A \sub \Lside$, $B \sub \Rside$ with $|A| \ge p^{1/7}|\Lside|$ and $|B| \le \frac{1}{512}|\Rside|$.
	%	Then $e_{G}(A,B) \le \frac{1}{256}p|A||\Rside|$.
\end{claim}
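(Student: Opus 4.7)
The plan is to apply Lemma~\ref{lemma:codegree} to $G$ with some $\alpha=O(|\R_1|^{-1/3})$, and then derive the required bound on $e_G(S,T)$ from the quantitative inequality
\[
e_G(S, T) \;\le\; |T|(1+x)p|S| \;+\; 2\alpha^{1/2}\,p|\Lside||\Rside|, \qquad x=\alpha^{1/6}/2,
\]
established inside the proof of Lemma~\ref{lemma:codegree}. Indeed, plugging $|T| \le |\Rside|/256$ into the first term gives $\tfrac{1+x}{256}p|S||\Rside|$, while $|S| \ge 4|\Lside|/|\R_1|^{1/6}$ turns the second term into $\tfrac12\alpha^{1/2}|\R_1|^{1/6}\,p|S||\Rside|$; for $\alpha$ as above and $x$ small, both terms comfortably fit under $p|S||\Rside|/200$ (the first because $1/256 < 1/200$ with room to spare).

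It remains to verify the codegree hypothesis of Lemma~\ref{lemma:codegree}, namely that $\sum_{v' \in \Lside}\max\{\codeg_G(v, v') - p^2|\Rside|,\, 0\} \le \alpha\, p^2 |\Lside||\Rside|$ for every $v \in \Lside$. I split pairs $(v,v')$ by whether they share a cluster of $\L_\ell$. Since $G$ is a blowup of $\widetilde{G}_\ell$, vertices in the same $\L_\ell$-cluster have identical neighbourhoods, so same-cluster pairs have codegree exactly $p|\Rside|$; there are only $|\Lside|/|\L_\ell|$ such $v'$, and by assumption~\ref{item:core-expL} the size $|\L_\ell|$ is doubly exponential in $|\R_1|$, so their total contribution to the sum is negligible. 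Distinct-cluster pairs satisfy $\codeg_G(v,v') = \codeg_{\widetilde{G}_\ell}(L_v,L_{v'})\cdot|\Rside|/|\R_\ell|$, so the task reduces to bounding $\sum_{L' \neq L}\max\{\codeg_{\widetilde{G}_\ell}(L,L') - p^2|\R_\ell|,\,0\}$ by $\alpha\, p^2|\L_\ell||\R_\ell|$ for every $L \in \L_\ell$.

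The crux, and the main obstacle, is this codegree excess bound for $\widetilde{G}_\ell$. I plan to prove it by induction on the level, using the recursive structure $\widetilde{G}_i = \widetilde{G}_{i-1} \cap \Gamma_i$ from~(\ref{eq:core-construction}). At each level $i$, the codegree in $\widetilde{G}_i$ factors through $\Gamma_i$, whose $\R$-side codegrees are controlled by condition~(iii) of Definition~\ref{def:balanced-graph} with multiplicative error $1+\alpha_i$ where $\alpha_i = |\L_i|^{-1/6}$; a matching $\L$-side bound is obtained from the complement-closed property~(iv) together with the $\b$-balanced condition~(ii), in the same way as the pairing-and-symmetrisation computation inside the proof of Lemma~\ref{lemma:seq-exists}. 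Because $|\L_i|$ grows as fast as $2^{|\R_i|/2^{i+10}}$, the accumulated error $\sum_i \alpha_i$ is dominated by $\alpha_1 \le 2^{-|\R_1|/(6\cdot 2^{11})}$, which is vastly smaller than $|\R_1|^{-1/3}$ for $|\R_1| \ge 2^{200}$. The delicate part will be to track how the per-level error propagates correctly through the recursive intersection, giving an additive (rather than multiplicative) contribution to the codegree excess across the $\ell$ levels.
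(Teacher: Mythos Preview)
Your plan has a genuine gap: you apply Lemma~\ref{lemma:codegree} with $V=\Lside$, but the construction only guarantees tight codegree control on the $\Rside$ side. Condition~(iii) of Definition~\ref{def:balanced-graph} (recorded as Claim~\ref{claim:core-properties}\ref{item:codeg}) bounds $|N_i(R_1')\cap N_i(R_2')\cap \L_i[L]|$ for pairs $R_1',R_2'\in\R_i$ with multiplicative error $1+\alpha_i$, $\alpha_i=|\L_i|^{-1/6}$. The ``matching $\Lside$-side bound'' you hope to extract from conditions~(ii) and~(iv) carries only the fixed error $\beta=\tfrac{1}{16}$: already for $\ell=1$ (where $\widetilde G_1=\Gamma_1$), $\beta$-balancedness plus biregularity give merely $|N_{\Gamma_1}(L)\cap N_{\Gamma_1}(L')|\le(\tfrac14+\tfrac{\beta}{2})|\R_1|$, so the per-pair excess is $\tfrac{\beta}{2}|\R_1|$ and summing over $L'\ne L$ forces $\alpha\ge 2\beta=\tfrac18$---nowhere near $O(|\R_1|^{-1/3})$. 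Across $\ell$ levels this only worsens, and in any case condition~(ii) is asserted only for the sets $F\in\F_i$, not for the intersections $\mathcal N_i(L_{i-1})\cap \mathcal N_i(L'_{i-1})$ that your induction would need.

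The paper instead takes $V=\Rside$ in Lemma~\ref{lemma:codegree}. It proves by induction on $i$ that $\codeg_{\widetilde G_i}(R,R') \le 2^{r}4^{-i}|\L_i|\prod_{j>r}(1+\alpha_j)$, where $r=r(R,R')$ is the deepest level at which $R,R'$ share an $\R_r$-cluster; the inductive step uses precisely Claim~\ref{claim:core-properties}\ref{item:codeg}. The factor $2^r$ is then absorbed in the excess sum by stratifying over $r$: at most $|\Rside|/|\R_r|$ vertices $v'$ have $r(v,v')=r$, and $|\R_r|\ge 4^{r-1}|\R_1|$ by assumption~\ref{item:core-expR}, so $\sum_r 2^{r+1}/|\R_r|=O(1/|\R_1|)$. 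This yields $\alpha=12/|\R_1|$ and hence $(\epsilon)$-regularity with $\epsilon=4/|\R_1|^{1/6}$; the claimed bound on $e_G(S,T)$ then follows directly from $(\epsilon)$-regularity (enlarging $T$ to size $\epsilon|\Rside|$ if necessary), without reaching inside the proof of Lemma~\ref{lemma:codegree}.
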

\begin{proof}
	We will prove that $G$ is $(\e)$-regular with $\e = 4/|\R_1|^{1/6}$.
	First, we show that this will complete the proof.
	Note that by assumption~\ref{item:core-minR} of Lemma~\ref{theo:core},
	%~(\ref{eq:core-conditions}),
	$\e \le 1/512$.
	Let $S \sub \Lside$ and $T \sub \Rside$ with $|S| \ge \e|\Lside|$ and $|T| \le |\Rside|/512$.
	%Our goal is to prove that $e(S,T) \le p|S||\Rside|/256$.
	If $|T| \ge \e|\Rside|$ then $e(S,T) \le (1+\e)p|S||T| \le p|S||\Rside|/200$, and we are done.
	Otherwise, let $T' \sub \Rside$ be an arbitrary superset of $T$ of size $\e|\Rside|$.
	Then $e(S,T) \le e(S,T') \le (1+\e)p|S||T'| = \e(1+\e)p|S||\Rside| \le p|S||\Rside|/200$, and we are done again.
	This completes the proof.
	
	%	We will show that the graph is $()$-quasirandom. This will complete the proof since ..
	
	%	It remains to show that $G$ is indeed qu$(\e)$-regular.
	%	We now show that in fact that graph is $1/R_1$-quasirandom, and by (2), this complete the proof.
	We now prove that $G$ is indeed $(\e)$-regular.
	Put $\R_0=\{\Rside\}$ and $\L_0=\{\Lside\}$.	
	For $R,R' \in \R_i$ with $0 \le i \le \t$ we denote by $0 \le r(R,R') \le i$ the largest integer such that $R,R' \sub \h{R}$ for some $\h{R} \in \R_{r(R,R')}$.
	We henceforth abbreviate $\codeg_{\widetilde{G}_i}(R,R')$ by $\codeg_i(R,R')$.
	Recall $\a_j$ from~(\ref{eq:alpha}).
	We prove, by induction on $0 \le i \le \t$, that for every $R,R' \in \R_i$ we have
	\begin{equation}\label{eq:codeg}
	\codeg_i(R,R') \le 2^{r}4^{-i}|\L_i| \prod_{j=r+1}^i (1+\a_j) \quad\text{ where }r=r(R,R') \;.
	\end{equation}
	For the proof we assume $R \neq R'$, as otherwise $r(R,R')=i$ and $\codeg_i(R,R')=\deg_{\widetilde{G}_i}(R)$, hence the required bound $\codeg_i(R,R')=2^{-i}|\L_i|$ follows from~\ref{item:equitable} in Claim~\ref{claim:core-properties}.
	The induction basis $i=0$ is trivially true since $\R_0$ does not contain two distinct members.
	%The induction basis $i=0$ is trivially true since $r(\Rside,\Rside)=0$ and $\codeg_0(\Rside,\Rside)=1=|\L_0|$ (as $\widetilde{G}_0$ is the single-edge graph).
	%		
	%	
	%The induction basis $i=0$ is trivially true since for every $R,R' \in \R_i$ we have $r(R,R')=0$, and since $\codeg_0(R,R')=|\L_0|$ (recall $G_0$ is the complete bipartite graph).
	For the induction step, let $R \neq R' \in \R_i$. %We assume
	Let $\h{R},\h{R}' \in \R_{i-1}$ satisfy $R \sub \h{R}$ and $R' \sub \h{R}'$, and let $\h{L} \in \L_{i-1}$ be a common neighbor of $\h{R}$ and $\h{R}'$ in $\widetilde{G}_{i-1}$.
	By~\ref{item:codeg} in Claim~\ref{claim:core-properties} together with the fact that $R \neq R'$, the number of common neighbors in $\widetilde{G}_i$ of $R,R'$ that lie in $\h{L}$ is at most $(1+\a_i)\frac14|\h{L}| = (1+\a_i)\frac14(|\L_i|/|\L_{i-1}|)$.
	Observe that $r(\h{R},\h{R}') = r(R,R')$, again using the fact that $R \neq R'$.
	Let $r=r(R,R')$. It follows that
	\begin{align*}
	\codeg_i(R,R') &\le \codeg_{i-1}(\h{R},\h{R}') \cdot (1+\a_i)\frac14\frac{|\L_i|}{|\L_{i-1}|}\\
	&\le 2^{r}4^{-i+1}|\L_{i-1}| \prod_{j=r+1}^{i-1} (1+\a_j) \cdot (1+\a_i)\frac14\frac{|\L_i|}{|\L_{i-1}|}
	= 2^{r}4^{-i}|\L_i| \prod_{j=r+1}^{i} (1+\a_j)  \;,
	\end{align*}
	where the second inequality uses the induction hypothesis.
	This completes the inductive proof.
	
	%Recall~(\ref{eq:core-conditions}), and in particular, the inequality
	Note that Assumption~\ref{item:core-expR} in Lemma~\ref{theo:core} implies that
	\begin{equation}\label{eq:4-power}
	|\R_i| \ge 4^{i-1}|\R_1| \;.
	\end{equation}
	%We will bound $\a$ using by~(\ref{eq:core-conditions}) and~(\ref{eq:alpha}), where we recall that the former in particular imp
	Put $\a=2\sum_{j=1}^{\t} \a_j$.
	By~(\ref{eq:alpha}), (\ref{eq:4-power}) and Assumptions~\ref{item:core-minR} and~\ref{item:core-expL} of Lemma~\ref{theo:core} we have
	%By~(\ref{eq:alpha}), assumption~\ref{item:core-expL},~(\ref{eq:4-power}) and assumption we have
	%By~(\ref{eq:core-conditions}),~(\ref{eq:alpha}) and~(\ref{eq:4-power}) we have
	\begin{equation}\label{eq:alpha2}
	\sum_{j=1}^{\t} \a_j \le \sum_{j=1}^{\t} \frac{1}{|\L_j|^{1/6}}
	\le \sum_{j=1}^{\t} \frac{1}{2^{|\R_j|/2^{j+13}}}
	\le \sum_{j=1}^{\t} \frac{1}{2^{|\R_1|2^{j-15}}}
	\le \frac{2}{2^{|\R_1|/2^{14}}} \le \frac{2}{|\R_1|} \,\,(\le 1) \;.
	\end{equation}
	%	\begin{equation}\label{eq:alpha2}
	%	\sum_{j=1}^s \a_j \le \sum_{j=1}^s \frac{1}{|\L_1|^{1/6}}
	%	\le \sum_{j=1}^s \frac{1}{2^{|\R_j|/2^{j+13}}}
	%	\le \frac{1}{2^{|\R_1|/2^{14}}} + \sum_{j=2}^s \frac{1}{2^{4^{j+100}/2^{j+13}}}
	%	\le \frac{2}{2^{|\R_1|/2^{j+13}}} \le \frac{2}{|\R_1|} \;.
	%	\end{equation}	
	%	We next use~(\ref{eq:codeg}) to prove that $G$ is indeed a quasirandom graph.
	For $v,v' \in \Rside$, with $v \in R \in \R_{\t}$ and $v' \in R' \in \R_{\t}$, we write $r(v,v'):=r(R,R')$.
	Since $G$ is a blowup of $\widetilde{G}_{\t}$, we have $\codeg_G(v,v')=\codeg_{\t}(R,R') \cdot |\Lside|/|\L_{\t}|$.
	%Denoting $r=r(v,v')$,
	It follows from the case $i=\t$ of~(\ref{eq:codeg}) (recall $4^{-\t}=p^2$) that
	\begin{equation}\label{eq:codegree2}
	\codeg_G(v,v') \le  2^r p^2|\Lside|\prod_{j=r+1}^{\t} (1+\a_j)
	%= \Big(1 \pm 2\sum_{j=r+1}^s\a_j\Big) 2^r p^2|\Lside|
	\le 2^r p^2|\Lside| (1 + \a) \quad\text{ where }r=r(R,R') \;,
	\end{equation}
	using the bound
	$\prod_{j=1}^{\t} (1+\a_j) \le \exp(\sum_{j=1}^{\t} \a_j)
	\le 1 + 2\sum_{j=1}^{\t} \a_j$, as
	$\sum_{j=1}^{\t} \a_j \le 1$
	%$\sum_{j=1}^s \a_j \le \a \le 1$
	%by~(\ref{eq:core-conditions}) and
	by~(\ref{eq:alpha2}).
	
	Fix $v \in \Rside$
	and note that~(\ref{eq:codegree2}) implies that
	\begin{equation}\label{eq:codegree3}
	\sum_{v' \in \Rside} \max\{\codeg_G(v,v') - p^2|\Lside|,\,0\}
	\le p^2|\Lside|\sum_{r=0}^{\t} \sum_{\substack{v' \in \Rside\colon\\r(v,v')=r}} \big( (1+\a)2^r - 1 \big) \;.
	\end{equation}
	Observe that the number of vertices $v' \in \Rside$ satisfying $r(v,v') = r$ is
	at most $|\Rside|/|\R_r|$.
	% \le |\Rside|/(4^{r-1}|\R_1|)$, where the inequality uses
	%Bounding the sum in~(\ref{eq:codegree3}), we have
	We have
	\begin{equation}\label{eq:codegree4}
	\frac{1}{|\Rside|}\sum_{r=0}^{\t} \sum_{\substack{v' \in \Rside\colon\\r(v,v')=r}} \big( (1+\a)2^r - 1 \big)
	\le \a + \sum_{r=1}^{\t} \frac{2^{r+1}}{|\R_r|}
	\le \a + \frac{8}{|\R_1|}\sum_{r=1}^\infty \frac{2^{r}}{4^{r}}
	\le \a + \frac{8}{|\R_1|}
	\le \frac{12}{|\R_1|} \;,
	\end{equation}
	where the second inequality uses~(\ref{eq:4-power})
	%(\ref{eq:core-conditions}) to bound $|\R_r| \ge 4^{r-1}|\R_1|$ for $r \ge 1$,
	and the third inequality uses~(\ref{eq:alpha2}).
	Summarizing~(\ref{eq:codegree3}) and~(\ref{eq:codegree4}), we have for every $v \in \Rside$ that
	$$\sum_{v' \in \Rside} \max\{\codeg_G(v,v') - p^2|\Lside|,\, 0\}
	\le \frac{12}{|\R_1|} \cdot p^2|\Lside||\Rside| \;.$$
	Therefore, Lemma~\ref{lemma:codegree} implies that $G$ is $(\e)$-regular with $\e=2(12/|\R_1|)^{1/6} \le 4/|\R_1|^{1/6}$.
	As explained in the first paragraph, this completes the proof.
	%In particular, by~(\ref{eq:core-conditions}), $\e \le 1/512$.
\end{proof}

%Recall that we fix $G \in \G_s$ and $p = d(G) = 2^{-s}$.
%We next obtain further properties of G.

%Henceforth fix $G \in \G_s$ and $p = d(G) = 2^{-s}$.
%For the rest of this subsection fix $1 \le i \le \t \le s$, fix $G \in \G_{\t}$, fix $G_i \in \G_i$ to be the unique graph with $E(G) \sub E(G_i)$, and put $p := d(G) = 2^{-\t}$.
We will need the following auxiliary property, where we recall that $G$ is any member of $\G_\ell$ and $p=d(G)=2^{-\ell}$.
%We additionally have the following property.
\begin{claim}\label{claim:property-degree}
	Let $1 \le i \le \t$ and let $G_i \in \G_i$ be the unique graph with $E(G) \sub E(G_i)$.
	If $L \in \L_i$, $R \in \R_{i}$ satisfy $d_{G_i}(L,R)=1$
	then every vertex $u \in L$ satisfies $d_{G}(u,R)=2^i p$.
	%
	%Let $j \le i$, $L \in \L_j$, $\R \in R_j$ with $d_{G_j}(L,R)=1$.
	%Fix $j \le i$, a graph $G \in \G_j$ and clusters $L \in \L_j$, $\R \in R_j$ with $d_{G_j}(L,R)=1$. Every vertex $u \in L$ satisfies $d_{G_i}(u,R)=2^j p$.
	%Let $G_i \in \G_i$ be the unique graph with $E(G) \sub E(G_i)$.
	
	%Let $L \in \L_i$, $R \in \R_{i}$ with $d_{G_i}(L,R)=1$.
	%Every vertex $u \in L$ satisfies $d_{G}(u,R)=2^i p$.
\end{claim}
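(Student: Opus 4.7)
The plan is to prove by induction on $j$ running from $j=i$ up to $j=\ell$ the stronger statement that every vertex $u\in L$ satisfies $d_{G_j}(u,R) = 2^{i-j}$. The conclusion of the claim is the case $j=\ell$: since $G \in \G_\ell$ and $\G_\ell$ refines $\G_{\ell-1}$ refines $\cdots$ refines $\G_i$, the unique chain of ancestors of $G$ ends at $G_\ell = G$ itself, so $d_G(u,R) = d_{G_\ell}(u,R) = 2^{i-\ell} = 2^i p$.

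For the base case $j=i$, the assumption $d_{G_i}(L,R)=1$ means that $G_i$ contains the complete bipartite graph between $L$ and $R$, so any $u \in L$ has $d_{G_i}(u,R)=1 = 2^{i-i}$.

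For the inductive step, suppose the statement holds at level $j$ and fix $u \in L$. Let $L^*_j \in \L_j$ be the unique cluster containing $u$; note $L^*_j \subseteq L$ because $\L_j \prec \L_i$. The vertex set $R$ partitions as $R = \bigsqcup_{R^* \in \R_j[R]} R^*$. Recall that $G_j$ is the blowup of $\widetilde{G}_j$, so the clusters $R^* \in \R_j[R]$ that contribute edges to $u$ in $G_j$ are precisely those with $(L^*_j, R^*) \in E(\widetilde{G}_j)$, each contributing all $|R^*|$ vertices of $R^*$ as neighbors of $u$. For exactly such $R^*$, Claim~\ref{claim:core-properties}\ref{item:equitable} applied to the pair $(L^*_j, R^*)$ at level $j+1$ states that $G_{j+1}[L^*_j,R^*]$ is biregular of density $\tfrac12$, hence $|N_{G_{j+1}}(u) \cap R^*| = \tfrac12|R^*|$. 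For $R^*$ with $(L^*_j,R^*) \notin E(\widetilde{G}_j)$ we have $N_{G_{j+1}}(u)\cap R^* \subseteq N_{G_j}(u)\cap R^* = \emptyset$. Summing over $R^* \in \R_j[R]$ therefore gives
\[
|N_{G_{j+1}}(u)\cap R| \;=\; \tfrac12\,|N_{G_j}(u)\cap R| \;=\; \tfrac12 \cdot 2^{i-j}|R| \;=\; 2^{i-j-1}|R|,
\]
using the induction hypothesis in the middle equality. This completes the induction.

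There is no real obstacle here; the only point that deserves care is verifying that the biregularity provided by Claim~\ref{claim:core-properties}\ref{item:equitable} transfers to the blowup $G_{j+1}$ at the level of individual vertices $u$ (not just clusters), and that exactly the $R^* \in \R_j[R]$ adjacent to $L^*_j$ in $\widetilde G_j$ are the ones carrying neighbors of $u$ — both of which are immediate from the definition of $G_{j+1}$ as a blowup of $\widetilde G_{j+1}$ together with the nesting $\L_{j+1} \prec \L_j$ and $\R_{j+1} \prec \R_j$.
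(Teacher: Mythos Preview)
Your proof is correct and follows essentially the same approach as the paper: both induct on $j$ from $i$ to $\ell$, using Claim~\ref{claim:core-properties}\ref{item:equitable} to halve the density at each step. The only cosmetic difference is that you phrase the induction hypothesis at the level of individual vertices $u \in L$, whereas the paper phrases it at the level of clusters $L_j \in \L_j$ with $L_j \subseteq L$; since each $G_j$ is a blowup of $\widetilde{G}_j$, these formulations are equivalent, and your version simply absorbs the paper's final conversion step into the induction itself.
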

\begin{proof}
	For each $i \le j \le \t$ let $G_j \in \G_j$ be the unique graph with $E(G) \sub E(G_j)$.
	Note that $E(G) \sub E(G_{\t-1}) \sub \cdots \sub E(G_i)$.
	We prove by induction on $i \le j \le \t$ that
	$d_{G_{j}}(L_j,R)=2^{i-j}$
	%
	%$d_{G_{j}}(L_j,R)=1/2^{j-i}$
	for every $L_j \in \L_j$ with $L_j \sub L$.
	We claim that the case $j=\t$ of the induction would complete the proof. Indeed, for every vertex $u \in L$ we have
	$u \in L_\t \in \L_\t$ for some $L_\t \sub L$; hence, recalling that $G$ is a blowup of the graph $\widetilde{G}$ on $(\L_\t,\R_\t)$, we would get
	$$d_G(u,R)=d_G(L_\t,R) = 2^{i-\ell}=2^i p \;,$$
	%$$d_G(u,R)=d_{\widetilde{G}}(L_\t,R)=d_G(L_\t,R) = 2^{i-\ell}=2^i p \;,$$
	as needed.
	%
	% $d_G(u,R)=d_{\widetilde{G}}(L_\t,R)=d_G(L_\t,R)$, hence the case $j=\t$ would imply that $d_G(u,R)=2^{i-\ell}=2^i p$, as needed.
	%This would complete the proof since the case $j=s$ implies, assuming $u \in L_s \in \L_s$, that $d_G(u,R)=d_G(L_s,R)=2^{i-s}=2^i p$ (recall that each $G_s \in \G_s$ is a blowup of $\widetilde{G}_s$).
	%
	We now prove our inductive claim.
	The induction basis $j=i$ is trivial since necessarily $L_j=L$.
	For the induction step, let $L_j \in \L_j$ and suppose $L_j \sub L_{j-1} \in \L_{j-1}$.
	By the induction hypothesis,
	%the number of clusters $\h{R} \in \R_{j-1}$ with $\h{R} \sub R$ that are neighbors of $\h{L}$ in $\widetilde{G}_{j-1}$ is $2^{-(j-1-i)}|\R_{j-1}|/|\R_i|$.
	$d_{{G}_{j-1}}(L_{j-1},R)=2^{i-j+1}$.
	By~\ref{item:equitable} in Claim~\ref{claim:core-properties}, for each $R_{j-1} \in N_{j-1}(L_{j-1})$ we have $d_{{G}_j}(L_j,R_{j-1})=\frac12$.
	By the construction of $G_j$ from $G_{j-1}$,
	we deduce that $d_{{G}_j}(L_j,R)=2^{i-j}$, which completes the induction step and the proof.
\end{proof}

The second key property of $G$ is as follows.

%We will prove that our graph $G=G_s$ has the following main property.
\begin{claim}[Key Property II]\label{claim:prop-main}
	Let $1 \le i \le \t$.
	Suppose $P \sub \Lside$ satisfies $P \in_{\frac14} \L_{i-1}$ and $P \notin_\g \L_i$.
	Then there exists $\frac{1}{6}2^{-i}|\R_i|$ clusters $R \in \R_i$ satisfying the following:
	\begin{enumerate}
		\item\label{item-key1} $d_G(P,R) \ge \frac{1}{4} 2^i p$,
		\item\label{item-key2} there is $P_1 \sub P$ with $|P_1| \ge \frac18\g|P|$ such that $d_G(P_1,R)=0$.
	\end{enumerate}
	%Let $P \sub \Lside$ satisfy $P \in_{\frac14} \L_{i-1}$ and $P \notin_\g \L_i$.There exist $\frac{1}{6}2^{-i}|\R_i|$ clusters $R \in \R_i$ such that for every such $R$ there is $P_1 \sub P$ with $|P_1| \ge \frac18\g|P|$ and $d_G(P_1,R)=0$ while $d_G(P,R) \ge \frac{1}{4} 2^i p$.
	%
	%
	%	Let $P \sub \Lside$ satisfy $P \in_{\frac14} \L_{i-1}$ and $P \notin_\g \L_i$.
	%	There exists $\Rf \sub \Rside$, which is a union of clusters $R \in \R_i$ with $|\Rf| \ge \frac{1}{6}2^{-i}|\Rside|$, where for every such $R$ there is $P_1 \sub P$ with $|P_1| \ge \frac18\g|P|$ such that $d_G(P_1,R)=0$ and $d_G(P,R) \ge \frac{1}{4} 2^i p$.
	%	
	%	There exist $P_1 \sub P$ with $|P_1| \ge \frac18\g|P|$, and $\Rf \sub \Rside$ a union of clusters of $\R_i$ with $|\Rf| \ge \frac{1}{6}2^{-i}|\Rside|$, such that $d_G(P_1,\Rf)=0$ and $d_G(P,\Rf) \ge \frac{1}{4} 2^i p$.
	%
	%	
	%	Let $P \sub \Lside$ with $|P-\L_{i-1}| \le |P|/4$.	There exist $P_1 \sub P$ with $|P_1| \ge \frac18|P-\L_{i}|$ and $\Rf \sub \Rside$ a union of clusters of $\R_i$ with $|\Rf| \ge \frac{1}{12}|\Rside|$ such that $d_G(P_1,\Rf)=0$ while $d_G(P,\Rf) \ge \frac{1}{4} p$.
	%	
	%	There exist at least $\frac{1}{6}|\R_{i}|/2^{i}$ clusters $R \in \R_{i}$ for which there is
	%	$P_1 \sub P$ with $|P_1| \ge \frac18|P-\L_{i}|$ such that $d_G(P_1,R)=0$ while $d_G(P,\,R) \ge \frac14 2^i p$.
	%	
	%	a partition $P=P_1 \cup P_2$ with $|P_1| \ge \frac18|P-\L_{i}|$ such that $e_G(P_1,R)=0$ while $e_G(P_2,\,R) \ge \frac14 2^i p|P||R|$.
	%	
	%	$P' \sub P$ with $|P'| \ge \frac18|P \sm P(\L_{r+1})|$ such that $e_G(P',R)=0$ while $e_G(P \sm P',\,R) \ge \frac14 2^i p|P||R|$.
\end{claim}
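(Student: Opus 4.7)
My plan is to reduce Claim~\ref{claim:prop-main} to Claim~\ref{claim:1-12} by choosing the obvious probability vector coming from $P$. First I would fix $L \in \L_{i-1}$ to be the unique part with $P \sub_{1/4} L$ (unique since $1/4 < 1/2$, per the remark on approximate refinements), and define $\lambda = (\lambda_{L'})_{L' \in \L_i}$ by $\lambda_{L'} := |P \cap L'|/|P|$. Then $\lambda \ge 0$ with $\norm{\lambda}_1 = 1$. The assumption $P \sub_{1/4} L$ translates to $\sum_{L' \sub L} \lambda_{L'} > 3/4$, so $\sum_{L' \nsubseteq L} \lambda_{L'} < 1/4$, while $P \notin_\g \L_i$ translates to $\lambda_{L'} \le 1-\g$ for every $L' \in \L_i$, i.e., $1-\norm{\lambda}_\infty \ge \g$.

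The key translation step is to convert densities in $G$ into $\lambda$-sums via Claim~\ref{claim:property-degree}. Let $G_i \in \G_i$ be the unique graph with $E(G) \sub E(G_i)$. For $R \in \R_i$ and $u \in L' \in \L_i$, Claim~\ref{claim:property-degree} at level $i$ yields $d_G(u,R) = 2^i p$ when $L' \in N_i(R)$ (neighborhoods in $\widetilde{G}_i$), and $d_G(u,R)=0$ otherwise (since then $d_{G_i}(L',R) = 0$ and $G \sub G_i$). Summing over $u \in P$ gives the identity
$$d_G(P,R) \;=\; 2^i p \sum_{L' \in N_i(R)} \lambda_{L'}.$$
So Item~\ref{item-key1} reduces to showing $\sum_{L' \in N_i(R)} \lambda_{L'} \ge 1/4$. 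For Item~\ref{item-key2}, I would set $P_1 := \bigcup_{L' \notin N_i(R)} (P \cap L')$; then $d_G(P_1,R)=0$ automatically, and $|P_1|/|P| = \sum_{L' \notin N_i(R)} \lambda_{L'}$, so the task reduces to showing $\sum_{L' \notin N_i(R)} \lambda_{L'} \ge \g/8$.

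Finally, I would apply Claim~\ref{claim:1-12} with this $L$ and $\lambda$, which furnishes $\tfrac{1}{6} 2^{-i}|\R_i|$ clusters $R \in \Nf(L) \sub \R_i$ simultaneously satisfying $\sum_{L' \notin N_i(R)} \lambda_{L'} \ge (1-\norm{\lambda}_\infty)/8 \ge \g/8$ (giving Item~\ref{item-key2} directly) and $\sum_{L' \in N_i(R),\, L' \sub L} \lambda_{L'} \ge 1/2 - \sum_{L' \nsubseteq L} \lambda_{L'} > 1/2 - 1/4 = 1/4$ (which implies Item~\ref{item-key1} upon dropping the constraint $L' \sub L$). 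I do not anticipate a serious obstacle, since Claim~\ref{claim:1-12} was crafted to provide exactly this two-part guarantee and Claim~\ref{claim:property-degree} was designed to convert $G$-densities into $\lambda$-sums. The only delicate point is ensuring the ``stray'' mass $\sum_{L' \nsubseteq L} \lambda_{L'}$ is strictly below $1/2$ so that the second estimate from Claim~\ref{claim:1-12} is usable, and this is exactly what the hypothesis $P \in_{1/4} \L_{i-1}$ provides.
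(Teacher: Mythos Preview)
Your proposal is correct and follows essentially the same route as the paper: define $\lambda_{L'}=|P\cap L'|/|P|$, apply Claim~\ref{claim:1-12} with the unique $L\in\L_{i-1}$ containing most of $P$, and use Claim~\ref{claim:property-degree} to translate $\widetilde{G}_i$-adjacency into $G$-densities. The only cosmetic difference is that you write the exact identity $d_G(P,R)=2^ip\sum_{L'\in N_i(R)}\lambda_{L'}$, whereas the paper introduces an auxiliary set $P_2(R')=P\cap\bigcup_{L'\in N_i(R'),\,L'\sub L}L'$ and bounds $e_G(P,R')\ge e_G(P_2(R'),R')$ from below; your formulation is in fact slightly cleaner.
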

\newcommand{\Nh}{\Gamma}
\begin{proof}
	For each $L' \in \L_i$ let $\lambda_{L'} = |P \cap L'|/|P|$.
	Put $\lambda=(\lambda_{L'})_{L' \in \L_i} \in [0,1]^{|\L_i|}$ and note that $\norm{\lambda}_1 = 1$.
	Let $L \in \L_{i-1}$ be the unique cluster satisfying $P \sub_{1/4} L$.
	We have $\norm{\lambda}_\infty \le 1-\g$ and $\sum_{L'\in\L_i \colon L' \nsubseteq L} \lambda_{L'} \le \frac14$.
	Therefore, by Claim~\ref{claim:1-12} there are at least $\frac{1}{6}2^{-i}|\R_{i}|$ clusters $R' \in \Nf(L)$ for which
	$$\sum_{L' \notin N_{i}(R')} \lambda_{L'} \ge \frac18(1-\norm{\lambda}_{\infty}) \ge \frac18\g \quad\text{ and }\quad
	\sum_{L' \in N_{i}(R'), L' \sub L} \lambda_{L'} \ge \frac12-\sum_{L'\in\L_i \colon L' \nsubseteq L} \lambda_{L'} \ge \frac14 \;.$$
	Fix $R' \in \Nf(L)$ as above.
	To complete the proof it suffices to show that $R'$ satisfies Properties~\ref{item-key1} and~\ref{item-key2} in the statement.
	Denote
	$$P_1(R') = P \cap \bigcup_{L' \notin N_{i}(R')} L' \quad\text{ and }\quad
	P_2(R') = P \cap \bigcup_{L' \in N_{i}(R'), L' \sub L} L' \;.$$	
	Thus, we have
	$$ |P_1(R')| \ge \frac18\g|P| \quad\text{ and }\quad |P_2(R')| \ge \frac14|P| \;.$$	
	%Recalling $G \sub G_i$,
	Let $G_i \in \G_i$ be the unique graph with $E(G) \sub E(G_i)$.
	We have $e_G(L',R')=0$ for every $L' \notin N_{i}(R')$,  implying
	%\begin{equation}%\label{eq:P1}
	$$
	e_G(P_1(R'),R')=0 \;,
	$$
	%\end{equation}
	which proves that $R'$ satisfies Property~\ref{item-key2}.
	Next, we claim that $d_{G}(u,R') = 2^i p$ for every $u \in P_2(R')$.
	Indeed, letting $u \in L' \in N_{i}(R')$ %with $L' \sub L$
	(recall the definition of $P_2(R')$), we have $d_{G_i}(L',R')=1$, thus applying Claim~\ref{claim:property-degree} on $L'$ and $R'$ proves our claim.
	We deduce that
	%\begin{equation}\label{eq:P}
	$$
	e_G(P,R') \ge e_G(P_2(R'),R') = |P_2(R')| \cdot 2^i p|R'| \ge \frac14 2^i p|P||R'| \;,
	$$
	%\end{equation}
	which proves that $R'$ satisfies Property~\ref{item-key1}. This completes the proof.
	%Summarizing, we have $d_G(P_1(R'),R')=0$ and $d_G(P,R') \ge \frac14 2^i p$, by~(\ref{eq:P1}) and~(\ref{eq:P}) respectively, which completes the proof.
	%	Finally, let $\Rf \sub \Rside$ be the union of the above (at least) $\frac{1}{6}2^{-i}|\R_{i}|$ clusters $R' \in \R_i$.
	%	Then
	%	$$|\Rf| \ge \frac{1}{6}2^{-i}|\R_{i}| \cdot (|\Rside|/|\R_i|) = \frac{1}{6}2^{-i}|\Rside| \;.$$
	%	For every $R' \in \R_i$ with $R' \sub \Rf$ we have $d_G(P_1(R'),R')=0$ and $d_G(P,R') \ge \frac14 2^i p$, by~(\ref{eq:P1}) and~(\ref{eq:P}) respectively, thus completing the proof.	
\end{proof}
%====

%\subsection{The main property}\label{subsec:LB-core-main}
\subsection{Proof of Lemma~\ref{theo:core}}\label{subsec:LB-core-main}

%A vertex partition $\Z$ is \emph{perfectly $\langle\a,\b\rangle$-regular} if every pair $Z \neq Z' \in \Z$ is $(\a \cdot d(Z,Z'),\,\b)$-regular.\footnote{That is, every $S \sub Z$, $T \sub T$ with $|S| \ge \b|Z|, |T| \ge \b|Z'|$ satisfy $d(S,T)=(1 \pm \a)d(Z,Z')$.}

%A vertex partition $\Z$ is \emph{perfectly $\langle\a,d\rangle$-regular} if every pair $Z \neq Z' \in \Z$ is $\a \cdot d(Z,Z')$-regular and $d(Z,Z') \le d$.

%A vertex partition $\Z$ is \emph{perfectly $(\e)$-regular} if every pair $Z \neq Z' \in \Z$ is $(\e d,\, \e)$-regular, where $d=d(Z,Z')$.

%\begin{definition}
%	A vertex partition $\Z$ is \emph{perfectly $(\e)$-regular} if for every pair $Z \neq Z' \in \Z$,
%	the bipartite graph between $Z$ and $Z'$ is $(\e d,\, \e)$-regular, where $d=d(Z,Z')$.\footnote{That is, $d(S,T) = (1\pm \e)d$ for every $S \sub Z$, $T \sub Z'$ with $|S| \ge \a|Z|$, $|T| \ge \a|Z'|$.}
%\end{definition}

%for each pair $Z \neq Z' \in \Z$, every $S \sub Z$ and $T \sub Z'$ with $|S| \ge \
%$G[Z,Z']$ is $\e$-regular with $\e=\a \cdot d_G(Z,Z')$ for every $Z \neq Z' \in \Z$.

We are now ready to prove our main result in this section, Lemma~\ref{theo:core}.
For convenience, we restate it below where $G$ is any graph of $\G_{\t}$, for any $1 \le \t \le s$.
We remind the reader that $\langle \d \rangle$-regularity below refers to Definition \ref{def:star-regular}.

%TODO: do we need this only for $G_s$ or for every $G_i$?

%%%TODO: can change $2^8\sqrt{\d}$ to $2^6\sqrt{\d}$ (and $\d \le 2^{-20}$ to $2^{-16}$) for free

%\setcounter{theo}{3}
%\addtocounter{lemma}{-1}
%\setcounter{lemma}{5}
\setcounter{lemma}{4}
\begin{lemma}\label{theo:cSRAL}%[new SRAL]
	Let $\d \le 2^{-20}$.
	Suppose $\P \cup \Q$ is a $\langle \d \rangle$-regular partition of $G$, where $\P$ and $\Q$ are partitions of $\Lside$ and $\Rside$ respectively, and $\Q \prec_{2^{-9}} \R_{t}$ for some $1 \le t \le \t$.
	%Let $\P$,$\Q$ be partitions of $\Lside$,$\Rside$ respectively. Suppose $\P \cup \Q$ is a $\langle \d \rangle$-regular partition of $G$ and $Q \prec_{2^{-9}} \R_{t}$ with $1 \le t \le s$.
	Then $\P \prec_{\g} \L_{t}$ with
	$\g = \max\{2^{5} \sqrt{\d},\, 32/\sqrt[6]{|\R_1|} \}$.
	%	$\g = \max\{2^{17} \d,\, 32/\sqrt[6]{|\R_1|},\, 64\e \}$.
	%	
	%	Let $P \cup \Q$ be a perfectly $\e$-regular partition of a graph $G'$ that is $\d$-close to $G$,
	%	and suppose $Q \prec_{2^{-9}} \R_{t}$ with $1 \le t \le s$.
	%	Then $\P \prec_{\g} \L_{t}$ with $\g = \max\{C \d,\, p,\, 8\e\}$.
	%	
	%	Let $P \cup \Q$ be a perfectly $\langle \frac12,d \rangle$-regular partition of a graph $G'$ that is $\d$-close to $G$,
	%	and suppose $Q \prec_{2^{-9}} \R_{t}$ with $1 \le t \le s$.
	%	Then $\P \prec_{\g} \L_{t}$ with $\g = \max\{C \d,\, 4d \}$.	
	%	
	%	Let $P \cup \Q$ be a $\d p$-regular partition of a graph $G'$ that is $\d$-close to $G$,
	%	
	%	Let $P \cup \Q$ be a perfectly $(\frac{1}{2})$-regular partition of a graph $G'$ that is $\d$-close to $G$,
	%	and suppose $Q \prec_{2^{-9}} \R_{t}$ with $1 \le t \le s$.
	%	Then $\P \prec_{\g} \L_{t}$ with $\g = \max\{C \d,\,p\}$.
\end{lemma}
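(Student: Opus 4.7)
The plan is to argue by contradiction. Suppose $\P \cup \Q$ is a $\langle \d \rangle$-regular partition of $G$ with $\Q \prec_{2^{-9}} \R_t$, yet $\P \nprec_\g \L_t$. Then by definition the ``bad'' parts $\P_{\text{bad}} = \{P \in \P : P \notin_\g \L_t\}$ satisfy $\sum_{P \in \P_{\text{bad}}}|P| > \g|\Lside|$. For each bad $P$, let $i(P) \in \{1,\ldots,t\}$ be the smallest index with $P \notin_\g \L_{i(P)}$; then $P \in_\g \L_{i(P)-1}$ (vacuously when $i(P)=1$, since $\L_0 = \{\Lside\}$), and since $\g \le 1/4$ under our assumptions $\d \le 2^{-20}$ and $|\R_1| \ge 2^{200}$, we in fact have $P \in_{1/4} \L_{i(P)-1}$. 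Key Property~II (Claim~\ref{claim:prop-main}) then supplies at least $\tfrac{1}{6} \cdot 2^{-i(P)}|\R_{i(P)}|$ witness clusters $R \in \R_{i(P)}$, each equipped with a subset $P_1=P_1(P,R) \sub P$ of size $\ge \tfrac18\g|P|$ satisfying $e_G(P_1,R)=0$ while $d_G(P,R) \ge \tfrac14\cdot 2^{i(P)}p$.

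I next transfer these witnesses from the $\R$-side to the $\Q$-side. Let $\Q^* = \{Q \in \Q : Q \in_{2^{-9}} \R_t\}$, which by assumption covers all but $\le 2^{-9}|\Rside|$ of $\Rside$, and for each $Q \in \Q^*$ let $R_t(Q) \in \R_t$ be the (unique) part with $Q \sub_{2^{-9}} R_t(Q)$. Since $\R_t$ refines $\R_{i(P)}$, setting $\Q(R) = \{Q \in \Q^* : R_t(Q) \sub R\}$ gives a family of $Q$'s each with $|Q \sm R| \le 2^{-9}|Q|$, whose union covers all but an $O(2^{-9})$-fraction of $R$. For each such $Q$ one has $e_G(P_1,Q) \le |P_1| \cdot 2^{-9}|Q|$ (coming entirely from $Q \sm R$), while the aggregate $\sum_{Q \in \Q(R)} e_G(P,Q)$ inherits the $\ge \tfrac14 \cdot 2^{i(P)} p$ density lower bound from the $(P,R)$ level up to the same $O(2^{-9})$ loss.

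The core calculation is then: in any graph $G'$ obtained from $G$ by modifying $\d e(G)$ edges so that $\P \cup \Q$ becomes $\langle \d \rangle$-regular, every bad pair $(P,Q)$ with $Q \in \Q(R)$ must satisfy $d_{G'}(P_1,Q) \ge \tfrac12 d_{G'}(P,Q)$, since $|P_1| \ge \tfrac18\g|P| \ge \d|P|$ by $\g \ge 2^5\sqrt\d$. Comparing the $G$- and $G'$-densities of $(P_1,Q)$ and $(P,Q)$ and plugging in the bounds above shows that the number $\alpha_{P,Q}$ of edges modified inside the pair $(P,Q)$ must satisfy $\alpha_{P,Q} = \Omega(\g \cdot 2^{i(P)} p \cdot |P||Q|)$. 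Summing over all bad $P$, over the $\ge \tfrac16 \cdot 2^{-i(P)}|\R_{i(P)}|$ witnesses, and over $Q \in \Q(R)$ (which aggregate to roughly $|\Rside|/|\R_{i(P)}|$ per witness), the $2^{\pm i(P)}$ factors cancel and one obtains a total $\sum_{P,Q}\alpha_{P,Q} = \Omega(\g^2 \cdot p \cdot |\Lside||\Rside|) = \Omega(\g^2 \cdot e(G))$, contradicting the $\d e(G)$ budget since $\g^2 \ge 2^{10}\d \gg \d$.

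The main obstacle will be rigorously controlling the noise terms from the approximate refinements, namely the edges $e_G(P_1, Q \sm R)$ for each pair together with contributions from $Q \in \Q \sm \Q^*$ and from the bits of each $R$ not covered by $\Q(R)$. For individual pairs the trivial $|P_1||Q \sm R|$ bound suffices, but once summed over all bad configurations the naive bound would overwhelm the main term whenever $2^{i(P)}p$ is small (i.e.\ when $i(P) \ll t$). This is exactly where Key Property~I (Claim~\ref{claim:property_qr}) enters: for a moderately large $S \sub \Lside$ and a sufficiently small $T \sub \Rside$ the edge count $e_G(S,T)$ is bounded via the \emph{global} density $p$ rather than $|S||T|$. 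Aggregating the $P_1$'s and the $Q \sm R$'s (and the $\Rside \sm \bigcup \Q^*$ sliver) into such pairs $(S,T)$ and invoking Key Property~I bounds the total noise by a quantity controlled by the second term $32/|\R_1|^{1/6}$ in the definition of $\g$, preserving the final $\Omega(\g^2 e(G)) > \d e(G)$ gap.
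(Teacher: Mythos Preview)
Your outline follows the paper's approach: argue by contradiction, stratify the bad $P$'s by the first level $i(P)$ at which $P\notin_\g\L_{i(P)}$, invoke Key Property~II to produce witnesses $R\in\R_{i(P)}$, transfer to the $\Q$-side, and use Key Property~I to absorb the approximate-refinement error.

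There is, however, a real gap in your core calculation. You apply the $\langle\d\rangle$-regularity of $G'$ to the pair $(P_1,Q)$, obtaining $e_{G'}(P_1,Q)\ge\tfrac{\g}{16}e_{G'}(P,Q)$. Since $e_G(P_1,Q)=e_G(P_1,Q\setminus R)$ need not vanish, rearranging yields only
\[
\alpha_{P,Q}\;\ge\;\tfrac{\g}{16}\,e_G(P,Q)\;-\;e_G(P_1,Q\setminus R),
\]
so the noise term $e_G(P_1,Q\setminus R)$ carries \emph{no} factor of $\g$. Summed over all $(P,R,Q)$ and bounded by Key Property~I, this noise is of order $p\,|D|\,|\Rside|$ (with $D=\bigcup_{P\text{ bad}}P$), while your main term is only of order $\g\,p\,|D|\,|\Rside|$. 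Using $|D|\ge\g|\Lside|$, the main term is $\Omega(\g^2 e(G))$ but the noise is $\Omega(\g\,e(G))$, which dominates for small $\g$; the claimed inequality $\sum\alpha_{P,Q}>\d\,e(G)$ does not follow. (Your per-pair claim $\alpha_{P,Q}=\Omega(\g\cdot 2^{i(P)}p|P||Q|)$ is also not true pairwise, since $d_G(P,Q)$ need not inherit the $(P,R)$ density, but that part is salvageable by aggregation; the missing $\g$ on the noise is not.)

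The fix---which is exactly what the paper does---is to apply regularity to $(P_1,\,Q\cap R)$ instead of $(P_1,Q)$ (legitimate since $|Q\cap R|\ge(1-2^{-9})|Q|\ge\d|Q|$). Then $e_G(P_1,Q\cap R)=0$ exactly, so $e_{G'}(P_1,Q\cap R)$ \emph{equals} the number of modifications there, and the only remaining noise is $e_G(P,\,R\setminus\Rside^*_i)$, which now sits \emph{inside} the $\g'$-factor. Key Property~I then bounds the summed noise by $\tfrac{\g'}{200}p|D||\Rside|$, comfortably below the main term $\tfrac{\g'}{24}p|D||\Rside|$, and the arithmetic closes to give $\sum_P\Delta(P,\Rside)>2^{-10}\g^2 e(G)\ge\d\,e(G)$.
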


\begin{proof}%[Proof of Theorem~\ref{theo:cSRAL}]
	Suppose towards contradiction that $\P \nprec_{\g} \L_{t}$.
	Let $G'$ be a graph satisfying that for every cluster pair $(P,Q) \in \P \times \Q$ and subsets $S \sub P$ and $S' \sub Q$ with $|S| \ge \d|P|$ and $|S'|\ge\d|Q|$ we have $d_{G'}(S,S') \ge \frac12 d_{G'}(P,Q)$.
	%a $\langle \d,0 \rangle^*$-regular partition of $E$.
	We need to show that $G'$ is not $\d$-close to $G$, that is, $|E(G') \Delta E(G)| > \d \cdot e(G)$.
	For subsets $S \sub \Lside$, $T \sub \Rside$ we denote $\Delta(S,T)=|E_{G'}(S,T) \triangle E_{G}(S,T)|$.
	Note that $|E(G') \triangle E(G)| = \sum_{P \in \P} \Delta(P,\Rside)$,
	hence our goal is to prove
	\begin{equation}\label{eq:LB-core-goal}
	\sum_{P \in \P} \Delta(P,\Rside) > \d \cdot e(G) \;.
	\end{equation}

	Put $\L_0 = \{\Lside\}$. %and $\b=\min\{\g,\frac14\}$.
	For each $1 \le i \le t$ let
	$$\D_i = \Big\{ P \in \P \,\Big\vert\,
	P \in_\g \L_{i-1}\text{ and } P \notin_\g \L_{i} \Big\} \;.$$
	%$$\D_i = \Big\{ P \in \P \,\colon\, |P-\L_i|>\frac14|P-\L_t|,\, |P-\L_{i-1}| \le \frac14|P-\L_t| \text{ and } |P-\L_t| \ge \frac12\g|P| \Big\} \;.$$
	Observe that $\D = \{ P \in \P \,\vert\, P \notin_\g \L_{t} \}$ is the disjoint union $\D = \bigcup_{i=1}^t \D_i$, since $P \in \D$ implies $P \in \D_i$ for a unique $i$ (as $P \in_\g \L_i$ implies $P \in_\g \L_{i-1}$) and since $P \sub_0 \L_0$ for every $P \in \P$.
	%Let $\D = \bigcup_{i=1}^t \D_i$ and note that
	%$\D = \{ P \in \P \colon P \notin_\g \L_{t} \}$,
	%using the fact that $P \in_\g \L_i$ implies $P \in_\g \L_{i-1}$ (as $\L_i \prec \L_{i-1}$) together with the fact that $P \sub_0 \L_0$ for every $P \in \P$.
	%Recalling $\b \le \g$, we have
	We have
	\begin{equation}\label{eq:LB-core-d}
	\sum_{P \in \D} |P| = \sum_{P \in \P \colon P \notin_\g \L_t} |P| > \g |\Lside| \;,
	\end{equation}
	where the inequality uses our assumption $\P \nprec_{\g} \L_{t}$.
	
	Fix $1 \le i \le t$ and put  $c=2^{-9}$.
	Let $\Rside^*_i = \bigcup (Q \cap R)$ where the union is over
	all $R \in \R_i$ and $Q \in \Q$ satisfying $Q \sub_{c} R$. We have
	\begin{equation}\label{eq:LB-core-R1}
	|\Rside \sm \Rside^*_i| = \sum_{R \in \R_i} \sum_{\substack{Q \in \Q \colon \\ Q \nsubseteq_{c} R}} |Q \cap R|
	\le \sum_{\substack{Q \in \Q \colon \\ Q \notin_{c} \R_i}} |Q|
	+  \sum_{\substack{Q \in \Q \colon \\ Q \in_{c} \R_i}} c|Q|
	\le 2^{-8}|\Rside| \;,
	\end{equation}
	where the last inequality bounds the first summand using the fact that $\Q \prec_c \R_i$, which follows from the statement's assumption $\Q \prec_c \R_t$ together with the fact that $\R_t \prec \R_i$.
	We also record the fact that, again since $\R_i \prec \R_t$, we have
	\begin{equation}\label{eq:LB-core-Rside*}
	\Rside^*_t \sub \Rside^*_i \;.
	\end{equation}
	
	Fix $P \in \D_i$ and put $p=d(G)$.
	%For subsets $S \sub \Lside$, $T \sub \Rside$ we denote $\Delta(S,T)=|E_{G'}(S,T) \triangle E_{G}(S,T)|$.
	%Note that $\Delta(S,T) \ge |E_{G}(S,T)-E_{G'}(S,T)|$, and that $|E(G') \triangle E(G)| = \sum_{P \in \P} \Delta(P,\Rside)$.	
	Our goal is to prove the lower bound on $\Delta(P,\Rside)$ stated in~(\ref{eq:LB-Delta}) below.	
	Note that $P \in_\frac14 \L_{i-1}$ (since $\g \le \frac14$ by the statement's assumptions) and $P \notin_\g \L_{i}$.
	%where for the first assertion we recall $\b \le \frac14$. %(since $\g \le 1$ for otherwise there is nothing to prove).
	%Note that, in particular, $|P-\L_{i-1}| \le \frac14|P|$.
	It follows by applying Claim~\ref{claim:prop-main} on
	$P$ that there exist $\frac{1}{6}2^{-i}|\R_i|$ clusters $R \in \R_i$ where for every such $R$ there is a subset $P_1=P_1(R) \sub P$ such that the following hold:
	% is $\Rf \sub \Rside$ which is a union of clusters $R \in \R_i$, where for every such $R$ there exists a subset $P_1=P_1(R) \sub P$ such that the following hold:
	\begin{enumerate}
		\item $|P_1| \ge \frac{1}{8}\g|P| \ge \d|P|$, %(the last inequality follows from our choice of $\g$),
		%		\item $|\Rf| \ge \frac{1}{6}2^{-i}|\Rside|$,
		%where $\Rf$ is a union of clusters of $\R_i$,
		\item $d_G(P_1,R) = 0$,
		\item $d_G(P,R) \ge \frac14 2^i p$.
	\end{enumerate}
	%where the last inequality in~$(i)$ follows from our choice of $\g$.
	Fix a cluster $R \in \R_i$ as above.
	Put $\g' = 2^{-5}\g$.
	Our goal toward proving~(\ref{eq:LB-Delta}) is to prove the lower bound on $\Delta(P,R)$ stated in~(\ref{eq:LB-Delta-R}) below.
	First, let $Q \in \Q$ and assume $Q \sub_c R$.
	Then
	\begin{align}
	\begin{split}\label{eq:LB-Q}
	e_{G'}(P_1,Q \cap R)
	&= |P_1|\cdot |Q \cap R|\cdot d_{G'}(P_1,Q \cap R) \\
	&\ge \frac{1}{8}\g|P|\cdot \frac12|Q|\cdot \frac12d_{G'}(P,Q)
	= \g' \cdot e_{G'}(P,Q)
	\ge \g' \cdot e_{G'}(P,Q \cap R) \;,
	\end{split}
	\end{align}
	where the first inequality follows from the assumption on the regularity of $\P \cup \Q$ at the beginning of the proof, together with~$(i)$ and our assumption $|Q \cap R| \ge (1-c)|Q| \ge \frac12|Q|$.
	% and the fact that $|P_1| \ge \d|P|$ by~$(i)$ and our choice of $\g$.	
	Let $R^* = R \cap \Rside^*_i$. We have by definition that $R^*$ decomposes into sets $Q \cap R$ such that $Q \in \Q$ satisfies $Q \sub_c R$.
	Therefore,
	\begin{align}
	\begin{split}\label{eq:LB-mod1}
	\Delta(P_1,R^*) &= e_{G'}(P_1,R^*)
	= \sum_{\substack{Q \in \Q\colon\\Q \sub_c R}} e_{G'}(P_1,Q \cap R) \\
	&\ge \g'\sum_{\substack{Q \in \Q\colon\\Q \sub_c R}} e_{G'}(P,Q \cap R)
	= \g' \cdot e_{G'}(P,R^*) \;,
	\end{split}
	\end{align}
	where the first equality follows from fact that $e_G(P_1,R^*)=0$ by~$(ii)$,
	and the inequality follows from~(\ref{eq:LB-Q}).
	%	We deduce that
	%	\begin{align}
	%	\begin{split}\label{eq:LB-mod2}
	%	\Delta(P,R) &= \Delta(P_1,R)+\Delta(P_2,R)
	%	\ge \frac{1}{128}\g\Big(e_{G'}(P,R^*)+\Delta(P_2,R)\Big)
	%	\end{split}
	%	\end{align}
	%	
	%===
	%	
	%	\begin{align}
	%	\begin{split}\label{eq:LB-mod2}
	%	\Delta(P,R) &\ge \Delta(P_1,R)+\Delta(P_2,R^*)
	%	\ge \Delta(P_1,R)+ e_{G}(P_2,R^*)-e_{G'}(P_2,R^*)
	%	\ge \frac{1}{128}\g \cdot e_{G}(P_2,R^*)
	%	\end{split}
	%	\end{align}
	%
	Now, to bound the right hand side of~(\ref{eq:LB-mod1}) we do the following, where we henceforth write $P_2 = P \sm P_1$;
	% and $R_0 = R \sm \Rside^*$;
	\begin{align}
	\begin{split}\label{eq:LB-mod2}
	e_{G'}(P,R^*) &\ge e_{G'}(P_2,R^*)
	\ge e_G(P_2,R^*) - \Delta(P_2,R^*)\\
	&= \big(e_G(P_2,R) - e_G(P_2,R \sm \Rside^*_i)\big) - \Delta(P_2,R^*)\\
	&\ge \frac14 2^i p|P||R|-e_G(P,R \sm \Rside^*_i) - \Delta(P_2,R) \;,
	\end{split}
	\end{align}
	where the second inequality uses the bound
	$\Delta(S,T) \ge e_{G}(S,T)-e_{G'}(S,T)$,
	%$\Delta(P_2,R^*) \ge e_{G}(P_2,R^*)-e_{G'}(P_2,R^*)$,
	and the last inequality bounds the first term using~$(ii)$ (implying $e_G(P_2,R)=e_G(P,R)$) and $(iii)$.
	Since $\Delta(P,R)=\Delta(P_1,R)+\Delta(P_2,R)$,
	the combination of~(\ref{eq:LB-mod1}) and~(\ref{eq:LB-mod2})
	implies the lower bound
	\begin{equation}\label{eq:LB-Delta-R}
	\Delta(P,R)
	%= \Delta(P_1,R)+\Delta(P_2,R)
	\ge \g'\Big(\frac{1}{4} 2^i p|P||R| - e_G(P,R \sm \Rside^*_i) \Big) \;.
	\end{equation}
	Summarizing, for every $P \in \D_i$ we have
	\begin{equation}\label{eq:LB-Delta}
	\Delta(P,\Rside)
	\ge \sum_{R} \Delta(P,R)
	%\ge \frac{1}{128}\g\Big(\frac{1}{4} 2^i p\big(\sum_R |P|\big)|R| - e_G(P,\Rside \sm \Rside^*_i) \Big)
	\ge \g'\Big(\frac{1}{24} p|P||\Rside| - e_G(P,\,\Rside \sm \Rside^*_t) \Big) \;,
	\end{equation}
	with the sum over the $\frac16 2^{-i}|\R_i|$ clusters $R \in \R_i$ as above,
	where the second inequality follows from~(\ref{eq:LB-Delta-R}) using the fact that $|R|=|\Rside|/|\R_i|$ for every $R \in \R_i$, as well as the fact that $\Rside \sm \Rside^*_i \sub \Rside \sm \Rside^*_t$ by~(\ref{eq:LB-core-Rside*}).
	%where we used~$(ii)$ and the fact that $\bigcup_{R} R_0 \sub \Rside \sm \Rside^*$.
	
	Let $\Df = \bigcup_{P \in \D} P$ and recall that, by~(\ref{eq:LB-core-d}), $|\Df| \ge \g|\Lside|$.
	%$\frac18\g|\Lside| \le |A| \le \g|\Lside|$.
	Summing over all $P \in \D$, we finally get
	\begin{align*}
	%|E(G') \Delta E(G)| &\ge
	\sum_{P \in \D} \Delta(P,\Rside) &=
	\sum_{i=1}^t \sum_{P \in \D_i} \Delta(P,\Rside)
	\ge \g'\Big(\frac{1}{24}p|\Df||\Rside| - e_G(\Df,\Rside \sm \Rside^*_t) \Big)\\
	&\ge \g'\Big( \frac{1}{24}p|\Df||\Rside| - \frac{1}{200}p|\Df||\Rside| \Big)
	> 2^{-5}\g' \cdot p|\Df||\Rside| \ge 2^{-10}\g^2 \cdot p|\Lside||\Rside| \;,
	%> \d \cdot e(G) \;,
	\end{align*}
	where the first inequality uses~(\ref{eq:LB-Delta}),
	the second inequality uses Claim~\ref{claim:property_qr} together with~(\ref{eq:LB-core-R1}) and the fact that $|\Df| \ge \g|\Lside| \ge 4|\R_1|^{-1/6}|\Lside|$
	%	where the last inequality follows from the choice of $\g$.
	by the choice of $\g$, and the last inequality again uses $|\Df| \ge \g|\Lside|$.
	
	We therefore deduce that
	$\sum_{P \in \D} \Delta(P,\Rside) > \d\cdot e(G)$,
	%$|E(G') \Delta E(G)| > \d\cdot e(G)$,
	again by the choice of $\g$.
	This proves~(\ref{eq:LB-core-goal}), thus completing the proof.
	%This contradicts the assumption that $G'$ is $\d$-close to $G$, thus completing the proof.
\end{proof}

\section{Ackermann-type Lower Bounds for the R\"odl-Schacht Regularity Lemma}\label{sec:coro}

%\section{Wowzer-type Lower Bounds for $3$-Graph Regularity Lemmas}\label{sec:FR}

\renewcommand{\K}{\mathcal{K}}

The purpose of this section is to apply Theorem~\ref{theo:main} in order to prove Corollary~\ref{coro:RS-LB}, giving level-$k$ Ackermann-type lower bounds for the $k$-graph regularity lemma of R\"odl and Schacht~\cite{RodlSc07}.
%\subsection{Definitions}
%Here we define regularity of hypergraph and hypergraph partitions.
We remind the reader that in~\cite{MS3} one can find an analogous section which deduces wowzer-type lower bound for the $3$-graph regularity lemmas of Frankl and R\"odl \cite{FrankRo02} and of Gowers~\cite{Gowers06}.
We begin with the required definitions.
The definitions we state here are essentially equivalent to (though shorter than) those in~\cite{RodlSc07}.
We will rely on the definitions in Subsection~\ref{subsec:preliminaries}, and in particular, the definition of a $k$-partition.

For a $k$-graph $H$, the \emph{density} of a $(k-1)$-graph $S$ in $H$ is
%For a %$k$-graph $H$ and a $k$-polyad $F$,
% (NOT on the same vertex set, as we need to allow V(F) \sub V(H)
%the \emph{density} of $S \sub F$
%in $H$ is
$$d_H(S) = \frac{|H \cap \K(S)|}{|\K(S)|} \;,$$%\footnote{%If $\K(S)=\emptyset$ one can set $d_H(S)=0$ (as RS do), although this has no bearing on our proof. RS (Dfn 5) use the notation $d(H|S)$ instead of $d_H(S)$.}$$
%%%and assume $V(H)=V(F)$ instead of $H \sub \K(F)$ although the latter is assumed when defining $\e$-regularity.}$$
%which makes little difference.}$$
%\begin{definition}[$\e$-regular $k$-graph]	
%	A $k$-graph $H \sub \K(P)$ is \emph{$\e$-regular} in $P$ if for every $S \sub P$ with $|\K(S)| \ge \e|\K(P)|$ we have $d_H(S) = d_H(P) \pm \e$.
%\end{definition}
%
%
where $d_H(S)=0$ if $|\K(S)|=0$.
%The notion of $\e$-regularity for uniform hypergraphs is defined as follows.
The notion of $\e$-regularity for $k$-graphs is defined as follows.

\begin{definition}[$\e$-regular $k$-graph]\label{def:e-reg}
	A $k$-partite $k$-graph $H$ is \emph{$(\e,d)$-regular}---or simply \emph{$\e$-regular}---in a $k$-polyad $P$ with $H \sub \K(P)$
	%on the same vertex classes
	if for every $S \sub P$ with $|\K(S)| \ge \e|\K(P)|$ we have $d_H(S) = d \pm \e$.%\footnote{This implies $d_H(S) = d_H(P) \pm 2\e$. RS (Dfn 6) use the term $\e$-regular to mean $(\e,d_H(P))$-regular, whereas we use it to mean $(\e,d)$-regular for some $d$, which is essentially equivalent.}
\end{definition}

A \emph{partition} of a $k$-graph $H$ is simply a $(k-1)$-partition on $V(H)$.

\begin{definition}[$\e$-regular partition]\label{def:e-reg-partition}
	A partition $\P$ of a $k$-graph $H$ is \emph{$\e$-regular} if
	$\sum_P |\K(P)| \le \e|V(H)|^k$ where the sum is over all $k$-polyads $P$ of $\P$ for which
	$H \cap \K(P)$ is not $\e$-regular in $P$.	
	%	all but at most $\e|V(H)|^k$ $k$-edges are underlied by $k$-polyads $P$ of $\P$ for which $H \cap \K(P)$ is $\e$-regular in $P$.	
	%the induced $k$-graph $H \cap \K(P)$ is $\e$-regular.	
	%every $S \sub P$ with $|\K(S)| \ge \e|\K(P)|$ has $d_{H}(S) = d_{H}(P) \pm \e$.
\end{definition}

Henceforth, an \emph{$(r,a_1,\ldots,a_r)$-partition} is simply an $r$-partition $\P$ (recall Definition~\ref{def:r-partition}) where  $|\P^{(1)}|=a_1$ and for every $2 \le i \le r$,
$\P^{(i)}$ subdivides each $K \in \K_i(\P)$ into $a_i$ parts.%\footnote{RS (Dfn 10) refer to $\{\P^{(1)},\ldots,\P^{(r)}\}$ as a \emph{family of partitions}.}

\begin{definition}[$f$-equitable partition]\label{def:r-equitable}
	Let $f\colon[0,1]\to[0,1]$.
	An $(r,a_1,\ldots,a_r)$-partition $\P$ is \emph{$f$-equitable} if $\P^{(1)}$ is equitable and for every $2 \le i \le r$, every $i$-graph $F \in \P^{(i)}$ is $(\e,1/a_i)$-regular in $\under(F)$, where $\e=f(d_0)$ and $d_0=\min\{1/a_2,\ldots,1/a_{r}\}$.
\end{definition}

\subsection{The lower bound}

The $k$-graph regularity of R\"odl and Schacht~\cite{RodlSc07} states, roughly, that for every $\e>0$ and every function $f\colon\N\to(0,1]$, every $k$-graph has an $\e$-regular $f$-equitable equipartition $\P$ where $|\P|$ (the total number of elements in the $(k-1)$-partition $\P$) is bounded by a level-$k$ Ackermann-type function.
In fact, R\"odl-Schacht's $k$-graph regularity lemma (Theorem~2.3 in~\cite{RodlSc07}) uses a considerably stronger notion of regularity of a partition that involves an additional function $r$ which we shall not discuss here (this stronger notion was crucial in~\cite{RodlSc07} for allowing them to prove a counting lemma).
%; that said, the introduction of this function $r$ is a main reason .
Our lower bound below applies even to the weaker notion stated above, which corresponds to taking $r \equiv 1$.

The proof of  Corollary~\ref{coro:RS-LB}
%Theorem~\ref{theo:RS-LB}
will follow quite easily from Theorem~\ref{theo:main} together with Claim~\ref{claim:k-reduction} below.
Claim~\ref{claim:k-reduction} basically shows that a $\langle \d \rangle$-regularity ``analogue'' of R\"odl and Schacht's notion of regularity implies graph $\langle \d \rangle$-regularity.
%Here it will be convenient to say that a hypergraph partition is \emph{perfectly} $\langle \d \rangle$-regular if all pairs of distinct clusters are $\langle \d \rangle$-regular without modifying any of the graph's edges.
%Furthermore, we will henceforth abbreviate $t(P)=|T(P)|$ for a triad $P$.
%We will only sketch the proof of Claim~\ref{claim:reduction}, deferring the full details to the Appendix~\ref{sec:FR-appendix}.
%
The proof of Claim~\ref{claim:k-reduction} is deferred to the Appendix~\ref{sec:RS-appendix}.
Henceforth we say that a graph partition is \emph{perfectly $\langle \d \rangle$-regular} if all pairs of distinct clusters are $\langle \d \rangle$-regular without modifying any of the graph's edges.
\begin{claim}\label{claim:k-reduction}
	Let $H$ be a $k$-partite $k$-graph on vertex classes $(\Vside^1,\ldots,\Vside^k)$, and
	let $\P$ be an $f$-equitable partition of $H$ with $\P^{(1)} \prec \{\Vside^1,\ldots,\Vside^k\}$,
	$f(x) = \d^4(x/2)^{2^{k+3}}$ and $|V(H)| \ge n_0(\d,|\P|)$.
	Suppose that for each $k$-polyad $P$ of $\P$, every $S \sub P$ with $|\K(S)| \ge \d|\K(P)|$ has $d_{H}(S) \ge \frac23 d_{H}(P)$.
	Then $E_k(\P) \cup V_k(\P)$ is a perfectly $\langle 2\sqrt{\d} \rangle$-regular %equi
	partition of $G_H^k$.
	%	Then $\P^{(k-1)}[\Vside^1\times\cdots\times\Vside^{k-1}] \cup \P^{(1)}[\Vside^k]$ is a perfectly $\langle 2\sqrt{\d} \rangle$-regular %equi
	%	partition of $G_H^k$.
\end{claim}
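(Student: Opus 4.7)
The plan is to reduce the claim to a clique-counting argument over the $k$-polyads of $\P$. Since $E_k(\P) \cup V_k(\P)$ partitions the two vertex sides of the bipartite graph $G_H^k$ and same-side cluster pairs contribute no edges (hence are trivially regular), it suffices to show, for every $F \in E_k(\P)$ and $V \in V_k(\P)$, that $G_H^k[F,V]$ is $\langle 2\sqrt{\d}\rangle$-regular. Fix arbitrary $F' \sub F$ and $V' \sub V$ with $|F'| \ge 2\sqrt{\d}|F|$ and $|V'| \ge 2\sqrt{\d}|V|$, and set $\rho := (|F'|/|F|)(|V'|/|V|) \ge 4\d$. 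Since $e(G_H^k[F,V]) = |H \cap (F \circ V)|$, the required inequality $d_{G_H^k}(F',V') \ge \tfrac12 d_{G_H^k}(F,V)$ translates into $|H \cap (F' \circ V')| \ge \tfrac{\rho}{2}|H \cap (F \circ V)|$.

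Applying Claim~\ref{claim:decomposition} furnishes a partition $F \circ V = \bigsqcup_i \K(P_i)$, where $P_i = (P_{i,1},\ldots,P_{i,k-1},F)$ ranges over all $k$-polyads of $\P$ whose $k$-th face is $F$ and $k$-th vertex class is $V$. For each such $P_i$, let $S_i$ be the sub-polyad of $P_i$ obtained by restricting each of the first $k-1$ faces to its induced subgraph on $\Vside^k$-vertex class $V'$ (rather than $V$) and replacing the $k$-th face $F$ by $F'$. One checks directly that $\K(S_i) = \K(P_i) \cap (F' \circ V')$, so $F' \circ V' = \bigsqcup_i \K(S_i)$ and $|H \cap (F' \circ V')| = \sum_i |H \cap \K(S_i)|$.

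The decisive step is a uniform counting lemma: the $f$-equitability of $\P$ with $f(x) = \d^4(x/2)^{2^{k+3}}$, together with $|V(H)| \ge n_0(\d,|\P|)$, yields for every $P_i$ and $S_i$ as above the estimate $|\K(S_i)| = (1 \pm \d^2)\,\rho\,|\K(P_i)|$. This is the standard $k$-clique count for $f$-equitable partitions, proved recursively through the $k-1$ lower layers of $\P$: $\e$-regularity at each level $r$ (with $\e = f(d_0)$ polynomially small in the minimum density $d_0$) propagates to a controlled error on clique counts, and the very small value of $f$ is calibrated so that the cumulative error after the $k-1$ layers is at most $\d^2$. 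Consequently $|\K(S_i)| \ge (1-\d^2)\cdot 4\d\cdot|\K(P_i)| \ge \d\,|\K(P_i)|$, so the hypothesis of the claim applies to each $P_i$, yielding $|H \cap \K(S_i)| \ge \tfrac23 d_H(P_i)|\K(S_i)| \ge \tfrac23(1-\d^2)\rho\,d_H(P_i)|\K(P_i)|$. Summing over $i$ and using $\sum_i d_H(P_i)|\K(P_i)| = |H \cap (F \circ V)|$ gives $|H \cap (F' \circ V')| \ge \tfrac23(1-\d^2)\rho\,|H \cap (F \circ V)| > \tfrac{\rho}{2}|H \cap (F \circ V)|$ (valid once $\d \le 1/4$, which is forced by $2\sqrt{\d}\le 1$), as needed; note that the regularity comes out automatically \emph{perfect} since this is an unconditional pointwise lower bound on densities.

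The main obstacle is the uniform counting-lemma estimate in the third paragraph: I need $|\K(S_i)|/|\K(P_i)| \approx \rho$ not merely on average (this average is exactly $\rho$) but for every individual polyad $P_i$. This is precisely what forces the very small value $f(x) = \d^4(x/2)^{2^{k+3}}$, whose huge exponent $2^{k+3}$ is tailored to absorb the recursive blow-up of regularity parameters through the $k-1$ layers of the partition.
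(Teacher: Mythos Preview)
Your overall strategy coincides with the paper's: decompose $F\circ V$ into $\K(P_i)$ via Claim~\ref{claim:decomposition}, pass to the sub-polyads $S_i$ (the paper calls them $Q_i$), obtain $|\K(S_i)|\ge \d|\K(P_i)|$ so that the hypothesis gives $d_H(S_i)\ge\tfrac23 d_H(P_i)$, and sum. The final arithmetic is the same.

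The one place where your sketch is too quick is the ``uniform counting lemma'' $|\K(S_i)|=(1\pm\d^2)\rho\,|\K(P_i)|$. You call this ``the standard $k$-clique count for $f$-equitable partitions,'' but $S_i$ is \emph{not} a polyad of $\P$: its $k$-th vertex class is the arbitrary subset $V'\subseteq V$, and its $k$-th face is the arbitrary subset $F'\subseteq F$. Neither restriction is covered by the plain counting lemma for polyads of $\P$. The paper handles these two restrictions separately and explicitly:
\begin{itemize}
\item For $V'\subseteq V$ it proves a \emph{slicing lemma} (Lemma~\ref{lemma:k-slice}): if a $k$-complex is $(f,d_2,\ldots,d_{k-1})$-regular and one restricts the last vertex class to a $\d$-fraction, the induced complex is still $(\tfrac{2}{\d}f,\,d_2,\ldots,d_{k-1})$-regular. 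This is where the factor $\d/2$ in the bound $f(x)\le\tfrac{\d}{2}F_{k,\gamma'}(x)$ is spent.
\item For the arbitrary $F'\subseteq F$ it invokes the \emph{extension} (``moreover'') part of the dense counting lemma (Fact~\ref{fact:counting}): for all but a $\gamma'$-fraction of edges $e\in F$, the number of $k$-cliques through $e$ is $(1\pm\gamma')$ times the expected value. Summing over $e\in F'$ and subtracting the trivial bound for the exceptional edges gives the lower bound on $|\K(S_i)|$; here one crucially needs $\gamma'\ll\d' d$ so that the $\gamma'|F|$ exceptional edges do not swamp $|F'|\ge\d'|F|$.
\end{itemize}
Once you spell out these two ingredients, your estimate follows (with error of order $\sqrt{\d}$ rather than $\d^2$, but that is still comfortably enough for $\tfrac23(1-\mathrm{err})\ge\tfrac12$). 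So your proposal is correct in outline and matches the paper's route, but the counting step genuinely requires both tools above; it is not a direct consequence of the basic clique count for polyads of $\P$.
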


Although quite technical, the rough idea behind the proof of Claim~\ref{claim:k-reduction} is rather simple.
In order to argue about the bipartite graphs in the statement's partition of $G_H^k$, one should consider ``semi-complete'' $k$-polyads, that is, $k$-polyads of the form $F \circ V$ (see Subsection~\ref{subsec:k-partitions-proofs}).
Using Claim~\ref{claim:decomposition}, one partitions such a $k$-polyad into $k$-polyads of $\P$, for which the statement's regularity assumption applies. The so-called dense counting lemma of~\cite{RodlSc07-B} implies that these $k$-polyads span approximately the expected number of $k$-cliques. By going over all the aforementioned $k$-polyads of $\P$, this can be used to deduce the regularity of the corresponding bipartite graph, $G_H^k[F,V]$, as needed.

%using the statement's assumption together with a decomposition of any $k$-polyad into  (i.e., )

We now formally restate and prove Corollary~\ref{coro:RS-LB}.
%In fact, our lower bound applies to the hypergraph regular approximation lemma of R\"odl and Schacht~\cite{RodlSc07}.
%
%We first restate the lower bound and immediately proceed to formally define the relevant notions.
We mention that, as will be immediate from the proof, our lower bound not only applies to the hypergraph regularity lemma of R\"odl and Schacht but also to the hypergraph regular approximation lemma~\cite{RodlSc07}.

\begin{theo}[Lower bound for the R\"odl-Schacht $k$-graph regularity lemma]\label{theo:RS-LB}
	Let $s \ge k \ge 2$ and
	put $c = 2^{-32^k}$.
	For every $s \in \N$ there exists a $k$-partite $k$-graph $H$ of density $p=2^{-s-k}$, and a partition $\V_0$ of $V(H)$ with $|\V_0| \le k 2^{200}$,
	such that if $\P$ is an $\e$-regular $f$-equitable partition of $H$ 	
	%such that if $H$ is $\e$-regular relative to an $f$-equitable $\P$,
	with
	$\e \le c p$,
	$f(x) \le c^4(x/2)^{2^{k+3}}$, $|V(H)| \ge n_0(k,|\P|)$
	and $\P^{(1)} \prec \V_0$, then $|\P^{(1)}| \ge \Ack_k(s)$.
\end{theo}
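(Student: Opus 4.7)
The plan is to take the $k$-partite $k$-graph $H$ and partition $\V_0$ produced by Theorem~\ref{theo:main} with density $p=2^{-s-k}$, and then to show that every $\e$-regular $f$-equitable partition $\P$ of $H$ satisfying the hypotheses of the theorem is automatically a $\langle \d \rangle$-regular partition of $H$ in the sense of Definition~\ref{def:k-reg}, for $\d := 2^{-16^k}$. Given this, Theorem~\ref{theo:main} immediately yields $|\P^{(1)}| \ge \Ack_k(s)$.

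The first step is to establish $\langle \d \rangle$-goodness of $\P$. Fix $2 \le r \le k$ and $F \in \P^{(r)}$, and let $\P_F$ be the $(r-1)$-partition obtained by restricting $\P$ to $V(F)$ and truncating to levels $\le r-1$. By $f$-equitability, $F$ is $(f(d_0),1/a_r)$-regular in $\under(F)$, whereas for every other $r$-polyad $P'$ of $\P_F$ we have $F \cap \K(P')=\emptyset$. Since $f(x) \le c^4(x/2)^{2^{k+3}} \le c^4(x/2)^{2^{r+3}}$ for $x \in (0,1]$, the restricted partition $\P_F$ satisfies the $f$-equitability requirement of Claim~\ref{claim:k-reduction} at uniformity $r$ with its internal parameter set to $c$. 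The density hypothesis there is vacuous on polyads with $d_F(P')=0$, and on $P'=\under(F)$ it follows from a parameter check: $f(d_0) \le c^4/(2a_r)^{2^{k+3}} \le 1/(3a_r)$, hence $d_F(S) \ge 1/a_r - f(d_0) \ge \tfrac23 \cdot 1/a_r$ for every $S \sub P'$ with $|\K(S)| \ge c|\K(P')|$. Claim~\ref{claim:k-reduction} thus makes $G_{F,\under(F)}^i$ a $\langle 2\sqrt{c} \rangle$-regular bipartite graph for every $1 \le i \le r$, which is $\langle \d \rangle$-regular since $2\sqrt{c} \le \d$.

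The second step is to show that $E_i(\P) \cup V_i(\P)$ is a $\langle \d \rangle$-regular partition of $G_H^i$ for every $i$. Define $H'$ by deleting from $H$ every edge that lies in a $k$-polyad $P$ of $\P$ which is either $\e$-irregular or has $d_H(P) < 3\e$. The $\e$-regularity of $\P$ bounds the edges of the first type by $\e|V(H)|^k$ and those of the second by $3\e|V(H)|^k$, so $e(H \sm H') \le 4\e|V(H)|^k \le (4\e k^k/p) \cdot e(H) \le \d \cdot e(H)$, using the gap $c \le \d/(4k^k)$ available from $c=2^{-32^k}$ and $\d=2^{-16^k}$. Now $H'$ meets the density hypothesis of Claim~\ref{claim:k-reduction} with parameter $c$: for any $k$-polyad $P$ with $H' \cap \K(P) \neq \emptyset$, $H$ is $\e$-regular in $P$ with $d_H(P) \ge 3\e$, so every $S \sub P$ with $|\K(S)| \ge c|\K(P)| \ge \e|\K(P)|$ satisfies $d_{H'}(S)=d_H(S) \ge d_H(P)-\e \ge \tfrac23 d_{H'}(P)$. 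Claim~\ref{claim:k-reduction} thus yields a perfectly $\langle 2\sqrt{c} \rangle$-regular partition of $G_{H'}^k$, and by permuting vertex classes the same holds for $G_{H'}^i$ for every $i$. Combined with $|E(G_H^i) \triangle E(G_{H'}^i)| \le e(H\sm H') \le \d \cdot e(G_H^i)$ and $2\sqrt{c} \le \d$, this makes $E_i(\P) \cup V_i(\P)$ a $\langle \d \rangle$-regular partition of $G_H^i$, so $\P$ is a $\langle \d \rangle$-regular partition of $H$ and Theorem~\ref{theo:main} finishes the proof.

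The most delicate part of the plan is parameter alignment: the single choice $f \le c^4(x/2)^{2^{k+3}}$ and $\e \le cp$ must simultaneously drive the $f$-equitability hypothesis of Claim~\ref{claim:k-reduction} at every level $r \le k$, force $2\sqrt{c} \le \d$ so that the guaranteed perfectly $\langle 2\sqrt{c} \rangle$-regular pairs meet the threshold $\d=2^{-16^k}$, and bound the edges removed to obtain $H'$ by a $\d$-fraction of $e(H)$. The generous gap between $c=2^{-32^k}$ and $\d=2^{-16^k}$ is engineered precisely so that all three constraints hold with room to spare.
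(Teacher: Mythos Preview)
Your proposal is correct and follows essentially the same two-step route as the paper: first establish $\langle \d \rangle$-goodness of $\P$ by applying Claim~\ref{claim:k-reduction} to each $F \in \P^{(r)}$ with the restricted $(r-1)$-partition, then delete the edges of $H$ in bad $k$-polyads (irregular or low density) to obtain $H'$, apply Claim~\ref{claim:k-reduction} to $H'$, and absorb the deleted edges into the $\d$-fraction allowed by Definition~\ref{def:star-regular}. Two small points to tighten: for goodness the range should be $2 \le r \le k-1$ since $\P$ is a $(k-1)$-partition; and since Definition~\ref{def:e-reg} only gives $d_H(S) \ge d_H(P) - 2\e$ (not $-\e$), your density threshold should be $6\e$ rather than $3\e$, exactly as in the paper, after which the edge-count bound becomes $7\e|V(H)|^k$ and your numerics still go through with room to spare.
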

%Next we define all relevant notions.

\begin{remark}
	One can easily remove the assumption $\P^{(1)} \prec \V_0$ by taking the common refinement of $\P^{(1)}$ with $\V_0$ (and adjusting $\P$ appropriately). Since $|\V_0|=O(k)$, this has only a minor effect on the parameters of $\P$ and thus
	one gets essentially the same lower bound. We omit the details of this routine transformation.
	%
	%One can easily remove the assumption $\P^{(1)} \prec \V_0$ by replacing $\P$ with its appropriate intersection
	%with $\V_0$. Since $|\V_0|=O(k)$ this has only a minor effect on the parameters of $\P$ and thus
	%one gets essentially the same lower bound. We omit the details of this routine transformation.
\end{remark}

%We are finally ready to prove our $k$-graph lower bound.
%Using Claim~\ref{claim:k-reduction} we now prove our lower bound for R\"odl-Schacht's hypergraph regularity lemma.

\begin{proof}[Proof of Theorem~\ref{theo:RS-LB}]
	Put $\a_k = 2^{-16^k}$ (recall $c = 2^{-32^k}$).
	The bound $|\P^{(1)}| \ge \Ack_k(s)$ would follow from Theorem~\ref{theo:main} if we show that $H$ is $\langle \a_k \rangle$-regular relative to the $(k-1)$-partition $\P$.
	Henceforth put $\d=(\a_k/2)^2$. We will later use the inequalities
	\begin{equation}\label{eq:RS-LB-ineq}
	c \le \frac{\a_k^k}{7k^k} \le \d \;.
	\end{equation}
	First we claim that $\P$ is $\langle \a_k \rangle$-good (recall Definition~\ref{def:k-good}).
	Let $2 \le r \le k-1$, let $F \in \P^{(r)}$ be an $r$-partite $r$-graph on $(V_1,\ldots,V_r)$, and denote by $P=\under(F)$ the $r$-polyad underlying $F$.	
	%We need to show that for every $1 \le i \le r$ the bipartite graph $G_{F,P}^i$ is $\langle \a_k \rangle$-regular (i.e., $\langle 2\sqrt{\d} \rangle$-regular).
	We will show that the bipartite graph $G_{F,P}^r$ is $\langle \a_k \rangle$-regular, and since an analogous argument will hold for $G_{F,P}^i$ for every $1 \le i \le r$, this would prove our claim.
	Recalling Definition~\ref{def:aux}, we have that $G_{F,P}^r = G_F^r[E,V_r]$ with $E:=P[V_1,\ldots,V_{r-1}] \in E_r(\P)$.
	Now, suppose $\P$ is a $(k-1,a_1,\ldots,a_{k-1})$-partition, put $d_r = 1/a_r$ for each $2 \le r \le k-1$, and put $d_0=\min\{1/a_2,\ldots,1/a_{k-1}\}$.
	Recalling Definition~\ref{def:r-equitable}, since $\P$ is $f$-equitable we have that $F$ is $(f(d_0),d_r)$-regular in $P$. Thus, recalling Definition~\ref{def:e-reg}, for every $S \sub P$ with $|K(S)| \ge \d|K(P)|$ (note $\d \ge c \ge f(d_0)$ using~(\ref{eq:RS-LB-ineq})) we have $d_F(S) \ge d_r-f(d_0) \ge d_F(P) - 2f(d_0) \ge \frac23 d_F(P)$.
	%	Let $2 \le r \le k-1$ and $F \in \P^{(r)}$. Let $(V_1,\ldots,V_r)$ denote the vertex classes of $F$ and put $P=\U(F)$.
	%	We need to show that $G_{F,P}^i$, for every $1 \le i \le r$, is $\langle\a_k\rangle$-regular.
	Let the $(r-1)$-partition $\P'$ be obtained by restricting $\P$ to $V_1 \cup\cdots \cup V_r$ (so in particular $V_i(\P')=\{V_i\}$), and note that $P$ is an $r$-polyad of $\P'$.
	%Since $F$ in $(f(d_0),d_r)$-regular in $P$, for every $S \sub P$ with $|\K(S)|\ge \d|\K(P)|$ we have $d_F(S) \ge \cdots \ge \frac23 d_F(P)$.
	Observe that $d_F(S) \ge \frac23 d_F(P)$ trivially holds for any $r$-polyad $P'$ of $\P'$ other than $P$ as well, since $d_F(P')=0$ as $F \sub \K(P)$.
	Apply Claim~\ref{claim:k-reduction}, with (the almost trivial choice of) the $r$-partite $r$-graph $F$ and the $f$-equitable $(r-1)$-partition $\P'$ of $F$, to deduce that $E_{r}(\P') \cup \{V_r\}$
	% $\P^{k-1}[V_1\times\cdots\times V_{r-1}] \cup \{V_r\}$
	is a perfectly $\langle\a_k\rangle$-regular (i.e., $\langle 2\sqrt{\d} \rangle$-regular) partition of $G_{F}^r$.
	Since $E \in E_{r}(\P')$, this in particular implies that $G_F^r[E,V_r]$ is $\langle\a_k\rangle$-regular, which proves our claim as explained above.
	
%	Note that $E:=P[V_1,\ldots,V_{r-1}]$ lies in
%	$E_{r}(\P)$,
%	%E_{r-1}(\P)$,
%	%\P^{(r-1)}[V_1\times\cdots\times V_{r-1}]$,
%	and that $G_{F,P}^r = G_F^r[E,V_r]$ (recall Definition~\ref{def:aux}).
%	It follows that $G_{F,P}^r$ is $\langle\a_k\rangle$-regular. An analogous argument holds for $G_{F,P}^i$ for any $1 \le i \le r$, so we are done.
	
	%	
	%	Apply Claim~\ref{claim:k-reduction} on the $r$-partite $r$-graph $F$, the partition obtained from restricting $\P$ to the $r$ vertex classes of $F$, and the $r$-polyad $P$. It follows that $G_{F,P}^r$, and by symmetry  $G_{F,P}^i$ for every $1 \le i \le r$, is $\langle 2\sqrt{\d} \rangle$-regular, as needed.
	
	It remains to show that $H$ is $\langle \a_k \rangle$-regular relative to the  $\langle \a_k \rangle$-good $(k-1)$-partition $\P$.
	Let $H'$ be obtained from $H$ by removing all its ($k$-)edges underlain by $k$-polyads of $\P$ such that either $d_H(P) \le 6\e$ or the $k$-graph $H \cap \K(P)$ is not $\e$-regular in $P$.
	By Definition~\ref{def:e-reg-partition}, the number of edges removed from $H$ to obtain $H'$ is at most
	$$\e|V(H)|^k + 6\e|V(H)|^k \le 7\cdot c p |V(H)|^k
	\le (\a_k p/k^k)|V(H)|^k = \a_k\cdot e(H) \;,$$
	where the inequalities use the statement's assumption on $\e,c$ and~(\ref{eq:RS-LB-ineq}), and the equality uses the fact that all $k$ vertex classes of $H$ are of the same size (see Remark~\ref{remark:main}).
	%(TODO: all vertex classes of $H$ have same size).
	%(In the last equality we used the fact that all three vertex classes of $H$ are of the same size, see Remark~\ref{remark:main}.)
	Thus, in $H'$, every non-empty $k$-polyad of $\P$ is $\e$-regular and of density at least $6\e$.
	%Put $\d = (\a_k/2)^2$.
	Again by Definition~\ref{def:e-reg-partition}, for every $k$-polyad $P$ of $\P$ and every $S \sub P$ with $|\K(S)| \ge \d |\K(P)|$ ($\ge \e |\K(P)|$
	by~(\ref{eq:RS-LB-ineq})) we have $d_H(S) \ge d_H(P)-2\e \ge \frac23 d_H(P)$.
	Apply Claim~\ref{claim:k-reduction}, this time with $H$ and $\P$.
	It follows that
	$E_{k}(\P) \cup V_{k}(\P)$
	%$\P^{(k-1)}[\Vside^1\times\cdots\times\Vside^{k-1}] \cup \P^{(1)}[\Vside^k]$
	is an $\langle \a_k \rangle$-regular %equi
	partition of $G_H^k$. An analogous argument applies for $G_H^i$ for every $1 \le i \le k$, thus completing the proof.
\end{proof}

\section{$\langle \d \rangle$-regularity does not suffice for triangle counting}\label{sec:example}

As observed following the statement of Theorem~\ref{theo:main}, our lower bound for $\langle \d \rangle$-regularity holds even when $\d$ is a fixed\footnote{As noted following Theorem~\ref{theo:main}, the fact that Theorem~\ref{theo:main} holds even with a {\em fixed} $\d=\d(k)$ (which is allowed to be much larger than the edge density $p$) is crucial for our inductive proof strategy.} constant and the edge density is small. 
As Proposition~\ref{claim:example} below states, this setting\footnote{On the other hand, if $\d$ is small compared to the density then a tripartite $\langle \d \rangle$-regular graph does indeed have many triangles, using a standard proof of the counting lemma.} is not strong enough even for counting triangles in graphs.
	%$\langle \d \rangle$-regularity is indeed strong enough to count triangles (up to a constant factor), using a standard proof of the counting lemma.} 
Namely, we construct for every fixed $\delta>0$ and small enough $p$, a graph of density $p$ which is $\langle \d \rangle$-regular yet does not
%contain the expected number of triangles %(in a strong sense).
even contain a single triangle.
%contain many triangles
%(actually, the example is going to be triangle free).
%Put differently, this shows that  $\langle \d \rangle$-regularity, even just for graphs, is strictly weaker than Szemer\'edi's regularity.
The precise statement is the following.

\begin{prop}\label{claim:example}
For every $0 < p \le 10^{-3}\d^5$ and large enough $n$ there is a $n$-vertex tripartite graph of density at least $p$, whose every pair of classes span a $\langle \d \rangle$-regular graph, and yet is triangle free.
\end{prop}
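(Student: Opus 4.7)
The plan is to construct $H$ as a balanced blow-up of a small auxiliary tripartite graph $K$, chosen so that each bipartite piece of $K$ is spectrally pseudorandom and of density slightly above $p$, yet $K$ is triangle-free after a small local modification. The blow-up inherits triangle-freeness from $K$, and $\langle\d\rangle$-regularity of each bipartite piece of $H$ follows from the spectral bound on $K$ via the expander mixing lemma applied clusterwise.

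Concretely, I would fix an integer $k$ of order $1/(p\d^2)$ and take each of $K_{12},K_{23},K_{13}$ to be an independent random bipartite graph on $[k]\times[k]$ of density $p':=2p$. A standard trace/moment calculation shows that, with positive probability, the biadjacency matrix of each $K_{ij}$ has second singular value $\lambda_2\le C_1\sqrt{p'k}$, while simultaneously the total number of triangles in $K$ is at most $O((p')^3k^3)$. I then destroy every triangle by deleting one $K_{12}$-edge per triangle; this removes only a fraction $O((p')^2k)=O(p/\d^2)$ of the edges of $K_{12}$. Under the hypothesis $p\le 10^{-3}\d^5$, this fraction is tiny, so both the density lower bound $\ge p$ and the spectral bound (up to a negligible perturbation) survive the surgery.

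For the blow-up step, given $n$ large, set $t=\lfloor n/(3k)\rfloor$ and replace each vertex of $K$ by a cluster of size $t$, using a complete bipartite graph between two clusters exactly when their representatives are adjacent in $K$. The resulting tripartite graph $H$ (on $3kt\le n$ vertices, padded to $n$ by isolated vertices) is triangle-free by pullback, and its bipartite pieces have density $\ge p$. For $\langle\d\rangle$-regularity of $G_{12}:=H[V_1,V_2]$, fix $A'\sub V_1$, $B'\sub V_2$ with $|A'|,|B'|\ge\d n$, and set $a_j=|A'\cap V_1^j|/t$, $b_{j'}=|B'\cap V_2^{j'}|/t$; then $\sigma_a:=\sum_j a_j$ and $\sigma_b:=\sum_{j'}b_{j'}$ are both at least $\d k$. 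Since $a_j,b_{j'}\in[0,1]$ one has $\|a\|_2^2\le\|a\|_1=\sigma_a$, and likewise for $b$, so the expander mixing lemma applied to $K_{12}$ yields
\[
\sum_{(j,j')\in E(K_{12})} a_j b_{j'} \;\ge\; p\,\sigma_a\sigma_b \;-\; \lambda_2\sqrt{\sigma_a\sigma_b}\;.
\]
The choice $k\gtrsim 1/(p\d^2)$ makes the error term at most half the main term, giving $d_{G_{12}}(A',B')\ge p/2$. The other two bipartite pieces are handled identically.

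The main obstacle is the bookkeeping in Step 1: verifying that a single random $K$ simultaneously enjoys the spectral bound \emph{and} the triangle-count bound (a small union bound on two standard concentration estimates), and that the triangle-removal surgery preserves enough of the spectral pseudorandomness (the key point being that removing an $O(p/\d^2)$ fraction of edges perturbs the spectrum by an amount absorbed into the slack built into $10^{-3}\d^5$). Both steps are routine but need constants tracked carefully.
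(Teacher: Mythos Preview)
Your high-level architecture---sparse random tripartite graph on $k$ vertices per side, delete a few edges to kill all triangles, then blow up---is exactly the paper's. The divergence is in how you certify regularity, and there your spectral route has a real gap, not just bookkeeping.

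With $k\asymp 1/(p\d^2)$ and $p'=2p$, each bipartite piece of $K$ has \emph{constant} average degree $p'k\asymp 1/\d^2$ while $k\to\infty$ as $p\to 0$. In this regime the claim $\lambda_2\le C_1\sqrt{p'k}$ is false for $G(k,k,p')$: the maximum degree is $\Theta(\log k/\log\log k)$ with high probability, and any vertex of degree $D$ already forces $\|A-p'J\|_{\mathrm{op}}\ge (1-o(1))\sqrt{D}$. So the relevant norm is $\Theta(\sqrt{\log k})$, not $O(1)$, and your expander-mixing error $\lambda_2\sqrt{\sigma_a\sigma_b}$ overtakes the main term $p\,\sigma_a\sigma_b$ once $p$ is small enough---which the statement explicitly permits. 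The $O(\sqrt{p'k})$ bound you invoke requires $p'k\gg\log k$; pushing $k$ into that range would also change your triangle-count arithmetic, so this is not merely a constant to be absorbed into the slack.

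The paper sidesteps spectra entirely. On the small graph it applies Chernoff directly to every pair $(S,T)$ with $|S|,|T|\ge\d k$ and union-bounds over the $2^{O(k)}$ such pairs; since the Chernoff exponent is linear in $k$ (with a coefficient depending only on $\d$ and on $qk\asymp 1/\d^2$), this succeeds even at constant average degree. For the blow-up, instead of expander mixing it writes each fractional weight vector $a\in[0,1]^k$ with $\|a\|_1\in\mathbb{N}$ as a convex combination of $\{0,1\}$-vectors of the same $\ell_1$-norm (Lemma~\ref{lemma:convex}) and uses bilinearity of $a^{\mathsf T}Ab$ to reduce back to the subset bound already established on the small graph. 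That convexity trick is the correct substitute for expander mixing in this constant-degree regime.
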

%\begin{claim}\label{claim:example}
%	For every $\d>0$ and large enough $n$ there is a $n$-vertex tripartite graph of density $\Omega(\d^4)$, whose every pair of classes spans a $\langle \d \rangle$-regular graph, and yet is triangle free.
%\end{claim}
We use the following well-known lemma, where we denote by $\norm{v}_1$ the  $\ell_1$-norm of a vector $v$ (for a proof see, e.g., Lemma 4.3 in~\cite{KaliSh13}).
\begin{lemma}\label{lemma:convex}%[E.g., Lemma 4.3 in~\cite{KaliSh13}]
	Every vector $x \in [0,1]^n$ with $\norm{x}_1 \in \N$ is a convex combination of binary vectors $y \in \{0,1\}^n$ each with $\norm{y}_1=\norm{x}_1$.
%	
%	
%	If $x \in [0,\g]^n$ and $\norm{x}_1=1$ then $x$ is a convex combination of vectors of the form
%	$\g y$ with $y \in \{0,1\}^n$ and $\norm{y}_1=1/\g$.
%	
%	$y \in \{0,\g\}^n$ with $\norm{y}_1=1$.
	%
	%$\frac{1}{k}v$ with $v \in \{0,1\}^n$ with $\sum_{i=1}^n v
\end{lemma}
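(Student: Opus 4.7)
The plan is to proceed by induction on the number of \emph{fractional} coordinates of $x$, that is, coordinates lying in the open interval $(0,1)$. In the base case $x$ has no fractional coordinates, so $x \in \{0,1\}^n$ already, and since $\norm{x}_1 \in \N$ by assumption, the trivial ``combination'' $x=x$ does the job.

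For the inductive step, first observe that if $x$ has at least one fractional coordinate, then it must in fact have at least two: a single fractional coordinate together with integer coordinates would force $\norm{x}_1 \notin \N$, contradicting the hypothesis. Pick any two such coordinates $x_i, x_j \in (0,1)$ and consider the ``transfer'' direction $v = e_i - e_j$, where $e_i$ denotes the $i$-th standard basis vector. Define
$$t_+ = \min(1-x_i,\, x_j), \qquad t_- = \min(x_i,\, 1-x_j),$$
and set $x^+ = x + t_+ v$ and $x^- = x - t_- v$. Both $t_+, t_-$ are strictly positive since $x_i, x_j \in (0,1)$, and by construction both $x^+$ and $x^-$ lie in $[0,1]^n$. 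Moreover, since $v$ has coordinate sum zero, $\norm{x^+}_1 = \norm{x^-}_1 = \norm{x}_1$. Crucially, in each of $x^+, x^-$ at least one of the coordinates $i, j$ has been driven to $\{0,1\}$, so each has strictly fewer fractional coordinates than $x$.

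A direct computation shows
$$x \;=\; \frac{t_-}{t_+ + t_-}\, x^+ \;+\; \frac{t_+}{t_+ + t_-}\, x^-,$$
so $x$ is a convex combination of $x^+$ and $x^-$. Applying the induction hypothesis to each of $x^+$ and $x^-$ (both of whose $\ell_1$-norms equal the integer $\norm{x}_1$, and each with fewer fractional coordinates) expresses them as convex combinations of binary vectors of $\ell_1$-norm $\norm{x}_1$; substituting these decompositions into the display above writes $x$ itself in the required form, completing the induction.

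There is no real obstacle here; the only point requiring attention is verifying that $t_+, t_- > 0$, which is what guarantees that the induction parameter (number of fractional coordinates) strictly decreases, and this is immediate from $x_i, x_j \in (0,1)$. Conceptually, the argument is just identifying the extreme points of the hypersimplex $\{x \in [0,1]^n : \norm{x}_1 = k\}$ as the Hamming-weight-$k$ binary vectors, and the pairwise ``transfer'' step above is the standard way to walk from an interior point to a vertex while staying in the polytope.
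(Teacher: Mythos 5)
Your proof is correct: the base case, the observation that a vector with integer $\ell_1$-norm cannot have exactly one fractional coordinate, the choice of $t_+,t_-$, the convex-combination identity, and the strict decrease in the number of fractional coordinates all check out. The paper does not include its own proof of this lemma (it defers to Lemma~4.3 of the cited reference), and your argument is the standard one for identifying the vertices of the hypersimplex, so there is nothing further to compare.
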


We will also apply the following version of the Chernoff bound.
%multiplicative version of Chernoff's inequality.
\begin{lemma}[Multiplicative Chernoff bound]\label{lemma:Chernoff}
	Let $X_1,\ldots,X_n$ be mutually independent Bernoulli random variables,
	and put $X=\sum_{i=1}^n X_i$, $\mu = \Ex[X]$.
	For every $\d \in [0,1]$ we have
	$$\Pr(X \neq (1 \pm \d)\mu) \le 2\exp\Big(-\frac13\d^2\mu\Big) \;.$$
\end{lemma}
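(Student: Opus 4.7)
The proof follows the classical Chernoff--Cram\'er (moment generating function) argument. My plan is to bound the two tails $\Pr(X \ge (1+\d)\mu)$ and $\Pr(X \le (1-\d)\mu)$ separately, each by $\exp(-\d^2\mu/3)$, and combine them via a trivial union bound to obtain the factor of $2$.

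For the upper tail I would, for a parameter $t>0$ to be optimized later, apply Markov's inequality to $e^{tX}$, giving
\[
\Pr(X \ge (1+\d)\mu) \le e^{-t(1+\d)\mu}\,\Ex[e^{tX}].
\]
By independence, $\Ex[e^{tX}]=\prod_{i=1}^n \Ex[e^{tX_i}]$, and writing $p_i:=\Pr(X_i=1)$ we have $\Ex[e^{tX_i}]=1+p_i(e^t-1)\le \exp(p_i(e^t-1))$ via $1+x\le e^x$, so $\Ex[e^{tX}]\le \exp(\mu(e^t-1))$. Choosing $t=\ln(1+\d)$ then yields the bound $\exp(-\mu\,\phi(\d))$ with $\phi(\d):=(1+\d)\ln(1+\d)-\d$. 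To close the upper-tail estimate it remains to invoke the elementary inequality $\phi(\d)\ge \d^2/3$ on $[0,1]$; this can be checked by noting that $\phi(0)=\phi'(0)=0$ and verifying that $\phi'(\d)-2\d/3=\ln(1+\d)-2\d/3$ is nonnegative on $[0,1]$ (it vanishes at $0$ and its value at $\d=1$ is $\ln 2-2/3>0$, while a single inflection point is easy to locate).

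For the lower tail, the analogous argument with $t<0$ (equivalently, running the same MGF argument on $-X$) yields $\Pr(X\le(1-\d)\mu)\le \exp(-\mu\,\psi(\d))$, where $\psi(\d):=(1-\d)\ln(1-\d)+\d$ satisfies $\psi(\d)\ge \d^2/2\ge \d^2/3$ on $[0,1]$ by the same kind of derivative check (in fact here $\psi(0)=\psi'(0)=0$ and $\psi''(\d)=1/(1-\d)\ge 1$). Adding the two tail bounds yields $2\exp(-\d^2\mu/3)$, as claimed. No step poses a real obstacle; the only piece that requires any actual computation is verifying the two scalar inequalities on $[0,1]$, which is routine calculus, and everything else is a direct invocation of Markov's inequality and independence.
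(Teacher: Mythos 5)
Your proposal is correct: the paper states this lemma as a standard fact without proof, and what you give is precisely the canonical moment-generating-function argument, with all the scalar inequalities ($\phi(\d)=(1+\d)\ln(1+\d)-\d\ge\d^2/3$ via $\ln(1+\d)\ge 2\d/3$ on $[0,1]$, and $\psi(\d)\ge\d^2/2$ via $\psi''\ge 1$) checking out. Nothing further is needed.
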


\begin{proof}[Proof of Proposition~\ref{claim:example}]
	We will in fact construct a graph satisfying an even somewhat stronger property than $\langle \d \rangle$-regularity (namely, the constant $\frac12$ is Definition~\ref{def:star-regular} will be replaced by $1-\d$).
	Consider a random tripartite graph on vertex classes $(V_1,V_2,V_3)$, each of size $k$, obtained by independently retaining each edge with probability %$p$ where
	$q:=3p$ $(\le 1)$, where $k$ is any integer satisfying
	\begin{equation}\label{eq:example-k}
	64\d^{-2}q^{-1} \le k
	\le \frac14\d^3 q^{-2} \;.
	\end{equation}
	Note that $k$ is well defined in~(\ref{eq:example-k}) by the statement's bound on $p$.
	%
%	Consider a random tripartite graph, with
%	%$k=cp^{-2}$
%	$k = c p^{-3/2}$
%	vertices in each vertex class ($c$ is a large enough constant to be determined later), obtained by independently retaining each edge with probability %$p$ where
%	$q:=2p$ $(\le 1)$.
%	%$q=\min\{4p,1\}$.
%	We will later use the bound
%	$$\frac{c}{\d^2 k} \le q \le \frac{1}{2\sqrt{k}}$$
%	$$c\d^{-2}q^{-1} \le k \le \frac14 q^{-2}$$
%	\begin{equation}
%	k q^2 = 4c p^{1/2} \le 3c \d^2 \;.
%	\end{equation}
	%
	Let $X$ be the random variable counting the triangles in the graph. One can easily check that
	%One can easily check that the variance of the number of triangles is at most
	$$\Exp[X] = k^3 q^3 \quad\text{ and }\quad
	\Var[X] \le k^3 q^3 + 3\binom{k}{2}k^2 q^5 \;.$$
	%\le k^3q^3\Big(1+\frac32 kq^2\Big) \le k^3q^3(1+6c)\;,$$
	Chebyshev's inequality thus implies that
	$$\Pr[X \ge \d^3k^2 q] \le \frac{\Var[X]}{(\d^3k^2q - \Exp[X])^2}
	\le \frac{k^3q^3(1+\frac32 k q^2)}{k^4q^2(\d^3 - kq^2)^2}
	%\le \frac{q}{k} \cdot \frac{20}{9}\d^{-6}
	%\le \frac{20}{9 \cdot 64} q^2 \d^{-4}
	%= \frac{5}{16} p^2 \d^{-4}
	\le \frac{q}{k} \cdot 8\d^{-6}
	\le \frac18 q^2 \d^{-4}
	= \frac98 p^2 \d^{-4}
	< \frac12 \;,$$
%	$$\Pr[X \ge \d^3k^2 q] \le \frac{\Var[X]}{\d^3k^2q - \Exp[X]} \le
%	kq^2 \cdot \frac{1+\frac32 kq^2}{\d^3-kq^2}
%	< \frac12 \;,$$
	%$$
	where the third inequality uses the upper bound $kq^2 \le \frac14 \d^3$ $(\le \frac14)$ from~(\ref{eq:example-k}), the fourth inequality uses the lower bound from~(\ref{eq:example-k}), and the last inequality uses the statement's bound on $p$.
	%where the last inequality uses the upper bound $kq^2 \le \frac14 \d^3$ $(\le \frac14)$ from~(\ref{eq:example-k}).
	Next, by using Lemma~\ref{lemma:Chernoff}
	%\emph{multiplicative} Chernoff's inequality
	together with the union bound we deduce that %for every pair of vertex classes $U \neq V$ we have
	\begin{equation}\label{eq:example-property}
	%\forall U\neq V \text{ vertex classes }\,
	\forall 1 \le a<b \le 3\,\,\,
	\forall S \sub V_a,\, T \sub V_b \text{ with } |S| \ge \d|V_a|,\, |T| \ge \d|V_b| \,\,\colon\,\,
	%\text{ we have }
	d(S,T) = \big(1\pm\frac13\d\big)q
	\end{equation}
	%and subsets $S \sub U$, $T \sub V$ with $|S| \ge \d|U|$, $|T| \ge \d|V|$ we have
	%$d(S,T) \ge (1-\frac12\d)q$
	%$|d(S,T)-q| \le \frac12\d q$.
	%$d(S,T) \ge (1-\frac12\d)q$.
	except with probability at most
	$$3 \cdot 2^{2k} \cdot 2\exp\Big(-\frac{1}{27}\d^2 qk^2\Big) \le \frac12 \;,$$
	where the inequality uses the lower bound $kq \ge 64\d^{-2}$ from~(\ref{eq:example-k}).
	We deduce from all of the above that there exists a tripartite graph that has $k$ vertices in each vertex class, at most $\d^3 k^2 q$ triangles and satisfies~(\ref{eq:example-property}).
	%Note that the latter in particular implies that the number of edges between every pair of vertex classes is at least $\frac12 k^2 q$ edges.
	By removing an edge from each triangle, one-third of them from each of the three pairs of vertex classes, we obtain a triangle-free graph $G_0$ such that for every $a<b$ and subsets $S \sub V_a$, $T \sub V_b$
	%pair of vertex classes $U \neq V$ and subsets $S \sub U$, $T \sub V$
	with $|S|\ge\d|V_a|$, $|T| \ge \d|V_b|$ we have
	$$e(S,T) \ge \big(1-\frac13\d\big)q|S||T|-\frac13 \d^3k^2q \ge \big(1-\frac23\d\big)q|S||T|
	\ge \frac{1-\frac23\d}{1+\frac13\d}d(V_a,V_b)|S||T|
	\ge (1-\d)d(V_a,V_b)|S||T|$$
	where the first and third inequalities follows from the lower and upper bound in~(\ref{eq:example-property}), respectively.
	In particular, we deduce from the inequality $e(S,T) \ge (1-\frac23\d)q|S||T|$ above that $d(V_a,V_b) \ge \frac13 q=p$.
	Summarizing, $G_0$ is triangle free, has density at least $p$ and satisfies %for every pair of vertex classes $U \neq V$ that
	\begin{equation}\label{eq:example-property2}
	%\forall U\neq V \text{ vertex classes }\,
	\forall a<b\,\,
	\forall S \sub V_a,\, T \sub V_b \text{ with } |S| \ge \d|V_a|,\, |T| \ge \d|V_b| \,\,\colon\,\, d(S,T) \ge (1-\d)d(V_a,V_b) \;.
	\end{equation}

	To obtain from $G_0$ a graph on a large enough number of vertices we simply take a blow-up, replacing each vertex $v$ of $G_0$ by a set $G(v)$ of $m$ new vertices and every edge by a complete bipartite graph, for $m \in \N$ suitably large. The resulting tripartite graph $G$ is clearly triangle free and of density $d(G_0) \ge p$.
	It thus remains to prove that any two vertex classes of $G$ span a bipartite graph satisfying the desired regularity property.
	%It thus remains to show that the bipartite graph spanned by any two vertex classes of $G$ is $\langle \d \rangle$-regular.
	Let $a<b$ and let $\overline{V_a},\overline{V_b}$ denote the vertex classes of $G$ corresponding to $V_a,V_b$.
	Note that $|V_a|=|V_b|=k$ and $|\overline{V_a}|=|\overline{V_b}|=mk$.
	Let $S \sub \overline{V_a}$, $T \sub \overline{V_b}$ with $|S| = \d|\overline{V_a}|$ and $|T| = \d|\overline{V_b}|$.
	To complete the proof it suffices to show that %(the stronger assertion than needed) that
	\begin{equation}\label{eq:example-goal}
	d(S,T) \ge (1-\d)d(\overline{V_a},\overline{V_b}) \;.
	\end{equation}
	%which would complete the proof.
	%$d_G(S,T) \ge \frac12 d_G(U,V)$.
	Consider the two vectors $s,t \in [0,1]^k$ defined as follows:
	$$s=(|S \cap G(u)|/m)_{u \in V_a} \quad\text{ and }\quad
	t=(|T \cap G(v)|/m)_{v \in V_b} \;.$$
	Note that $\norm{s}_1 = |S|/m = \d k$ and $\norm{t}_1 = |T|/m = \d k$.
	Assume without loss of generality that $\d k \in \N$.
	By Lemma~\ref{lemma:convex} applied on $s$,
	$$s = \sum_i \a_i s_i \quad\text{ with }\quad s_i \in \{0,1\}^k,\, \norm{s_i}_1 = |S|/m \text{ and } \a_i \ge 0,\, \sum_i \a_i = 1 \;.$$
	By Lemma~\ref{lemma:convex} applied on $t$,
	$$t = \sum_j \b_j t_j \quad\text{ with }\quad t_j \in \{0,1\}^k,\, \norm{t_j}_1 = |T|/m \text{ and } \b_j \ge 0,\, \sum_j \b_j = 1 \;.$$		
	%$\overline{u} = \sum_i \a_i u_i$ with $\norm{u_i}_1 = 1$, $u_i \in \{0,\g\}$ and $\a_i \ge 0$, $\sum_i \a_i = 1$.
	%Again by Lemma~\ref{lemma:convex},
	%$\overline{v} = \sum_j \b_j v_j$ with $\norm{v_j}_1 = 1$, $v_j \in \{0,\g\}$ and $\b_j \ge 0$, $\sum_j \b_j = 1$.
	%
%	Put $\overline{u_i} = u_i/\g$ and $\overline{v_j} = v_j/\g$.
%	Thus,
%	$\overline{u_i} \in \{0,1\}$, $\norm{\overline{u_i}}_1 = 1/\g$ and $\overline{v_i} \in \{0,1\}$, $\norm{\overline{v_i}}_1 = 1/\g$.
	%
	Let $A$ denote the $k \times k$ bi-adjacency matrix of $G_0[V_a,V_b]$.
	Observe that $e_G(S,T) = (ms)^T A (mt)$.
	Moreover, observe that for every $i,j$ we have that $s_i^T A t_j$ is the number of edges of $G_0$ between the subsets of $V_a,V_b$ corresponding to $s_i,t_j$, respectively. Note that these subsets are of size $\norm{s_i}_1,\norm{t_j}_1$, respectively, which are both at least $\d k$.
	Thus, by~(\ref{eq:example-property2}),	
	%regularity of $G_0[U_0,V_0]$,
	$$s_i^T A t_j \ge (1-\d)d(V_a,V_b)\norm{s_i}_1\norm{t_j}_1
	= (1-\d)d(\overline{V_a},\overline{V_b})|S||T|/m^2 \;.$$
%	
%	Moreover, observe that if $x,y \in \{0,1\}^k$ then $x^t A y$ is the number of edges of $G_0$ between the subsets of $U_0,V_0$ corresponding to $x,y$, respectively;
%	thus,
%	by the regularity of $G_0[U_0,V_0]$
%	%by the $\langle \d \rangle$-regularity of $G_0[U_0,V_0]$
%	we have $x^t A y \ge (1-\d)d_G(U,V)\norm{x}_1\norm{y}_1$, provided $\norm{x}_1,\norm{y}_1 \ge \d k$. Hence, for every $i,j$ we have
%	$$s_i^t A t_j = \g^2 \cdot \overline{u_i}^t A \overline{v_j}
%	\ge \g^2 \cdot (1-\d)d_G(U,V) (1/\g)^2 = (1-\d)d_G(U,V) \;.$$
	We deduce that
	\begin{align*}
	e(S,T) &= m^2 \cdot s^T A t
	= m^2\Big(\sum_i \a_i s_i \Big)^T A \Big(\sum_j \b_j t_j\Big)
	= m^2\sum_{i,j} \a_i\b_j s_i^T A t_j\\
	%&\ge \sum_{i,j} \a_i\b_j \cdot (1-\d)d_G(U,V)|S||T|
	&\ge \Big(\sum_i \a_i \Big)\Big(\sum_j \b_j \Big)(1-\d) d(\overline{V_a},\overline{V_b})|S||T| = (1-\d)d(\overline{V_a},\overline{V_b})|S||T|\;.
	\end{align*}
%	
%	\begin{align*}
%	d_G(S,T) &= \frac{e_G(S,T)}{|S||T|}
%	= \frac{s^t A t}{\norm{s}_1\norm{t}_1}
%	= \overline{u}^t A \overline{v}
%	= \Big(\sum_i \a_i u_i \Big)^t A \Big(\sum_j \b_j v_j\Big)
%	= \sum_{i,j} \a_i\b_j u_i^t A v_j\\
%	&\ge \sum_{i,j} \a_i\b_j \cdot (1-\d)d_G(U,V)
%	= (1-\d) d_G(U,V) \Big(\sum_i \a_i \Big)\Big(\sum_j \b_j \Big) = (1-\d)d_G(U,V)\;.
%	\end{align*}
	This gives~(\ref{eq:example-goal}) and thus completes the proof.
\end{proof}

\bigskip

\noindent \textbf{Acknowledgment:} We are grateful to an anonymous referee for a careful reading of the paper.

\appendix

\section{Proof of Claim~\ref{claim:k-reduction}}\label{sec:RS-appendix}
	%Ackermann Lower Bound for R\"odl-Schacht's Regularity Lemma}\label{sec:RS-appendix}

%\paragraph*{Complexes.}

%\section{R\"odl-Schacht $\Rightarrow$ Uniformity}

\renewcommand{\K}{\mathcal{K}}

\subsection{Basic facts}

In order to prove Claim~\ref{claim:k-reduction}
%Theorem~\ref{theo:RS-LB} 
we will need several auxiliary results and definitions.
We begin with the notion of complexes.
Henceforth, the \emph{rank} of a (not necessarily uniform)
hypergraph $P$ is $\max_{e \in P} |e|$.
%the maximum cardinality of an edge of $P$.
For $r \ge 2$ we denote %\marginpar{$P^{(r)}$}
$P^{(r)} = \big\{e \in P \,\big\vert\, |e|=r\big\}$ and $P^{(1)}=V(P)$.

\begin{definition}[complex]
	A \emph{$k$-complex} $(k \ge 2)$ is a $k$-partite hypergraph $P$ of rank $k-1$ where $P^{(r)} \sub \K(P^{(r-1)})$ for every $2 \le r \le k-1$.%\footnote{I.e., $e \in P$ implies $e' \in P$ for every $e' \sub e$ with $|e'| \ge 2$.}
	%\footnote{RS (Dfn 3) more generally define $(\ell,h)$-complex $P$ where $P$ is an $\ell$-partite $h$-graph for any $\ell \ge h$, and moreover, $P$ is instead represented as a collection of uniform hypergraphs (in our notation, $\{P^{(i)}\}_{i=1}^{h}$).}
\end{definition}

%We now extend Definition~\ref{def:e-reg} to $k$-complexes (with $k \ge 3$).
%\begin{definition}[$f$-regular complex]\label{def:complex}		
%	Let $f \colon [0,1]\to[0,1]$. A $k$-complex $P$ is \emph{$(f,d_2,\ldots,d_{k-1})$-regular}, or simply \emph{$f$-regular}, if
%	for every $2 \le i \le k-1$, $P^{(i)}$ is $(\e,d_i)$-regular in $P^{(i-1)}$, where $\e=f(d_0)$ and $d_0=\min\{d_2,\ldots,d_{k-1}\}$.\footnote{RS (Dfn 8) only use the special case of this definition with $f(x)=\e$ a constant function.}
%	%
%	%	For $f \colon [0,1]\to[0,1]$, a $k$-polyad $P$ is \emph{$(f,d_2,\ldots,d_{k-1})$-regular} if $P$ is $(f(d_0),d_2,\ldots,d_{k-1})$-regular where $d_0=\min\{d_2,\ldots,d_{k-1}\}$.
%	%
%	%	We say that $P$ is \emph{$f$-regular} if $P$ is $(f,d_2,\ldots,d_{k-1})$-regular for some $d_2,\ldots,d_k$.
%\end{definition}

\begin{definition}[$f$-regular complex]\label{def:complex}		
	Let $f \colon [0,1]\to[0,1]$. A $k$-complex $P$ on vertex classes $(V_1,\ldots,V_k)$ is \emph{$(f,d_2,\ldots,d_{k-1})$-regular}, or simply \emph{$f$-regular}, if
	for every $2 \le r \le k-1$ and every $r$ vertex classes $V_{i_1},\ldots,V_{i_r}$ we have that $P^{(r)}[V_{i_1},\ldots,V_{i_r}]$ is $(\e,d_r)$-regular in $P^{(r-1)}[V_{i_1},\ldots,V_{i_r}]$,
	where $\e=f(d_0)$ and $d_0=\min\{d_2,\ldots,d_{k-1}\}$.%\footnote{RS (Dfn 8) only use the special case of this definition with $f(x)=\e$ a constant function.}
	%	
	%	$2 \le i \le k-1$, $P^{(i)}$ is $(\e,d_i)$-regular in $P^{(i-1)}$, where $\e=f(d_0)$ and $d_0=\min\{d_2,\ldots,d_{k-1}\}$.\footnote{RS (Dfn 8) only use the special case of this definition with $f(x)=\e$ a constant function.}
\end{definition}

Note that by using the notion of complexes one can equivalently define an $f$-equitable partition (recall Definition~\ref{def:r-equitable}) as follows;
%\begin{definition}[$f$-equitable partition]
%	Let $f\colon[0,1]\to[0,1]$.
	an $(r,a_1,\ldots,a_r)$-partition $\P$ is $f$-equitable if $\P^{(1)}$ is equitable and, if $r \ge 2$, every $r$-complex
	of $\P$
	%$P \sub \P$
	is $(f,1/a_2,\ldots,1/a_r)$-regular.%\footnote{RS (Dfn 12) instead use the term $(\eta,\e,a_1,\ldots,a_r)$-equitable for a family of partitions on $V$, corresponding to the case of constant $f(x)=\e$, and where $|\Cross_k(\P^{(1)})| \ge (1-\eta)\binom{|V|}{k}$ (which amounts to $a_1$ being not too small, and since we prove a lower bound we omit this condition).}
%\end{definition}

We now state the \emph{dense counting lemma} of~\cite{RodlSc07-B} specialized to
complexes.
%the $k$-partite rank-$(k-1)$ case.
%
We henceforth fix the following notation for $k \ge 3$, $\g > 0$;
\begin{equation}\label{eq:DCL}
F_{k,\g}(x) := \frac{\g^3}{12}\Big(\frac{x}{2}\Big)^{2^{k+1}} \;.
%F_{k,\g}(x) = \frac18\g \cdot x^{2^k} \;.
\end{equation}
The statement we use below follows from combining Theorem~10 and Corollary~14 in~\cite{RodlSc07-B}, and generalized to the case where the vertex classes are not necessarily of the same size.
For a $k$-polyad $F$ and an edge $e \in F$, we denote the set of $k$-cliques in $F$ containing $e$ by $\K(F,e)=\{e' \in \K(F) \,\vert\, e \sub e'\}$.
%We use the notation $\K(F,e)$, where $F$ is a $k$-polyad and $e \in F$, for the set of $k$-cliques in $F$ containing $e$; that is, $\K(F,e)=\{e' \in \K(F) \,\vert\, e \sub e'\}$.
For a $k$-complex $P$ we abbreviate $\K(P):=\K(P^{(k-1)})$.
\begin{fact}[Dense counting lemma for $k$-complexes]\label{fact:counting}
	Let $\g > 0$ and let $P$ be a $k$-complex $(k \ge 3)$
	that is $(F_{k,\g},d_2,\ldots,d_{k-1})$-regular with $n_i \ge n_0(\g,d_2,\ldots,d_{k-1})$ vertices in the $\ith$ vertex class.
	Then	
	%	% and $f \colon [0,1]\to[0,1]$ with $f(x)\le F_{k,\g}(x)$.
	%	For every $k$-complex $P$ with $n_i$ vertices in the $\ith$ vertex class, if $P$ is $(F_{k,\g},d_2,\ldots,d_{k-1})$-regular
	%	%with $d_0=\min\{d_2,\ldots,d_{k-1}\}$
	%	and $n_i \ge n_0(d_2,\ldots,d_{k-1})$ then
	$$|\K(P)| = (1 \pm \g)\prod_{i=2}^{k-1} d_i^{\binom{k}{i}} \cdot \prod_{i=1}^k n_i \;.$$
	%	Moreover, for all but at most $\g|P^{(k-1)}|$ edges $e \in P^{(k-1)}$ we have
	%	$$|\K(P^{(k-1)},e)| = (1 \pm \g)\prod_{i=2}^{k-1} d_i^{\binom{k-1}{i-1}} \cdot n_{i'} \;.$$	
	Moreover,\footnote{In~\cite{RodlSc07-B}, the statement of the `moreover' part (Corollary 2.3, dense extension lemma) allows for $\g|P^{(k-1)}|$ exceptional edges in $P^{(k-1)}$ rather than only in $P_k$, which is nevertheless essentially equivalent to our statement. Furthermore, they allow for counting not only $k$-cliques, in which case they do not need all $P_i$ to be regular.} let $P^{(k-1)}=(P_1,\ldots,P_k)$. We have for all edges $e \in P_k$ but at most $\g|P_k|$ that
	$$|\K(P,e)| = (1 \pm \g)\prod_{i=2}^{k-1} d_i^{\binom{k-1}{i-1}} \cdot n_{k} \;.
	\footnote{To obtain the bound $F_{k,\g}$ from the proof of Corollary~2.3 in~\cite{RodlSc07-B} (with $h=k-1$ and $\ell=k$), one can verify that:
		\begin{itemize}
		\item $\e(\mathbf{\F},\g,d_0)$ in Theorem~2.2 can be bounded by $\g(d_0/2)^{|\F^{(h)}|}$, and so $\e(K_{k}^{(k-1)},\g,d_0) \le \g(d_0/2)^{2^k}$,
		\item $\b$ in Fact~2.4 can be bounded by $\g^3/4$,
		\item $\e_{GDCL}(\D(\F^{(h)},f),\frac{\b}{3},d_0)$ in the proof of Corollary~2.3 can be bounded by $\frac{\b}{3}(d_0/2)^{2^{k+1}}$, using the first item and the fact that $\D(\F^{(h)},f)$ has at most $2k-(k-1)=k+1$ vertices.
		\end{itemize}}$$
\end{fact}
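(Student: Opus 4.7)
The plan is to prove Fact~\ref{fact:counting} by induction on $k \ge 3$. The base case $k=3$ is the classical triangle counting lemma: the three bipartite graphs $P^{(2)}[V_i, V_j]$ are each $(\eps, d_2)$-regular with $\eps = F_{3,\g}(d_2)$, so a vertex-by-vertex argument---fix $v \in V_1$, use regularity to obtain $|N_i(v)| = (1 \pm \eps) d_2 n_i$ for almost every $v$, apply the slicing lemma to see that $P^{(2)}[N_2(v), N_3(v)]$ remains regular with density $d_2$, and conclude that the number of triangles through $v$ is $(1 \pm O(\eps/d_2)) d_2^3 n_2 n_3$---yields the triangle count $(1 \pm \g) d_2^3 n_1 n_2 n_3$ after summing over $v$, once $\eps/d_2^3 \ll \g$ (which holds by the choice $F_{3,\g}(d_2) = (\g^3/12)(d_2/2)^{16}$).

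For the inductive step, assume the fact for $k-1$ and fix an $F_{k,\g}$-regular $k$-complex $P$. Count $K^{k-1}_k$-cliques in $P^{(k-1)}$ by summing over $v \in V_k$. For each such $v$, construct an auxiliary ``link'' $(k-1)$-complex $P_v$ on $(V_1, \ldots, V_{k-1})$, defined at level $r \le k-2$ by $P_v^{(r)} = \{S \in P^{(r)} : S \cup \{v\} \in P^{(r+1)}\}$. Each $K^{k-1}_k$-clique of $P$ through $v$ corresponds to a $K^{k-2}_{k-1}$-clique in $P_v$ whose base $(k-1)$-subset additionally lies in $P_k$. Iterating the slicing lemma at each level $r = 2, \ldots, k-1$ shows that for all but $O(\g) n_k$ vertices $v$, the complex $P_v$ is $F_{k-1, \g/3}$-regular with its vertex classes of size $(1 \pm \g/10) d_2 n_i$, so the inductive hypothesis counts its top-level cliques; the regularity of $P^{(k-1)}$ in its underlying polyad then ensures that the base condition holds for the expected $d_{k-1}$ fraction of such cliques. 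Summing over $v$ and using the identity $\binom{k-1}{i} + \binom{k-1}{i-1} = \binom{k}{i}$ to combine the density exponents from the link structure with those from the base and from the vertex-class-size factors yields $|\K(P)| = (1 \pm \g) \prod_{i=2}^{k-1} d_i^{\binom{k}{i}} \prod_{i=1}^k n_i$.

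The ``moreover'' statement follows by swapping the summation order. For each $e \in P_k$, the number of extending $v \in V_k$ equals the size of the intersection, over the $(k-2)$-subfaces $S \subsetneq e$, of the sets $\{v \in V_k : S \cup \{v\} \in P^{(k-1)}\}$; the regularity of $P$ propagated via the slicing lemma gives this intersection size as $(1 \pm \g) \prod_{i=2}^{k-1} d_i^{\binom{k-1}{i-1}} n_k$ for all but the $\g$-fraction of edges $e$ whose slicing-lemma witnesses fail at one of the $O(k)$ levels. The main obstacle is tracking the regularity parameter through $k-2$ nested slicing-lemma applications, each of which can degrade $\eps$ by a factor of up to $(d_0/2)^{-2^r}$ at level $r$. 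The choice $F_{k,\g}(d_0) = (\g^3/12)(d_0/2)^{2^{k+1}}$ is calibrated so that the restricted link complex $P_v$ remains $F_{k-1, \g/3}$-regular---the doubling of the exponent $2^{k+1} \to 2^{k+2}$ between consecutive induction levels precisely absorbs the cumulative loss.
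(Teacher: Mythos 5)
The paper does not actually prove this Fact: it is imported wholesale from R\"odl--Schacht \cite{RodlSc07-B} (their Theorem~10 and Corollary~14 / Theorem~2.2 and Corollary~2.3), and the only original content here is the footnote-level bookkeeping verifying that the explicit function $F_{k,\gamma}(x)=\frac{\gamma^3}{12}(x/2)^{2^{k+1}}$ is an admissible regularity requirement, plus the routine generalization to unequal vertex-class sizes. Your induction-on-$k$ link argument is the standard route to such counting lemmas and is close in spirit to what \cite{RodlSc07-B} does, so the strategy is sound; but as a proof it has two genuine gaps.

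First, the regularity of the link complex $P_v$ is not a consequence of the slicing lemma. Slicing (as in Lemma~\ref{lemma:k-slice}) restricts one \emph{vertex class} to a subset; passing to the link of $v$ instead replaces each $P^{(r)}[V_{i_1},\ldots,V_{i_r}]$ by the sub-$r$-graph of elements whose union with $v$ lies in $P^{(r+1)}$, which changes the underlying polyad at every level simultaneously. Showing that for all but $O(\gamma)n_k$ vertices $v$ the relative densities stay close to $d_r$ and the relative regularity is inherited requires a separate hereditary/extension argument (typically a Cauchy--Schwarz over pairs of vertices, or an appeal to the counting lemma for a larger configuration), and this is precisely where the main quantitative loss occurs; asserting that the exponent doubling $2^{k}\to 2^{k+1}$ "precisely absorbs" it is the claim that needs proof, not a proof. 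Second, the ``moreover'' part does not follow by ``swapping the summation order'': to show that only a $\gamma$-fraction of edges $e\in P_k$ have an atypical number of extensions one needs a second-moment argument, i.e., an application of the counting lemma to the doubled configuration obtained by gluing two extensions of $K_k^{(k-1)}$ along $e$ (a configuration on $k+1$ vertices) -- this is exactly the content of Corollary~2.3 and Fact~2.4 of \cite{RodlSc07-B}, and it is why the paper's footnote tracks the regularity requirement of that doubled configuration. Your sketch never produces the variance bound that controls the exceptional set.
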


%\begin{fact}[Slicing lemma for $k$-graphs]\label{fact:slice-k-easy}
%	Let $H \sub \K(F)$ be a $k$-graph
%	and let $F' \sub F$ with $|\K(F')| \ge \a|\K(F)|$.
%	If $H$ is $(\e,d)$-regular in $F$ then $H[F']:=H \cap \K(F')$ is $(\e/\a,\,d)$-regular in $F'$.
%\end{fact}
%\begin{proof}
%	If $S \sub F'$ satisfies $|\K(S)| \ge (\e/\a)|\K(F')|$ then $|\K(S)| \ge \e|\K(F)|$, hence $d_{H[F']}(S)=d_{H}(S) = d \pm \e$.
%\end{proof}

We will also need a \emph{slicing lemma} for complexes.

\begin{lemma}[Slicing lemma for complexes]\label{lemma:k-slice}
	Let $P$ be a $k$-complex $(k\ge 3)$ on vertex classes $(V_1,\ldots,V_k)$ and let $V_k' \sub V_k$ with $|V_k'| \ge \d|V_k|$.
	If $P$ is $(f,\,d_2,\ldots,d_{k-1})$-regular with
	$f(x) \le \frac{\d}{2} F_{k-1,\frac14}(x)$
	and $|V(P)| \ge n_0(d_2,\ldots,d_{k-1})$
	then the induced $k$-complex $Q=P[V_1,\ldots,V_{k-1},V_k']$ is $(f^*,d_2,\ldots,d_{k-1})$-regular with $f^*=\frac{2}{\d} \cdot f$.
\end{lemma}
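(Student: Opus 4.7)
The plan is to proceed by induction on $k\ge 3$. The base case $k=3$ reduces to slicing of bipartite graph regularity: for the only nontrivial choice of $r=2$ vertex classes containing $V_k$, any $S_1\sub V_j$ and $S_2'\sub V_k'$ with $|S_1|\ge\e^*|V_j|$ and $|S_2'|\ge\e^*|V_k'|$ satisfy $|S_2'|\ge\e^*\d|V_k|\ge\e|V_k|$, so the original $(\e,d_2)$-regularity of $P^{(2)}[V_j,V_k]$ applies and yields density $d_2\pm\e\sub d_2\pm\e^*$.

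For the inductive step $k\ge 4$, I would consider each $2\le r\le k-1$ and each choice of $r$ vertex classes. If $V_k$ is absent the sliced and unsliced layers coincide, so there is nothing to prove; if $V_k$ is present and $r=2$ the base case argument applies; hence assume $r\ge 3$ with classes $V_{i_1},\dots,V_{i_{r-1}},V_k$. Set $P':=P^{(r-1)}[V_{i_1},\dots,V_{i_{r-1}},V_k]$ and $Q':=P^{(r-1)}[V_{i_1},\dots,V_{i_{r-1}},V_k']$. The crux is the estimate $|\K(Q')|\ge(\d/2)|\K(P')|$: granted this, for any $S\sub Q'$ with $|\K(S)|\ge\e^*|\K(Q')|$ (where $\e^*=(2/\d)\e$), the threshold upgrades to $|\K(S)|\ge\e|\K(P')|$, and the $(\e,d_r)$-regularity of $P$ yields $d_{P^{(r)}}(S)=d_r\pm\e\sub d_r\pm\e^*$, as required.

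To prove the estimate, let $R$ be the $r$-complex on $(V_{i_1},\dots,V_{i_{r-1}},V_k)$ obtained by restricting $P$'s layers. Since $f\le(\d/2)F_{k-1,1/4}\le F_{r,1/4}$ (using that $F_{k,\g}(x)$ decreases in $k$ when $x\le 1$, together with $\d/2\le 1$), the dense counting lemma (Fact~\ref{fact:counting}) applies to $R$ and gives $|\K(P')|=(1\pm 1/4)\prod_{i=2}^{r-1} d_i^{\binom{r}{i}}\cdot|V_{i_1}|\cdots|V_{i_{r-1}}|\cdot|V_k|$. By the inductive hypothesis applied to $R$ (valid since $r<k$, with the hypothesis $f\le(\d/2)F_{r-1,1/4}$ following from the same monotonicity), the sliced complex $R|_{V_k'}$ is $(2f/\d,d_2,\dots,d_{r-1})$-regular, and since $2f/\d\le F_{k-1,1/4}\le F_{r,1/4}$, Fact~\ref{fact:counting} applies again to give $|\K(Q')|=(1\pm 1/4)\prod_{i=2}^{r-1} d_i^{\binom{r}{i}}\cdot|V_{i_1}|\cdots|V_{i_{r-1}}|\cdot|V_k'|$. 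Since $|V_k'|\ge\d|V_k|$, the ratio is at least $(3/4)/(5/4)\cdot\d=(3/5)\d\ge\d/2$.

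The main obstacle is simply bookkeeping these regularity conditions through the recursion; the hypothesis $f\le(\d/2)F_{k-1,1/4}$ is calibrated precisely so that losing a factor $2/\d$ via slicing still leaves enough regularity to invoke the dense counting lemma at the next level. A minor subtlety is that when $r=3$ within the inductive step, the appeal to the inductive hypothesis is to the base case (bipartite slicing), which does not require any dense counting and hence sidesteps the fact that $F_{2,1/4}$ is not formally defined in the paper.
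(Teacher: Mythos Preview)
Your approach is essentially the same as the paper's: both proceed by induction on $k$, handle the base case $k=3$ by direct threshold scaling for bipartite regularity, and in the inductive step obtain the key estimate $|\K(Q')|\ge(\d/2)|\K(P')|$ by applying the dense counting lemma (Fact~\ref{fact:counting}) to both the unsliced and sliced lower complexes, then transfer regularity via the threshold inequality $|\K(S)|\ge\e^*|\K(Q')|\Rightarrow|\K(S)|\ge\e|\K(P')|$.

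The organizational difference is that the paper inducts from $k$ to $k{+}1$ and, for each $i\neq k{+}1$, applies the induction hypothesis once to the full $k$-complex $P_i^{(\le k-1)}$ (thereby handling all layers $2,\dots,k{-}1$ at once) and uses counting only for the top layer $k$; you instead use strong induction and, within the proof for a $k$-complex, handle each layer $r$ and each $r$-tuple of classes separately, invoking the induction hypothesis on the $r$-complex $R$ (with $r<k$) and then counting. Both are valid; the paper's grouping avoids some redundancy but the logic is identical.

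One small slip: in your base case you phrase the hypothesis as ``$|S_1|\ge\e^*|V_j|$ and $|S_2'|\ge\e^*|V_k'|$'', but the correct condition from Definition~\ref{def:e-reg} is $|S_1||S_2'|\ge\e^*|V_j||V_k'|$ (the product, not each factor). The fix is immediate---from $|S_1||S_2'|\ge(\e/\d)\cdot|V_j||V_k'|$ and $|V_k'|\ge\d|V_k|$ one gets $|S_1||S_2'|\ge\e|V_j||V_k|$, exactly as the paper does---so this does not affect the validity of your argument.
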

For the proof we will need the notation $P^{(\le i)} = \{e \in P \,\vert\, |e| \le i\}$ where $P$ is any hypergraph.
\begin{proof}
	We proceed by induction on $k$. We begin with the induction basis $k=3$.
	Let $P=(P_1,P_2,P_3)$ be an $(f,d_2)$-regular $3$-complex on vertex classes $(V_1,V_2,V_3)$, meaning that each bipartite graph $P_i$ (which is obtained from $P$ by removing $V_i$ and its adjacent edges)
	%(on $(V_1,\ldots,V_{i-1},V_{i+1},\ldots,V_3)$
	is $(\e,d)$-regular with $d=d_2$ and $\e=f(d)$.
	Put $Q=(Q_1,Q_2,Q_3)$.
	We will show that the bipartite graphs $Q_1=P_1[V_2,V_3']$ and $Q_2=P_2[V_1,V_3']$ are each $(\e/\d,\,d)$-regular.
	Since $f(x)/\d \le f^*(x)$, and since $Q_3=P_3$ is $(\e,d)$-regular by assumption, this would imply that $Q$ is $(f^*,d_2)$-regular, as needed.
	To prove that $Q_1$ is $(\e/\d,d)$-regular, let $S \sub V_2 \cup V_3'$ with $|\K(S)| \ge (\e/\d)|V_2||V_3'|$. Then $|\K(S)| \ge \e|V_2||V_3|$, hence $d_{Q_1}(S)=d_{P_1}(S) = d \pm \e$, as desired.
	Similarly, to prove that $Q_{2}$ is $(\e/\d,d)$-regular, let $S \sub V_1 \cup V_3'$ with $|\K(S)| \ge (\e/\d)|V_1||V_3'|$. Then $|\K(S)| \ge \e|V_1||V_3|$, hence $d_{Q_2}(S)=d_{P_2}(S) = d \pm \e$. This proves the induction basis.

	It remains to prove the induction step.
	Let $P$ be a $(k+1)$-complex on vertex classes $(V_1,\ldots,V_{k+1})$ and let $V_{k+1}' \sub V_{k+1}$ with $|V_{k+1}'| \ge \d|V_{k+1}|$, and suppose $P$ is $(f,d_2,\ldots,d_k)$-regular with
	\begin{equation}\label{eq:k-slice-f-bound}
	f(x) \le \frac{\d}{2} F_{k,\frac14}(x) \;.
	\end{equation}
	%$f(x) \le F_{k+1}(x)$.
	We need to show that the induced $(k+1)$-complex $Q=P[V_1,\ldots,V_k,V_{k+1}']$ is $(f^*,d_2,\ldots,d_k)$-regular.
	Put $d_0 = \min\{d_2,\ldots,d_{k-1}\}$,
	$P^{(k)}=(P_1,\ldots,P_{k+1})$ and $Q^{(k)}=(Q_1,\ldots,Q_{k+1})$.
	Let $i \in [k+1]$, and observe that the regularity assumption on $P$ translates to the following assumptions on $P_i$:
	\begin{enumerate}
		\item the $k$-complex $P_i^{(\le k-1)}$ is $(f,d_2,\ldots,d_{k-1})$-regular,
		\item the $k$-partite $k$-graph $P^{(k)}_i$ is $(f(d_0),\,d_k)$-regular in $P^{(k-1)}_i$.
	\end{enumerate}	
	To prove that $Q$ is $(f^*,d_2,\ldots,d_k)$-regular
	we need to show that $Q_i$ satisfies the following conditions:
	\begin{enumerate}
		\item the $k$-complex $Q_i^{(\le k-1)}$ is $(f^*,d_2,\ldots,d_{k-1})$-regular,
		\item the $k$-partite $k$-graph $Q^{(k)}_i$ is $(f^*(d_0),\,d_k)$-regular in $Q^{(k-1)}_i$.
	\end{enumerate}
	We henceforth assume $i \neq k+1$, since otherwise $Q_i=P_i$ and so the above conditions follow from the above assumptions together with the fact that $f(x) \le f^*(x)$.
	%	Put $I=[k+1] \sm \{i'\}$, and let us henceforth assume $i' \neq k+1$ (i.e., $I \neq [k]$). We may make this assumption since for $I=[k]$ we have $Q_I=P_I$, in which case the above conditions follow from the above assumptions as $f(x) \le f^*(x)$.
	Apply the induction hypothesis with the $k$-complex $P_i^{(\le k-1)}$ and $V_{k+1}'$, using assumption~$(i)$, the fact that $f(x) \le \frac{\d}{2}F_{k-1,\frac14}(x)$ by~(\ref{eq:k-slice-f-bound}) and the statement's assumption on $|V(P)|$.
	It follows that the $k$-complex $Q_i^{(\le k-1)}=P_i^{(\le k-1)}[V_1,\ldots,V_{i-1},V_{i+1},\ldots,V_k,V_{k+1}']$ is $(f^*,d_2,\ldots,d_k)$-regular,
	thus proving condition~$(i)$.
	
	Apply Fact~\ref{fact:counting} (dense counting lemma) with $\g=1/2$ and the $k$-complex $P_i^{(\le k-1)}$, using assumption~$(i)$, the fact that $f(x) \le F_{k,\frac12}(x)$ by~(\ref{eq:k-slice-f-bound}) and the statement's assumption on $|V(P)|$, to deduce that
	$$|\K(P_i^{(k-1)})| \le \frac32\prod_{j=2}^{k-1} d_j^{\binom{k}{j}} \cdot \prod_{\substack{1 \le j \le k+1\colon\\j \neq i}} |V_j| \;.$$
	On the other hand, applying Fact~\ref{fact:counting} with $\g=1/4$ and the $k$-complex $Q_i^{(\le k-1)}$, using condition~$(i)$, the fact that $f^*(x) = \frac{2}{\d}f(x) \le F_{k,\frac14}(x)$ by~(\ref{eq:k-slice-f-bound})
	and the statement's assumption on $|V(P)|$, implies that
	\begin{equation}\label{eq:k-slice-count}
	|\K(Q_i^{(k-1)})| \ge \frac34\prod_{j=2}^{k-1} d_j^{\binom{k}{j}} \cdot \prod_{\substack{1 \le j \le k\colon\\j \neq i}} |V_j| \cdot \d |V_{k+1}| \ge \frac{\d}{2}|\K(P_i^{(k-1)})| \;.
	\end{equation}
	%We deduce that $|\K(P')| \ge (\d/3)|\K(P)|$.
	We now prove condition~$(ii)$.
	Let $S \sub Q_i^{(k-1)}$ satisfy $|\K(S)| \ge f^*(d_0)|\K(Q_i^{(k-1)})|$.
	Then $|\K(S)| \ge f(d_0)|\K(P_i^{(k-1)})|$ by~(\ref{eq:k-slice-count}).
	Therefore $d_{Q_i^{(k)}}(S)=d_{P_i^{(k)}}(S) = d_k \pm f(d_0)$, where the last equality uses assumption~$(ii)$.
	%Thus, $Q_I^{(k)}$ is $(f^*(d_0),\, d_k)$-regular in $Q_I^{(k-1)}$, proving condition~$(ii)$. This completes the induction step and the proof.
	This proves condition~$(ii)$, thus completing the induction step and the proof.
\end{proof}

%\subsection{Reduction to graphs}
\subsection{Proof of Claim~\ref{claim:k-reduction}}

\begin{proof}%[Proof of Claim~\ref{claim:k-reduction}]
%	Put $\d'=2\sqrt{\d}$, $G=G_{H,P_0}^k$, $V=V_k$ and $E=P_0[V_1,\ldots,V_{k-1}] \in \P[V_1\times\cdots\times V_{k-1}]$.
%	Let $E' \sub E$, $V' \sub V$ with $|E'| \ge \d'|E|$, $|V'| \ge \d'|V|$.
%	%To complete the proof
%	For the proof we need to show that
%	$d_{G}(E',V') \ge \frac12 d_{G}(E,V)$ (recall Definition~\ref{def:star-regular} and Definition~\ref{def:aux}).
%
	Put $G=G_H^k$, $\d'=2\sqrt{\d}$, and let $E \in E_k(\P)$ and $V \in V_k(\P)$.
	%Let the $(k-2)$-partition $\P'$ be obtained by restricting $\P$ to $\Vside^1 \cup\cdots\cup \Vside^{k-1}$.
	%Let $E \in \P'^{(k-1)}=\P^{(k-1)}[\Vside^1\times\cdots\times\Vside^{k-1}]$ and $V \in \P^{(1)}[\Vside^k]$.
	Note that $E$ is a $(k-1)$-partite $(k-1)$-graph, and let $(V_1,\ldots,V_{k-1})$ denote its vertex classes. Thus, $V_j \sub \Vside^j$ for every $1 \le j \le k-1$, and $V \sub \Vside^k$.
	%Let $E \in \P^{(k-1)}[\Vside^1 \times\cdots\times \Vside^{k-1}]$, $V \in \P^{(1)}[\Vside^k]$,
	Moreover, let $E' \sub E$, $V' \sub V$ with $|E'| \ge \d'|E|$, $|V'| \ge \d'|V|$.
	%	
	%	Our goal is to prove that $G[E,V]$ is $\langle \d' \rangle$-regular with $\d'=2\sqrt{\d}$.
	%	Fix $E' \sub E$ and $V' \sub V$ with $|E'| \ge \d'|E|$ and $|V'| \ge \d'|V|$.
	%	Thus,
	To complete the proof our goal is to show that
	$d_{G}(E',V') \ge \frac12 d_{G}(E,V)$ (recall Definition~\ref{def:star-regular}).

%	===
%	
%	%Put $H=H_G$ and $\d'=2\sqrt{\d}$,
%	Put $G=G_H^k$, $\d'=2\sqrt{\d}$,
%	and denote by $\P'$ the restriction of $\P$ to $\Vside^1 \cup\cdots\cup \Vside^{k-1}$.
%	Let $E \in \P'^{(k-1)}=\P^{(k-1)}[\Vside^1\times\cdots\times\Vside^{k-1}]$ and $V \in \P^{(1)}[\Vside^k]$. Note that $E$ is a $(k-1)$-partite $(k-1)$-graph, and let $(V_1,\ldots,V_{k-1})$ denote its vertex classes. Thus, $V_j \sub \Vside^j$ for every $1 \le j \le k-1$, and $V \sub \Vside^k$.
%	%Let $E \in \P^{(k-1)}[\Vside^1 \times\cdots\times \Vside^{k-1}]$, $V \in \P^{(1)}[\Vside^k]$,
%	Moreover, let $E' \sub E$, $V' \sub V$ with $|E'| \ge \d'|E|$, $|V'| \ge \d'|V|$.
%	To complete the proof we need to show that
%	$d_{G}(E',V') \ge \frac12 d_{G}(E,V)$ (recall Definition~\ref{def:star-regular}).
	
	Consider the following $k$-partite $k$-graph and subgraph thereof:
	$$K = \{ (v_1,\ldots,v_k) \,\vert\, (v_1,\ldots,v_{k-1}) \in E \text{ and } v_k \in V \} = E \circ V \;,$$
	$$K' = \{ (v_1,\ldots,v_k) \,\vert\, (v_1,\ldots,v_{k-1}) \in E' \text{ and } v_k \in V' \} = E' \circ V' \;.$$
	%$$K = \{ (v_1,\ldots,v_k) \in \Vside^1\times\cdots\times\Vside^k \,\vert\, (v_1,\ldots,v_{k-1}) \in E \text{ and } v_k \in V \} \;,$$
	%$$K' = \{ (v_1,\ldots,v_k) \in \Vside^1\times\cdots\times\Vside^k \,\vert\, (v_1,\ldots,v_{k-1}) \in E' \text{ and } v_k \in V' \} \;.$$
	We claim that
	\begin{equation}\label{eq:red-k-d}
	d_{G}(E,V) = \frac{|H \cap K|}{|K|} \quad\text{ and }\quad d_{G}(E',V') = \frac{|H \cap K'|}{|K'|} \;.
	%d_{G}(E,V) = d_{H}(K) \quad\text{ and }\quad d_{G}(E',V') = d_{H}(K') \;.
	\end{equation}
	Proving~(\ref{eq:red-k-d}) would mean that to complete the proof it suffices to show that
	\begin{equation}\label{eq:red-k-goal}
	\frac{|H \cap K'|}{|K'|} \ge \frac12 \frac{|H \cap K|}{|K|}  \;.
	%d_H(K') \ge \frac14 d_H(K)  \;.
	\end{equation}
	To prove~(\ref{eq:red-k-d}) first note that
	\begin{equation}\label{eq:red-k-prod}
	|K| = |E||V| \quad\text{ and }\quad |K'|=|E'||V'| \;.
	\end{equation}
	Furthermore,
	\begin{align*}
	e_G(E,V) &= \big|\big\{ \, ((v_1,\ldots,v_{k-1}),v_k) \in G \,\big\vert\, (v_1,\ldots,v_{k-1}) \in E,\, v_k \in V \, \big\}\big| \\
	&= \big|\big\{ \, (v_1,\ldots,v_{k-1},v_k) \in H \,\big\vert\, (v_1,\ldots,v_{k-1}) \in E,\, v_k \in V \, \big\}\big|
	= |H \cap K| \;,
	%= e_{\Hy{G}}(K) \;,
	\end{align*}
	%$$e_G(E,V) = |G \cap (E \times V)| = |\Hy{G} \cap K| = e_{\Hy{G}}(K) \;.$$
	and similarly, $e_G(E',V')=|H \cap K'|$.
	%$e_G(E',V')=e_{\Hy{G}}(K')$.
	Therefore, using~(\ref{eq:red-k-prod}), we indeed have
	$$d_G(E,V) = \frac{e_G(E,V)}{|E||V|}
	= \frac{|H \cap K|}{|K|}
	\quad\text{ and }\quad d_G(E',V') = \frac{e_G(E',V')}{|E'||V'|}
	= \frac{|H \cap K'|}{|K'|} \;.$$
	%	%\frac{e_{\Hy{G}}(K)}{|K|}
	%	%= d_{\Hy{G}}(K)$$
	%	and %$d_G(E',V') = d_{\Hy{G}}(K')$.
	%	$$d_G(E',V') = \frac{e_G(E',V')}{|E'||V'|}
	%	= \frac{|H \cap K'|}{|K'|} \;.$$
	%\frac{e_{\Hy{G}}(K)}{|K|}
	%= d_{\Hy{G}}(K') \;.$$
	%	This proves~(\ref{eq:red-k-d}).
	
	Having completed the proof of~(\ref{eq:red-k-d}),
	it remains to prove~(\ref{eq:red-k-goal}).
	By Claim~\ref{claim:decomposition} there is a set of $k$-polyads $\{P_i\}_i$ of $\P$ on $(V_1,\ldots,V_{k-1},V)$ such that
	\begin{equation}\label{eq:red-k-partitionP}
	K = \bigcup_i \K(P_i) \,\text{ is a partition, with } P_i = (P_{i,1},\ldots,P_{i,k-1},E) \;.
	\end{equation}
	%as stated below in~(\ref{eq:red-k-tP}), and on $|\K(Q_i)|$, as stated below  in~(\ref{eq:red-k-tQ}).
	For each $k$-polyad $P_i=(P_{i,1},\ldots,P_{i,k-1},E)$, let $P_i'$ be the induced $k$-polyad $P_i' = P_i[V_1,\ldots,V_{k-1},V']$. Write $P_i'=(P'_{i,1},\ldots,P'_{i,k},E)$,
	and let $Q_i$ be the $k$-polyad $Q_i=(P'_{i,1},\ldots,P'_{i,k-1},E')$.
	Note that $Q_i$ satisfies $\K(Q_i) = \K(P_i) \cap K'$.
	It therefore follows from~(\ref{eq:red-k-partitionP}) that
	\begin{equation}\label{eq:red-k-partitionQ}
	K' = \bigcup_i (\K(P_i) \cap K') = \bigcup_i \K(Q_i) \,\text{ is a partition.}
	\end{equation}
	%	
	%	For each $P_i=(P_{i,1},\ldots,P_{i,k-1},E)$ from~(\ref{eq:red-k-partitionP}) we next define a $k$-polyad $Q_i$ satisfying $\K(Q_i) = \K(P_i) \cap K'$. Let $P_i'$ be the induced $k$-polyad $P_i' = P_i[V_1,\ldots,V_{k-1},V'] =: (P'_{i,1},\ldots,P'_{i,k},E)$,
	%	%obtained from $P_i$ by inducing it on $V'$,
	%	%Denote $P_i=(P_{i,1},\ldots,P_{i,k-1},E)$ and $P_i'=(P'_{i,1},\ldots,P'_{i,k},E)$.
	%	%We obtain $Q_i$ from $P_i'$ by setting
	%	and let $Q_i=(P'_{i,1},\ldots,P'_{i,k-1},E')$.
	%	Clearly $Q_i$ satisfies $\K(Q_i) = \K(P_i) \cap K'$ as desired.
	%
	%	Put $P_i=(P_{i,1},\ldots,P_{i,k})$ and $P_i'=(P'_{i,1},\ldots,P'_{i,k})$,
	%	and note that $P'_{i,k}=P_{i,k}=E$ (recall~(\ref{eq:red-k-partitionF})).
	%	We obtain $Q_i$ from $P_i'$ by setting $Q_i=(P'_{i,1},\ldots,P'_{i,k-1},E')$,
	%	which clearly satisfies $\K(Q_i) = \K(P_i) \cap K'$, as desired.
	%	
	%	For each $i$ let $Q_i$ be the unique $k$-polyad satisfying $\K(Q_i) = \K(P_i) \cap K'$. To be more formal, let $P_i'$ be the $k$-polyad obtained from $P_i$ by inducing it on $V'$, and note that for $I=[k-1]$ we have $(P_i')_{I}=(P_i)_{I}=E$ (recall~(\ref{eq:red-k-partitionF})).
	%	Then $Q_i$ is obtained from $P_i'$
	%	by setting $(P_i')_{I} = E'$.
	%
	Suppose $\P$ is a $(k-1,a_1,a_2,\ldots,a_{k-1})$-partition, and denote $d_j = 1/a_j$ and
	$$d = \prod_{j=2}^{k-1} d_j^{\binom{k-1}{j-1}} \;.$$
	Put $\g = \frac18\d'$ ($\le \frac18$, as otherwise there is nothing to prove). We  will next apply the dense counting lemma (Fact~\ref{fact:counting}) to prove the estimates:
	\begin{equation}\label{eq:red-k-tP}
	|\K(P_i)| \le (1+\g)d|K|
	\end{equation}
	and
	\begin{equation}\label{eq:red-k-tQ}
	|\K(Q_i)| \ge \big(1-\g)d|K'| \;.
	\end{equation}
	Note that proving these estimates would in particular imply the bound
	\begin{equation}\label{eq:red-k-threshold}
	|\K(Q_i)| \ge \d|\K(P_i)| \;;
	\end{equation}
	indeed, from the assumptions $|E'| \ge \d'|E|$, $|V'| \ge \d'|V|$ and~(\ref{eq:red-k-prod}) we have that $|K'| \ge (\d')^2|K|$,
	hence we deduce from~(\ref{eq:red-k-tP}) and~(\ref{eq:red-k-tQ}) the lower bound
	%\begin{equation}\label{eq:red-k-threshold}
	$$
	\frac{|\K(Q_i)|}{|\K(P_i)|}
	\ge \frac{1-\g}{1+\g}(\d')^2
	\ge \frac34 \cdot (2\sqrt{\d})^2 \ge \d \;,
	%\end{equation}
	$$
	where we used the inequality
	\begin{equation}\label{eq:red-k-quotient}
	\frac{1-\g}{1+\g} \ge 1-2\g \ge \frac34 \;.
	%\frac{1-\g}{1+\g} \ge 1-2\g = 1-\frac14\d' \ge \frac34 \;.
	\end{equation}
	
	%We will next obtain estimates on $|\K(P_i)|$ and $|\K(Q_i)|$ ((\ref{eq:red-k-tP}) and~(\ref{eq:red-k-tQ}) below).	
	In order to prove~(\ref{eq:red-k-tP}) and~(\ref{eq:red-k-tQ})
	we first need to introduce some notation.
	%Recalling that $\P$ is a $(k-1)$-partition, suppose it is a $(k-1,a_1,a_2,\ldots,a_{k-1})$-partition.
	Put $m=n/a_1$ where $n=|V(H)|$ is the size of the vertex set, and put
	%Moreover, put
	$$
	%\g = \frac14\d' ,\quad %\, (=\sqrt{\d}/2)
	%\quad\text{ and }\quad
	\g' = \frac12\g \d'd \,\,\Big(=\frac14\d d\Big), \quad d_0 = \min\{d_2,\ldots,d_{k-1}\} \;.$$	
	%
	%and moreover, put $d_0 = \min\{d_2,\ldots,d_{k-1}\}$.
	% and $\e = f(d_0)$.
	Note that $d \ge d_0^{2^k}$.
	Using the statement's assumption on $f$ we have (recall~(\ref{eq:DCL}))
	\begin{equation}\label{eq:red-k-f}
	f(d_0) = \d^4\Big(\frac{d_0}{2}\Big)^{2^{k+3}}
	\le \frac{\d}{2^{6}} \cdot \Big(\frac14\d d_0^{2^k}\Big)^3 \Big(\frac{d_0}{2}\Big)^{2^{k+1}}
	\le \frac{\d}{2}\cdot\frac{\g'^3}{12}\Big(\frac{d_0}{2}\Big)^{2^{k+1}} = \frac{\d}{2}F_{k,\g'}(d_0) \;.
	\end{equation}
	In particular,
	\begin{equation}\label{eq:red-k-eps}
	f(d_0) \le \g' d_0 \le \g' d_{k-1} \;.
	\end{equation}
	Note that if $P$ is an $\ell$-polyad of $\P$, for any $2 \le \ell \le k$, then, since $\P$ is $f$-equitable, the unique $\ell$-complex of $\P$ containing $P$ is $(f,d_2,\ldots,d_{\ell-1})$-equitable.
	Applying Fact~\ref{fact:counting} (dense counting lemma) with $\g'$ implies, using the fact that $f(x) \le F_{k,\g'}(x) \le F_{\ell,\g'}(x)$ by~(\ref{eq:red-k-f}) and the statement's assumption on $n$, that
	\begin{equation}\label{eq:red-k-count}
	|\K(P)| = (1 \pm \g')\prod_{j=2}^{\ell-1} d_j^{\binom{\ell}{j}} \cdot m^\ell \;.
	\end{equation}
	%	
	%	Since $\P$ is $f$-equitable, for each $2 \le j \le k$, every $j$-complex of $\P$ is $(f,d_2,\ldots,d_{j-1})$-regular.	
	%	It follows that every $i$-polyad $P$ of $\P$
	%	Applying Fact~\ref{fact:counting} (dense counting lemma) on $P$ with any $\a \le ...$ implies, since $f(x) \le F_{k,...}(x) \le F_{j,\a}(x)$, that

	%	
	Let $P_E$ be the unique $(k-1)$-polyad of $\P$ such that $E \sub K(P_E)$.
	%Note that, by definition, $E \sub K(P_E)$ with $P_E$ a $(k-1)$-polyad of $\P$.
	%Let $K_E$ be the unique $(k-1)$-polyad of $\P$ containing $E$.
	Then $|E|=d_E(P_E)|\K(P_E)|$, and since $\P$ is $f$-equitable, $E$ is $(d_{k-1},f(d_0))$-regular in $P_E$. In particular, $|E| \ge (d_{k-1}-f(d_0))|\K(P_E)|$.
	%Apply Fact~\ref{fact:counting} (dense counting lemma) with $\g$ on the unique $(k-1)$-complex containing $P_E$ to deduce that
	By~(\ref{eq:red-k-count}),
	$$|\K(P_E)| \ge (1-\g')\prod_{j=2}^{k-2} d_j^{\binom{k-1}{j}} \cdot m^{k-1} \;.$$
	Thus,
	%the bound $\e \le d_{k-1}d$, we have
	\begin{equation}\label{eq:red-k-E}
	|E| \ge (d_{k-1}-f(d_0))|\K(P_E)| \ge (1-\g')d_{k-1}|\K(P_E)| \ge %(1-d)(1-\g')
	(1-2\g')\prod_{j=2}^{k-1} d_j^{\binom{k-1}{j}} \cdot m^{k-1} \;,\
	\end{equation}
	where the second inequality uses~(\ref{eq:red-k-eps}).
	Furthermore, for every $P_i$ as above we have (recall $P_i$ is a $k$-polyad of $\P$), again by~(\ref{eq:red-k-count}), that
	%	
	%	Fix $i$ and let $\overline{P_i}$ be the unique $k$-complex of $\P$ containing $P_i$.
	%	By definition, $\overline{P_i}$ is $(f,d_2,\ldots,d_{k-1})$-regular.
	%	Apply Fact~\ref{fact:counting} (dense counting lemma) with $\g$ on $\overline{P_i}$, using the fact that $f(x) \le F_{k,\g}(x)$, to deduce that
	$$
	|\K(P_i)| \le (1+\g')\prod_{j=2}^{k-1} d_j^{\binom{k}{j}} \cdot m^k
	= (1+\g')d\prod_{j=2}^{k-1} d_j^{\binom{k-1}{j}} \cdot m^k
	\le \frac{1+\g'}{1-2\g'}d|E||V|
	\le (1+4\g')d|K| \;,
	$$
	where the penultimate inequality uses~(\ref{eq:red-k-E}), and the last inequality uses~(\ref{eq:red-k-prod}) and the fact that $\g' \le \g \le \frac18$.
	%
	%	where the second equality uses the fact that, since $E \in \P^{(k-1)}$ and $\P$ is $f$-equitable,
	%	$$\frac{|E|}{m^{k-1}} \le \prod_{j=2}^{k-1} \Big(d_j + f(d_0)\Big)^{\binom{k-1}{j}}
	%	= \Big(1+\frac{f(d_0)}{d_0}\Big)^{2^{k-1}}\cdot \prod_{j=2}^{k-1} d_j^{\binom{k-1}{j}}
	%	= \Big(1+2^{k-1}\frac{f(d_0)}{d_0}\Big)\prod_{j=2}^{k-1} d_j^{\binom{k-1}{j}}$$
	%	%$$|E|=\prod_{j=2}^{k-1} a_j^{-\binom{k-1}{j}} \cdot m^{k-1}$$
	%	and the third equality uses~(\ref{eq:red-k-prod}).
	This proves~(\ref{eq:red-k-tP}).
	
%	HERE
	
	Next we prove~(\ref{eq:red-k-tQ}). Let $\overline{P_i}$ be the unique $k$-complex of $\P$ containing the $k$-polyad $P_i$, and let $\overline{P_i}'$ be the induced $k$-complex $\overline{P_i}'=\overline{P_i}[V_1,\ldots,V_{k-1},V']$.
	%$\overline{P_i'}$ be obtained by inducing $\overline{P_i}$ on $V'$.
	Apply Lemma~\ref{lemma:k-slice} (slicing lemma) on $\overline{P_i}$, using the fact that $|V'| \ge \d'|V|$ and $f(x) \le \frac{\d}{2}F_{k-1,\frac14}$ by~(\ref{eq:red-k-f}), to deduce that $\overline{P_i}'$ is $(\frac{2}{\d}f,\, d_2,\ldots,d_{k-1})$-regular.
	%	Denoting $\overline{P_i}'^{(k-1)}=(\overline{P_i}'$ $ 	
	Let the $k$-complex $\overline{Q_i}$ be obtained from the $k$-complex $\overline{P_i}'$ by replacing $E$ $(=P[V_1,\ldots,V_{k-1}])$ with $E'$,
	and note that the $(k-1)$-uniform hypergraph $\overline{Q_i}^{(k-1)}$ is precisely the $k$-polyad $Q_i$.
	%
	%Let the $k$-complex $\overline{Q_i}$ be obtained from $\overline{P_i}'$ by setting $(\overline{Q_i})_{[k]\sm\{k\}}=E'$, and note that the $k$-polyad $Q_i$ is precisely $\overline{Q_i}^{(k-1)}$.
	Apply Fact~\ref{fact:counting} (dense counting lemma, the ``moreover'' part) with $\g'$ on $\overline{P_i}'$, using the fact that %$d \ge d_0^{2^{k-1}}$ %and
	$\frac{2}{\d}f(x) %\le F_{k,x^{2^k}}(x)
	\le F_{k,\g'}(x)$ by~(\ref{eq:red-k-f}) and the statement's assumption on $n$, to deduce that
	$$
	|\K(Q_i)| \ge |E'| \cdot (1-\g')d|V'| - \g'|E|\cdot|V'|
	\ge \big(1-\g' - \frac{1}{\d' d}\g'\big)d|E'||V'|
	\ge \big(1-\g)d|K'| \;,
	$$
	where the second inequality uses the assumption that $|E'|\ge \d'|E|$ and the third inequality uses~(\ref{eq:red-k-prod}).
	This proves~(\ref{eq:red-k-tQ}).

	Finally, recall that our goal is to prove~(\ref{eq:red-k-goal}).
	%For a $k$-polyad $X$ we henceforth abbreviate $e_H(X):=|H \cap \K(X)|= d_H(X)\cdot|\K(X)|$.
	We have
	\begin{align*}
	|H \cap K| &= \sum_i |H \cap \K(P_i)|
	%=\sum_i e_H(P_i)
	= \sum_i d_H(P_i) \cdot |\K(P_i)|
	\le (1+\g)|K| \cdot d\sum_i d_H(P_i) \;,
	\end{align*}
	where the first equality uses~(\ref{eq:red-k-partitionP}) and the inequality is by~(\ref{eq:red-k-tP}).
	Put $d'=d\sum_i d_H(P_i)$. Then
	\begin{equation}\label{eq:red-k-dP}
	\frac{|H \cap K|}{|K|} \le (1 + \g)d' \;.
	\end{equation}
	Observe that for every $i$, the statement's assumption on $\P$
	%the perfect $\langle \d,f \rangle$-regularity of $\P$ with respect to $H$
	implies, together with~(\ref{eq:red-k-threshold}), that
	\begin{equation}\label{eq:red-k-reg}
	d_H(Q_i) \ge \frac23 d_H(P_i) \;.
	\end{equation}
	We have
	\begin{align*}
	|H \cap K'| &= \sum_i |H \cap \K(Q_i)| =
	\sum_i d_H(Q_i) \cdot |\K(Q_i)| \\
	&\ge \sum_i \frac23 d_H(P_i) \cdot |\K(Q_i)|
	\ge \frac23(1-\g)|K'| \cdot d\sum_i d_H(P_i) \;,
	\end{align*}
	where the first equality uses~(\ref{eq:red-k-partitionQ}), the first inequality uses~(\ref{eq:red-k-reg}) and the second inequality uses~(\ref{eq:red-k-tQ}).
	This means that
	$$\frac{|H \cap K'|}{|K'|} \ge \frac23(1-\g)d' \ge
	\frac23\cdot\frac{1-\g}{1+\g}\frac{|H \cap K|}{|K|} \ge \frac12 \frac{|H \cap K|}{|K|} \;,$$
	where the second inequality uses~(\ref{eq:red-k-dP}) and the third inequality uses~(\ref{eq:red-k-quotient}).
	We have thus proved~(\ref{eq:red-k-goal}) and are therefore done.
\end{proof}


\begin{thebibliography}{10}
	
%	\bibitem{AlonFiKrSz00}
%	N.~Alon, E.~Fischer, M.~Krivelevich and M.~Szegedy,
%	{Efficient testing of large graphs},
%	Combinatorica \textbf{20} (2000), 451--476.
%	
%	\bibitem{AlonShSt08}
%	N.~Alon, A.~Shapira and U.~Stav,
%	{Can a graph have distinct regular partitions?},
%	{SIAM J. Discrete Math.} \textbf{23} (2008/09), 278--287.
%	
%
%    \bibitem{Chung}
%    F.~Chung,
%    {Regularity lemmas for hypergraphs and quasi-randomness}
%    {Random Struc. Alg.} \textbf{2} (1991), 241--252.


	\bibitem{ChungGrWi89}
	F.~Chung, R.~Graham and R.~Wilson,
	{Quasi-random graphs},
	{Combinatorica} \textbf{9} (1989), 345--362.
	
	\bibitem{ConlonFo12}
	D.~Conlon and J.~Fox,
	{Bounds for graph regularity and removal lemmas},
	{Geom. Funct. Anal.} \textbf{22} (2012), 1191--1256.

    \bibitem{ConlonFox13}
    D.~Conlon and J.~Fox,
    {Graph removal lemmas},
    {Surveys in Combinatorics} (2013), 1--50.

	\bibitem{ElekSz12}
	G.~Elek and B.~Szegedy,
	{A measure-theoretic approach to the theory of dense hypergraphs},
	{Adv. Math.} \textbf{231} (2012), 1731--1772.

	\bibitem{ErdosFrRo86}
	P.~Erd\H{o}s, P.~Frankl and V.~R\"odl,
	{The asymptotic number of graphs not containing a fixed subgraph and a problem for hypergraphs having no exponent},
	{Graphs Combin.} \textbf{2} (1986), 113--121.

	\bibitem{Fox11}
	J.~Fox,
	{A new proof of the graph removal lemma},
	{Ann. of Math.} \textbf{174} (2011), 561--579.
	
	\bibitem{FoxLo17}
	J.~Fox and L.M.~Lov\'asz,
	{A tight lower bound for Szemer\'edi's regularity lemma},
	{Combinatorica} (2017).
	
	\bibitem{FrankRo02}
	P.~Frankl and V.~R\"odl,
	{Extremal problems on set systems},
	{Random Struct. Alg.} \textbf{20} (2002), 131--164.
	
%	\bibitem{FriezeKa99}
%	A.~Frieze and R.~Kannan,
%	{Quick approximation to matrices and applications},
%	{Combinatorica} \textbf{19} (1999), no. 2, 175--220.
	
	\bibitem{FurstenbergKa78}
	H.~Furstenberg and Y.~Katznelson,
	{An ergodic Szemer\'edi theorem for commuting transformations},
	{J. Anal. Math.} \textbf{34} (1978), 275–-291.
	
	\bibitem{Gowers97}
	T.~Gowers,
	{Lower bounds of tower type for Szemer\'edi's uniformity lemma},
	{Geom. Funct. Anal.} \textbf{7} (1997), 322--337.
	
	\bibitem{Gowers01}
	T.~Gowers,
	{A new proof of Szemer\'edi's theorem},
	{Geom. Funct. Anal.} \textbf{11} (2001), 465--588.
	
	\bibitem{Gowers06}
	T.~Gowers,
	{Quasirandomness, counting and regularity for $3$-uniform hypergraphs},
	{Combin. Probab. Comput.} \textbf{15} (2006), 143--184.

	\bibitem{Gowers07}
	T.~Gowers,
	{Hypergraph regularity and the multidimensional Szemer\'edi theorem},
	{Ann. of Math.} \textbf{166} (2007), 897--946.
	
%	\bibitem{HalesJe63}
%	A.W.~Hales and R.I.~Jewett,
%	{Regularity and positional games},
%	\emph{Trans. Am. Math. Soc.} \textbf{106} (1963), 222-–229.
	
	\bibitem{Heoffding63}
	W.~Hoeffding,
	{Probability inequalities for sums of bounded random variables},
	\emph{J. Amer. Statist. Assoc.} \textbf{58} (1963), 13--30.
	
	\bibitem{JansonLuRu11}
	S.~Janson, T.~{\Lpolish}uczak and A.~Ruci{\' n}ski,
	\textbf{Random Graphs},
	Wiley, 2011.
	
	\bibitem{KaliSh13}
	S.~Kalyanasundaram and A.~Shapira,
	{A wowzer-type lower bound for the strong regularity lemma},
	{Proc. Lond. Math. Soc.} \textbf{106} (2013), 621--649.

	
%	\bibitem{KomlosSi}
%	J.~Koml\'os and M.~Simonovits,
%	{Szemer\'edi's Regularity Lemma and its applications in graph theory},
%	In: {\em Combinatorics, Paul Erd\"os is Eighty}, Vol II,
%	%(D.~Mikl\'os, V.~T.~S\'os, T.~Sz\"onyieds.),
%	J\'anos Bolyai Math.~Soc., Budapest (1996), 295--352.
	
	\bibitem{KomlosShSiSz02}
	J.~Koml\'os, A.~Shokoufandeh, M.~Simonovits and E.~Szemer\'edi,
	{The regularity lemma and its applications in graph theory},
	In: {\em Theoretical Aspects of Computer Science: Advanced Lectures}, Springer Berlin Heidelberg (2002), 84--112.
	
%	\bibitem{LovaszSz07}
%	L.~Lov\'asz and B.~Szegedy,
%	{Szemer\'edi's lemma for the analyst},
%	{Geom. Funct. Anal.} \textbf{17} (2007), no. 1, 252--270.
	
 	\bibitem{MoshkovitzSh16}
 	G.~Moshkovitz and A.~Shapira,
	{A short proof of Gowers' lower bound for the regularity lemma},
 	{Combinatorica}, \textbf{36} (2016), 187--194.
 	
 	\bibitem{MoshkovitzSh18}
	G.~Moshkovitz and A.~Shapira,
	{A sparse regular approximation lemma},
	{Trans. Amer. Math. Soc.}, to appear.

 	\bibitem{MS3}
	G.~Moshkovitz and A.~Shapira,
	{A tight bound for hypergraph regularity I},
	arXiv 1804.05511.
	%manuscript.

	\bibitem{NaglePoRoSc09}
	B.~Nagle, A.~Poerschke, V.~R\"odl and M.~Schacht,
	{Hypergraph regularity and quasi-randomness},
	Proceedings of the {Twentieth Annual ACM-SIAM Symposium on Discrete Algorithms} (2009), 227--235.

	\bibitem{NagleRoSc06}
	B.~Nagle, V.~R\"odl and M.~Schacht,
	{The counting lemma for regular $k$-uniform hypergraphs},
	{Random Struct. Alg.} \textbf{28} (2006), 113--179.

%	\bibitem{NagleRoSc17}
%	B.~Nagle, V.~R\"odl and M.~Schacht,
%	{The regularity method for hypergraphs},
%	manuscript.
	
	\bibitem{Rodl14}
	V.~R\"odl,
	{Quasi-randomness and the regularity method in hypergraphs},
	Proceedings of the {International Congress of Mathematicians (ICM)} \textbf{1} (2015), 571--599.
	
	\bibitem{ReiherRoSc16}
	C.~Reiher, V.~R\"odl and M.~Schacht,
	{Embedding tetrahedra into quasirandom hypergraphs},
	{J. Combin. Theory Ser. B} \textbf{121} (2016), 229--247.
	
	\bibitem{RodlNaSkScKo05}
	V.~R\"odl, B.~Nagle, J.~Skokan, M.~Schacht and Y.~Kohayakawa,
	{The hypergraph regularity method and its applications},
	{Proc. Natl. Acad. Sci. USA} \textbf{102} (2005), 8109--8113.
	
	\bibitem{RodlTeScTo06}
	V.~R\"odl, E.~Tengan, M.~Schacht and N.~Tokushige,
	{Density theorems and extremal hypergraph problems},
	{Israel J. Math.} \textbf{152} (2006), 371--380.
	
	\bibitem{RodlSc07}
	V.~R\"odl and M.~Schacht,
	{Regular partitions of hypergraphs:~regularity lemmas},
	{Combin. Probab. Comput.} \textbf{16} (2007), 833--885.
	
	\bibitem{RodlSc07-B}
	V.~R\"odl and M.~Schacht,
	{Regular partitions of hypergraphs:~counting lemmas},
	{Combin. Probab. Comput.} \textbf{16} (2007), 887--901.
		
	\bibitem{RodlSc10}
	V.~R\"odl and M.~Schacht,
	{Regularity lemmas for graphs},
	Fete of Combinatorics and Computer Science, Bolyai Soc. Math. Stud., 20 (2010), 287--325.
	
	\bibitem{RodlSk04}
	V.~R\"odl and J.~Skokan,
	{Regularity lemma for uniform hypergraphs},
	{Random Struct. Alg.} \textbf{25} (2004), 1--42.
	
	\bibitem{Roth54}
	K.F.~Roth,
	{On certain sets of integers (II)},
	{J. Lond. Math. Soc.} \textbf{29} (1954), 20--26.
	
	\bibitem{RuzsaSz76}
	I.Z.~Ruzsa and E.~Szemer\'edi,
	{Triple systems with no six points carrying three triangles}, in Combinatorics (Keszthely, 1976), Coll. Math. Soc. J. Bolyai 18, Volume II, 939-945.
	
	\bibitem{Shelah89}
	S.~Shelah,
	{Primitive recursive bounds for van der Waerden numbers},
	{J. Amer. Math. Soc.} \textbf{1} (1989), 683--697.
	
%	\bibitem{Solymosi03}
%	J.~Solymosi,
%	{Note on a generalization of Roth's theorem},
%	{Algorithms and Combinatorics} \textbf{25} (2003), 825--827.
	
	\bibitem{Solymosi04}
	J.~Solymosi,
	{A note on a question of Erd\H{o}s and Graham},
	{Combin. Probab. Comput.} \textbf{13} (2004), 263--267.
	
	\bibitem{Szemeredi75}
	E.~Szemer\'edi,
	{On sets of integers containing no $k$ elements in arithmetic progression},
 	{Acta Arith.} \textbf{27} (1975), 199--245.	

	\bibitem{Szemeredi78}
	E.~Szemer\'edi,
	{Regular partitions of graphs},
	In: {\em Proc.\ Colloque Inter.\ CNRS}, %(J.~C.~Bermond, J.~C.~Fournier, M.~Las~Vergnas and D.~Sotteau, eds.),
	1978, 399--401.
	
	\bibitem{Tao06-h}
	T.~Tao,
	{A variant of the hypergraph removal lemma},
	{J. Combin. Theory Ser. A} \textbf{113} (2006), 1257--1280.
	
	\bibitem{Tao06}
	T.~Tao,
	{Szemer\'edi's regularity lemma revisited},
	{Contrib. Discrete Math.} \textbf{1} (2006), 8--28.

	\bibitem{Towsner17}
	H.~Towsner,
	{$\sigma$-algebras for quasirandom hypergraphs},
	{Random Struct. Alg.} \textbf{50} (2017), 114--139.
	
%	\bibitem{VDW27}
%	B.L.~Van der Waerden,
%	{Beweis einer Baudetschen Vermutung},
%	{Nieuw Archief voor Wiskunde} \textbf{15} (1927), 212--216.
	
\end{thebibliography}
\end{document}